\newcolumntype{L}[1]{>{\raggedright\arraybackslash}p{#1}}
\newcolumntype{C}[1]{>{\centering\arraybackslash}p{#1}}
\newcolumntype{R}[1]{>{\raggedleft\arraybackslash}p{#1}}
\numberwithin{equation}{section}
\theoremstyle{plain}
\newtheorem{theorem}{Theorem}[section]
\theoremstyle{theorem}
\newtheorem{prop}[theorem]{Proposition}
\newtheorem{lemma}[theorem]{Lemma}
\newtheorem{cor}[theorem]{Corollary}
\newtheorem*{question*}{Question}
\theoremstyle{definition}
\newtheorem{definition}[theorem]{Definition}
\newtheorem*{remark}{Remark}
\newcommand{\R}{\mathbb{R}}
\newcommand{\C}{\mathbb{C}}
\newcommand{\Z}{\mathbb{Z}}
\newcommand{\Hyp}{\mathbb{H}}
\newcommand{\D}{\mathbb{D}}
\newcommand*\circled[1]{\tikz[baseline=(char.base)]{
            \node[shape=circle,draw,inner sep=0.2pt] (char) {#1};}}
\DeclareMathOperator{\Int}{Int}
\newcommand{\mate}{\bot \!\! \! {\bot}} 
\numberwithin{figure}{section}
\begin{document}

\title[Circle Packings, Reflection Groups and Anti-rational Maps]{Circle packings, kissing reflection groups and critically fixed anti-rational maps}

\begin{author}[R.~Lodge]{Russell Lodge}
\address{Department of Mathematics and Computer Science, Indiana State University, Terre Haute, IN 47809, USA}
\email{russell.lodge@indstate.edu}
\end{author}

\begin{author}[Y.~Luo]{Yusheng Luo}
\address{Institute for Mathematical Sciences, Stony Brook University, 100 Nicolls Rd, Stony Brook, NY 11794-3660, USA}
\email{yusheng.s.luo@gmail.com}
\end{author}

\begin{author}[S.~Mukherjee]{Sabyasachi Mukherjee}
\address{School of Mathematics, Tata Institute of Fundamental Research, 1 Homi Bhabha Road, Mumbai 400005, India}
\email{sabya@math.tifr.res.in}
\end{author}

\begin{abstract}
In this paper, we establish an explicit correspondence between kissing reflection groups and critically fixed anti-rational maps.
The correspondence, which is expressed using simple planar graphs, has several dynamical consequences.
As an application of this correspondence, we give complete answers to geometric mating problems for critically fixed anti-rational maps.
\end{abstract}

\maketitle

\setcounter{tocdepth}{1}
\tableofcontents

\section{Introduction}
Ever since Sullivan's translation of Ahlfors' finiteness theorem into a solution of a long standing open problem on wandering domains in the 1980s \cite{Sullivan85}, many more connections between the theory of Kleinian groups and the study of dynamics of rational functions on $\widehat\C$ have been discovered.
These analogies between the two branches of conformal dynamics, which are commonly known as the {\em Sullivan's dictionary}, not only provide a conceptual framework for understanding the connections, but motivates research in each field as well.

In this paper, we extend this dictionary by establishing a strikingly explicit correspondence between \begin{itemize}
\item {\em Kissing reflection groups}: groups generated by reflections along the circles of finite circle packings $\mathcal{P}$ (see \S \ref{sec:cc}), and
\item {\em Critically fixed anti-rational maps}: proper anti-holomorphic self-maps of $\widehat\C$ with all critical points fixed (see \S \ref{sec:cf}).
\end{itemize}
This correspondence can be expressed by a combinatorial model: a simple \emph{plane graph} $\Gamma$. Throughout this paper, a plane graph will mean a planar graph together with an embedding in $\widehat\C$.
We say that two plane graphs are \emph{isomorphic} if the underlying graph isomorphism is induced by an orientation preserving homeomorphism of $\widehat\C$.
A graph $\Gamma$ is said to be {\em $k$-connected} if $\Gamma$ contains more than $k$ vertices and remains connected if any $k-1$ vertices and their corresponding incident edges are removed.
Our first result shows that
\begin{theorem}\label{thm:c}
The following three sets are in natural bijective correspondence:
\begin{itemize}
\item $\{2$-connected, simple, plane graphs $\Gamma$ with $d+1$ vertices up to isomorphism of plane graphs$\}$,\\

\item $\{$Kissing reflection groups $G$ of rank $d+1$ with connected limit set up to QC conjugacy$\}$,\\

\item $\{$Critically fixed anti-rational maps $\mathcal{R}$ of degree $d$ up to M{\"o}bius conjugacy$\}$.
\end{itemize}
Moreover, if $G_\Gamma$ and $\mathcal{R}_\Gamma$ correspond to the same plane graph $\Gamma$, then the limit set $\Lambda(G_\Gamma)$ is homeomorphic to the Julia set $\mathcal{J}(\mathcal{R}_\Gamma)$ via a dynamically natural map.
\end{theorem}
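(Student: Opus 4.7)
The plan is to establish the two pairwise bijections with the set of plane graphs, then derive the dynamical homeomorphism from the explicit geometric constructions realizing those bijections.

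\smallskip

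\emph{Plane graphs and kissing reflection groups.} For the first correspondence I invoke the Koebe--Andreev--Thurston Circle Packing Theorem: given a $2$-connected simple plane graph $\Gamma$ with $d+1$ vertices, there exists a circle packing $\mathcal{P}$ in $\widehat\C$ whose tangency (nerve) graph is $\Gamma$, unique up to M\"obius transformations when $\Gamma$ is $3$-connected, and up to QC conjugacy in general. The kissing reflection group $G_\Gamma$ is then generated by reflections in the $d+1$ circles of $\mathcal{P}$. Conversely, the reflection circles of any kissing reflection group form a circle packing whose tangency graph is a simple plane graph with $d+1$ vertices. The $2$-connectedness hypothesis is shown equivalent to connectedness of the limit set $\Lambda(G_\Gamma)$: a cut vertex of $\Gamma$ produces a tangency point that separates $\Lambda(G_\Gamma)$, while $2$-connectedness rules out such separations.

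\smallskip

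\emph{Plane graphs and critically fixed anti-rational maps.} This direction is more delicate and I would treat it using (anti-holomorphic) Thurston theory. Given $\Gamma$, I construct an orientation-reversing, postcritically finite topological branched self-cover $f_\Gamma : S^2 \to S^2$ prescribed combinatorially by $\Gamma$: each vertex becomes a fixed branch point whose local degree is dictated by its valence, and the cyclic order of incident edges specifies how a neighborhood of the vertex wraps over the sphere. I then apply the anti-holomorphic analogue of Thurston's characterization theorem to realize $f_\Gamma$, uniquely up to M\"obius conjugacy, as a critically fixed anti-rational map $\mathcal{R}_\Gamma$. The key step is verifying the absence of a Thurston obstruction; since every critical point is fixed, any hypothetical obstructing multicurve would induce a Levy-type degeneration that can be excluded from the planarity and $2$-connectedness of $\Gamma$. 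Conversely, any critically fixed anti-rational map carries a canonical invariant graph assembled from fixed internal rays connecting the fixed critical points to their associated fixed boundary points; its abstract structure is a $2$-connected simple plane graph on $d+1$ vertices, recovering $\Gamma$.

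\smallskip

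\emph{Identification of $\Lambda(G_\Gamma)$ with $\mathcal{J}(\mathcal{R}_\Gamma)$.} Having constructed both objects from the same graph, I would produce the promised homeomorphism by a combined symbolic-and-surgical argument. On the group side, the action of $G_\Gamma$ on $\Lambda(G_\Gamma)$ admits a natural Markov partition indexed by the vertex-sectors of $\Gamma$, presenting the dynamics as a subshift of finite type whose transition matrix is determined by $\Gamma$. On the map side, the Julia set $\mathcal{J}(\mathcal{R}_\Gamma)$ admits a matching Markov partition coming from the canonical invariant graph of the previous paragraph and encoding $\mathcal{R}_\Gamma$ via the same subshift. The two coding maps then yield a topological conjugacy. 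A cleaner geometric realization is via David surgery in the spirit of Ha\"issinsky: one uniformizes the invariant gap components of the domain of discontinuity of $G_\Gamma$ into the Fatou components of $\mathcal{R}_\Gamma$ by David homeomorphisms, whose boundary values glue to a dynamically natural equivariant homeomorphism of $\Lambda(G_\Gamma)$ onto $\mathcal{J}(\mathcal{R}_\Gamma)$.

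\smallskip

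The principal obstacle is the absence-of-obstruction step in the anti-holomorphic Thurston realization of $f_\Gamma$. After that, the remaining technical work lies in matching the two Markov structures (equivalently, carrying out the David surgery) finely enough to upgrade the symbolic conjugacy to a genuine topological conjugacy that is natural with respect to both the group and map dynamics.
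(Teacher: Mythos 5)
Your overall architecture matches the paper's (circle packing theorem on the group side, Thurston realization on the map side, a coding/pullback argument for the homeomorphism), but there are concrete gaps. First, your topological model $f_\Gamma$ is set up on the wrong graph: the fixed critical points of $\mathcal{R}_\Gamma$ correspond to the \emph{faces} of $\Gamma$ (equivalently, to the vertices of the planar dual $\Gamma^{\vee}$, which is the Tischler graph), with local degree one less than the number of sides of the face. Placing a branch point at each vertex of $\Gamma$ with local degree equal to its valence gives $\sum(\mathrm{val}-1)=2E-(d+1)$ critical points counted with multiplicity, which is not $2d-2$ in general, so the resulting cover does not even have degree $d$. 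The same confusion appears in your converse direction: the union of closures of fixed internal rays is typically \emph{not} a simple graph on $d+1$ vertices (for $\overline{z}^d$ it has two vertices joined by $d+1$ edges); it is its planar dual that is simple, $2$-connected, and has $d+1$ vertices. Second, the step you flag as the principal obstacle --- absence of a Thurston obstruction --- is the heart of the realization, and the gesture toward ``a Levy-type degeneration excluded by planarity and $2$-connectedness'' is not an argument; a priori an obstruction for the orientation-preserving second iterate need not be a Levy cycle. The paper's proof applies the Pilgrim--Tan theorem to the interaction of an irreducible obstruction with the invariant arc system formed by the edges of the Tischler graph, and then uses $2$-connectedness of $\Gamma$ to show that the preimage of the Tischler graph minus the graph itself, together with its vertex set, is a connected graph containing the postcritical set, forcing the obstruction to be empty.

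For the homeomorphism $\Lambda(G_\Gamma)\cong\mathcal{J}(\mathcal{R}_\Gamma)$: exhibiting two Markov partitions coded by the same subshift is not by itself enough, since two quotients of the same shift space are homeomorphic only if the coding maps make the same identifications. The missing ingredients are the expansion estimates (diameters of the level-$n$ pieces tend to $0$ uniformly --- on the group side because nested disks shrink, on the map side by hyperbolicity) and the matching of boundary identifications, namely that tangency points of circles correspond to landing points of the period-two ray pairs. The paper implements this by an iterated lift $h_{i-1}\circ\mathcal{G}_\Gamma=\mathcal{R}_\Gamma\circ h_i$ of a homeomorphism respecting the Tischler graph, proving uniform continuity on the dense orbit of cusps; your alternative via David surgery would prove something stronger but itself rests on a nontrivial extension theorem you would need to import. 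Finally, bijectivity also requires injectivity, which you do not address: on the group side non-isomorphic plane graphs give non-QC-conjugate groups (the paper uses the pairing-cylinder structure at the cusps), and on the map side one invokes Thurston rigidity.
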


The correspondence between graphs and kissing reflection groups comes from the well-known circle packing theorem (see Theorem \ref{thm:cpt}):
given a kissing reflection group $G$ with corresponding circle packing $\mathcal{P}$, the plane graph $\Gamma$ associated to $G$ in Theorem \ref{thm:c} is the {\em contact graph} of the circle packing $\mathcal{P}$.
The $2$-connectedness condition for $\Gamma$ is equivalent to the connectedness condition for the limit set of $G$ (see Figure \ref{fig:dtwe}).

\begin{figure}[h!]
\begin{subfigure}{.40\textwidth}
  \centering
  \includegraphics[width=.95\linewidth]{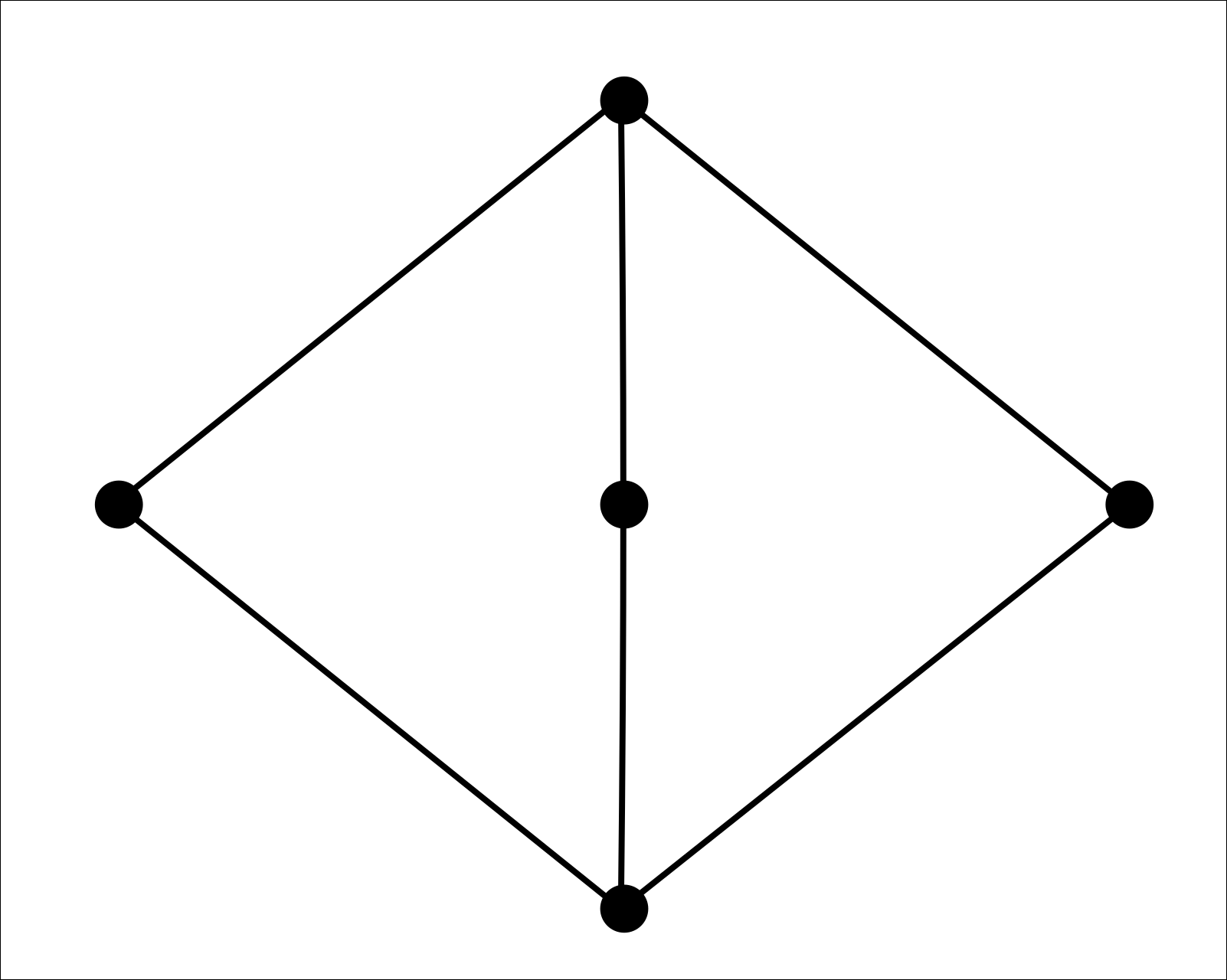}  
  \caption{A $2$-connected simple plane graph $\Gamma$ with five vertices.}
\end{subfigure}
\begin{subfigure}{.55\textwidth}
  \centering
   \includegraphics[width=.95\linewidth]{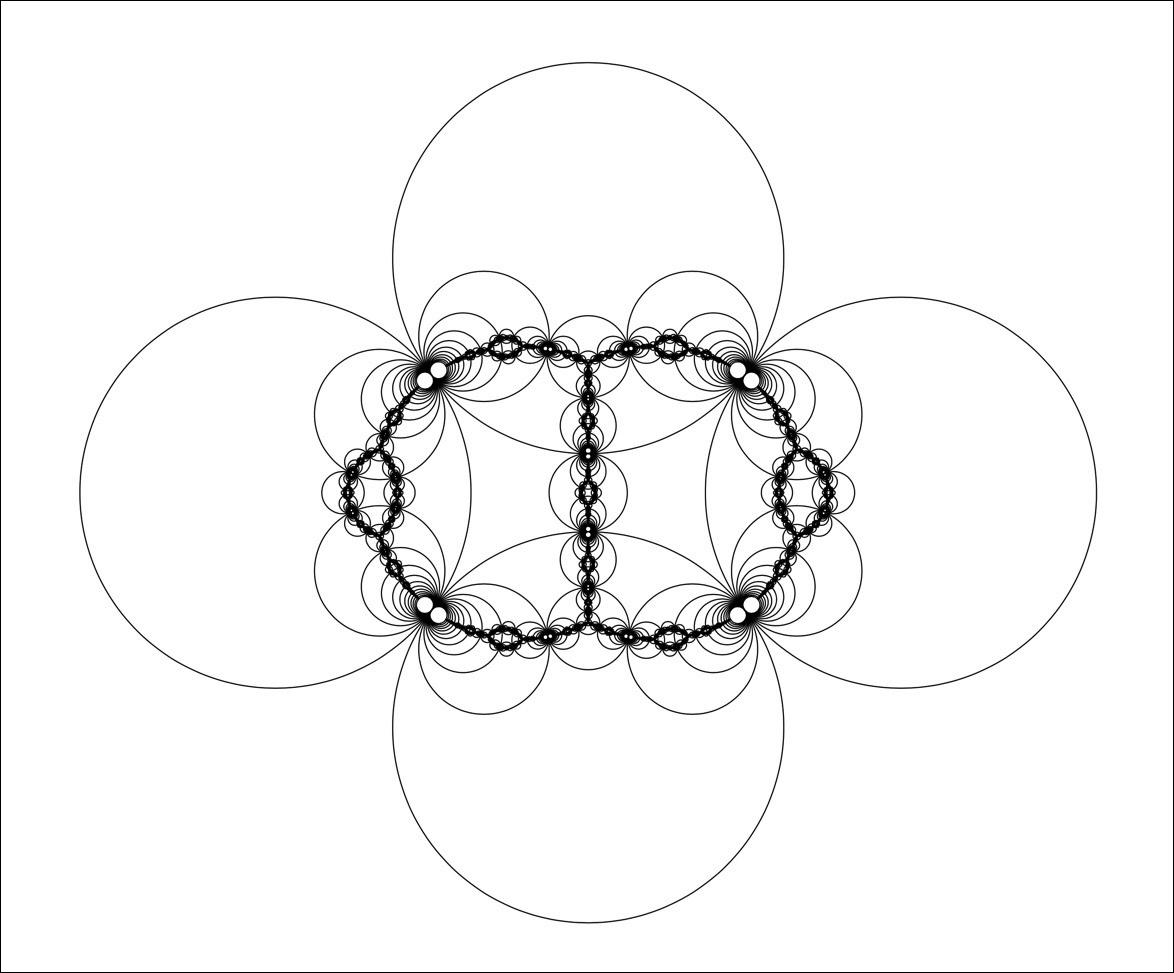}  
  \caption{The limit set of a kissing reflection group $G_\Gamma$. The four outermost circles together with the central circle form a finite circle packing $\mathcal{P}$ whose contact graph is $\Gamma$. The kissing reflection group $G_\Gamma$ is generated by reflections along the five circles in $\mathcal{P}$.}
\end{subfigure}
\begin{subfigure}{.95\textwidth}
  \centering
    \includegraphics[width=.95\linewidth]{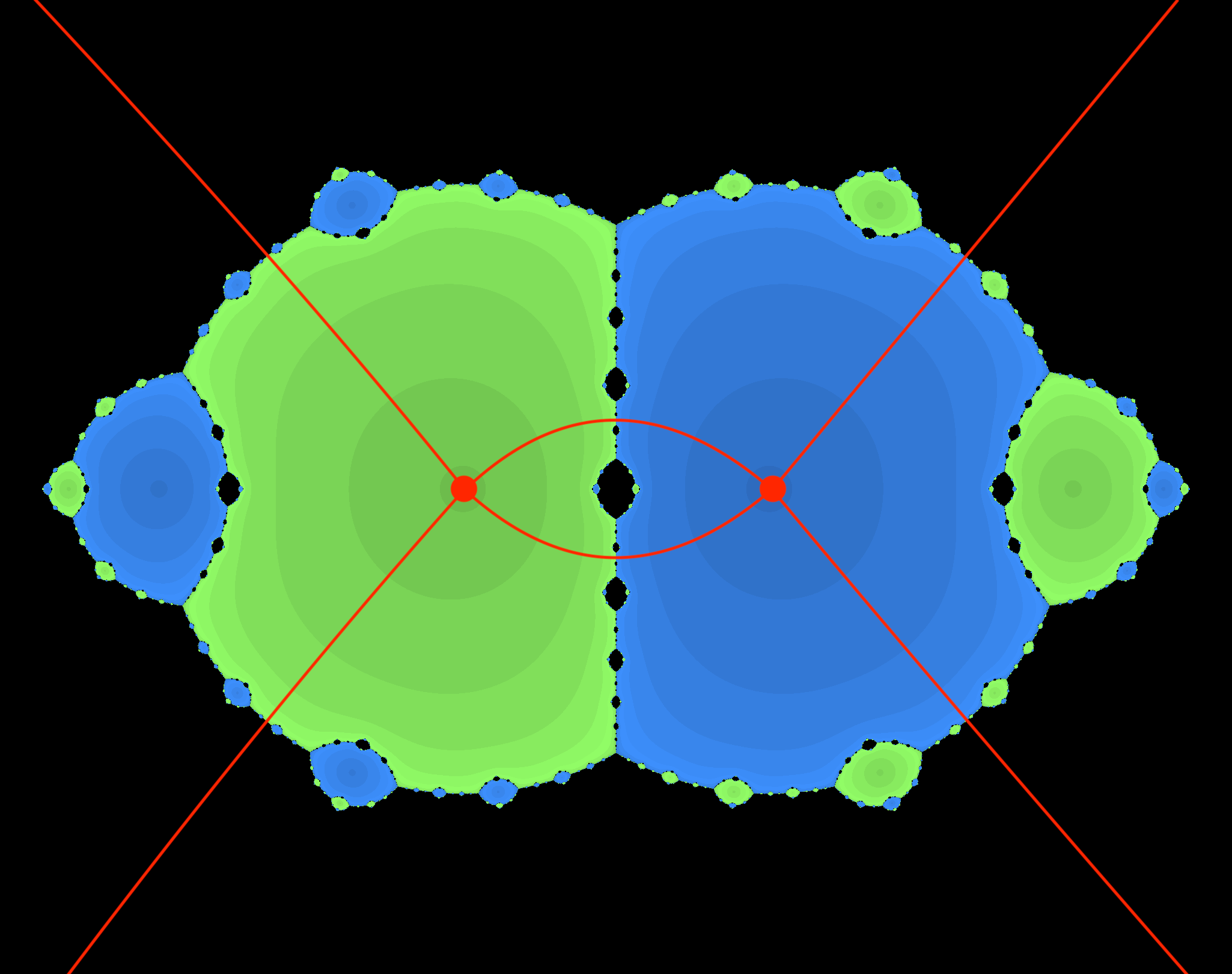}  
  \caption{The Julia set of the degree $4$ critically fixed anti-rational map $\mathcal{R}_\Gamma$. The Tischler graph $\mathscr{T}$, which is drawn schematically in red with two finite vertices and one vertex at infinity, is the planar dual of $\Gamma$.}
  \label{fig:mr}
\end{subfigure}
\caption{An example of the correspondence.}
\label{fig:dtwe}
\end{figure}

On the other hand, given a critically fixed anti-rational map $\mathcal{R}$, we consider the union $\mathscr{T}$ of all {\em fixed internal rays} in the invariant Fatou components (each of which necessarily contains a fixed critical point of $\mathcal{R}$), known as the {\em Tischler graph} (cf. \cite{Tis89}). Roughly speaking, Tischler graphs are to critically fixed (anti-)rational maps what Hubbard trees are to postcritically finite (anti-)polynomials: both are forward invariant graphs containing the postcritical points.
We show that the planar dual of $\mathscr{T}$ is a $2$-connected, simple, plane graph.
The plane graph $\Gamma$ we associate to $\mathcal{R}$ in Theorem \ref{thm:c} is the planar dual $\mathscr{T}^{\vee}$ of $\mathscr{T}$ (see Figure \ref{fig:dtwe}).

In the special case when $\Gamma$ is the planar dual of a triangulation of $\widehat{\C}$, the existence of the critically fixed anti-rational map $\mathcal{R}_\Gamma$ was proved in \cite{LLMM19}.

We remark that the correspondence between $G_\Gamma$ and $\mathcal{R}_\Gamma$ through the plane graph $\Gamma$ is dynamically natural. Indeed, we associate, following \cite[\S 4]{LLMM19}, a map $\mathcal{N}_\Gamma$ to the group $G_\Gamma$ with the properties that $\mathcal{N}_\Gamma$ and $G_\Gamma$ have the same grand orbits (cf. \cite{Nielsen,BS}), and the homeomorphism between $\Lambda(G_\Gamma)$ and $\mathcal{J}(\mathcal{R}_\Gamma)$ conjugates $\mathcal{N}_\Gamma$ to $\mathcal{R}_\Gamma$. See \S \ref{dyn_corr_subsec} for more details.

The asymmetry between the QC conjugacies on the group side and M\"obius conjugacies on the anti-rational map side is artificial.
Since the dynamical correspondence is between the limit set and the Julia set,
Theorem~\ref{thm:c} yields a bijection between quasiconformal conjugacy classes of kissing reflection groups (of rank $d+1$) with connected limit set and hyperbolic components having critically fixed anti-rational maps (of degree $d$) as centers.

\subsection*{The geometric mating problems.}
In complex dynamics, {\em polynomial mating} is an operation first introduced by Douady in \cite{Douady83} that takes two suitable polynomials $P_1$ and $P_2$, and constructs a richer dynamical system by carefully pasting together the boundaries of their filled Julia sets so as to obtain a copy of the Riemann sphere, together with a rational map $R$ from this sphere to itself (see Definition \ref{defn:gm} for the precise formulation).
It is natural and important to understand which pairs of polynomials can be mated, and which rational maps are matings of two polynomials.
The analogous question in the Kleinian group setting can be formulated in terms of Cannon-Thurston maps for degenerations in the Quasi-Fuchsian space (see \cite{Hubbard12} and \S \ref{sec:krg} for related discussions).
The correspondence in Theorem~\ref{thm:c} allows us to explicitly characterize kissing reflection groups and critically fixed anti-rational maps that arise as matings.

We say that a simple plane graph $\Gamma$ with $n$ vertices is {\em outerplanar} if it has a face with all $n$ vertices on its boundary.
It is said to be {\em Hamiltonian} if there exists a Hamiltonian cycle, i.e., a closed path visiting every vertex exactly once. 

\begin{theorem}\label{thm:gm}
Let $\Gamma$ be a $2$-connected, simple, plane graph. Let $G_\Gamma$ and $\mathcal{R}_\Gamma$ be a kissing reflection group and a critically fixed anti-rational map associated with $\Gamma$.
Then the following hold true.
\begin{itemize}
\item $\Gamma$ is outerplanar $\Leftrightarrow$ $\mathcal{R}_\Gamma$ is a critically fixed anti-polynomial $\Leftrightarrow$ $G_\Gamma$ is a function group.
\item $\Gamma$ is Hamiltonian $\Leftrightarrow$ $\mathcal{R}_\Gamma$ is a mating of two polynomials $\Leftrightarrow$ $G_\Gamma$ is a mating of two function groups $\Leftrightarrow$ $G_\Gamma$ is in the closure of the quasiconformal deformation space of the regular ideal polygon reflection group.
\end{itemize}
\end{theorem}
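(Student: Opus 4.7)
\emph{Outerplanar equivalences.} My plan is to argue through the duality $\Gamma=\mathscr{T}^\vee$ provided by Theorem~\ref{thm:c}. Each face $F$ of $\Gamma$ corresponds to a vertex $v_F$ of the Tischler graph $\mathscr{T}$ of $\mathcal{R}_\Gamma$; when $v_F$ is a fixed critical point, its local degree is determined by the combinatorial valence of $v_F$ in $\mathscr{T}$, which in turn is read off from the number of edges of $\partial F$ in $\Gamma$ (each face of the $2$-connected simple graph $\Gamma$ being a simple cycle). Outerplanarity of $\Gamma$ precisely means that some face $F$ has all $d+1$ vertices on its boundary, so $v_F$ must be a fixed critical point of local degree $d$, i.e., a totally invariant fixed point. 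After M\"obius normalization placing this point at $\infty$, $\mathcal{R}_\Gamma$ becomes an anti-polynomial. The converse reverses this count: a critically fixed anti-polynomial of degree $d$ has a fixed critical point of local degree $d$ at $\infty$, whose dual face in $\Gamma$ is bounded by $d+1$ edges and hence by all vertices. On the group side, when $\Gamma$ is outerplanar the outer face of the circle packing supplies a distinguished component of $\Omega(G_\Gamma)$ whose closure is preserved by every reflection generator, exhibiting $G_\Gamma$ as a function group; conversely, an invariant component of $\Omega(G_\Gamma)$ transfers under the homeomorphism of Theorem~\ref{thm:c} (which conjugates $\mathcal{N}_\Gamma$ to $\mathcal{R}_\Gamma$, see \S\ref{dyn_corr_subsec}) to a totally invariant Fatou component of $\mathcal{R}_\Gamma$, yielding the anti-polynomial conclusion.

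\emph{Hamiltonian equivalences.} Given a Hamiltonian cycle $H\subset\Gamma$, the Jordan curve $H\subset\widehat{\C}$ splits $\Gamma$ into two $2$-connected outerplanar subgraphs $\Gamma_1,\Gamma_2$ (each the union of $H$ with the edges lying in one of the two complementary disks of $H$). Applying Part 1 to each $\Gamma_i$ produces critically fixed anti-polynomials $\mathcal{R}_{\Gamma_i}$ and function groups $G_{\Gamma_i}$. I would then identify an invariant Jordan curve in $\mathcal{J}(\mathcal{R}_\Gamma)$ (respectively, $\Lambda(G_\Gamma)$) that is combinatorially dual to $H$ -- concretely, the union of arcs threading through the chain of pinch points / parabolic fixed points corresponding to the edges of $H$ -- and show that cutting along this ``equator'' and invoking the standard topological mating construction realizes $\mathcal{R}_\Gamma=\mathcal{R}_{\Gamma_1}\mate\mathcal{R}_{\Gamma_2}$, with the analogous Bers-style simultaneous-uniformization argument yielding $G_\Gamma$ as the mating of $G_{\Gamma_1}$ and $G_{\Gamma_2}$. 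The reverse implication is by reading off a Hamiltonian cycle from the dual of the equator provided by any mating structure on $\mathcal{R}_\Gamma$ or $G_\Gamma$.

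\emph{Closure of the ideal polygon deformation space.} The regular ideal $(d+1)$-gon reflection group is exactly $G_{C_{d+1}}$ where $C_{d+1}$ is the cycle on $d+1$ vertices (trivially Hamiltonian). Its QC deformation space parametrizes kissing reflection groups whose contact graph contains a marked Hamiltonian $(d+1)$-cycle with the prescribed cyclic tangency pattern; the closure, obtained by standard Maskit-style pinching deformations, adjoins groups in which extra pairs of circles become mutually tangent, producing precisely those $\Gamma$ that admit a Hamiltonian cycle together with additional chords inside or outside the cycle. The principal technical obstacle will be verifying the mating structures -- namely, that the candidate invariant equator in $\mathcal{J}(\mathcal{R}_\Gamma)$ is genuinely an embedded invariant Jordan curve so that no Thurston-type obstruction arises, and that the algebraic/geometric limit of pinching deformations of the regular ideal polygon group has exactly the combinatorics of $G_\Gamma$. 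The former will lean on rigidity features of critically fixed anti-rational maps (all postcritical points are fixed), while the latter uses the Cannon--Thurston-map framework for Quasi-Fuchsian degenerations alluded to in \S\ref{sec:krg}.
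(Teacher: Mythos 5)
Your outerplanar equivalences track the paper's own route almost verbatim: the group-side argument (the outer face gives a component of $\Omega(G_\Gamma)$ fixed by every generator, and conversely every face misses some vertex) is Proposition \ref{prop:fkrg}, and the valence count on the dual vertex is how Corollary \ref{cor:h3}(1) is proved. Your description of $\overline{\mathcal{QC}(\Gamma_d)}$ via pinching non-parallel multicurves is the content of Propositions \ref{thm:closure} and \ref{prop:qb} (the convergence of the pinching sequence rests on Thurston's hyperbolization via Ohshika, and the converse inclusion on the fact that algebraic limits of weakly type preserving representations are again kissing reflection groups whose contact graph dominates the cycle). Splitting a Hamiltonian $\Gamma$ along the cycle into two $2$-connected outerplanar pieces is also exactly the paper's starting point.

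The gap is in how you certify the mating. First, the object you propose --- an invariant Jordan curve inside $\mathcal{J}(\mathcal{R}_\Gamma)$ ``threading through the pinch points'' --- does not exist as described: in a geometric mating $\mathcal{J}(R)=\partial\mathcal{F}^+=\partial\mathcal{F}^-$, so the Julia set itself is the collapsed equator, and an equator in the sense of Meyer \cite{Mey14} is a curve in the sphere isotopic rel the postcritical set to its preimage, necessarily running through Fatou components (the paper's remark after Corollary \ref{cor:h3} notes this as a legitimate alternative proof, but it is not a curve in the Julia set). Second, and more seriously, ``invoking the standard topological mating construction'' does not supply what the definition of geometric mating demands, namely continuous semiconjugacies $\psi^{\pm}:\mathcal{K}(P^{\pm})\to\overline{\mathcal{F}^{\pm}}$ (respectively $\psi^{\pm}:\mathcal{K}(G^{\pm})\to\overline{\Omega^{\pm}(G)}$): the Rees--Shishikura machinery runs in the opposite direction (from two polynomials to a rational map), Thurston obstructions are not the issue when unmating a map that already exists, and ``Bers-style simultaneous uniformization'' is unavailable because $G_\Gamma$ sits on the boundary of quasi-Fuchsian space, not in its interior --- the required maps are Cannon--Thurston maps. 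The paper supplies exactly this in Proposition \ref{prop:mg}, constructing $\psi^{\pm}$ level by level over the tilings $\Pi^{j,\pm}$ and proving uniform continuity on the dense orbit of cusps via the shrinking of disks (Lemma \ref{lem:maxr}); it then transports the group-side mating to $\mathcal{R}_\Gamma$ through the global topological conjugacy between the Nielsen-type branched cover $\mathcal{G}_\Gamma$ and $\mathcal{R}_\Gamma$ (Proposition \ref{prop:tc}, Lemma \ref{lem:tcl}) rather than arguing directly on the map side. Without some version of this construction your plan does not close.
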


It is known that a rational map may arise as the geometric mating of more than one pair of polynomials (in other words, the \emph{decomposition/unmating} of a rational map into a pair of polynomials is not necessarily unique). This phenomenon was first observed in \cite{Wittner88}, and is referred to as {\em shared matings} (see \cite{Rees10}).
In our setting, we actually prove that each Hamiltonian cycle of $\Gamma$ gives an unmating of $\mathcal{R}_\Gamma$ into two anti-polynomials.
Thus, we get many examples of shared matings coming from different Hamiltonian cycles in the associated graphs.

We now address the converse question of mateability in terms of laminations. 
Let us first note that the question of mateability for kissing reflection groups can be answered using Thurston's double limit theorem and the hyperbolization theorem. 
In the reflection group setting, possible degenerations in the Quasi-Fuchsian space (of the regular ideal polygon reflection group) are listed by a pair of geodesic laminations on the two conformal boundaries which are invariant under some orientation reversing involution $\sigma$ (see \S \ref{sec:ads}). 
All these $\sigma$-invariant laminations turn out to be multicurves on the associated conformal boundaries.
A pair of simple closed curves is said to be {\em parallel} if they are isotopic under the natural orientation reversing identification of the two conformal boundary components.
A pair of laminations is said to be {\em non-parallel} if no two components are parallel.
If we lift a multicurve to the universal cover, we get two invariant laminations on the circle.
Then they are are non-parallel if and only if the two laminations share no common leaf under the natural identification of the two copies of the circle.
Thurston's hyperbolization theorem asserts that in our setting, the degeneration along a pair of laminations exists if and only if this pair is non-parallel.

For a marked anti-polynomial, we can also associate a lamination via the B\"otthcher coordinate at infinity.
Similar as before, we say a pair of (anti-polynomial) laminations are {\em non-parallel} if they share no common leaf under the natural identification of the two copies of the circle.
When we glue two filled Julia sets using the corresponding laminations, the resulting topological space may not be a $2$-sphere.
We call this a {\em Moore obstruction}.
We prove the following more general mateability theorem for post-critically finite anti-polynomials, which in particular answers the question of mateability of two critically fixed anti-polynomials.
\begin{theorem}\label{thm:mig}
Let $P_1$ and $P_2$ be two marked anti-polynomials of equal degree $d\geq 2$, where $P_1$ is critically fixed and $P_2$ is postcritically finite, hyperbolic.
Then there is an anti-rational map $R$ that is the geometric mating of $P_1$ and $P_2$ if and only if there is no Moore obstruction.

Consequently, if both $P_1$ and $P_2$ are critically fixed, then they are geometrically mateable if and only if the corresponding laminations are non-parallel.
\end{theorem}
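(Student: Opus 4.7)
The plan is to handle the easy direction by definition-chasing, then construct a topological model of the mating via Moore's theorem, and finally promote that topological model to an anti-rational map by invoking the anti-holomorphic version of Thurston's realization theorem. The most delicate step will be ruling out Thurston obstructions with only the hypothesis that $P_2$ be postcritically finite hyperbolic (rather than critically fixed).

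First, if the geometric mating $R = P_1 \mate P_2$ exists, then by Definition \ref{defn:gm} the quotient of the formal mating by ray equivalence is, tautologically, homeomorphic to $\widehat\C$; this is exactly what a Moore obstruction would prevent, so the ``only if'' direction is immediate. For the converse, I would form the formal mating: glue two copies of the closed disk along their boundary circles via $z \mapsto \bar z$, and extend $P_1$ and $P_2$ to their respective closed disks using B\"ottcher coordinates at infinity. This gives a degree-$d$ orientation-reversing postcritically finite branched self-cover $F$ of the resulting topological $2$-sphere.

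Next, I would pass to the quotient by ray-equivalence. The ray-equivalence classes form an upper semi-continuous decomposition into connected continua. The assumption that there is no Moore obstruction is precisely the statement that no such class separates, so Moore's theorem produces a quotient sphere $\Sigma$ and a postcritically finite topological branched cover $\widetilde F: \Sigma \to \Sigma$ of degree $d$. It remains to show $\widetilde F$ is combinatorially equivalent to an anti-rational map. By the anti-holomorphic analog of Thurston's theorem (hyperbolic orbifold case), this reduces to excluding a Thurston obstruction: a $\widetilde F$-stable multicurve $\Gamma$ whose Thurston matrix has spectral radius at least one. Because both $P_1$ and $P_2$ are hyperbolic and postcritically finite, every periodic cycle of $\Gamma$ must be a Levy cycle, so it suffices to rule out Levy cycles. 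This is where the critically fixed hypothesis on $P_1$ is used decisively: the $P_1$-hemisphere is tiled by fixed Fatou components, which forces any curve in a Levy cycle to eventually isotope, via a backward-orbit argument, into a configuration bounded by arcs of ray-equivalence classes; in such a configuration the curve must bound a disk containing at most one critical value of $P_1$ (trivially so, because critical values are critical points) and the corresponding simple closed curve on the $P_2$-side must be $\widetilde F$-trivial by the hyperbolic Thurston obstruction analysis for $P_2$. I expect this reduction to be the main technical obstacle, since it requires carefully bookkeeping how ray-equivalence classes interact with the multicurve across the equator.

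Finally, for the ``consequently'' clause, specialize to the case that both $P_1$ and $P_2$ are critically fixed anti-polynomials. Their laminations are, by construction, the ray-equivalence relations on $S^1$ induced by the fixed-ray structure in each B\"ottcher coordinate. Under the identification $z\sim \bar z$ of the two equatorial circles, a Moore obstruction would arise from a ray-equivalence class in $\Sigma$ that separates. A short combinatorial argument (following a shared leaf and using that $\deg P_1 = \deg P_2$ so leaf-angle dynamics match) shows that the existence of such a separating class is equivalent to the two laminations sharing at least one leaf, i.e.\ to being parallel. Combined with the general criterion just proved, this gives the stated equivalence for pairs of critically fixed anti-polynomials.
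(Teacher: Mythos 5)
Your overall architecture (the only-if direction is definitional; for the converse, pass from absence of Moore obstruction to absence of Thurston obstruction and then realize) matches the paper's, but the two steps that carry all the weight have genuine gaps. First, your claim that ``because both $P_1$ and $P_2$ are hyperbolic and postcritically finite, every periodic cycle of $\Gamma$ must be a Levy cycle'' is unjustified: for matings in degree $d\geq 3$ it is not true in general that a Thurston obstruction is (or contains) a Levy cycle, and hyperbolicity alone does not give this. The paper instead exploits the critically fixed hypothesis on $P_1$ quantitatively: an irreducible obstruction $\Gamma$ for $(P_1\mate P_2)^{2}$ must intersect some edge $\lambda$ of the Hubbard tree of $P_1$ (else it would be an obstruction for $P_2$ alone), and $\lambda$ is an invariant irreducible arc system; the Shishikura--Tan/Pilgrim--Tan intersection theorem (Theorem \ref{thm:pt}, \cite[Theorem~3.9]{ST00}) then forces $\Gamma$ to be a Levy cycle consisting of a single curve, and \cite[Theorem~1.4(2)]{ST00} converts a non-degenerate Levy cycle into a periodic ray class containing a cycle, contradicting Moore-unobstructedness. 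Your ``backward-orbit into a configuration bounded by arcs of ray-equivalence classes'' sketch does not substitute for this, and the parenthetical ``trivially so, because critical values are critical points'' is a non sequitur --- every essential curve in the complement of the postcritical set bounds at least two marked points on each side.

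Second, ending with ``$\widetilde F$ is combinatorially equivalent to an anti-rational map'' does not prove the theorem: Definition \ref{defn:gm} demands semi-conjugacies $\psi^{\pm}$ from the filled Julia sets that are conformal on the interiors, i.e.\ the \emph{geometric} mating, not merely a Thurston equivalence of the quotient map. The paper closes this gap by applying the Rees--Shishikura theorem \cite[Theorem~1.7]{Shishikura00} to the unobstructed formal mating; you would need the same (or an equivalent) input, and your ``quotient first, then Thurston'' order makes this harder, since one must also verify that the ray-equivalence decomposition is upper semi-continuous and that the induced map on the quotient sphere is a branched cover --- facts that are not automatic and are packaged in the paper into Proposition \ref{prop:epstein} and the Rees--Shishikura machinery rather than assumed. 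For the ``consequently'' clause your target equivalence (shared leaf $\iff$ obstruction) is the right one, but the direction ``Moore obstruction $\implies$ shared leaf'' needs the analysis of Lemma \ref{lem:noMoore}: obstructions are detected on principal ray classes, whose cycles have length $2$ or $4$, and $4$-cycles cannot occur when both maps are critically fixed.
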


\subsection*{Acylindrical manifolds and gasket sets.}
A circle packing is a connected collection of (oriented) circles in $\widehat{\C}$ with disjoint interiors.
We say that a closed set $\Lambda$ is a {\em round gasket} if
\begin{itemize}
\item $\Lambda$ is the closure of some infinite circle packing; and
\item the complement of $\Lambda$ is a union of round disks which is dense in $\widehat{\C}$.
\end{itemize}
We will call a homeomorphic copy of a round gasket a {\em gasket}.

Many examples of kissing reflection groups and critically fixed anti-rational maps have gasket limit sets and Julia sets (see Figure \ref{fig:ps}).
The correspondence allows us to classify all these examples (cf. \cite[Theorem~28]{KN19} and \cite{KMS93}).

\begin{theorem}\label{thm:glj}
Let $\Gamma$ be a $2$-connected, simple, plane graph. Then,

\noindent $G_\Gamma$ has gasket limit set $\iff$ $\mathcal{R}_\Gamma$ has gasket Julia set $\iff$ $\Gamma$ is $3$-connected.
\end{theorem}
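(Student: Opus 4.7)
The plan is to prove both equivalences by first working on the group side and then transferring via Theorem~\ref{thm:c}. The dynamically natural homeomorphism between $\Lambda(G_\Gamma)$ and $\mathcal{J}(\mathcal{R}_\Gamma)$ preserves the peripheral structure, sending the boundary of each complementary component of $\Lambda(G_\Gamma)$ in $\widehat\C$ to the boundary of the corresponding Fatou component of $\mathcal{R}_\Gamma$, because both families are indexed by the same combinatorics of $\Gamma$ and are dynamically characterized via the conjugacy $\mathcal{N}_\Gamma\sim\mathcal{R}_\Gamma$. Since being a (topological) gasket depends only on each complementary component being a Jordan domain together with density in $\widehat\C$, the gasket property transfers; it therefore suffices to prove that $G_\Gamma$ has a gasket limit set if and only if $\Gamma$ is $3$-connected.

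Realize $G_\Gamma$ by reflections in the circles of the packing $\mathcal{P}$ whose contact graph is $\Gamma$. The complement $\Omega(G_\Gamma)$ decomposes into connected components, each built from a single interstice of $\mathcal{P}$ by reflecting across its circular sides and iterating; its boundary is a tree-like infinite union of circular arcs meeting at tangent points of $\Lambda(G_\Gamma)$. Since $\Lambda(G_\Gamma)$ is nowhere dense, these components are automatically dense in $\widehat\C$, and the gasket condition reduces to showing that each such component has a simple closed curve boundary, i.e., is a Jordan domain.

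For the forward direction $\Gamma$ is $3$-connected $\Rightarrow$ $\Lambda(G_\Gamma)$ is a gasket, the strategy is to rule out self-tangencies on the boundary of an orbit component: any such self-tangency would translate into a pair of packing circles $C_i,C_j$ whose joint removal disconnects $\mathcal{P}$, equivalently a $2$-cut $\{v_i,v_j\}$ in $\Gamma$, contradicting $3$-connectedness. Conversely, given a $2$-cut in a non-$3$-connected $\Gamma$, one follows the two arcs of $C_i\cup C_j$ bounding one side of the cut into the reflection dynamics to produce an orbit component whose boundary self-intersects at tangent points on the cut circles, violating the Jordan condition.

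The main obstacle is the rigorous topological control in the forward direction: the boundary of a complementary component is an infinite nested union of circular arcs, and excluding \emph{all} potential self-tangencies requires careful bookkeeping along the Cayley-graph-like tree of reflected interstices, identifying each candidate self-tangency with a $2$-cut of $\Gamma$. Once this combinatorial-to-topological dictionary is set up, both directions of the group-side equivalence follow in parallel, and Theorem~\ref{thm:c} completes the proof by transferring the conclusion to $\mathcal{R}_\Gamma$.
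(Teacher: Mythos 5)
Your transfer step (using the global topological conjugacy of Theorem~\ref{thm:c}/Proposition~\ref{prop:tc} to move the gasket property between $\Lambda(G_\Gamma)$ and $\mathcal{J}(\mathcal{R}_\Gamma)$) is fine and matches what the paper does in Corollary~\ref{cor:h3}(3), since a gasket is by definition a homeomorphic copy of a round gasket. The problems are on the group side, where your reduction of ``gasket'' to ``every complementary component is a Jordan domain, plus density'' is incorrect, and this breaks both directions. A gasket has the additional property that the closures of any two distinct complementary components meet in at most one point (two disks of a circle packing have disjoint interiors, hence are at most tangent). This is exactly the case your converse misses: by Lemma~\ref{lem:graph3c}, a $2$-cut $\{v,w\}$ puts $v,w$ on the boundaries of two faces $F_1,F_2$, and when $v,w$ are non-adjacent on \emph{both} faces the element $g_vg_w$ is loxodromic, every component of $\Omega(G_\Gamma)$ may perfectly well be a Jordan domain, and the obstruction is instead that $\overline{\Omega_{F_1}}$ and $\overline{\Omega_{F_2}}$ touch at the \emph{two} fixed points of $g_vg_w$. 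Your plan to always manufacture a self-tangency of a single component's boundary only covers the other case (where $v,w$ are adjacent on one of the faces, giving an accidental parabolic and a non-Jordan component); see the dichotomy recorded in Proposition~\ref{why_not_gasket_prop}.

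The forward direction has a second gap: even after ruling out self-tangencies and double-tangencies, you have not exhibited $\Lambda(G_\Gamma)$ as (a homeomorphic copy of) the closure of an infinite circle packing, which is part of the definition. The paper avoids this entirely by invoking the circle packing theorem for polyhedral graphs (Theorem~\ref{thm:gcpt}, i.e., Schramm's orthogonal realization): for $3$-connected $\Gamma$ one chooses the packing $\mathcal{P}$ together with its orthogonal dual packing $\mathcal{P}^{\vee}$, observes that every component of $\Omega(G_\mathcal{P})$ is a round disk of the form $g\cdot D$ with $D\in\mathcal{P}^{\vee}$, and that $\bigcup_{g}\bigcup_{C\in\mathcal{P}^{\vee}}g\cdot C$ is an infinite circle packing whose closure is exactly $\Lambda(G_\mathcal{P})$. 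This single geometric input delivers both the Jordan-domain property and the tangency structure at once, and is the key idea your combinatorial bookkeeping along the tree of reflected interstices does not replace. To repair your argument you would need to (i) add the two-components-touching-twice case to the converse, and (ii) either import Theorem~\ref{thm:gcpt} for the forward direction or prove a uniformization statement showing that your topological conditions characterize gaskets.
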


The $3$-connectedness for the graph $\Gamma$ also has a characterization purely in terms of the inherent structure of the hyperbolic $3$-manifold associated with $G_\Gamma$.
Given a kissing reflection group $G_\Gamma$, the index $2$ subgroup $\widetilde{G}_\Gamma$ consisting of orientation preserving elements is a Kleinian group.
We say that $G_\Gamma$ is {\em acylindrical} if the hyperbolic $3$-manifold for $\widetilde{G}_\Gamma$ is acylindrical (see \S \ref{subsec:akrg} for the precise definitions).
We show that
\begin{theorem}\label{thm:a}
Let $\Gamma$ be a $2$-connected, simple, plane graph. Then,

\noindent $G_\Gamma$ is acylindrical $\iff$ $\Gamma$ is $3$-connected.
\end{theorem}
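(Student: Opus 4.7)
The plan is to identify essential annuli in the convex core $C(M)$ of $M := \Hyp^3/\widetilde{G}_\Gamma$ with $2$-vertex cuts of $\Gamma$; by definition, $G_\Gamma$ is acylindrical if and only if the pared manifold $(C(M),P)$ admits no essential annulus, where $P$ is the rank-$1$ parabolic locus. Recall that a fundamental polyhedron for $G_\Gamma$ acting on $\Hyp^3$ is the intersection of the exteriors of the hyperbolic half-spaces bounded by the hemispheres over the circles of the packing $\mathcal{P}$; doubling this polyhedron along its mirror boundary (i.e.\ passing from $G_\Gamma$ to the index-$2$ subgroup $\widetilde{G}_\Gamma$) recovers $C(M)$. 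Under this description, the totally geodesic boundary components of $C(M)$ correspond to the faces of the plane graph $\Gamma$, and the rank-$1$ cusps of $M$ correspond to the edges of $\Gamma$ (one cusp per tangency between adjacent circles).

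For the forward direction (not $3$-connected implies not acylindrical), since $\Gamma$ is $2$-connected by hypothesis, the failure of $3$-connectedness yields a $2$-vertex cut $\{v_1,v_2\}$. Let $r_1,r_2$ be the reflections across the associated hyperplanes $H_1,H_2\subset\Hyp^3$; then $\langle r_1,r_2\rangle$ is infinite dihedral, with $r_1 r_2$ parabolic (if $C_1,C_2$ are tangent) or hyperbolic (if they are disjoint), in the latter case with axis the common perpendicular to $H_1$ and $H_2$. The strip in $\Hyp^3$ between $H_1$ and $H_2$, taken modulo $\langle r_1 r_2\rangle$, descends to an annulus $\mathcal A$ in $M$. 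The $2$-cut hypothesis ensures that $H_1\cup H_2$ actually separates the fundamental polyhedron into pieces containing hemispheres over circles from both non-empty components of $\Gamma\setminus\{v_1,v_2\}$; this prevents $\mathcal A$ from being boundary parallel or parallel into $P$, while incompressibility is automatic because the core of $\mathcal A$ represents the non-trivial element $r_1 r_2 \in \widetilde G_\Gamma$. Hence $\mathcal A$ is an essential annulus and $\widetilde{G}_\Gamma$ is not acylindrical.

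For the converse, assume $\Gamma$ is $3$-connected and suppose for contradiction that $(C(M),P)$ contains an essential annulus $\mathcal A$. The core curve of $\mathcal A$ represents a primitive element $g\in\widetilde{G}_\Gamma$ and yields a non-peripheral splitting of $\widetilde{G}_\Gamma$ as an amalgamated product or HNN extension over the two-ended subgroup $\langle g\rangle$. This splitting pulls back to a splitting of the Coxeter group $G_\Gamma$ over a dihedral subgroup $D$. Using the standard structure of two-ended splittings of geometrically finite reflection groups (for instance, via the JSJ/characteristic submanifold decomposition for the pared manifold, or directly via the combinatorics of splittings of right-angled Coxeter groups over their two-generator parabolic subgroups), every such $D$ must be conjugate into a subgroup $\langle r_{v_1},r_{v_2}\rangle$ generated by two of the given reflections. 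The non-peripherality of the splitting then translates into the assertion that $\{v_1,v_2\}$ is a $2$-vertex cut of $\Gamma$, contradicting the $3$-connectedness hypothesis.

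The main obstacle is the converse direction, where one must rule out essential annuli not obviously visible from the forward construction. The cleanest route is via JSJ theory, for which one must verify that for these polyhedral reflection groups the acylindrical JSJ decomposition is recoverable directly from the nerve $\Gamma$. A more hands-on alternative is to lift $\mathcal A$ to a strip in $\Hyp^3$ and analyse the dihedral subgroup stabilising it, showing that the two reflections in this stabiliser correspond to two vertices of $\Gamma$ whose removal disconnects the graph. Either way, the explicit combinatorial--geometric dictionary between $\Gamma$ and the polyhedral structure of $\widetilde{G}_\Gamma$ is what drives the argument.
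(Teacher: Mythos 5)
Your overall strategy follows the right outline, but both directions have genuine gaps. In the forward direction the construction of the essential annulus is incomplete. First, a $2$-cut $\{v_1,v_2\}$ may consist of \emph{adjacent} vertices (tangent circles), in which case $r_1r_2$ is parabolic, there is no strip between $H_1$ and $H_2$ in your sense, and the obvious annulus for $\langle r_1r_2\rangle$ is the pairing cylinder itself; one must show that the cusp is \emph{accidental}, i.e., that $g_{v_1}g_{v_2}$ is also represented by a non-peripheral curve on some boundary component. Second, even in the loxodromic case the quotient strip you describe has its boundary on the planes $H_1,H_2$, not on $\partial\mathcal{M}(G_\Gamma)$: to get a properly embedded essential annulus you need $g_{v_1}g_{v_2}$ to stabilize two components $\Omega_{F_1},\Omega_{F_2}$ of $\Omega(G_\Gamma)$, i.e., you need $v_1,v_2$ to lie on the boundaries of two common faces and to be non-adjacent along at least one of them. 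This is exactly the combinatorial content of the paper's Lemma~\ref{lem:graph3c}, which is the real engine of Lemma~\ref{lem:fd3c}, and it is absent from your argument. Finally, your separation claim is false as stated: the fundamental polyhedron is convex, so removing two of its face-planes $H_1\cup H_2$ cannot disconnect it; the $2$-cut is a combinatorial separation (no tangencies across the cut), not a geometric one.

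In the converse direction you have a plan rather than a proof, as you acknowledge. The assertion that every two-ended splitting of $\widetilde{G}_\Gamma$ coming from an essential annulus is conjugate into a two-generator special subgroup $\langle r_{v_1},r_{v_2}\rangle$, and that non-peripherality forces $\{v_1,v_2\}$ to be a cut pair, is precisely the hard content of the theorem; invoking JSJ theory or visual splittings of Coxeter groups would require verifying their hypotheses here and reconciling pared-manifold peripherality with graph-of-groups peripherality, none of which is carried out. The paper's route (Lemma~\ref{lem:bd3c}) is quite different and essentially geometric: for $3$-connected $\Gamma$ it invokes the orthogonal dual circle packing (Theorem~\ref{thm:gcpt}) to show that $\Lambda(G_\Gamma)$ is a round gasket in which the closures of any two components of $\Omega(G_\Gamma)$ meet in at most one point, a cusp; hence every essential cylinder is homotopic to a pairing cylinder. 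You would need either to execute the splitting argument in full or to supply a substitute for this geometric input.
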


\begin{figure}[h!]
\begin{subfigure}{.3\textwidth}
  \centering
  \includegraphics[width=.95\linewidth]{./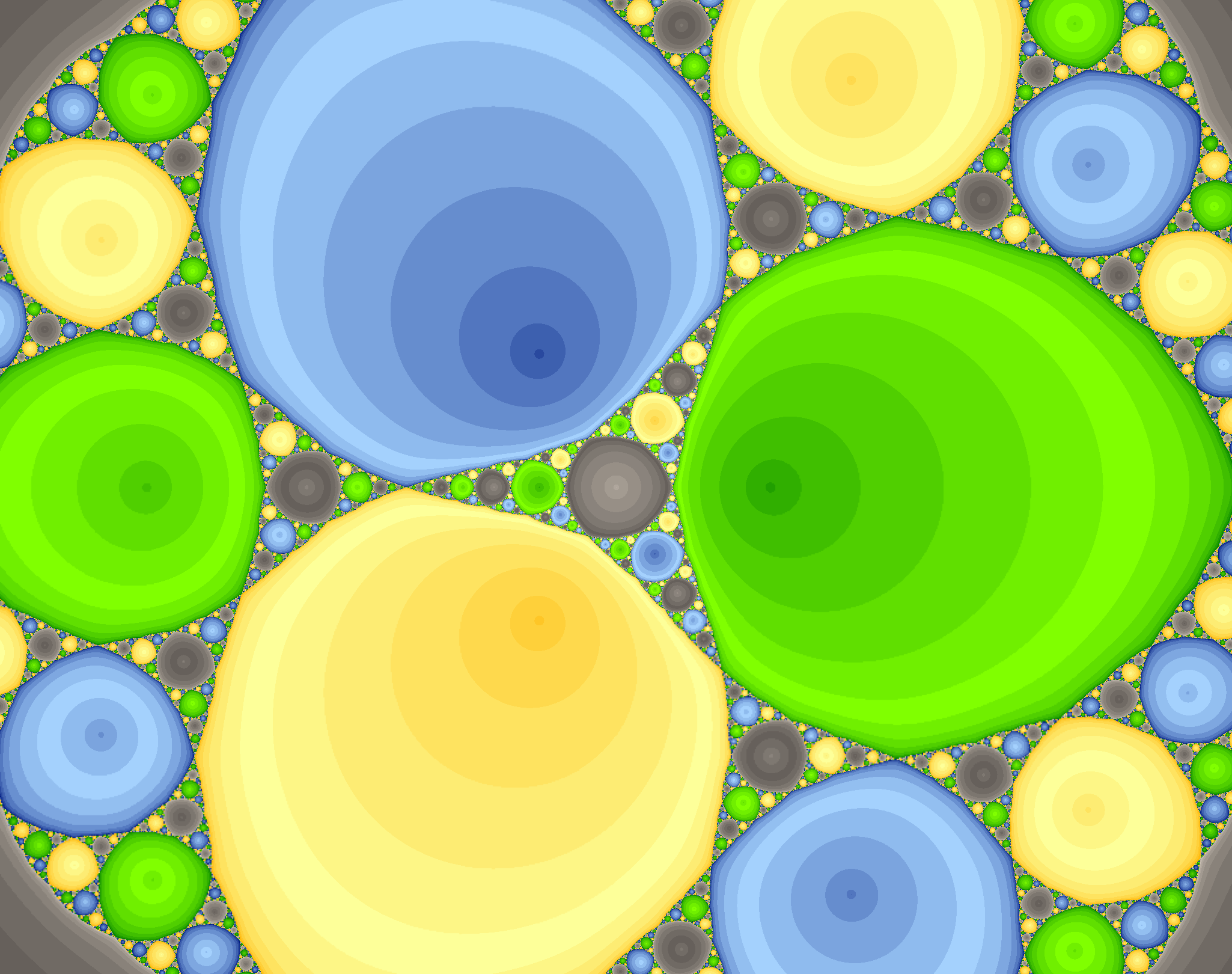}  
  \caption{Tetrahedron: $\frac{3\overline{z}^2}{2\overline{z}^3+1}$}
\end{subfigure}
\begin{subfigure}{.3\textwidth}
  \centering
  \includegraphics[width=.95\linewidth]{./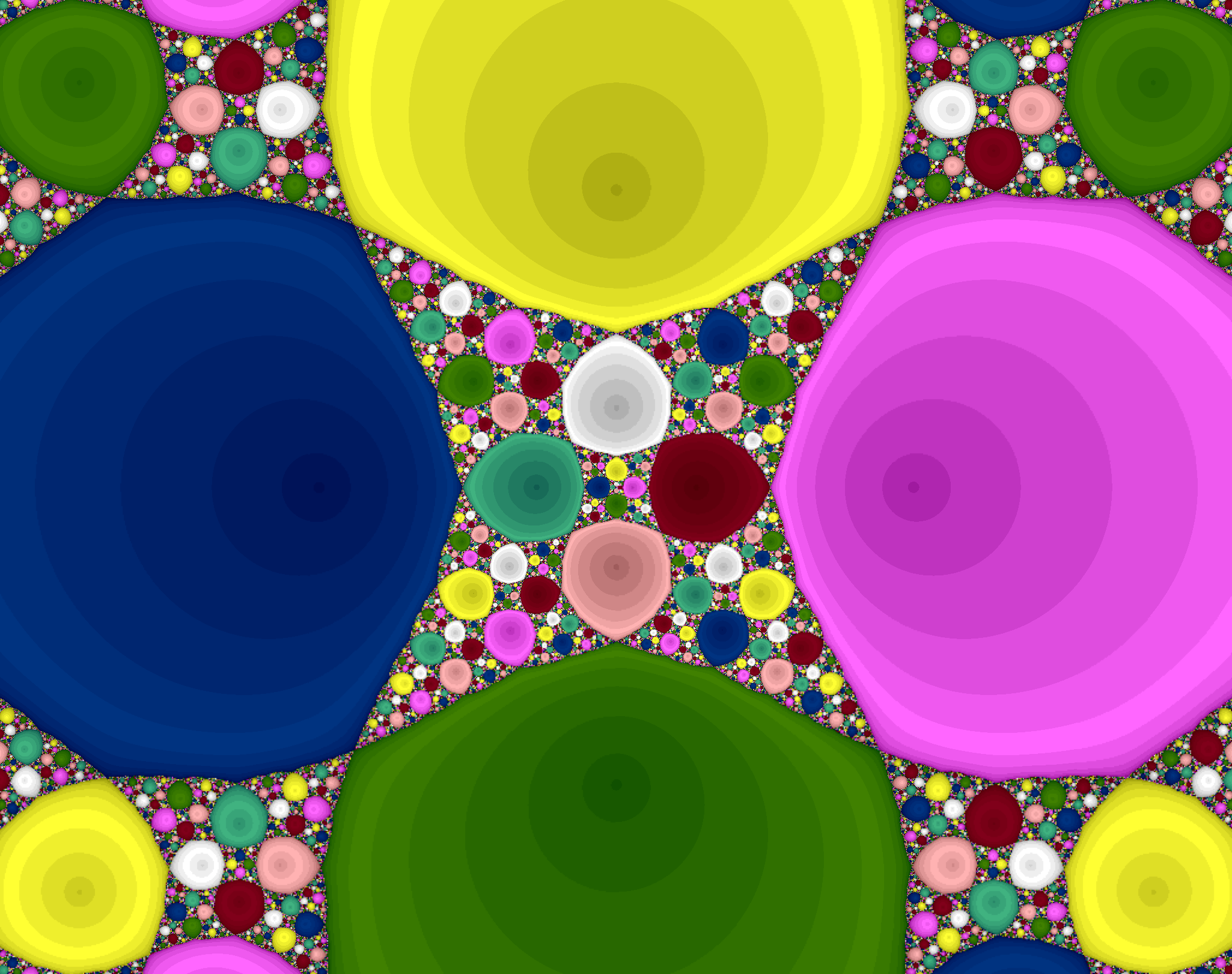}  
  \caption{Octahedron: $\frac{5\overline{z}^4+1}{\overline{z}^5+5\overline{z}}$}
\end{subfigure}
\begin{subfigure}{.3\textwidth}
  \centering
  \includegraphics[width=.95\linewidth]{./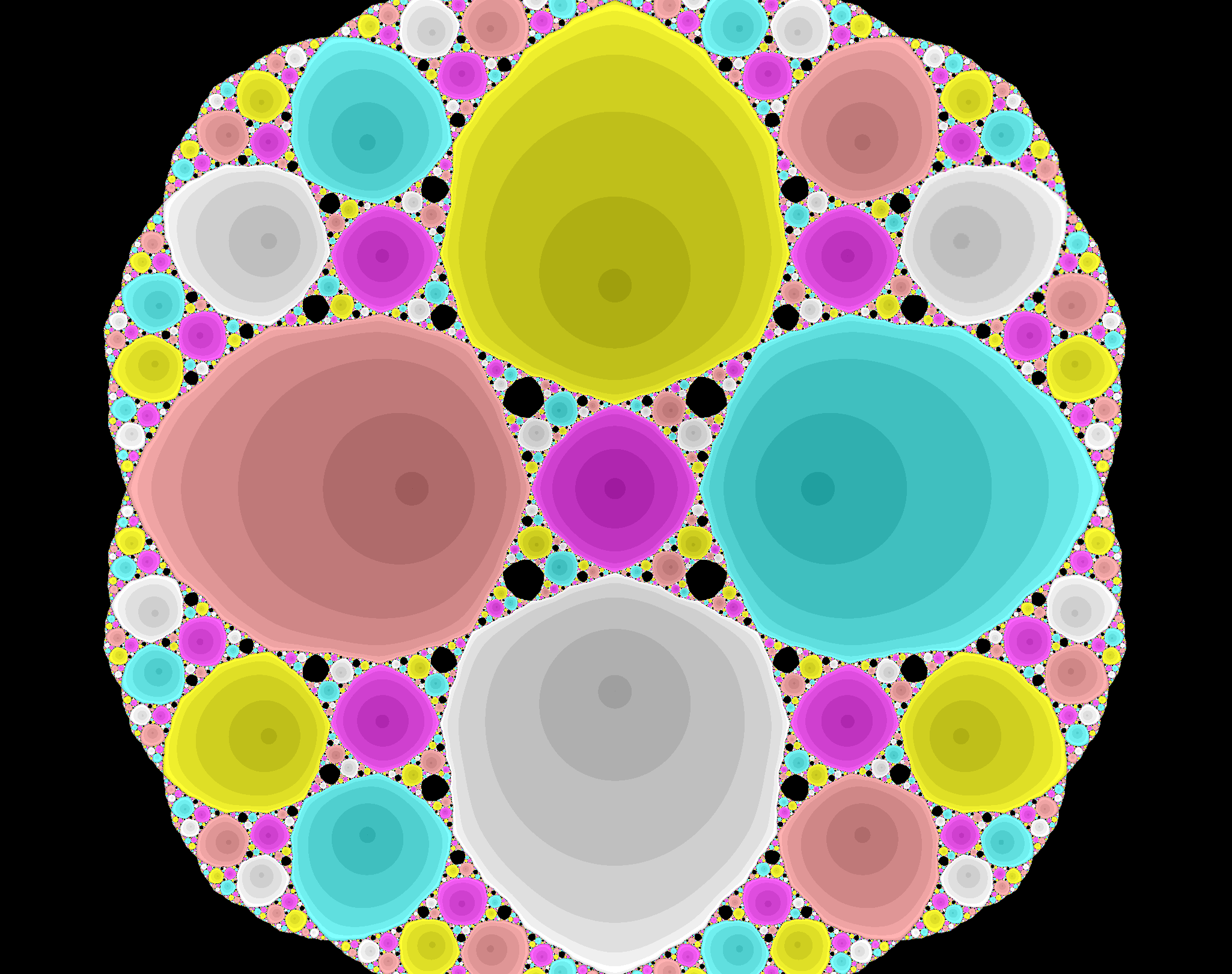}  
  \caption{Cube: $\frac{\overline{z}^7+7\overline{z}^3}{7\overline{z}^4+1}$}
\end{subfigure}

\begin{subfigure}{.46\textwidth}
 \centering
  \includegraphics[width=.95\linewidth]{./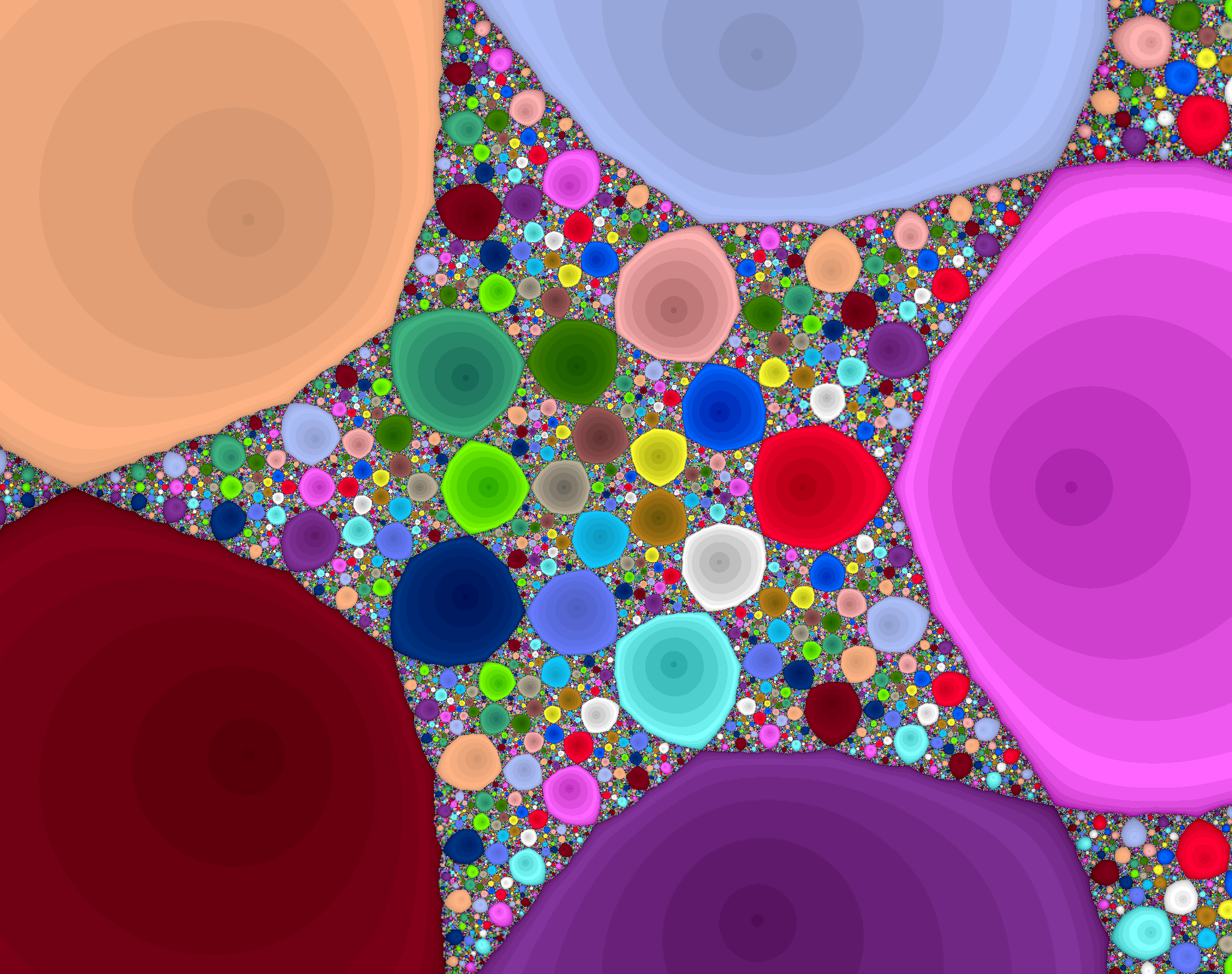}  
  \caption{\begin{tiny}Icosahedron:\end{tiny} \\$\frac{11\overline{z}^{10}+66\overline{z}^5-1}{\overline{z}^{11}+66\overline{z}^6-11\overline{z}}$}
\end{subfigure}
\begin{subfigure}{.46\textwidth}
  \centering
  \includegraphics[width=.95\linewidth]{./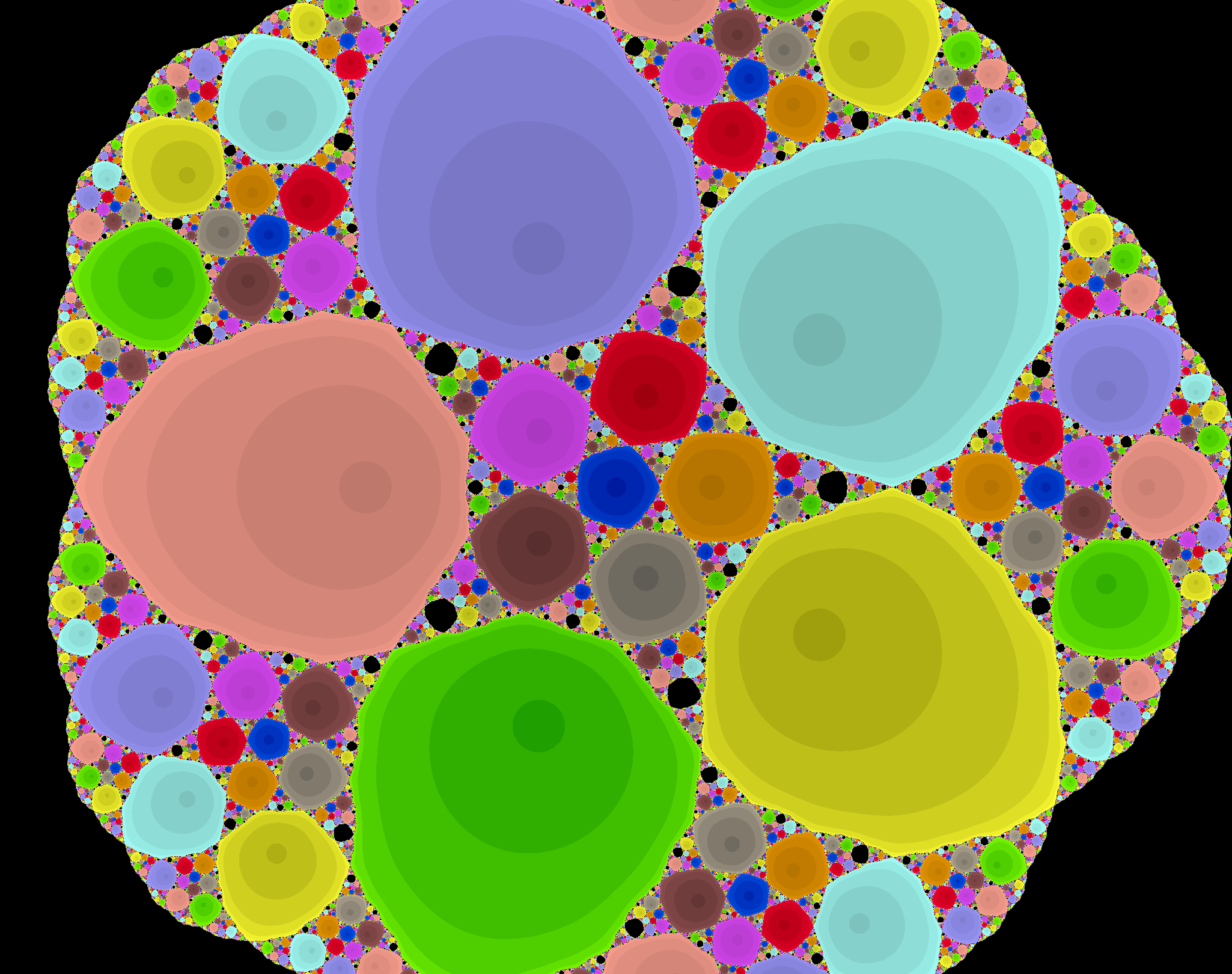} 
 \caption{ \begin{tiny}Dodecahedron:\end{tiny} \\$\frac{\overline{z}^{19}-171\overline{z}^{14}+247\overline{z}^9+57\overline{z}^4}{-57\overline{z}^{15}+247\overline{z}^{10}+171\overline{z}^5+1}$}
\end{subfigure}
\caption{Critically fixed anti-rational maps associated to Platonic solids. The Fatou components are colored according to their grand orbit. The Tischler graph $\mathscr{T}$, which is the planar dual of $\Gamma$, is visible in the Figures by connecting the centers of critical fixed Fatou components.}
\label{fig:ps}
\end{figure}

\subsection*{Parameter space implications of the dictionary}

We now briefly mention some natural questions regarding parameter spaces of anti-rational maps raised by the aforementioned dictionary between kissing reflection groups and critically fixed anti-rational maps.

Acylindrical manifolds play an important role in three dimension geometry and topology.
Relevant to our discussion, Thurston proved that the deformation space of an acylindrical $3$-manifold is bounded \cite{Thurston86} (this is famously known as Thurston's compactness theorem). The analogue of deformation spaces in holomorphic dynamics is {\em hyperbolic components}.
The analogy established in Theorems~\ref{thm:glj} and~\ref{thm:a} leads one to ask whether there is a counterpart of Thurston's compactness theorem for hyperbolic components of critically fixed anti-rational maps with gasket Julia sets (see \cite[Question~5.3]{McM95} for the corresponding question in the convex cocompact setting). 
In a forthcoming paper, a suitably interpreted boundedness result will be proved for the hyperbolic components in consideration \cite{LLM22}.

In \cite{HT}, Hatcher and Thurston studied the topology of the moduli space of marked circle packings with $n$ circles in $\C$. In particular, they showed that the map that sends a marked circle packing to the centers of the circles yields a homotopy equivalence between the moduli space of marked circle packings and the configuration space of $n$ marked points in $\C$. Motivated by this statement, one may ask whether the union of the closures of the hyperbolic components of degree $d$ critically fixed anti-rational maps have a non-trivial topology. 
This question will be answered affirmatively in a future work, drawing exact parallels between contact structures of quasiconformal deformation spaces of kissing reflection groups and the corresponding hyperbolic component closures \cite{LLM22}.

We now summarize these correspondences in the following table.\footnote{Here we have also included the parameter space implications, which will be studied in a subsequent paper.}

\begin{center}
\begin{tabular}{|C{0.24\textwidth} | C{0.33\textwidth}| C{0.33\textwidth}|}
     \hline & & \\
\textbf{Simple planar graph} & \textbf{Kleinian groups} & \textbf{Complex dynamics}  \\ & & \\ \hhline{|===|}
 Connected            &  Kissing reflection group             &   -    \\ \hline 
\vspace{0.05mm} $2$-connected                  & \vspace{0.05mm} Connected limit set                                                      & Critically fixed anti-rational map          \\ \hline
\vspace{0.1mm} $3$-connected/ Polyhedral            & Gasket limit set/ Acylindrical/  Bounded QC deformation space & Gasket Julia set/ Bounded pared deformation space                            \\ \hline
\vspace{0.1mm} Outerplanar                  & Function kissing reflection group                                                      & Critically fixed anti-polynomial          \\ \hline
\vspace{0.1mm} Hamiltonian                  & Closure of Quasi-Fuchsian space                                          & Mating of two anti-polynomials             
\\ \hline
\vspace{0.1mm} A marked Hamiltonian cycle                  & \vspace{0.1mm} A pair of non-parallel multicurves                                                      & A pair of non-parallel anti-polynomial laminations
\\ \hline
\vspace{0.1mm} A graph $\Gamma_1 < \Gamma_2$                 & The QC deformation space $\mathcal{QC}(\Gamma_2) \subseteq \partial \mathcal{QC}(\Gamma_1)$                                          & The hyperbolic component $\mathcal{H}_{\Gamma_1}$ bifurcates to $\mathcal{H}_{\Gamma_2}$        
\\ \toprule
\end{tabular} 
\end{center}

\subsection*{Notes and references.}
Aspects of the Sullivan's dictionary were already anticipated by Fatou \cite[p. 22]{Fat29}.
Part of the correspondence in Theorem \ref{thm:c} was recently established in \cite{LLMM19} where the Tischler graphs were assumed to be triangulations. A classification of critically fixed anti-rational maps has also been obtained in \cite{Gey20}.
There is also a connection between Kleinian reflection groups, anti-rational maps, and Schwarz reflections of quadrature domains explained in \cite{LLMM18,LLMM19,LMM19,LMM20}. 
In the holomorphic setting, critically fixed rational maps have been studied in \cite{CGNPP15, H19}.

Many critically fixed anti-rational maps have large symmetry groups.
The examples corresponding to the five platonic solids are listed in Figure \ref{fig:ps}.
The counterparts in the holomorphic setting were constructed in \cite{DMcM89} (see also \cite{Buff, HJM12}).

The connections between number theoretic problems for circle packings and equidistribution results for Kleinian groups can be found in \cite{KO11, OS16, KN19}.

\subsection*{Some questions}
The dictionary between kissing reflection groups and critically fixed anti-rational maps raises some natural questions. Let us fix a $2$-connected, simple, plane graph $\Gamma$.
\begin{itemize}
\item Although the limit and Julia sets $\Lambda(G_\Gamma)$ and $\mathcal{J}(\mathcal{R}_\Gamma)$ are homeomorphic, the dynamically natural homeomorphism is not a quasisymmetry because it sends parabolic fixed points to repelling ones. In fact, we believe that there is no quasiconformal homeomorphism of the sphere carrying one to the other. In the case when $\Gamma$ is $3$-connected, this follows from the observation that the boundaries of two components of the domain of discontinuity of $G_\Gamma$ touch at a cusp (with zero angle), while the boundaries of two Fatou components of $\mathcal{R}_\Gamma$ touch at a repelling (pre-) fixed point (with a positive angle). However, in general, we do not know how to rule out the existence of `exotic' quasiconformal homeomorphisms between $\Lambda(G_\Gamma)$ and $\mathcal{J}(\mathcal{R}_\Gamma)$.

Moreover, since the quasisymmetry group of a fractal is a quasiconformal invariant, it will be interesting to know the quasisymmetry groups of $\Lambda(G_\Gamma)$ and $\mathcal{J}(\mathcal{R}_\Gamma)$. Note that according to \cite{LLMM19}, the quasisymmetry groups of these two fractals are isomorphic when the dual of $\Gamma$ is an unreduced triangulation.

\item Our proof of the existence of a homeomorphism between $\Lambda(G_\Gamma)$ and $\mathcal{J}(\mathcal{R}_\Gamma)$ only makes use of the conformal dynamics of the group and the anti-rational map on $\widehat{\C}$. One would like to know if there is a direct $3$-dimensional interpretation of this result.
\end{itemize}

\subsection*{Structure of the paper.}
We collect various known circle packing theorems in \S \ref{sec:cp}.
Based on this, we prove the connection between kissing reflection groups and simple plane graphs in \S \ref{sec:cc}.
In particular, the group part of Theorem \ref{thm:c}, Theorem \ref{thm:gm}, Theorem \ref{thm:glj}, and Theorem \ref{thm:a} are proved in Proposition \ref{prop:2c}, Propositions \ref{prop:qb}, \ref{prop:fkrg}, \ref{prop:mg}, Proposition \ref{prop:3g}, and Proposition \ref{prop:3connected} respectively.

Critically fixed anti-rational maps are studied in \S \ref{sec:cf}.
The anti-rational map part of Theorem \ref{thm:c} is proved in Proposition \ref{prop:tc}.
Once this is established, the anti-rational map part of Theorem \ref{thm:gm} and Theorem \ref{thm:glj} follow from their group counterparts as explained in Corollary \ref{cor:h3}.
Theorem \ref{thm:mig} is proved in Proposition \ref{prop:mig} and Corollary \ref{cor:mig}.

\vspace{2mm}

\noindent\textbf{Acknowledgements.} The authors would like to thank Curt McMullen for useful suggestions. They also thank Mikhail Hlushchanka for stimulating discussions. The authors are grateful to the anonymous referee for their valuable input.

The third author thanks the Institute for Mathematical Sciences, and Simons Center for Geometry and Physics at Stony Brook University for their hospitality during part of the work on this project. 

\section{Circle Packings}\label{sec:cp}
In this paper, a {\em circle packing} $\mathcal{P}$ is a connected finite collection of (oriented) circles in $\widehat{\C}$ with disjoint interiors.
Unless stated otherwise, the circle packings in this paper are assumed to contain at least three circles.
The combinatorics of configuration of a circle packing can be described by its {\em contact graph} $\Gamma$: we associate a vertex to each circle, and two vertices are connected by an edge if and only if the two associated circles intersect.
The embedding of the circles in $\widehat\C$ determines the isomorphism class of its contact graph as a plane graph.
It can also be checked easily that the contact graph of a circle packing is simple.
This turns out to be the only constraint for the graph (See \cite[Chapter 13]{Thurston78}).

\begin{theorem}[Circle Packing Theorem]\label{thm:cpt}
Every connected, simple, plane graph  is isomorphic to the contact graph of some circle packing.
\end{theorem}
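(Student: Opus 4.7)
My plan is to reduce the statement to the classical Koebe--Andreev--Thurston circle packing theorem for triangulations of $\widehat{\C}$, and then sketch the proof of that theorem. Given a connected, simple, plane graph $\Gamma$, first enlarge it to a triangulation $T$ of $\widehat{\C}$ as follows: for each face $F$ of $\Gamma$ whose boundary has more than three edges, insert a new vertex $v_F$ in the interior of $F$ and join $v_F$ by a fresh edge inside $F$ to each vertex of $\partial F$. The resulting $T$ is a simple plane triangulation of $\widehat{\C}$ containing $\Gamma$ as a subgraph, and crucially every edge of $T \setminus \Gamma$ is incident to one of the newly added vertices $v_F$.

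Granted the circle packing theorem for $T$, I would produce a circle packing $\mathcal{P}_T$ of $\widehat{\C}$ whose contact graph is $T$, and let $\mathcal{P}_\Gamma \subset \mathcal{P}_T$ be the sub-collection indexed by the original vertices of $\Gamma$. Two circles in $\mathcal{P}_T$ are tangent if and only if their vertices are adjacent in $T$; since no edge of $T \setminus \Gamma$ joins two vertices of $\Gamma$, the tangencies within $\mathcal{P}_\Gamma$ occur precisely along edges of $\Gamma$. Hence $\mathcal{P}_\Gamma$ is a circle packing whose contact graph is isomorphic, as a plane graph, to $\Gamma$.

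For the triangulated case, the plan is to prescribe radii $r \colon V(T) \to \R_{>0}$. For each face $\Delta = \{u,v,w\}$ of $T$, the triple $(r_u, r_v, r_w)$ determines a Euclidean (or spherical, depending on normalization) triangle with vertex angles $\alpha_u(\Delta; r), \alpha_v(\Delta; r), \alpha_w(\Delta; r)$, realized geometrically by three mutually tangent circles of the prescribed radii. The packing problem becomes solving the nonlinear system $\sum_{\Delta \ni v} \alpha_v(\Delta; r) = 2\pi$ at every vertex $v$; given such an $r$, the combinatorial triangles glue together along matching angles into a developing map that globalizes, via a $2\pi$-monodromy argument at each vertex, to the required circle packing of $\widehat{\C}$.

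The main obstacle is the existence (and uniqueness up to M{\"o}bius normalization) of such a radius function. Following Thurston, I would introduce the natural convex functional on normalized radii whose Euler--Lagrange equations are precisely the angle-sum conditions above, use strict monotonicity of $\alpha_v(\Delta; r)$ in the radii (a form of Andreev rigidity) to establish strict convexity, analyze the behaviour of the functional as radii degenerate to $0$ or $\infty$ to establish properness, and then extract a unique minimizer by standard convex optimization. This delivers the sought radius assignment and completes the proof.
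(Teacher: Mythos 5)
The paper does not prove this statement at all: it is the classical Koebe--Andreev--Thurston circle packing theorem, quoted with a citation to Thurston's notes. So there is no ``paper proof'' to compare against; what you have written is a sketch of the standard argument (star-triangulate, solve for radii by Thurston's convex variational method, restrict the packing to the original vertices), and the overall strategy is the right one.

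There is, however, a concrete gap in your reduction step. The theorem is asserted for all \emph{connected} simple plane graphs, but coning each face (one new vertex $v_F$ joined to each vertex of $\partial F$) only produces a triangulation when every face boundary is a simple cycle, i.e.\ when $\Gamma$ is $2$-connected. If $\Gamma$ has a cut vertex or a bridge --- the extreme case being a tree --- the boundary walk of a face visits some vertex (or traverses some edge) more than once, and your construction fails: joining $v_F$ once to each \emph{vertex} of $\partial F$ leaves non-triangular faces, while joining it once per \emph{occurrence} in the boundary walk creates multi-edges, destroying simplicity. For example, for the path $a$--$b$--$c$ the cone over the unique face still has a quadrilateral face $v_F\,a\,b\,c$. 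The fix is standard but must be said: either iterate the subdivision (cone, then cone again the faces that are still not triangles, which terminates because each step strictly increases connectivity along the offending boundary walks), or first augment $\Gamma$ to a $2$-connected simple plane supergraph on a larger vertex set before coning, always taking care that no new edge joins two original vertices so that the final restriction argument still yields contact graph exactly $\Gamma$. A second, smaller point: the angle-sum condition $\sum_{\Delta\ni v}\alpha_v=2\pi$ at \emph{every} vertex of a sphere triangulation is incompatible with Euclidean triangles (Gauss--Bonnet forces a total angle deficit of $4\pi$), so the variational problem must be set up either in the spherical metric or, as Thurston does, by deleting one face and imposing boundary normalization; your parenthetical acknowledges this but the functional you minimize lives on the normalized problem, not the literal system you wrote down.
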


The definition of contact graphs can be easily generalized to an infinite collection of circles with disjoint interiors.
In this paper, an {\em infinite circle packing} $\mathcal{P}$ is an infinite collection of (oriented) circles in $\hat\C$ with disjoint interiors, whose contact graph is connected.

\subsection*{$k$-connected graphs.}
A graph $\Gamma$ is said to be {\em $k$-connected} if $\Gamma$ contains more than $k$ vertices and remains connected if any $k-1$ vertices and their corresponding incident edges are removed.
A graph is $\Gamma$ is said to be {\em polyhedral} if $\Gamma$ is the $1$-skeleton of a convex polyhedron.
According to Steinitz's theorem, a graph is polyhedral if and only if it is $3$-connected and planar.

Given a polyhedral graph, we have a stronger version of the circle packing theorem \cite{Sch92} (cf. midsphere for canonical polyhedron).

\begin{theorem}[Circle Packing Theorem for polyhedral graphs]\label{thm:gcpt}
Suppose $\Gamma$ is a polyhedral graph, then there is a pair of circle packings whose contact graphs are isomorphic to $\Gamma$ and its planar dual.
Moreover, the two circle packings intersect orthogonally at their points of tangency.

This pair of circle packings is unique up to M\"obius transformations.
\end{theorem}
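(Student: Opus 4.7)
The plan is to deduce the theorem from the \emph{midsphere} form of the circle packing theorem: every polyhedral graph $\Gamma$ is combinatorially realized by a convex polyhedron $P \subset \R^3$ whose edges are all tangent to a common round sphere $S$, and such a $P$ is unique up to the projective transformations of $\R^3$ preserving $S$. Stereographic projection identifies $S$ with $\widehat{\C}$ and intertwines this projective group with the Möbius group of $\widehat{\C}$, so both the existence and the uniqueness parts of the theorem will drop out of the corresponding statements for the midsphere polyhedron. I would therefore invoke the midsphere theorem (which is the content of \cite{Sch92} in the required generality) as a black box and focus on extracting the two packings from $P$ and verifying orthogonality.

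Given such a $P$, I produce the packings in parallel. For each vertex $v$ of $P$, the lines through $v$ tangent to $S$ form a right-circular cone whose circle of tangency with $S$ is a round circle $C_v$; set $\mathcal{P} = \{C_v\}_v$. For each face $F$ of $P$, the supporting plane $\Pi_F$ meets $S$ in a round circle $C_F^*$; set $\mathcal{P}^\vee = \{C_F^*\}_F$. Each edge $e$ of $P$ is tangent to $S$ at a unique point $p_e$, and $p_e$ lies on exactly the two vertex-circles $C_v, C_{v'}$ at the endpoints of $e$ and on exactly the two face-circles $C_F^*, C_{F'}^*$ flanking $e$; moreover, each such pair meets only tangentially at $p_e$, because in each case the two relevant planes intersect along a common line tangent to $S$ at $p_e$ (namely the tangent plane $T_{p_e}S$ for the vertex pair, and the edge $e$ itself for the face pair). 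Combined with the standard disjointness check for non-adjacent vertices and non-adjacent faces, this shows $\mathcal{P}$ is a circle packing with contact graph $\Gamma$ and $\mathcal{P}^\vee$ is a circle packing with contact graph $\Gamma^\vee$.

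For the orthogonality of $C_v$ and $C_F^*$ at a common tangency point $p_e$, note that the tangent to $C_F^*$ at $p_e$ lies in $\Pi_F \cap T_{p_e}S$; since the edge $e$ lies in both $\Pi_F$ and $T_{p_e}S$, this tangent is parallel to $\vec{e}$. On the other hand, $C_v$ lies in the plane through $p_e$ perpendicular to $Ov$, and the right-angled triangle $Op_e v$ (right-angled at $p_e$ because $e$ is tangent to $S$) shows that this plane intersects $T_{p_e}S$ in the line through $p_e$ perpendicular to $\vec{e}$. Hence $C_v \perp C_F^*$ at $p_e$. Uniqueness up to Möbius transformations is then inherited from the uniqueness clause of the midsphere theorem. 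The only genuinely difficult ingredient is the midsphere theorem itself, a deep existence-and-uniqueness result for convex polyhedra tangent to a sphere with prescribed combinatorics; I would cite it rather than reprove it, and the remainder of the argument reduces to the elementary Euclidean picture above.
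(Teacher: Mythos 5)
Your proposal is correct and follows exactly the route the paper intends: the paper states this result without proof, citing Schramm's midsphere theorem \cite{Sch92} (``cf.\ midsphere for canonical polyhedron''), and your derivation of the orthogonal vertex- and face-circle packings from the midscribed polyhedron is the standard elementary argument that translates that theorem into the stated form. The only step you leave implicit --- that the vertex caps (resp.\ face incircle caps) for non-adjacent vertices (resp.\ faces) have disjoint interiors --- is indeed routine and part of the cited classical picture, so nothing essential is missing.
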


\subsection*{Marked contact graphs.}
In many situations, it is better to work with a marking on the graph as well as the circle packing.
A {\em marking} of a graph $\Gamma$ is a choice of the graph isomorphism
$$
 \phi: \mathscr{G}\longrightarrow \Gamma,
$$
where $\mathscr{G}$ is the underlying abstract graph of $\Gamma$.
We will refer to the pair $(\Gamma, \phi)$ as a {\em marked graph}.

Given two marked plane graphs $(\Gamma_1, \phi_1)$ and $(\Gamma_2, \phi_2)$ with the same underlying abstract graph $\mathscr{G}$, we say that they are equivalent if $\phi_2\circ \phi_1^{-1}:\Gamma_1 \longrightarrow \Gamma_2$ is an isomorphism of plane graphs.

Similarly, a circle packing $\mathcal{P}$ is said to be marked if the associated contact graph is marked.

\section{Kissing Reflection Groups}\label{sec:cc}
Let $\Gamma$ be a marked connected simple plane graph.
By the circle packing theorem, $\Gamma$ is (isomorphic to) the contact graph of some marked circle packing 
$$
\mathcal{P}=\{C_1,..., C_n\}.
$$
We define the {\em kissing reflection group} associated to this circle packing $\mathcal{P}$ as
$$
G_\mathcal{P} := \langle g_1,..., g_n\rangle,
$$
where $g_i$ is the reflection along the circle $C_i$.

We denote the group of all M{\"o}bius automorphisms of $\widehat{\C}$ by $\textrm{Aut}^+(\widehat{\C})$, and the group of all M{\"o}bius and anti-M{\"o}bius automorphisms of $\widehat{\C}$ by $\textrm{Aut}^\pm(\widehat{\C})$.
Note that since a kissing reflection group is a discrete subgroup of $\textrm{Aut}^\pm(\widehat{\C})$, definitions of limit set and domain of discontinuity can be easily extended to kissing reflection groups.
We shall use $\widetilde{G}_\mathcal{P}$ to denote the index two subgroup of $G_\mathcal{P}$ consisting of orientation preserving elements.
Note that $\widetilde{G}_\mathcal{P}$ lies on the boundary of Schottky groups.

We remark that if $\mathcal{P}'$ is another circle packing realizing $\Gamma$,
since the graph $\Gamma$ is marked, there is a canonical identification of the circle packing $\mathcal{P}'$ with $\mathcal{P}$.
This gives a canonical isomorphism between the kissing reflection groups $G_{\mathcal{P}'}$ and $G_\mathcal{P}$, which is induced by a quasiconformal map.
We refer to $\Gamma$ as the contact graph associated to $G_\mathcal{P}$.

\subsection{Limit set and domain of discontinuity of kissing reflection groups.}\label{limit_connected_subsec}
Let $\mathcal{P} = \{C_1,..., C_n\}$ be a marked circle packing, and $D_i$ be the associated open disks for $C_i$.
Let $P$ be the set consisting of points of tangency for the circle packing $\mathcal{P}$.
Let 
$$
\Pi = \widehat{\C}\setminus\left(\bigcup_{i=1}^n D_i \cup P\right).
$$
Then $\Pi$ is a fundamental domain of the action of $G_\mathcal{P}$ on the domain of discontinuity $\Omega(G_\mathcal{P})$.
Denote $\Pi = \bigcup_{i=1}^k \Pi_i$ where $\Pi_i$ is a component of $\Pi$.
Note that when the limit set of $G_\mathcal{P}$ is connected, each component of the domain of discontinuity is simply connected (i.e., a conformal disk), and each component $\Pi_i$ is a closed ideal polygon in the corresponding component of the domain of discontinuity bounded by arcs of finitely many circles in the circle packing.

Since $G_\mathcal{P}$ is a free product of finitely many copies of $\Z/2\Z$, each element $g \in G_\mathcal{P}$ admits a shortest expression $g = g_{i_1}...g_{i_l}$ ($i_r\neq i_{r+1}$, for $r\in\{1,\cdots, l-1\}$) with respect to the standard generating set $S = \{g_1,..., g_n\}$. The integer $l$ is called the \emph{length} of the group element $g$, thought of as a word in terms of the generators.

The following lemma follows directly by induction.
\begin{lemma}\label{lem:im}
Let $g=g_{i_1}...g_{i_l}$ be an element of length $l$, then $g\cdot \Pi \subseteq \overline{D}_{i_1}$.
\end{lemma}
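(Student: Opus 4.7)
The plan is a straightforward induction on the word length $l$, exploiting the circle packing property that any two disks $D_i, D_j$ ($i\neq j$) have disjoint interiors.

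For the base case $l=1$, I would write $g=g_{i_1}$ and note that by the definition of $\Pi$ we have $\Pi\subseteq \widehat{\C}\setminus D_{i_1}$, i.e., $\Pi$ lies in the closed exterior of $C_{i_1}$. Since $g_{i_1}$ is the reflection along the circle $C_{i_1}$, it swaps the closed exterior $\widehat{\C}\setminus D_{i_1}$ with the closed disk $\overline{D}_{i_1}$ (fixing $C_{i_1}$ pointwise). Hence $g_{i_1}\cdot \Pi \subseteq \overline{D}_{i_1}$.

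For the inductive step, assume the claim holds for all elements of length $<l$, and write $g=g_{i_1}g'$ with $g'=g_{i_2}\cdots g_{i_l}$. I first observe that $g'$ has length exactly $l-1$: indeed, if there were a strictly shorter expression for $g'$, prepending $g_{i_1}$ would give an expression for $g$ of length $<l$, contradicting the minimality assumption on $g_{i_1}\cdots g_{i_l}$. By the induction hypothesis applied to $g'$, we obtain
\[
 g'\cdot \Pi \;\subseteq\; \overline{D}_{i_2}.
\]
Since $i_1\neq i_2$ and $\mathcal{P}$ is a circle packing, the interiors $D_{i_1}$ and $D_{i_2}$ are disjoint, so $\overline{D}_{i_2}\subseteq \widehat{\C}\setminus D_{i_1}$. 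Applying $g_{i_1}$ (which, as in the base case, maps $\widehat{\C}\setminus D_{i_1}$ into $\overline{D}_{i_1}$) to both sides yields
\[
 g\cdot \Pi \;=\; g_{i_1}\cdot (g'\cdot \Pi) \;\subseteq\; g_{i_1}\cdot\overline{D}_{i_2} \;\subseteq\; g_{i_1}\cdot (\widehat{\C}\setminus D_{i_1}) \;\subseteq\; \overline{D}_{i_1},
\]
completing the induction.

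There is no real obstacle here; the only subtle point is the one flagged above, namely confirming that $g_{i_2}\cdots g_{i_l}$ is itself a shortest expression so that the induction hypothesis applies. The geometry is entirely driven by the packing condition (disjoint interiors) together with the fact that a reflection along $C_{i_1}$ maps the closed exterior of $C_{i_1}$ onto $\overline{D}_{i_1}$.
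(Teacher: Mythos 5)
Your proof is correct, and it fills in precisely the induction that the paper leaves implicit (the paper only states that the lemma "follows directly by induction"). The base case via the reflection swapping $\overline{D}_{i_1}$ with its closed exterior, and the inductive step using disjointness of the interiors of $D_{i_1}$ and $D_{i_2}$ together with the fact that a reduced subword is itself of minimal length, is exactly the intended argument.
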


We set $\Pi^1:= \bigcup_{i=1}^n g_i\cdot \Pi$ and $\Pi^{j+1} = \bigcup_{i=1}^n g_i\cdot (\Pi^j\setminus\overline{D}_i)$.
For consistency, we also set  $\Pi^0:= \Pi$.
We call $\Pi^l$ the \emph{tiling} of \emph{level} $l$. The following lemma justifies this terminology.

\begin{lemma}\label{lem:genl}
$$\Pi^{l} = \bigcup_{\vert g\vert = l} g\cdot \Pi.$$
\end{lemma}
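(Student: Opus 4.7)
The plan is to prove the identity $\Pi^{l} = \bigcup_{|g|=l} g\cdot \Pi$ by induction on $l$, using Lemma~\ref{lem:im} to keep track of which tile of $\Pi^j$ lies in which disk $\overline{D}_i$.

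The base case $l=0$ is trivial (only the identity has length $0$), and $l=1$ is precisely the definition $\Pi^1 = \bigcup_i g_i \cdot \Pi$, together with the observation that the length-$1$ elements of $G_\mathcal{P}$ are exactly $g_1,\dots,g_n$. For the inductive step, assume $\Pi^j = \bigcup_{|g|=j} g\cdot \Pi$. I will analyze $\Pi^j \setminus \overline{D}_i$ tile by tile. If $g = g_{i_1}\cdots g_{i_j}$ has length $j$ and $i_1 = i$, then by Lemma~\ref{lem:im} we have $g\cdot \Pi \subseteq \overline{D}_i$, so $g\cdot \Pi \setminus \overline{D}_i = \emptyset$. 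If instead $i_1 \neq i$, then $g\cdot \Pi \subseteq \overline{D}_{i_1}$, and $\overline{D}_{i_1} \cap \overline{D}_i$ is either empty or a single tangency point of $\mathcal{P}$; since tangency points are excluded from $\Pi$ (hence from every translate $g\cdot \Pi$, these being contained in the domain of discontinuity), one has $g\cdot \Pi \setminus \overline{D}_i = g\cdot \Pi$. Therefore
\[
\Pi^j \setminus \overline{D}_i \;=\; \bigcup_{\substack{|g|=j \\ g \text{ does not start with } g_i}} g\cdot \Pi .
\]

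Applying $g_i$ and unioning over $i$, we obtain
\[
\Pi^{j+1} \;=\; \bigcup_{i=1}^n \;\bigcup_{\substack{|g|=j \\ g \text{ does not start with } g_i}} g_i g \cdot \Pi .
\]
The key observation is that for each $g$ not beginning with $g_i$, the concatenation $g_i g$ is already reduced, so $|g_i g| = j+1$. Conversely, any element $h$ of length $j+1$ has a unique reduced expression $h = g_{i_1} g_{i_2}\cdots g_{i_{j+1}}$, and thus is obtained from the pair $(i,g) = (i_1, g_{i_2}\cdots g_{i_{j+1}})$ in exactly one way, since $g$ does not start with $g_{i_1}$ by reducedness. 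This bijection gives $\Pi^{j+1} = \bigcup_{|h|=j+1} h\cdot \Pi$, completing the induction.

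The only subtle point is the treatment of the boundary arcs and tangency points when forming $\Pi^j \setminus \overline{D}_i$; the argument above dispenses with this by invoking the convention that tangency points (being parabolic fixed points in $\Lambda(G_\mathcal{P})$) are excluded from $\Pi$ and its translates. Beyond this, the proof is entirely bookkeeping on reduced words in the free product $G_\mathcal{P} \cong (\Z/2\Z)^{*n}$, and Lemma~\ref{lem:im} does all the geometric work.
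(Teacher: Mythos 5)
Your proof is correct and follows essentially the same route as the paper: induction on $l$ with Lemma~\ref{lem:im} controlling which closed disk each tile lies in; the paper proves one inclusion explicitly and leaves the reverse as "similar," whereas you package both directions into the clean bijection $(i,g)\mapsto g_i g$ between pairs with $g$ not starting with $g_i$ and reduced words of length $j+1$. Your explicit handling of the tangency points (which lie in the limit set and hence outside every translate of $\Pi$) is a worthwhile clarification of a point the paper glosses over, but it is not a different argument.
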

\begin{proof}
We will prove this by induction. The base case is satisfied by the definition of $\Pi^1$.
Assume that $\Pi^{j} = \bigcup_{\vert g\vert = j} g\cdot \Pi$.
Let $g = g_{i_1}g_{i_2}...g_{i_{j+1}}$ be of length $j+1$.
Note that $i_1 \neq i_2$.
By Lemma \ref{lem:im}, $g_{i_2}g_{i_3}...g_{i_{j+1}}\cdot \Pi$ does not intersect $\overline{D}_{i_1}$, thus
$g\cdot \Pi \subseteq g_{i_1} \cdot (\Pi^j\setminus\overline{D}_{i_1})$.
So $\bigcup_{\vert g\vert = j+1} g\cdot \Pi \subseteq \Pi^{j+1}$.
The reverse inclusion can be proved similarly.
\end{proof}

Since the domain of discontinuity is the union of tiles of all levels, Lemma \ref{lem:genl} implies that
$$
\Omega(G_\mathcal{P}) = \bigcup_{g\in G_{\mathcal{P}}} g\cdot \Pi = \bigcup_{i=0}^\infty \Pi^i.
$$

Similarly, we set $\mathcal{D}^i = \widehat\C \setminus \bigcup_{j=0}^i\Pi^j$.
We remark that $\mathcal{D}^i$ is neither open nor closed: it is a finite union of open disks together with the orbit of $P$ under the group elements of length up to $i$ on the boundaries of these disks.

Let $\overline{\mathcal{D}^i}$ be its closure.
Note that each $\overline{\mathcal{D}^i}$ is a union of closed disks for some (possibly disconnected) finite circle packing.
Indeed, 
$$
\overline{\mathcal{D}^0} = \overline{\widehat\C\setminus\Pi^0} = \bigcup_{i=1}^n \overline{D}_i
$$ 
is the union of the closed disks corresponding to the original circle packing $\mathcal{P}$.
By induction, we have that at level $i+1$,
\begin{align}\label{eqn:1}
\overline{\mathcal{D}^{i+1}} = \bigcup_{j=1}^n g_j \cdot \overline{\mathcal{D}^i\setminus\overline{D}_j}
\end{align}
is the union of the images of the level $i$ disks outside of $\overline{D}_j$ under $g_j$.

We also note that the sequence $\overline{\mathcal{D}}^i$ is nested, and thus the limit set
$$
\Lambda(G_\mathcal{P}) = \bigcap_{i=0}^\infty \mathcal{D}^i = \bigcap_{i=0}^\infty \overline{\mathcal{D}^i}.
$$
Therefore, we have the following expansive property of the group action on $\Lambda(G_\mathcal{P})$.
\begin{lemma}\label{lem:maxr}
Let $r_n$ be the maximum spherical diameter of the disks in $\mathcal{D}^n$. Then $r_n \to 0$.
\end{lemma}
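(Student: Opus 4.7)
My plan is a straightforward contradiction argument: extract a Hausdorff-convergent subsequence of the deep-level disks and then derive a contradiction from the fact that $\Lambda(G_\mathcal{P})$ has empty interior. Assume that $r_n\not\to 0$, so there exist $\varepsilon>0$, a subsequence $n_k\to\infty$, and a component $\widetilde{D}_k$ of $\mathcal{D}^{n_k}$ of spherical diameter at least $\varepsilon$. By the inductive description \eqref{eqn:1}, each such component is a genuine round spherical disk; I record $\overline{\widetilde{D}_k}$ by its spherical center $c_k$ and spherical radius $\rho_k\in[\varepsilon/2,\pi]$. Using compactness of $\widehat{\C}$ and of the closed interval of admissible radii, I pass to a further subsequence along which $c_k\to c_\infty$ and $\rho_k\to \rho_\infty\geq\varepsilon/2$. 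Let $\overline{D_\infty}$ be the resulting limiting closed spherical disk; it has positive spherical radius, hence nonempty interior.

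Next I transfer the containment through the Hausdorff limit using the nested structure of $\overline{\mathcal{D}^i}$, which is immediate from \eqref{eqn:1}. For every fixed $N$ and every $k$ with $n_k\geq N$, $\overline{\widetilde{D}_k}\subset \overline{\mathcal{D}^{n_k}}\subset \overline{\mathcal{D}^N}$; since $\overline{\mathcal{D}^N}$ is a finite union of closed round disks and hence closed in $\widehat{\C}$, passing to the Hausdorff limit gives $\overline{D_\infty}\subset \overline{\mathcal{D}^N}$. As this holds for every $N$, the formula $\Lambda(G_\mathcal{P})=\bigcap_N\overline{\mathcal{D}^N}$ recorded just before the lemma produces $\overline{D_\infty}\subset\Lambda(G_\mathcal{P})$.

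To close the argument I need to rule this out, i.e., show that $\Lambda(G_\mathcal{P})$ has empty interior. Because the contact graph is connected with at least three vertices, one finds three circles $C_{i_1},C_{i_2},C_{i_3}$ with $C_{i_1}\cap C_{i_2}\neq\emptyset$ and $C_{i_2}\cap C_{i_3}\neq\emptyset$; the products $g_{i_1}g_{i_2}$ and $g_{i_2}g_{i_3}$ are then parabolic with distinct fixed points, so the discrete group $G_\mathcal{P}\subset \mathrm{Aut}^{\pm}(\widehat{\C})$ is non-elementary. By the standard minimality/density theorem for such groups, every $G_\mathcal{P}$-orbit is dense in $\Lambda(G_\mathcal{P})$; applying this to any point of the nonempty open set $\Pi\subset \Omega(G_\mathcal{P})$ (whose orbit stays in $\Omega(G_\mathcal{P})$) yields $\Lambda(G_\mathcal{P})\subset\overline{\Omega(G_\mathcal{P})}$. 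Hence $\overline{\Omega(G_\mathcal{P})}=\widehat{\C}$ and $\Lambda(G_\mathcal{P})$ has empty interior, contradicting the existence of $\overline{D_\infty}$. The main obstacle is essentially this last empty-interior step; everything else is a routine compactness-and-nesting argument.
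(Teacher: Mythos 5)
Your proof is correct and takes essentially the same approach as the paper: both derive a contradiction by producing a round disk of definite size inside $\Lambda(G_\mathcal{P})$, the paper via an explicit nested chain of disks of radius bounded below, you via a Hausdorff-convergent subsequence combined with the nestedness of the sets $\overline{\mathcal{D}^N}$. The paper leaves the final step---that the limit set cannot contain a disk---implicit, whereas you justify it via non-elementarity and minimality; that justification is sound.
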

\begin{proof}
Otherwise, we can construct a sequence of nested disks of radius bounded from below implying that the limit set contains a disk, which is a contradiction.
\end{proof}

We now prove the group part of Theorem~\ref{thm:c}.
\begin{prop}\label{prop:2c}
The kissing reflection group $G_\mathcal{P}$ has connected limit set if and only if the contact graph $\Gamma$ of $\mathcal{P}$ is $2$-connected.
\end{prop}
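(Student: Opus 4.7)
The plan is to prove both implications directly by analyzing the nested circle packings $\overline{\mathcal{D}^n}$ introduced above.

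For $(\Leftarrow)$, assume $\Gamma$ is $2$-connected. I will show by induction on $n \geq 0$ the conjunction of two statements: (i) $\overline{\mathcal{D}^n}$ is connected, and (ii) for every $j$, $\overline{\mathcal{D}^n \setminus \overline{D}_j}$ is connected. The base case $n=0$ follows from $2$-connectedness: $\overline{\mathcal{D}^0} = \bigcup_i \overline{D}_i$ is connected because $\Gamma$ is, and $\overline{\mathcal{D}^0 \setminus \overline{D}_j} = C_j \cup \bigcup_{i \neq j}\overline{D}_i$ is connected because $\Gamma \setminus j$ is connected (that is the $2$-connected assumption) and $C_j$ is attached to each neighbouring $\overline{D}_i$ at the tangency $p_{ij}$. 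For the step $n \to n+1$, use the recursion~\eqref{eqn:1}: each summand $g_j \cdot \overline{\mathcal{D}^n \setminus \overline{D}_j}$ of $\overline{\mathcal{D}^{n+1}}$ is connected by the inductive hypothesis, and for $j \sim j'$ in $\Gamma$ the tangency $p_{jj'} \in C_j \cap C_{j'}$ is a parabolic fixed point of $g_j g_{j'}$, hence lies in $\Lambda \subseteq \overline{\mathcal{D}^n}$, and being fixed by both $g_j$ and $g_{j'}$ belongs to both $g_j \cdot \overline{\mathcal{D}^n \setminus \overline{D}_j}$ and $g_{j'} \cdot \overline{\mathcal{D}^n \setminus \overline{D}_{j'}}$; this glues the summands together. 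Connectedness of $\Gamma$ (respectively $\Gamma \setminus j$) then yields (i) (respectively (ii)) at level $n+1$. The limit set $\Lambda = \bigcap_n \overline{\mathcal{D}^n}$ is thus a nested intersection of compact connected subsets of $\widehat{\C}$, and so is connected.

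For $(\Rightarrow)$, assume $\Gamma$ has a cut vertex $v$, so that $\Gamma \setminus v$ splits into components $\Gamma_1, \ldots, \Gamma_k$ with $k \geq 2$. Set $U_l := \bigcup_{j \in \Gamma_l}\overline{D}_j$, so $\overline{\mathcal{D}^0 \setminus \overline{D}_v} = U_1 \sqcup \cdots \sqcup U_k$ is a disjoint union. The plan is to show that $\overline{\mathcal{D}^1}$ is already disconnected, which forces $\Lambda \subseteq \overline{\mathcal{D}^1}$ to be disconnected. Define
$$
B_l := g_v \cdot U_l \;\cup\; \bigcup_{j \in \Gamma_l} g_j \cdot \overline{\mathcal{D}^0 \setminus \overline{D}_j}.
$$
By~\eqref{eqn:1} the $B_l$ cover $\overline{\mathcal{D}^1}$. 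A routine case check shows they are pairwise disjoint: the inner clusters $g_v U_l$ are disjoint inside $\overline{D}_v$ (since $U_l \cap U_{l'} = \emptyset$), the outer regions $\bigcup_{j \in \Gamma_l} g_j \cdot \overline{\mathcal{D}^0 \setminus \overline{D}_j} \subseteq U_l$ are disjoint outside $\overline{D}_v$, and cross-terms vanish because a tangency $p_{vj}$ with $j \in \Gamma_{l'}$ fails to lie in $U_l$ for $l \neq l'$. Hence $\overline{\mathcal{D}^1} = B_1 \sqcup \cdots \sqcup B_k$ is disconnected into $k \geq 2$ nonempty closed pieces. Moreover, for each $l$ the parabolic fixed point $p_{vj} \in \Lambda$ (where $j \in \Gamma_l$ is any neighbour of $v$ in $\Gamma$, which exists since $\Gamma$ is connected) lies in $B_l$; this exhibits $\Lambda$ as split into at least two disjoint nonempty closed pieces.

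The main obstacle I anticipate is in the inductive step of $(\Leftarrow)$, specifically for the auxiliary statement (ii) that $\overline{\mathcal{D}^{n+1} \setminus \overline{D}_j}$ is connected. Because this is the closure of a set difference, one must verify that the bridging tangency points on $C_j$ are recovered by closure, and that the remaining summands $\bigcup_{i \neq j} g_i \cdot \overline{\mathcal{D}^n \setminus \overline{D}_i}$ are correctly glued through tangencies corresponding to edges of $\Gamma \setminus j$. This is precisely the step that consumes the full strength of $2$-connectedness rather than mere connectedness of $\Gamma$.
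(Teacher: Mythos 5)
Your proof is correct and follows essentially the same route as the paper's, which also argues via the recursion $\overline{\mathcal{D}^{i+1}} = \bigcup_{j} g_j \cdot \overline{\mathcal{D}^i\setminus\overline{D}_j}$ and the nested intersection $\Lambda(G_\mathcal{P}) = \bigcap_i \overline{\mathcal{D}^i}$; you have merely made explicit the double induction (your statements (i) and (ii)) and the gluing at tangency points that the paper leaves implicit. One small inaccuracy: $C_j$ is not contained in $\overline{\mathcal{D}^0\setminus\overline{D}_j}$ (that closure is just $\bigcup_{i\neq j}\overline{D}_i$), but this is harmless since the connectedness of $\Gamma$ minus the vertex $j$ --- which you also invoke --- is exactly what makes that union connected.
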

\begin{proof}
If $\Gamma$ is not $2$-connected, then there exists a circle (say $C_1$) such that the circle packing becomes disconnected once we remove it.
Then we see $\mathcal{D}^1 \subseteq \widehat\C$ is disconnected by Equation \ref{eqn:1}. This forces the limit set to be disconnected as well (see Figure \ref{fig:dls}).

On the other hand, if $\Gamma$ is $2$-connected, then $\mathcal{D}^1$ is connected by Equation \ref{eqn:1}. Now by induction and Equation \ref{eqn:1} again, $\mathcal{D}^i$ is connected for all $i$.
Thus, $\Lambda(G_\mathcal{P}) = \bigcap_{i=0}^\infty \mathcal{D}^i$ is also connected.
\end{proof}

Proposition \ref{prop:2c} and the definition of kissing reflection groups show that the association of a $2$-connected simple plane graph with a kissing reflection group with connected limit set is well defined and surjective.
To verify that this is indeed injective, we remark that if $\mathcal{P}$ and $\mathcal{P}'$ are two circle packings associated to two contact graphs that are non-isomorphic as plane graphs, then the closures of the fundamental domains $\overline{\Pi}$ and $\overline{\Pi'}$ are not homeomorphic.
Note that the touching patterns of different components of $\Pi$ or $\Pi'$ completely determine the structures of the pairing cylinders of the associated $3$-manifolds at the cusps (See \cite[\S 2.6]{Marden74}).
This means that the conformal boundaries of $\Hyp^3/\widetilde{G}_{\mathcal{P}}$ and $\Hyp^3/\widetilde{G}_{\mathcal{P}'}$ with the pairing cylinder structures are not the same.
Thus, the two kissing reflection groups $G_\mathcal{P}$ and $G_{\mathcal{P}'}$ are not quasiconformally isomorphic.

\begin{figure}[h!]
  \centering
  \includegraphics[width=.8\linewidth]{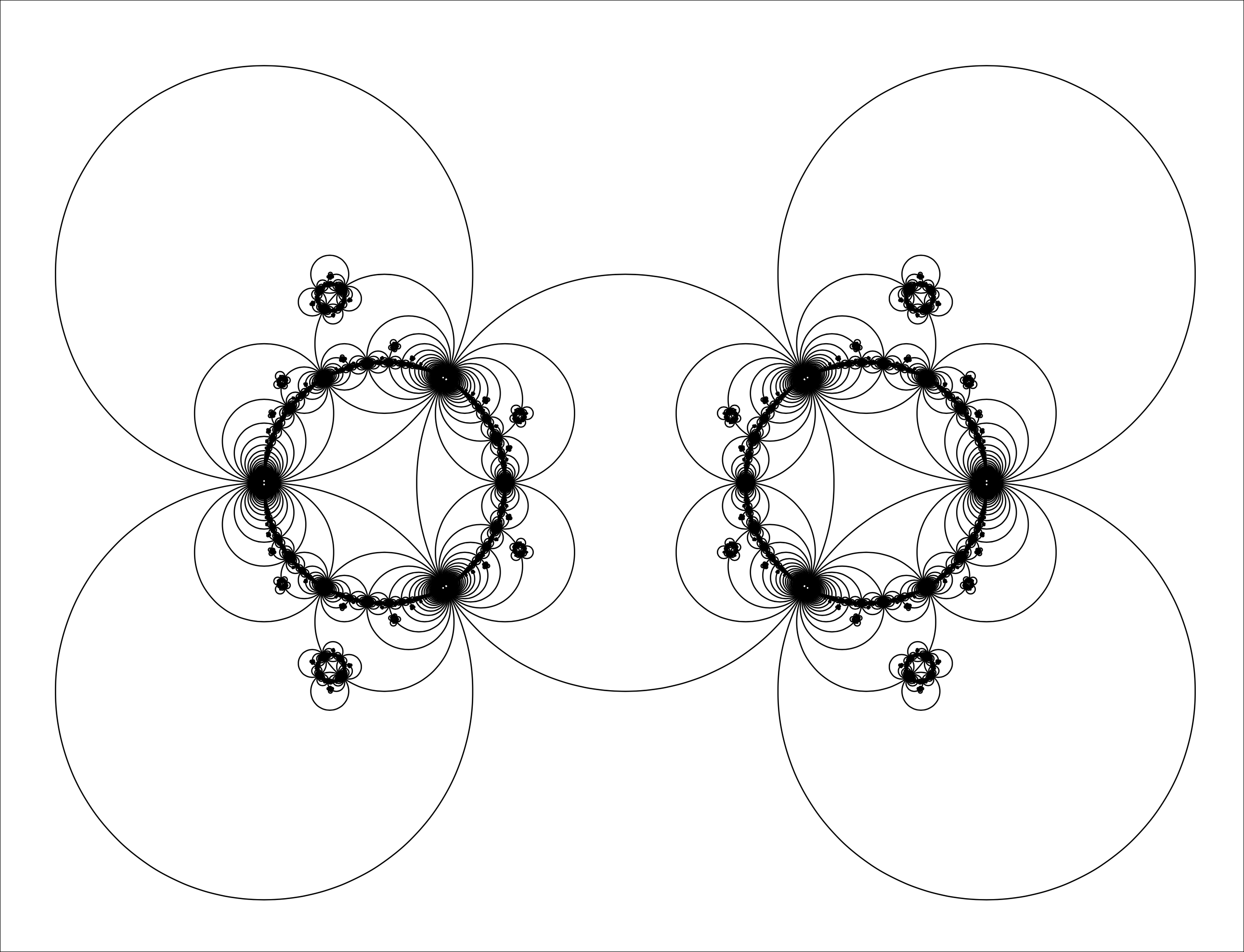}  
  \caption{A disconnected limit set for a kissing reflection group $G$ with non $2$-connected contact graph. $G$ is generated by reflections along the $5$ visible large circles in the figure.}
\label{fig:dls}
\end{figure}

\subsection{Acylindrical kissing reflection groups}\label{subsec:akrg}
Recall that $\widetilde{G}_\mathcal{P}$ is the index $2$ subgroup of $G_\mathcal{P}$ consisting of orientation preserving elements.
We set 
$$
\mathcal{M}(G_\mathcal{P}) := \Hyp^3 \cup \Omega(G_\mathcal{P})/\widetilde{G}_\mathcal{P}
$$
to be the associated $3$-manifold with boundary.
Note that the boundary 
$$
\partial \mathcal{M}(G_\mathcal{P}) = \Omega(G_\mathcal{P})/\widetilde{G}_\mathcal{P}
$$
is a finite union of punctured spheres.
Each punctured sphere corresponds to the double of a component of $\Pi$.

Let $F$ be a face of $\Gamma$. Then it corresponds to a component $\Pi_F$ of $\Pi$, which also corresponds to a component $R_F$ of $\partial\mathcal{M}(G_\mathcal{P})$.
More precisely, there is a unique component $\Omega_F$ of $\Omega(G_\mathcal{P})$ containing $\Pi_F$, and 
$$
R_F\cong \Omega_F / \text{stab}(\Omega_F),
$$
where $\text{stab}(\Omega_F)$ is the stabilizer of $\Omega_F$ in $\widetilde{G}_\mathcal{P}$.

A compact $3$-manifold $M^3$ with boundary is called {\em acylindrical} if $M^3$ contains no essential cylinders and is boundary incompressible.
Here an essential cylinder $C$ in $M^3$ is a closed cylinder $C$ such that $C\cap \partial M^3 = \partial C$, the boundary components of $C$ are not homotopic to points in $\partial M^3$ and $C$ is not homotopic into $\partial M^3$.
$M^3$ is said to be boundary incompressible if the inclusion $\pi_1(R) \xhookrightarrow{} \pi_1(M^3)$ is injective for every component $R$ of $\partial M^3$. (We refer the readers to \cite[\S 3.7, \S 4.7]{Marden16} for detailed discussions.)

Our manifold $\mathcal{M}(G_\mathcal{P})$ is not a compact manifold as there are parabolic elements (cusps) in $\widetilde{G}_\mathcal{P}$.
Thurston \cite{Thurston86} introduced the notion of {\em pared manifolds} to work with Kleinian groups with parabolic elements.
In our setting, we can also use an equivalent definition without introducing pared manifolds.
To start the definition, we note that for a geometrically finite group, associated with the conjugacy class of a rank one cusp, there is a pair of punctures $p_1, p_2$ on $\partial M^3$. 
If $c_1, c_2$ are small circles in $\partial M$ retractable to $p_1, p_2$, then there is a {\em pairing cylinder} $C$ in $M^3$, which is a cylinder bounded by $c_1$ and $c_2$ (see \cite[\S 2.6]{Marden74}, \cite[p. 125]{Marden16}).
\begin{definition}
A kissing reflection group $G_\mathcal{P}$ is said to be {\em acylindrical} if $\mathcal{M}(G_\mathcal{P})$ is boundary incompressible and every essential cylinder is homotopic to a pairing cylinder.
\end{definition}

Note that $\mathcal{M}(G_\mathcal{P})$ is boundary incompressible if and only if each component of $\Omega(G_\mathcal{P})$ is simply connected if and only if the limit set $\Lambda(G_\mathcal{P})$ is connected.
We also note that the acylindrical condition is a quasiconformal invariant, and hence does not depend on the choice of the circle packing $\mathcal{P}$ realizing a simple, connected, plane graph $\Gamma$.
In the remainder of this section, we shall prove the following characterization of acylindrical kissing Kleinian reflection groups.
\begin{prop}\label{prop:3connected}
The kissing reflection group $G_\mathcal{P}$ is acylindrical if and only if the contact graph $\Gamma$ of $\mathcal{P}$ is $3$-connected.
\end{prop}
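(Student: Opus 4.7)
The plan is to reduce the equivalence to a combinatorial statement about $\Gamma$, namely the presence or absence of cut pairs. First I would dispose of the boundary-incompressibility clause in the definition of acylindricity: by the remark immediately preceding the proposition, $\mathcal{M}(G_\mathcal{P})$ is boundary incompressible iff $\Lambda(G_\mathcal{P})$ is connected, which by Proposition~\ref{prop:2c} is equivalent to $\Gamma$ being $2$-connected. Since $3$-connectedness implies $2$-connectedness, this handles the boundary-incompressibility condition in both directions, and it remains to prove, assuming $\Gamma$ is $2$-connected, that every essential cylinder in $\mathcal{M}(G_\mathcal{P})$ is homotopic to a pairing cylinder if and only if $\Gamma$ has no cut pair.

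For the direction ``not $3$-connected $\Rightarrow$ not acylindrical'', the plan is the following explicit construction. Given a cut pair $\{v_1, v_2\}$ with $A, B$ the vertex sets of two distinct components of $\Gamma\setminus\{v_1,v_2\}$, I would produce a round circle $\alpha\subset\widehat{\C}$ orthogonal to both $C_1$ and $C_2$, disjoint from the remaining circles of $\mathcal{P}$, and separating $\{C_v : v\in A\}$ from $\{C_v : v\in B\}$. Such an $\alpha$ exists by routine M\"obius geometry together with the cut-pair property (no circle corresponding to a vertex of $A$ is tangent to a circle of $B$). The hyperbolic plane $P\subset\Hyp^3$ bounded by $\alpha$ is then preserved by both reflections $g_1$ and $g_2$, so the cyclic group $\langle g_1 g_2\rangle\subset\widetilde{G}_\mathcal{P}$ acts on $P$, and $P/\langle g_1g_2\rangle$ descends to a properly immersed annulus in $\mathcal{M}(G_\mathcal{P})$ (with a cusp end in the adjacent case, where $g_1 g_2$ is parabolic). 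Its two boundary curves sit on the punctured-sphere components of $\partial\mathcal{M}$ corresponding to the two faces of $\Gamma$ crossed by the arcs of $\alpha\cap\Pi$, and each of them separates the punctures of that face into its $A$-edges and $B$-edges, with at least two punctures on each side by $2$-connectedness. This rules out the annulus being homotopic to a pairing cylinder, which only ever connects the two faces adjacent to a single edge of $\Gamma$.

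For the direction ``$3$-connected $\Rightarrow$ acylindrical'', I would argue contrapositively. Starting from a non-pairing essential cylinder $A\subset\mathcal{M}(G_\mathcal{P})$ with cyclic stabilizer $\langle\gamma\rangle$ and boundary curves $\alpha_i\subset R_{F_i}$ ($i=1,2$), each $\alpha_i$ must be a non-peripheral essential simple closed curve on a punctured sphere, hence separates the edges of $\partial F_i$ into two non-empty blocks meeting at two vertices, call them $v_1^{(i)}, v_2^{(i)}\in\partial F_i$. A telescoping computation in the face stabilizer identifies the conjugacy class of $\alpha_i$ in $\widetilde{G}_\mathcal{P}$ with that of $g_{v_1^{(i)}} g_{v_2^{(i)}}$. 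Since both $\alpha_i$'s represent the same conjugacy class $[\gamma]$ and two-generator reflection subgroups of $\widetilde{G}_\mathcal{P}$ are determined up to conjugacy by their unordered generator pair, one deduces $\{v_1^{(1)}, v_2^{(1)}\} = \{v_1^{(2)}, v_2^{(2)}\} =: \{v_1, v_2\}$. Gluing together the separating arcs of $\alpha_1$ and $\alpha_2$ across $C_1$ and $C_2$ then produces a Jordan curve in $\widehat{\C}$ that exhibits $\{v_1, v_2\}$ as a cut pair of $\Gamma$, contradicting $3$-connectedness.

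The main obstacle is the rigidity direction, and specifically verifying that the vertex pair $\{v_1^{(i)}, v_2^{(i)}\}$ cut out by each boundary curve is independent of $i$. This requires a careful analysis of the intersection of two face stabilizers $\mathrm{stab}(\Omega_{F})\cap\mathrm{stab}(\Omega_{F'})$ inside $\widetilde{G}_\mathcal{P}$, handled uniformly for the parabolic and loxodromic cases of $\gamma$, together with checking that the resulting Jordan curve genuinely separates $\Gamma\setminus\{v_1,v_2\}$ into two non-trivial pieces rather than collapsing to a single side.
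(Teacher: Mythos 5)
Your overall architecture is reasonable, but there are genuine gaps in both directions, the most serious being the existence of the circle $\alpha$ in the forward direction. You posit a \emph{round} circle orthogonal to $C_1$ and $C_2$, disjoint from every other circle of $\mathcal{P}$, separating the $A$-circles from the $B$-circles, and you call its existence ``routine M\"obius geometry.'' It is not. Normalize so that $C_1,C_2$ are tangent at $\infty$, i.e.\ the parallel lines $y=0$ and $y=1$; then every circle orthogonal to both is a vertical line, and each component of $\Gamma\setminus\{v_1,v_2\}$ corresponds to a connected union of disks spanning the strip. Two such disjoint spanning clusters can have overlapping projections to the $x$-axis (e.g.\ $A=\{a\}$ realized by the disk of radius $1/2$ centered at $(0,1/2)$, and $B$ a chain whose first disk is centered at $(0.5,0.9)$ with radius $0.1$), in which case \emph{no} vertical line separates them, so no totally geodesic plane of the required type exists. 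The correct essential annulus has boundary the geodesics $\widetilde{\gamma}^{F_i}_{v_1v_2}$ in the intrinsic hyperbolic metrics of two components of $\Omega(G_\mathcal{P})$, and these arcs are in general not co-circular. Relatedly, you nowhere establish the combinatorial fact that makes any version of this work: that a cut pair of a $2$-connected plane graph lies on the boundary of \emph{two common faces}, and is non-adjacent along at least one of them (this is Lemma~\ref{lem:graph3c} in the paper, and its proof is a genuine argument with the face cycle around $v$). The paper then avoids your geometric construction entirely by observing that $g_vg_w$ is represented by an essential curve on $R_{F_1}$ and by an essential or peripheral curve on $R_{F_2}$, and invoking the Cylinder Theorem.

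For the converse you take a genuinely different route from the paper: the paper uses the orthogonal dual circle packing of Theorem~\ref{thm:gcpt} to show that when $\Gamma$ is $3$-connected the closures of distinct components of $\Omega(G_\mathcal{P})$ meet only at cusps, which immediately kills all non-pairing cylinders; you instead analyze the boundary curves of a hypothetical essential cylinder group-theoretically. Your telescoping computation (an essential curve on $R_F$ separating the punctures at the cut vertices $v_1,v_{j+1}$ is conjugate to $(g_{v_1}g_{v_2})(g_{v_2}g_{v_3})\cdots(g_{v_j}g_{v_{j+1}})=g_{v_1}g_{v_{j+1}}$) and the conjugacy rigidity in the free product of $\Z/2$'s are correct, and the resulting Jordan curve does exhibit a cut pair, with both sides nonempty because each $\alpha_i$ is essential. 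But you have deferred rather than resolved the hard cases: a boundary curve of the cylinder may be peripheral (the accidental parabolic situation, where $v_1\sim v_2$ in $\Gamma$ but the pair is non-consecutive on one face), and the two boundary curves may lie on the same component $R_{F}$, in which case your gluing produces nothing and you must argue the cylinder is boundary-parallel. These are exactly the cases your last paragraph flags as ``the main obstacle,'' so as written the proof of this direction is incomplete. If you want to keep your approach, you should prove the face-sharing lemma first, replace the round circle by the pair of intrinsic geodesic arcs $\gamma^{F_1}_{v_1v_2},\gamma^{F_2}_{v_1v_2}$, and handle the peripheral and single-face cases explicitly; otherwise the paper's gasket argument is substantially shorter for the converse.
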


This proposition will be proved after the following lemmas.
Let $\Gamma$ be a $2$-connected simple plane graph, and $\mathcal{P} = \{C_1,..., C_n\}$ be a realization of $\Gamma$.
Let $G_\mathcal{P}$ be the kissing reflection group, with generators $g_1,..., g_n$ given by reflections along $C_1,..., C_n$.
Note that a face $F$ of $\Gamma$ corresponds to a component $R_F$ of $\partial\mathcal{M}(G_\mathcal{P})$.

Any two non-adjacent vertices $v, w$ of the face $F$ give rise to an essential simple closed curve $\widetilde{\gamma}^F_{vw}$ on $R_F$ (a simple closed curve on a surface is essential if it is not homotopic to a point or a puncture).
More precisely, let $g_v, g_w$ be the reflections associated to the two vertices, then $g_vg_w \in \text{stab}(\Omega_F)$ is a loxodromic element under the uniformization of $R_F$ which gives the simple closed curve $\widetilde{\gamma}^F_{vw}$ on $R_F$.
Note that $g_vg_w$ itself may not be loxodromic as the vertices $v,w$ may be adjacent in some other faces. 
If this is the case, then we have an accidental parabolic element (see \cite[p. 198, 3-17]{Marden16}).

We first prove the following graph theoretic lemma.
\begin{lemma}\label{lem:graph3c}
Let $\Gamma$ be a $2$-connected simple plane graph. If $\Gamma$ is not $3$-connected, then there exist two vertices $v, w$ so that $v, w$ lie on the intersection of the boundaries of two faces $F_1$ and $F_2$. Moreover, they are non-adjacent for at least one of the two faces.
\end{lemma}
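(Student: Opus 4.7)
The plan is to exploit non-3-connectedness to obtain a 2-cut and then analyze the cyclic order of edges at one of the cut vertices to locate the desired faces. First I would invoke non-3-connectedness to produce vertices $v, w$ with $\Gamma \setminus \{v, w\}$ disconnected, say with components $C_1, \dots, C_k$ ($k \geq 2$); 2-connectedness of $\Gamma$ forces both $v$ and $w$ to have at least one neighbor in every $C_i$. Label each edge incident to $v$ by the component $C_i$ containing its other endpoint, except for the edge $vw$ (if present), which receives the distinct label $\star$. In the cyclic order at $v$ inherited from the plane embedding, call a corner of $v$ a \emph{transition} if its two consecutive edges carry different labels. The label set has at least two symbols, so the cyclic label sequence has at least two transitions; moreover, if $vw \in E(\Gamma)$ then the label set contains $\star$ and at least two $C_i$'s, so the cyclic sequence has at least three transitions.

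The key step is to show that every transition corner at $v$ corresponds to a face $F$ of $\Gamma$ whose boundary cycle contains both $v$ and $w$. Here I would use that in a 2-connected plane graph every face boundary is a simple cycle. If the two corner edges end at $a'$ and $a$, then $\partial F$ has the form $v \to a' \to \cdots \to a \to v$, and the internal walk from $a'$ to $a$ never revisits $v$. If one of the two edges is $vw$, then $\{a', a\}$ already contains $w$. Otherwise $a' \in C_j$, $a \in C_i$ with $i \neq j$, and since the internal walk lies in $\Gamma \setminus \{v\}$ while the $C_i$'s are pairwise separated by $\{v, w\}$, the walk is forced to pass through $w$. Distinct transition corners produce distinct faces (a simple face cycle visits $v$ only once), so we obtain at least two faces $F_1, F_2$ containing both $v$ and $w$ on their boundaries.

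For the non-adjacency clause: if $vw \notin E(\Gamma)$ then $v$ and $w$ are automatically non-adjacent on every face boundary that contains them, and the conclusion is immediate. If instead $vw \in E(\Gamma)$, exactly two transitions at $v$ involve the label $\star$, and these correspond precisely to the two faces flanking the edge $vw$, on which $v$ and $w$ are adjacent; by the stronger count from Step~1 there is at least one further transition whose two edges are both labeled by components, and the resulting walk $v \to a' \to \cdots \to w \to \cdots \to a \to v$ places $w$ strictly between $a'$ and $a$ on the face cycle, so $v$ and $w$ are non-adjacent on the boundary of that face.

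The main obstacle is the topological argument in the key step, controlling exactly where the face-boundary walk can go. This is handled by combining two inputs: simplicity of face boundaries in a 2-connected plane graph (so $v$ is not revisited on the arc from $a'$ to $a$) and the fact that the components $C_1, \dots, C_k$ are separated in $\Gamma$ precisely by $\{v, w\}$ (so any cross-component walk avoiding $v$ must traverse $w$). Once these two inputs are in place, the transition count and the case analysis of $vw$ being or not being an edge make the rest routine bookkeeping.
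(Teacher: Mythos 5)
Your proof is correct and follows essentially the same route as the paper: both take a $2$-cut $\{v,w\}$, examine the cyclic sequence of faces around $v$ (using that face boundaries in a $2$-connected plane graph are simple cycles), show that $w$ must lie on the boundaries of at least two of these faces, and use simplicity of $\Gamma$ to locate one face where $v,w$ are non-adjacent. The only difference is in bookkeeping: you count label transitions among the edges at $v$ and argue directly that each transition face crosses between components of $\Gamma\setminus\{v,w\}$ while avoiding $v$, hence must pass through $w$ (which also yields the sharper count of three such faces when $vw$ is an edge), whereas the paper argues by contradiction that if $w$ appeared at most once on the link cycle around $v$ then $\Gamma\setminus\{v,w\}$ would remain connected.
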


\begin{figure}[ht!]
\begin{tikzpicture}[thick]
   \node at (7,4) [circle,fill=black,inner sep=3pt] {};
      \node at (7,6) [circle,fill=black,inner sep=3pt] {};
         \node at (5,6.4) [circle,fill=black,inner sep=3pt] {};
   \node at (9,6.4) [circle,fill=black,inner sep=3pt] {}; 
      \node at (4.5,4.5) [circle,fill=black,inner sep=3pt] {};
         \node at (9.5,4.5) [circle,fill=black,inner sep=3pt] {};
            \node at (5.4,2.5) [circle,fill=black,inner sep=3pt] {};
      \node at (8.6,2.5) [circle,fill=black,inner sep=3pt] {};
         \node at (7,1) [circle,fill=black,inner sep=3pt] {};
         \node at (0,0) {};
         
         \draw[-,line width=1pt] (7,4)->(7,6);
   \draw[-,line width=1pt] (7,4)->(5.4,2.5);
  \draw[-,line width=1pt] (7,4)->(8.6,2.5);
  \draw[-,line width=1pt] (7,6)->(5,6.4);
    \draw[-,line width=1pt] (7,6)->(9,6.4);
  \draw[-,line width=1pt] (5,6.4)->(4.5,4.5);
 \draw[-,line width=1pt] (4.5,4.5)->(5.4,2.5);
  \draw[-,line width=1pt] (5.4,2.5)->(7,1);
  \draw[-,line width=1pt] (7,1)->(8.6,2.5);
  \draw[-,line width=1pt] (8.6,2.5)->(9.5,4.5);
  \draw[-,line width=1pt] (9.5,4.5)->(9,6.4);
  
  \node at (7,3.48) {\begin{Large}$v$\end{Large}};
  \node at (7,6.6) {\begin{Large}$v_{1,4}=v_{2,1}$\end{Large}};
  \node at (4.4,6.6) {\begin{Large}$v_{2,2}$\end{Large}};
  \node at (9.6,6.6) {\begin{Large}$v_{1,3}$\end{Large}};
  \node at (3.8,4.5) {\begin{Large}$v_{2,3}$\end{Large}};
  \node at (10.2,4.5) {\begin{Large}$v_{1,2}$\end{Large}};
  \node at (4,2.2) {\begin{Large}$v_{2,4}=v_{3,1}$\end{Large}};
  \node at (7,0.5) {\begin{Large}$v_{3,2}$\end{Large}};
  \node at (10,2.2) {\begin{Large}$v_{1,1}=v_{3,3}$\end{Large}};
  
  \node at (5.6,5)  {\begin{Large}$F_2$\end{Large}}; 
   \node at (8.4,5)  {\begin{Large}$F_1$\end{Large}};  
  \node at (7,2.3)  {\begin{Large}$F_3$\end{Large}};
   \end{tikzpicture}
   \caption{A schematic picture of the potentially non-simple cycle around $v$.}
   \label{graph_lemma_fig}
   \end{figure}
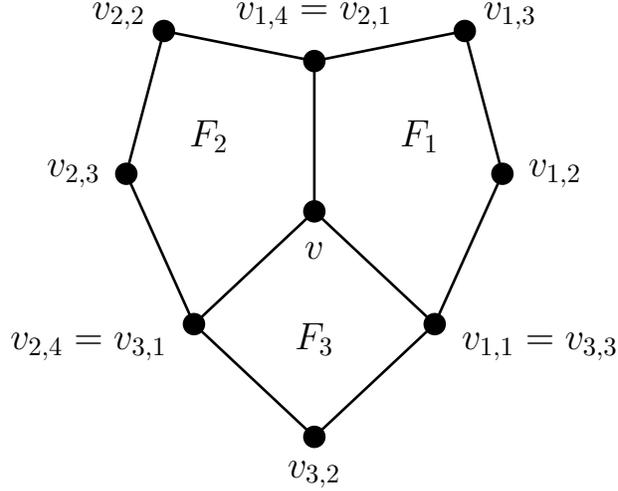

\begin{proof}
As $\Gamma$ is not $3$-connected, there exist two vertices $v, w$ so that $\Gamma\setminus\{v, w\}$ is disconnected.
Let $F_1,..., F_k$ be the list of faces that contain $v$ as a vertex.
Since $\Gamma$ is plane, we may assume that the faces $F_i$ are ordered around $v$ counter clockwise.
Since $\Gamma$ is plane and $2$-connected, each face $F_i$ is a Jordan domain.
Let $v_{i,0} = v, v_{i,1},..., v_{i, j_i}$ be the vertices of $F_i$ ordered counter clockwise.
Since the faces $F_i$ are ordered counterclockwise, we have that $v_{i,j_i} = v_{i+1, 1}$.
We remark that there might be additional identifications.
Then 
$$
v_{1,1}\to v_{1,2}\to...\to v_{1,j_1} (=v_{2,1}) \to v_{2,2}\to ...\to v_{k,j_k} = v_{1,1}
$$ 
form a (potentially non-simple) cycle $C$ (see Figure~\ref{graph_lemma_fig}). Since $\Gamma$ is $2$-connected, $\Gamma \setminus\{w\}$ is connected.
Thus, in particular, any vertex $p$ is connected to $C\setminus\{w\}$ in $\Gamma\setminus\{w\}$.
Thus, if $w \notin C$ or $w$ only appears once in $C$, then $C\setminus\{w\}$ is connected.
This would imply that $\Gamma\setminus\{v,w\}$ is connected, which is a contradiction.
Therefore, $w$ must appear at least twice in the cycle $C$.
Since each face is a Jordan domain, $w$ must appear on the boundaries of at least $2$ faces $F_{i_1}$ and $F_{i_2}$.

Since $\Gamma$ is simple, $w$ is adjacent to $v$ in at most $2$ faces, in which case $w = v_{i,j_i} = v_{i+1,1}$, i.e., it contributes to only one point in $C$.
Therefore, there exists a face on which $w$ is not adjacent to $v$.
This proves the lemma.
\end{proof}

We can now prove one direction of Proposition \ref{prop:3connected}.
\begin{lemma}\label{lem:fd3c}
If $G_\mathcal{P}$ is acylindrical, then $\Gamma$ is $3$-connected.
\end{lemma}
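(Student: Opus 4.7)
The plan is to prove the contrapositive: assuming $\Gamma$ is not $3$-connected, I will exhibit an essential cylinder in $\mathcal{M}(G_\mathcal{P})$ that is not homotopic to any pairing cylinder. Lemma~\ref{lem:graph3c} furnishes two vertices $v, w$ lying on the boundaries of two faces $F_1, F_2$, non-adjacent in at least one of them, say $F_1$. Set $\gamma = g_v g_w \in \widetilde{G}_\mathcal{P}$. Because $v, w$ are non-adjacent in $F_1$, the circles $C_v$ and $C_w$ do not meet along $\partial \Pi_{F_1}$, so $\gamma$ acts on $\Omega_{F_1}$ as a loxodromic Fuchsian isometry and descends to an essential, non-peripheral simple closed geodesic $\alpha_1 \subset R_{F_1}$, exactly the curve $\widetilde{\gamma}^{F_1}_{vw}$ singled out before the lemma.

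I then split into two cases according to whether $v, w$ are joined by an edge of $\Gamma$. If not, then $C_v, C_w$ are disjoint and $\gamma$ is loxodromic in the ambient Kleinian group $\widetilde{G}_\mathcal{P}$; moreover, $v, w$ are then automatically non-adjacent in $F_2$ as well, yielding a second essential non-peripheral curve $\alpha_2 \subset R_{F_2}$. The axis of $\gamma$ in $\Hyp^3$ quotiented by $\langle \gamma \rangle$ produces an essential cylinder $C \subset \mathcal{M}(G_\mathcal{P})$ with boundary $\alpha_1 \cup \alpha_2$. Since $\gamma$ is loxodromic rather than parabolic, $C$ cannot be a pairing cylinder, and $G_\mathcal{P}$ fails to be acylindrical.

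If instead $v, w$ are joined by an edge, then $C_v, C_w$ are tangent at a single point $p$, and $\gamma$ becomes an accidental parabolic: parabolic in $\widetilde{G}_\mathcal{P}$ yet loxodromic in $\text{stab}(\Omega_{F_1})$. The Fuchsian axis of $\gamma$ in $\Omega_{F_1}$, extended into $\Hyp^3$ and quotiented by $\langle \gamma \rangle$, sweeps out an annulus in $\mathcal{M}(G_\mathcal{P})$ whose boundary on $R_{F_1}$ is $\alpha_1$ and whose other end winds into the cusp at $p$. After truncating a small horoball neighborhood of this cusp, this becomes an essential cylinder joining $\alpha_1$ to a meridian of the cusp. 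Since $\alpha_1$ is non-peripheral on $R_{F_1}$, the cylinder is not homotopic into $\partial \mathcal{M}(G_\mathcal{P})$, and it is not homotopic to a pairing cylinder, whose two boundary components are both cusp meridians. Again acylindricity fails.

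The main obstacle is the accidental parabolic case, where one must carefully interpret the surface arising from the horocyclic extension of the Fuchsian axis as a genuine essential cylinder in the (compactified) pared manifold, and confirm that it is distinct from a pairing cylinder up to homotopy. This verification hinges on the non-peripherality of $\alpha_1$ on $R_{F_1}$, which prevents the annulus from being absorbed into the boundary or into the cusp structure.
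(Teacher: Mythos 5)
Your proposal is correct and follows essentially the same route as the paper: prove the contrapositive, invoke Lemma~\ref{lem:graph3c} to obtain the vertices $v,w$, and extract from $g_vg_w$ an essential cylinder that is not a pairing cylinder, splitting into the loxodromic and accidental-parabolic cases. The only cosmetic differences are that you organize the case split by whether $vw$ is an edge of $\Gamma$ (rather than by adjacency in $F_2$, which amounts to the same dichotomy) and that you describe the cylinder directly via the axis of $g_vg_w$ where the paper cites Marden's Cylinder Theorem; neither affects the validity of the argument.
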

\begin{proof}
Note that $\Gamma$ must be $2$-connected as $\Lambda(G_\mathcal{P})$ is connected (by the boundary incompressibility condition). 
We will prove the contrapositive, and assume that $\Gamma$ is not $3$-connected.
Let $v,w$ be the two vertices given by Lemma \ref{lem:graph3c}.
There are two cases.

If $v, w$ are non-adjacent vertices in two faces $F_1, F_2$, then $g_vg_w$ gives a pair of essential simple closed curves on $R_{F_1}$ and $R_{F_2}$ in $\partial\mathcal{M}(G_\mathcal{P})$.
This pair bounds an essential cylinder (see the Cylinder Theorem in \cite[\S 3.7]{Marden16}), which is not homotopic to a pairing cylinder of two punctures (see $g_Cg_{C''}$ in Figure~\ref{fig:njd1}).

If $v, w$ are non-adjacent vertices in $F_1$ but adjacent vertices in $F_2$, then $g_vg_w$ corresponds to an essential simple closed curve in $R_{F_1}$, and a simple closed curve homotopic to a puncture in $R_{F_2}$.
Then $g_vg_w$ is an accidental parabolic, and the two curves bound an essential cylinder which is not homotopic to a pairing cylinder (see $g_Cg_{C'}$ in Figure~\ref{fig:njd1}).

Therefore, in either case, $G_\mathcal{P}$ is cylindrical.
\end{proof}

\subsection*{Gasket limit set.}
Recall that a closed set $\Lambda$ is a {\em round gasket} if
\begin{itemize}
\item $\Lambda$ is the closure of some infinite circle packing; and
\item the complement of $\Lambda$ is a union of round disks which is dense in $\widehat{\C}$.
\end{itemize}
We call a homeomorphic copy of a round gasket a {\em gasket}.
See \S~\ref{sec:cp} for our definition of infinite circle packings.

If $\Gamma$ is $3$-connected, then $\Gamma$ is a polyhedral graph.
Let $\Gamma^{\vee}$ be the planar dual of $\Gamma$. Then, Theorem \ref{thm:gcpt} gives a (unique) pair of circle packings $\mathcal{P}$ and $\mathcal{P}^{\vee}$ whose contact graphs are isomorphic to $\Gamma$ and $\Gamma^{\vee}$ (respectively) as plane graphs such that $\mathcal{P}$ and $\mathcal{P}^{\vee}$ intersect orthogonally at their points of tangency (see Figure~\ref{fig:cube}).
Let $G_\mathcal{P}$ be the kissing reflection group associated with $\mathcal{P}$.
Since the circle packing $\mathcal{P}^{\vee}$ is dual to $\mathcal{P}$, we have that
$$
\bigcup_{g\in G_\mathcal{P}} \bigcup_{C\in \mathcal{P}^{\vee}} g\cdot C
$$
is an infinite circle packing, and the limit is the closure
$$
\Lambda(G_\mathcal{P}) = \overline{\bigcup_{g\in G_\mathcal{P}} \bigcup_{C\in \mathcal{P}^{\vee}} g\cdot C}.
$$
Since $\Lambda(G_\mathcal{P})$ is nowhere dense, and the complement is a union of round disks, we conclude that $\Lambda(G_\mathcal{P})$ is a round gasket.

\begin{figure}[h!]
  \centering
 {\includegraphics[width=.64\linewidth]{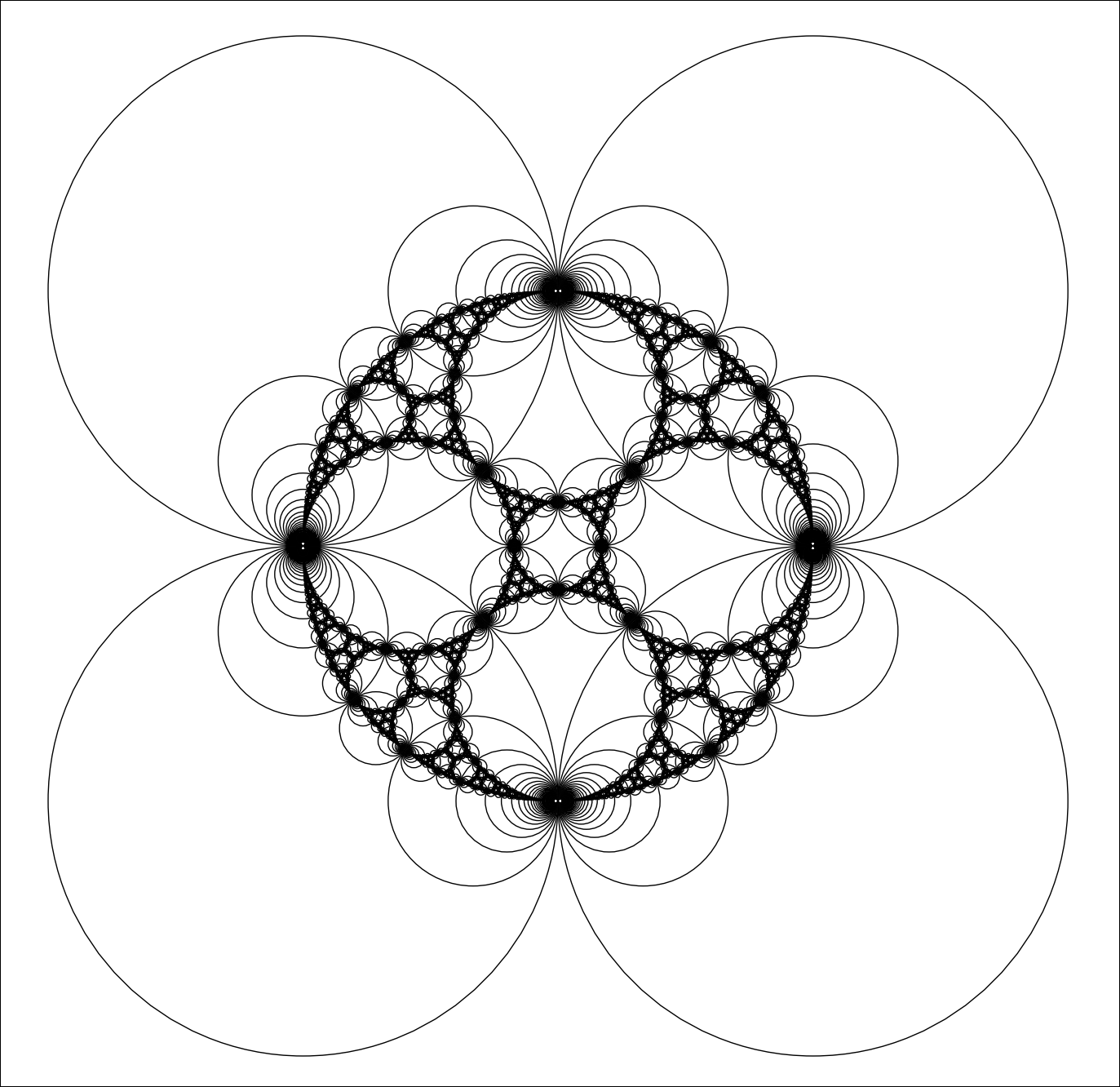}};
   \caption{The limit set of a kissing reflection group $G$ with a $3$-connected contact graph.}
   \label{fig:cube}
  \end{figure}

Note that each component of $\Omega(G_\mathcal{P})$ is of the form $g\cdot D$ where $g\in G_\mathcal{P}$ and $D$ is a disk in the dual circle packing $\mathcal{P}^{\vee}$.
By induction, we have the following.
\begin{lemma}
If $\Gamma$ is $3$-connected, then the closure of any two different components of $\Omega(G_\mathcal{P})$ only intersect at cusps.
\end{lemma}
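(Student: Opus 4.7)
The idea is to combine the structural fact, just noted, that every component of $\Omega(G_\mathcal{P})$ is an open round disk of the form $g\cdot D^\vee_F$ (for some $g\in G_\mathcal{P}$ and some face $F$ of $\Gamma$), with an induction on reduced word length in the free product $G_\mathcal{P}=\langle g_1\rangle*\cdots*\langle g_n\rangle$.

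Since $\Lambda(G_\mathcal{P})$ is a round gasket, distinct components of $\Omega(G_\mathcal{P})$ are pairwise-disjoint open round disks, and two distinct closed round disks with disjoint interiors in $\widehat{\C}$ can meet in at most one point---a tangency of their bounding circles. So suppose $\Omega_1=g_1 D^\vee_{F_1}$ and $\Omega_2=g_2 D^\vee_{F_2}$ are distinct components with $\overline{\Omega_1}\cap\overline{\Omega_2}=\{p\}$. After replacing both representatives by their $g_1^{-1}$-translates---harmless, since being a cusp is $G_\mathcal{P}$-invariant---we may assume $g_1=e$; set $g:=g_1^{-1}g_2$ and write $g=g_{i_1}\cdots g_{i_\ell}$ in reduced form. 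I will induct on $\ell$.

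For the base case $\ell=0$, both disks lie in the original dual packing $\mathcal{P}^\vee$, so $p$ is a tangency of $\mathcal{P}^\vee$; by Theorem~\ref{thm:gcpt}, this coincides with a tangency of $\mathcal{P}$, which is fixed by the parabolic element $g_v g_w$ (where $\{v,w\}$ is the primal edge of $\Gamma$ separating $F_1$ and $F_2$), hence is a cusp. For the inductive step $\ell\geq 1$, write $g=g_{i_1}h$. The crux is to show that $C_{i_1}$ is one of the primal circles bounding $F_1$. Granting this, the orthogonality $C_{i_1}\perp\partial D^\vee_{F_1}$ (Theorem~\ref{thm:gcpt}) gives $g_{i_1}\cdot D^\vee_{F_1}=D^\vee_{F_1}$; applying $g_{i_1}$ to the pair yields the new tangent pair $(D^\vee_{F_1},\, hD^\vee_{F_2})$ with tangency point $g_{i_1}\cdot p$ and relative length $\ell-1$, and the inductive hypothesis implies $g_{i_1}\cdot p$---and hence $p$---is a cusp.

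To prove the crux by contradiction, suppose $C_{i_1}$ is not on the boundary of the face $F_1$. Then $C_{i_1}$ and $\partial D^\vee_{F_1}$ are disjoint circles (the primal--dual contact structure from Theorem~\ref{thm:gcpt} only produces intersections between $C_j$ and $\partial D^\vee_F$ when $j$ is a vertex of $F$); since $\Pi_{F_1}\subset D^\vee_{F_1}$ lies outside every primal disk, connectivity of $D^\vee_{F_1}$ forces $\overline{D^\vee_{F_1}}\cap\overline{D}_{i_1}=\emptyset$. Separately, an extension of Lemma~\ref{lem:im} from $\Pi$ to $D^\vee_{F_2}$ (via its $\mathrm{stab}(\Omega_{F_2})$-tiling by translates of $\Pi_{F_2}$) would give $\overline{g\cdot D^\vee_{F_2}}\subseteq\overline{D}_{i_1}$, and combining these two inclusions would contradict $p\in\overline{\Omega_1}\cap\overline{\Omega_2}$. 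The main technical obstacle is this last extension: one has to check that for each $k\in\mathrm{stab}(\Omega_{F_2})$ the reduced form of $gk$ still begins with $g_{i_1}$, so that Lemma~\ref{lem:im} forces $gk\cdot\Pi\subseteq\overline{D}_{i_1}$. This is automatic when $C_{i_1}$ is not on the boundary of $F_2$; otherwise, the generator $g_{i_1}$ could appear in $k$ and produce cancellations in $gk=g_{i_1}hk$, and one must choose the representative $g$ of $\Omega_2$ with care---for example by replacing $g$ with $gg_{i_1}$, available since $g_{i_1}\in\mathrm{stab}(\Omega_{F_2})$ in this case---to arrange that no problematic cancellation occurs.
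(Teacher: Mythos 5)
Your strategy is exactly the paper's: the paper records the observation that every component of $\Omega(G_\mathcal{P})$ is a translate $g\cdot D^\vee_F$ of a dual disk and then simply says ``by induction,'' and your proposal is a fleshed-out version of that induction. Two steps in your write-up, however, are not closed as stated.

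First, the cancellation issue you flag at the end is real, but your proposed fix (replacing $g$ by $gg_{i_1}$) does not address it: cancellation in $gk$ occurs at the junction between the \emph{last} letter $g_{i_\ell}$ of $g$ and the first letter of $k$, and it can only propagate back to consume $g_{i_1}$ if an initial segment of $k$ equals $g^{-1}$. The clean resolution is to choose $g$ of minimal length in the coset $g\cdot\mathrm{stab}(\Omega_{F_2})$. Since $\mathrm{stab}(\Omega_{F_2})=\langle g_v : v\in V(F_2)\rangle$, minimality forces $i_\ell\notin V(F_2)$ (otherwise $gg_{i_\ell}$ would be shorter), so no cancellation occurs at all, every $gk$ is reduced with first letter $g_{i_1}$, and Lemma~\ref{lem:im} applies to each translate $gk\cdot\Pi_{F_2}$; one also checks that $h=g_{i_1}^{-1}g$ remains a minimal coset representative, so the induction is legitimate.

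Second, your claim that $i_1\notin V(F_1)$ forces $\overline{D^\vee_{F_1}}\cap\overline{D}_{i_1}=\emptyset$ is too strong as stated. What the decomposition $\Omega_{F_1}=\bigcup_{k\in\mathrm{stab}(\Omega_{F_1})}k\cdot\Pi_{F_1}$ together with Lemma~\ref{lem:im} gives is $\overline{\Omega_{F_1}}\subseteq\overline{\Pi_{F_1}}\cup\bigcup_{v\in V(F_1)}\overline{D}_v$, whence for $i_1\notin V(F_1)$ the intersection $\overline{D^\vee_{F_1}}\cap\overline{D}_{i_1}$ is contained in $\bigcup_{v\in V(F_1)}\bigl(\overline{D}_v\cap\overline{D}_{i_1}\bigr)$, i.e.\ in the set $P$ of tangency points of $\mathcal{P}$. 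So this case does not produce a contradiction; rather, it yields $p\in P$, which is a parabolic fixed point and hence a cusp, so the conclusion of the lemma holds directly. With these two repairs the argument goes through.
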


\begin{figure}[ht!]
  \centering
  \begin{tikzpicture}
   \node[anchor=south west,inner sep=0] at (2.5,0) {\includegraphics[width=.64\linewidth]{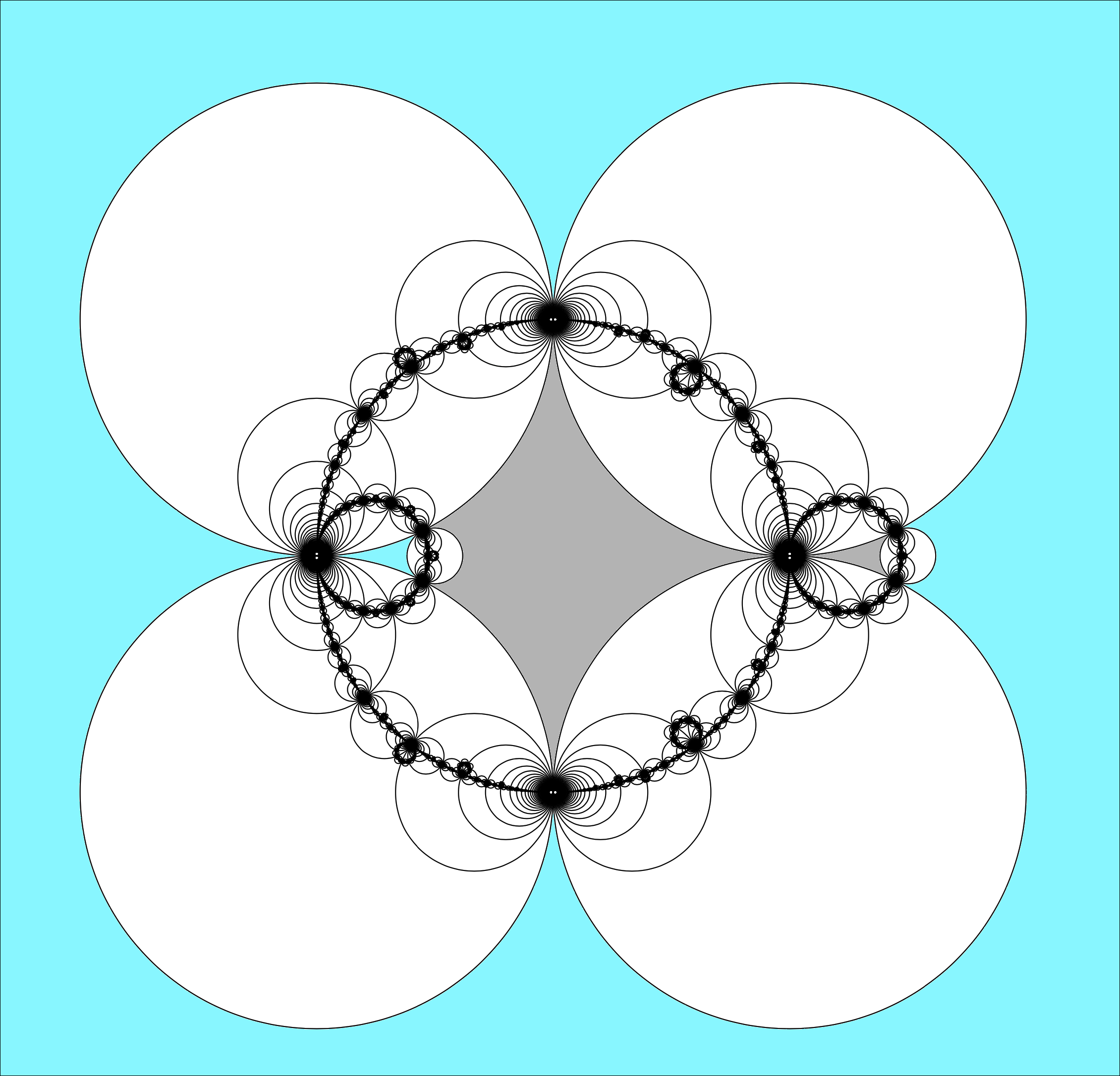}};
   \node at (4.8,6.75) {$C$};
   \node at (4.8,0.8) {$C'$};
   \node at (8.2,0.8) {$C''$};
   \node at (6.6,3.8) {$F_1$};
   \node at (6.6,7.2) {$F_2$};
  \end{tikzpicture}
   \caption{The limit set of a kissing reflection group $G$ with Hamiltonian but non $3$-connected contact graph. 
The unique Hamiltonian cycle of the associated contact graph $\Gamma$ divides the fundamental domain $\Pi(G)$ into two parts $\Pi^\pm$, which are shaded in grey and blue. With appropriate markings, $G$ is the mating of two copies of the group $H$ shown in Figure~\ref{fig:njd}.}

\label{fig:njd1}
\end{figure}

We have the following characterization of gasket limit set for kissing reflection groups.
\begin{prop}\label{prop:3g}
Let $\Gamma$ be a simple plane graph, then $\Lambda(G_\mathcal{P})$ is a gasket if and only if $\Gamma$ is $3$-connected.
\end{prop}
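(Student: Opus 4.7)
The proof has two halves. For the forward direction ($\Gamma$ $3$-connected implies gasket), the discussion preceding the proposition essentially does the work: Theorem~\ref{thm:gcpt} applied to the polyhedral graph $\Gamma$ produces an orthogonal pair of circle packings $\mathcal{P}, \mathcal{P}^\vee$, and I would verify that the $G_\mathcal{P}$-orbit $\bigcup_{g\in G_\mathcal{P}}\bigcup_{C\in\mathcal{P}^\vee} g\cdot C$ is an infinite circle packing whose closure equals $\Lambda(G_\mathcal{P})$, with complement a dense union of round disks (density following from Lemma~\ref{lem:maxr}). Hence $\Lambda(G_\mathcal{P})$ is a round gasket, and the proof just packages this.

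For the converse I argue the contrapositive. If $\Gamma$ is not $2$-connected, then $\Lambda(G_\mathcal{P})$ is disconnected by Proposition~\ref{prop:2c}, and a gasket is connected, so we are done. Otherwise $\Gamma$ is $2$-connected but not $3$-connected, and Lemma~\ref{lem:graph3c} hands me vertices $v, w$ lying on the boundaries of two distinct faces $F_1, F_2$ of $\Gamma$, non-adjacent in $F_1$. The main combinatorial property of a gasket that I will exploit is the following: \emph{every point of the gasket lies in the closures of at most two distinct components of its complement, so the closures of two distinct complement components meet in at most one point}. This is visibly true for a round gasket (since closed round disks with disjoint interiors meet in at most one point) and transfers to any topological gasket via the defining homeomorphism. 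I will violate this property in each case.

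If $v, w$ are non-adjacent in $\Gamma$, then $C_v, C_w$ are disjoint and $g_v g_w$ is loxodromic in $G_\mathcal{P}$ with two distinct fixed points $p_\pm$. Since $v, w$ are non-adjacent in both $F_1$ and $F_2$, the element $g_v g_w$ lies in and is loxodromic in $\mathrm{stab}(\Omega_{F_i})$ for $i = 1, 2$; its axis is a bi-infinite hyperbolic geodesic in $\Omega_{F_i}$ with endpoints $p_\pm \in \partial\Omega_{F_i}$, so $\{p_+, p_-\} \subseteq \overline{\Omega_{F_1}}\cap\overline{\Omega_{F_2}}$, violating the gasket property. If instead $v, w$ are adjacent in $\Gamma$, then $C_v, C_w$ are tangent at a unique point $p$ and $g_v g_w$ is parabolic in $G_\mathcal{P}$. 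Letting $F_3, F_4$ denote the two faces of $\Gamma$ flanking the edge $vw$, the point $p$ is a cusp of both $\Pi_{F_3}$ and $\Pi_{F_4}$, giving $p \in \overline{\Omega_{F_3}} \cap \overline{\Omega_{F_4}}$; and because $v, w$ are non-adjacent in $F_1$, the element $g_v g_w$ is an accidental parabolic for $\mathrm{stab}(\Omega_{F_1})$, forcing $p \in \overline{\Omega_{F_1}}$ as well. Since the edge $vw$ does not lie on $\partial F_1$, we have $F_1 \notin \{F_3, F_4\}$, so $p$ lies in three distinct complement-component closures, again violating the gasket property.

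The step I expect to be the main obstacle is making the ``at most two complement components per point'' property precise for topological gaskets from the given definition, and rigorously justifying that the accidental parabolic fixed point $p$ lies in $\overline{\Omega_{F_1}}$ — the latter uses the standard fact that every parabolic fixed point of a geometrically finite Kleinian group lies in the closure of each invariant component of the domain of discontinuity.
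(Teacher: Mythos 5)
Your proof is correct and follows the paper's proof almost step for step: the forward direction packages the dual-circle-packing discussion exactly as the paper does, and the converse runs the same contrapositive through Proposition~\ref{prop:2c} and Lemma~\ref{lem:graph3c}. The one place you diverge is the tangent-circle (parabolic) case: the paper observes that the accidental parabolic forces $\Omega_{F_1}$ to fail to be a Jordan domain, whereas you instead show the parabolic fixed point $p$ lies in the closures of three distinct complementary components ($\Omega_{F_1}$ together with the two faces flanking the edge $vw$), violating the fact that at most two complementary disks of a round gasket can meet at a point. Both routes rest on topological properties of gaskets that are true but left implicit (the paper's on ``each complementary component is a Jordan domain and two closures meet in at most one point,'' yours on ``at most two components per point''), so your version is at the same level of rigor; your case split by adjacency in $\Gamma$ rather than adjacency in $F_2$ is in fact slightly cleaner, since it keeps the loxodromic case genuinely loxodromic.
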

\begin{proof}
Indeed, from the above discussion, if $\Gamma$ is $3$ connected, then $\Lambda(G_\mathcal{P})$ is a gasket.

Conversely, if $\Gamma$ is not $2$-connected, then $\Lambda(G_\mathcal{P})$ is disconnected by Proposition \ref{prop:2c}, so it is not a gasket.
On the other hand, if $\Gamma$ is $2$-connected but not $3$-connected, by Lemma \ref{lem:graph3c}, we have two vertices $v, w$ so that $v,w$ lie on the intersection of the boundaries of two faces $F_1$ and $F_2$.
If $v, w$ are non-adjacent vertices in both $F_1$ and $F_2$, then the corresponding components $\Omega_{F_1}$ and $\Omega_{F_2}$ touch at two points corresponding to the two fixed points of the loxodromic element $g_vg_w$ (see $g_Cg_{C''}$ in Figure~\ref{fig:njd1}).
Thus, $\Lambda(G_\mathcal{P})$ is not a gasket.
If $v, w$ are non-adjacent vertices in $F_1$ but adjacent vertices in $F_2$, then $g_vg_w$ gives an accidental parabolic element.
The corresponding component $\Omega_{F_1}$ is not a Jordan domain as the unique fixed point of the parabolic element $g_vg_w$ corresponds to two points on the ideal boundary of $\Omega_{F_1}$ (see $g_Cg_{C'}$ in Figure~\ref{fig:njd1}).
Therefore $\Lambda(G_\mathcal{P})$ is not a gasket.
\end{proof}

In the course of the proof, we have actually derived the following characterization which is worth mentioning.

\begin{prop}\label{why_not_gasket_prop}
Let $\mathcal{P}$ be a circle packing whose contact graph $\Gamma$ is not $3$-connected, and $G_\mathcal{P}$ be the associated kissing reflection group, then either
\begin{itemize}
\item there exists a component of $\Omega(G_\mathcal{P})$ which is not a Jordan domain; or
\item there exist two components of $\Omega(G_\mathcal{P})$ whose closures touch at least at two points.
\end{itemize}
\end{prop}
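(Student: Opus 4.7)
The plan is to extract both alternatives directly from the case analysis already carried out for Proposition~\ref{prop:3g}, keeping careful track of which geometric conclusion is produced in each branch. I will split into three cases: (i) $\Gamma$ is not $2$-connected, (ii) $\Gamma$ is $2$-connected but not $3$-connected with a separating pair $(v,w)$ non-adjacent in both shared faces, and (iii) the same as (ii) except $(v,w)$ is adjacent in one of those two faces.

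For case (i), Proposition~\ref{prop:2c} tells us that $\Lambda(G_\mathcal{P}) = A \sqcup B$ is disconnected, with $A, B$ non-empty disjoint closed sets. I would then pick a Jordan curve $\gamma \subset \Omega(G_\mathcal{P})$ separating $A$ from $B$ in $\widehat{\C}$, let $\Omega_0$ be the component of $\Omega(G_\mathcal{P})$ containing $\gamma$, and argue that if $\Omega_0$ were simply connected then $\gamma$ would bound a topological disk $D \subset \Omega_0 \subset \Omega(G_\mathcal{P})$; but $D$ must contain $A$ or $B$, contradicting $D \cap \Lambda(G_\mathcal{P}) = \emptyset$. Hence $\Omega_0$ is not a Jordan domain, giving the first alternative.

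For cases (ii) and (iii), Lemma~\ref{lem:graph3c} supplies two vertices $v, w$ lying on the boundaries of two distinct faces $F_1, F_2$, non-adjacent in at least one of them. In case (ii), the element $g_v g_w$ is loxodromic, and its two fixed points in $\Lambda(G_\mathcal{P})$ lie on both $\partial \Omega_{F_1}$ and $\partial \Omega_{F_2}$ (because $g_v g_w$ belongs to the stabilizer of each of these two components), so the closures of these two distinct components of $\Omega(G_\mathcal{P})$ touch at two points, giving the second alternative. In case (iii), $g_v g_w$ is an accidental parabolic whose unique fixed point --- the tangency of $C_v$ and $C_w$ on the boundary of $\Pi_{F_2}$ --- is approached from two different sides on the ideal boundary of $\Omega_{F_1}$ (since the corresponding simple closed curve on $R_{F_1}$ is essential and non-peripheral), so $\Omega_{F_1}$ fails to be a Jordan domain, again giving the first alternative.

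The main obstacle is the bookkeeping in case (iii): one must establish that a single parabolic fixed point in $\widehat{\C}$ actually corresponds to two distinct points on $\partial\mathbb{D}$ under the uniformization of $\Omega_{F_1}$. This follows from essentiality of $\widetilde{\gamma}^{F_1}_{vw}$ on $R_{F_1}$, which is precisely where the non-adjacency of $v, w$ in $F_1$ enters. Case (i) requires the Jordan separation argument sketched above, while case (ii) is a direct re-reading of the loxodromic part of the proof of Proposition~\ref{prop:3g}.
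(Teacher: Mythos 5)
Your proof is correct and takes essentially the same route as the paper: the proposition is obtained by re-reading the case analysis in the proof of Proposition~\ref{prop:3g}, with the loxodromic $g_vg_w$ producing two touching points of $\overline{\Omega_{F_1}}$ and $\overline{\Omega_{F_2}}$ and the accidental parabolic producing a non-Jordan component. Your explicit Jordan-separation argument in the non-$2$-connected case is a welcome bit of extra care that the paper leaves implicit.
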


We are now able to prove the other direction of Proposition \ref{prop:3connected}:
\begin{lemma}\label{lem:bd3c}
If $\Gamma$ is $3$-connected, then $G_\mathcal{P}$ is acylindrical.
\end{lemma}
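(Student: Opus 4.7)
The plan is to verify the two defining conditions of acylindricity separately. Boundary incompressibility is immediate: $3$-connectedness implies $2$-connectedness, so by Proposition~\ref{prop:2c} the limit set $\Lambda(G_\mathcal{P})$ is connected, hence every component of $\Omega(G_\mathcal{P})$ is a Jordan domain and the inclusion $\pi_1(R_F)\hookrightarrow\pi_1(\mathcal{M}(G_\mathcal{P}))$ is injective on every boundary component. Moreover, by Proposition~\ref{prop:3g} and the lemma preceding it, $\Lambda(G_\mathcal{P})$ is a round gasket, and the closures of any two distinct components of $\Omega(G_\mathcal{P})$ meet in at most one point, which must be a rank-one cusp.

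For the cylinder condition, let $C\subset\mathcal{M}(G_\mathcal{P})$ be an essential cylinder, and let $\gamma\in\widetilde{G}_\mathcal{P}$ generate (up to conjugacy) the image of $\pi_1(C)$ in $\pi_1(\mathcal{M}(G_\mathcal{P}))$. Lifting $C$ to a strip $\tilde{C}$ in $\Hyp^3\cup\Omega(G_\mathcal{P})$, its two boundary curves lie in components $\Omega_1,\Omega_2$ of $\Omega(G_\mathcal{P})$, both preserved by $\gamma$. If $\gamma$ is loxodromic, its two fixed points lie in $\overline{\Omega_1}\cap\overline{\Omega_2}$; since distinct components share at most one boundary point this forces $\Omega_1=\Omega_2$, in which case the two boundary curves of $C$ project to freely homotopic simple closed curves on a single surface $R_F$, contradicting essentiality of $C$.

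When $\gamma$ is parabolic, it is conjugate to a power of $g_vg_w$ for some edge $vw$ of $\Gamma$, and its unique fixed point $p$ is the tangency of $C_v$ with $C_w$. Since all cusps are rank one and $p$ lies on exactly two circles of $\mathcal{P}$ (bounding precisely two faces $F_{p,1},F_{p,2}$ of $\Gamma$), only the two components $\Omega_{F_{p,1}},\Omega_{F_{p,2}}$ have $p$ on their boundary, so $\{\Omega_1,\Omega_2\}\subseteq\{\Omega_{F_{p,1}},\Omega_{F_{p,2}}\}$. To guarantee that $C$ is a pairing cylinder I must exclude accidental parabolics, i.e. the possibility that $\gamma$ represents an essential non-peripheral curve on some $R_{F'}$. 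Such behaviour would require $v,w$ to lie on $\partial F'$ while the edge $vw$ is not part of the face cycle $\partial F'$; but in a $3$-connected plane graph every face boundary is a chordless (induced) cycle, ruling out such an $F'$. Hence each boundary curve of $C$ is peripheral at $p$, and essentiality forces $\Omega_1\neq\Omega_2$, so $\{\Omega_1,\Omega_2\}=\{\Omega_{F_{p,1}},\Omega_{F_{p,2}}\}$ and $C$ is homotopic to the pairing cylinder at $p$.

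The main difficulty is the parabolic case: separating genuine pairing cylinders from those arising from accidental parabolics rests on the graph-theoretic fact that face boundaries in $3$-connected plane graphs are induced cycles, which is exactly the graph-side analogue of the input Lemma~\ref{lem:graph3c} used in the converse direction. The loxodromic case, by contrast, follows directly from the gasket structure of the limit set.
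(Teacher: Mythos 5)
Your proof is correct and follows essentially the same route as the paper: both reduce the cylinder condition to the fact (from the gasket structure) that distinct components of $\Omega(G_\mathcal{P})$ are round disks whose closures meet in at most one point, necessarily a cusp. You simply unpack the paper's one-line conclusion into an explicit loxodromic/parabolic case analysis, with the chordless-face property of $3$-connected plane graphs supplying the exclusion of accidental parabolics that the paper leaves implicit.
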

\begin{proof}
Since $\Gamma$ is $3$-connected, it follows that the closures of any two different components of $\Omega(G_\mathcal{P})$ intersect only at cusps.
This means that there are no essential cylinder other than the pairing cylinders of the rank one cusps.
So $G_\mathcal{P}$ is acylindrical.
\end{proof}

\begin{proof}[Proof of Proposition \ref{prop:3connected}]
This follows from Lemma \ref{lem:fd3c} and \ref{lem:bd3c}.
\end{proof}

We remark that the unique configuration of pairs of circle packings given in Theorem \ref{thm:gcpt} gives a kissing reflection group with {\em totally geodesic boundary}.
This unique point in the deformation space of acylindrical manifolds is guaranteed by a Theorem of McMullen \cite{McM90}.

\subsection{Deformation spaces of kissing reflection groups.}\label{sec:ads}
Throughout this section, we will use bold symbols, such as $\mathbf{G}, \mathbf{G}_\Gamma$, to represent the base point for the corresponding deformation spaces.
We use regular symbols $G$ to represent the image of a representation in the deformation spaces.
If the group is a kissing reflection group, we also use $G_\mathcal{P}$ if we want to emphasize the corresponding circle packing is $\mathcal{P}$.

\subsection*{Definition of $\textrm{AH}(\mathbf{G})$.}
Let $\mathbf{G}$ be a finitely generated discrete subgroup of $\textrm{Aut}^\pm(\widehat{\C})$.
A representation (i.e., a group homomorphism) $\xi: \mathbf{G} \longrightarrow \textrm{Aut}^\pm(\widehat{\C})$ is said to be weakly type preserving if 
\begin{enumerate}
\item $\xi(g) \in \textrm{Aut}^+(\widehat\C)$ if and only if $g\in \textrm{Aut}^+(\widehat\C)$, and
\item if $g \in \textrm{Aut}^+(\widehat\C)$, then $\xi(g)$ is parabolic whenever $g$ is parabolic.
\end{enumerate}
Note that a weakly type preserving representation may send a loxodromic element to a parabolic one.

\begin{definition}\label{AH_def}
Let $\mathbf{G}$ be a finitely generated discrete subgroup of $\textrm{Aut}^\pm(\widehat{\C})$.
\begin{align*}
\textrm{AH}(\mathbf{G})
&:=  \lbrace\xi: \mathbf{G}\longrightarrow G\ \textrm{is a weakly type preserving isomorphism to}
\\
& \textrm{a discrete subgroup}\ G\ \textrm{of}\ \textrm{Aut}^\pm(\widehat{\C})\rbrace / \sim,\;
\end{align*}
where $\xi_1\sim\xi_2$ if there exists a M{\"o}bius transformation $M$ such that $$\xi_2(g)=M\circ\xi_1(g)\circ M^{-1},\ \textrm{for all}\ g\in \mathbf{G}.$$
\end{definition}

$\textrm{AH}(\mathbf{G})$ inherits the quotient topology of algebraic convergence.
Indeed, we say that a sequence of weakly type preserving representations $\{\xi_n\}$ converges to $\xi$ algebraically if $\{\xi_n(g_i)\}$ converges to $\xi(g_i)$ as elements of $\textrm{Aut}^\pm(\widehat{\C})$ for (any) finite generating set $\{g_i\}$.

\subsection*{Quasiconformal deformation space of $\mathbf{G}$.}
Recall that a Kleinian group is said to be {\em geometrically finite} if it has a finite sided fundamental polyhedron.
We say a finitely generated discrete subgroup of $\textrm{Aut}^\pm(\widehat{\C})$ is {\em geometrically finite} if the index $2$ subgroup is geometrically finite.

Let $\mathbf{G}$ be a finitely generated, geometrically finite, discrete subgroup of $\textrm{Aut}^\pm(\widehat{\C})$.
A group $G$ is called a {\em quasiconformal deformation} of $\mathbf{G}$ if there is a quasiconformal map $F: \widehat{\C} \longrightarrow \widehat{\C}$ that induces an isomorphism $\xi: \mathbf{G} \longrightarrow G$ such that $F \circ g(z) = \xi(g) \circ F(z)$ for all $g\in \mathbf{G}$ and $z\in \widehat{\C}$.
Such a group $G$ is necessarily geometrically finite and discrete.
The map $F$ is uniquely determined (up to normalization) by its Beltrami differential on the domain of discontinuity $\Omega(\mathbf{G})$ (See \cite[\S 5.1.2, Theorem~3.13.5]{Marden16}).

We define the {\em quasiconformal deformation space} of $\mathbf{G}$ as
$$
\mathcal{QC}(\mathbf{G}) = \{\xi \in \textrm{AH}(\mathbf{G}): \xi \text{ is induced by a quasiconformal deformation of } \mathbf{G}\}.
$$
By definition, $\mathcal{QC}(\mathbf{G})  \subseteq \textrm{AH}(\mathbf{G})$.

\subsection*{Kissing reflection groups.}\label{sec:krg}
The above discussion is quite general, and applies to any Kleinian (reflection) group.
In the following, we will specialize to the case of a kissing reflection group.

Recall that different realizations of a fixed marked, connected, simple, plane graph $\Gamma$ as circle packings $\mathcal{P}$ give canonically isomorphic kissing reflection groups $G_\mathcal{P}$. Thus, the algebraic/quasiconformal deformation spaces of all such $G_\mathcal{P}$ can be canonically identified. However, in order to study these deformation spaces, it will be convenient to fix a (marked)  circle packing realization $\mathcal{P}$ of the (marked) graph $\Gamma$, and use the associated kissing reflection group
$$
\mathbf{G}_\Gamma:=G_\mathcal{P}
$$ 
as the base point. With this choice, we can and will use the notation $\textrm{AH}(\Gamma)$ and $\mathcal{QC}(\Gamma)$ to denote $\textrm{AH}(\mathbf{G}_\Gamma)$ and $\mathcal{QC}(\mathbf{G}_\Gamma)$, respectively.

Now let $\Gamma_0$ be a simple plane graph. The goal of this section is to describe the algebraic deformation space $\textrm{AH}(\Gamma_0)$ and the closure $\overline{\mathcal{QC}(\Gamma_0)}$ (of the quasiconformal deformation space) in $\textrm{AH}(\Gamma_0)$.

In line with the convention introduced above, we choose a circle packing $\{\mathbf{C}_1,..., \mathbf{C}_n\}$ realizing $\Gamma_0$, and denote the reflection along $\mathbf{C}_i$ as $\rho_i$. Let $\mathbf{G}_{\Gamma_0}$ be the kissing reflection group generated by the reflections $\rho_i$. 
We remark that since we will be working with deformation spaces for several contact graphs, we use $\mathbf{G}_{\Gamma_0}$ to emphasize that the contact graph is $\Gamma_0$.

Let $G$ be a discrete subgroup of $\textrm{Aut}^\pm(\widehat\C)$, and let $\xi: \mathbf{G}_{\Gamma_0}\longrightarrow G$ be a weakly type preserving isomorphism.
Since the union of the circles $\mathbf{C}_1,..., \mathbf{C}_n$ is connected, for each $\rho_i$, there exists $\rho_j$ so that $\rho_i\circ\rho_j$ is parabolic.
This implies that no $\xi(\rho_i)$ is the antipodal map.
Hence, $\xi(\rho_i)$ is also a circular reflection.
Assume that $\xi(\rho_i)$ is the reflection along some circle $C_i$.
If $\rho_i \circ \rho_j$ is parabolic, then $\xi(\rho_i) \circ \xi(\rho_{j})$ is also parabolic.
Thus, $C_i$ is tangent to $C_j$ as well. This motivates the following definition.

\begin{definition}\label{abstract_domination_def}
Let $\Gamma$ be a simple plane graph with the same number of vertices as $\Gamma_0$, we say that $\Gamma$ {\em abstractly dominates} $\Gamma_0$, denoted by $\Gamma \geq_{a} \Gamma_0$, if there exists an embedding 
$\psi: \Gamma_0 \longrightarrow \Gamma$ as abstract graphs (i.e., if there exists a graph isomorphism between $\Gamma_0$ and a subgraph of $\Gamma$).

We say that $\Gamma$ {\em dominates} $\Gamma_0$, denoted by $\Gamma \geq \Gamma_0$, if there exists an embedding 
$\psi: \Gamma_0 \longrightarrow \Gamma$ as plane graphs (i.e., if there exists a graph isomorphism between $\Gamma_0$ and a subgraph of $\Gamma$ that extends to an orientation-preserving homeomorphism of $\widehat{\C}$).

We also define
$$
\mathrm{Emb}_a(\Gamma_0):=\{(\Gamma, \psi): \Gamma\geq_a \Gamma_0 \text{ and } \psi: \Gamma_0 \longrightarrow \Gamma \text{ is an embedding as abstract graphs}\},
$$
and
$$
\mathrm{Emb}_p(\Gamma_0):=\{(\Gamma, \psi): \Gamma\geq \Gamma_0 \text{ and } \psi: \Gamma_0 \longrightarrow \Gamma \text{ is an embedding as plane graphs}\}.
$$
\end{definition}

\begin{figure}[ht!]
\begin{tikzpicture}[thick]
   \node at (1,1) [circle,fill=black,inner sep=3pt] {};
      \node at (3,1) [circle,fill=black,inner sep=3pt] {};
         \node at (1,4) [circle,fill=black,inner sep=3pt] {};
   \node at (3,4) [circle,fill=black,inner sep=3pt] {};
      \node at (4,2.5) [circle,fill=black,inner sep=3pt] {};
         \node at (0,2.5) [circle,fill=black,inner sep=3pt] {};
         \draw[-,line width=1pt] (1,1)-|(3,4);
   \draw[-,line width=1pt] (1,4)-|(3,4);
  \draw[-,line width=1pt] (0,2.5)->(1,1);
  \draw[-,line width=1pt] (0,2.5)->(1,4);
  \draw[-,line width=1pt] (4,2.5)->(3,1);
  \draw[-,line width=1pt] (4,2.5)->(3,4);
  \draw[-,line width=1pt] (1,1)->(3,4);  
  \draw [-] (1,1) to [out=225,in=270] (-1,2.5);
  \draw [-] (-1,2.5) to [out=90,in=135] (1,4);
        
         \node at (5.6,3.2) {\begin{huge}$\geq_a$\end{huge}};
    \node at (5.5,1.8) {\begin{huge}$\ngeq$\end{huge}};
             
   \node at (8,1) [circle,fill=black,inner sep=3pt] {};
      \node at (10,1) [circle,fill=black,inner sep=3pt] {};
         \node at (8,4) [circle,fill=black,inner sep=3pt] {};
   \node at (10,4) [circle,fill=black,inner sep=3pt] {};
      \node at (11,2.5) [circle,fill=black,inner sep=3pt] {};
         \node at (7,2.5) [circle,fill=black,inner sep=3pt] {};
         \draw[-,line width=1pt] (8,1)-|(10,1);
  \draw[-,line width=1pt] (10,1)-|(10,4);
    \draw[-,line width=1pt] (8,1)-|(8,4);
  \draw[-,line width=1pt] (8,1)-|(10,1);
  \draw[-,line width=1pt] (8,4)-|(10,4);
  \draw[-,line width=1pt] (7,2.5)->(8,1);
  \draw[-,line width=1pt] (7,2.5)->(8,4);
  \draw[-,line width=1pt] (11,2.5)->(10,1);
  \draw[-,line width=1pt] (11,2.5)->(10,4);
   \end{tikzpicture}
\caption{The graph on the left abstractly dominates the graph on the right, but no embedding of the right graph into the left graph respects the plane structure.}
\label{abstract_domination_fig}
\end{figure}
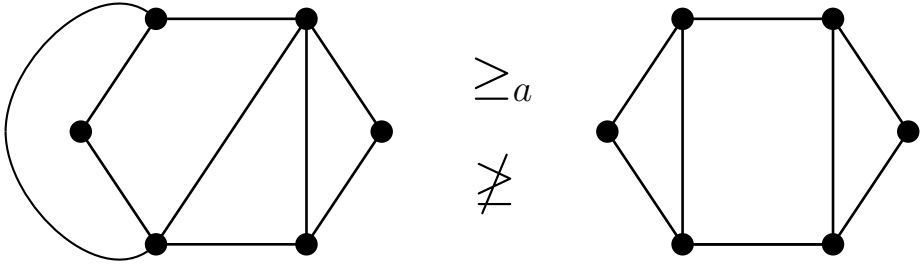

In other words, a simple plane graph $\Gamma$ abstractly dominates $\Gamma_0$ if $\Gamma$ (as an abstract graph) can be constructed from $\Gamma_0$ by introducing new edges; and it dominates $\Gamma_0$ if one can do this while respecting the plane structure.

We emphasize that the graph $\Gamma$ is always assumed to be plane, but the embedding $\psi$ may not respect the plane structure in $\mathrm{Emb}_a(\Gamma_0)$ (See Figure \ref{abstract_domination_fig}).
We also remark that an element $(\Gamma, \psi)$ is identified with $(\Gamma', \psi')$ in $\mathrm{Emb}_a(\Gamma_0)$ or $\mathrm{Emb}_p(\Gamma_0)$ if $\psi'\circ\psi^{-1}$ extends to an isomorphism between $\Gamma$ and $\Gamma'$ as plane graphs.

We have the following lemma.
\begin{lemma}\label{lem:kissingVsAH}
Let $G$ be a discrete subgroup of $\textrm{Aut}^\pm(\widehat{\C})$ and let
$\xi:\mathbf{G}_{\Gamma_0}\longrightarrow G$ be a weakly type preserving isomorphism.
Then the simple plane graph $\Gamma$ associated with $G$ abstractly dominates $\Gamma_0$.

Conversely, if $\Gamma$ is a simple plane graph abstractly dominating $\Gamma_0$ and $G$ is a kissing reflection group associated with $\Gamma$, then there exists a weakly type preserving isomorphism $\xi:\mathbf{G}_{\Gamma_0}\longrightarrow G$.
\end{lemma}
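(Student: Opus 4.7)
The plan is to show that $\xi$ forces the generators $\xi(\rho_i)$ to themselves be circular reflections in circles $C_i$ whose tangency pattern contains that of the original $\mathbf{C}_i$. First, each $\xi(\rho_i)$ is an orientation-reversing involution in $\textrm{Aut}^\pm(\widehat{\C})$, hence either a reflection in a circle or the antipodal map. As the paragraph just before the lemma observes, connectedness of the union of the $\mathbf{C}_i$ supplies, for each $i$, some $j$ with $\rho_i\rho_j$ parabolic; the antipodal map composed with any circular reflection is elliptic of order two, which would contradict weak type preservation, so $\xi(\rho_i)$ must be a reflection in some circle $C_i$. Tangency of $\mathbf{C}_i$ and $\mathbf{C}_j$ makes $\rho_i\rho_j$ parabolic, and weak type preservation then forces $C_i$ and $C_j$ to be tangent. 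Moreover the $C_i$'s cannot cross transversally: a transverse intersection at angle $\theta\in(0,\pi)$ would make $\xi(\rho_i)\xi(\rho_j)$ elliptic, but $\rho_i\rho_j\in\mathbf{G}_{\Gamma_0}$ always has infinite order and $\xi$ is an isomorphism. With a consistent choice of orientations, one concludes that $\{C_1,\ldots,C_n\}$ is a circle packing whose contact graph $\Gamma$ contains $\Gamma_0$ as a subgraph, so $\Gamma\geq_a\Gamma_0$; surjectivity of $\xi$ then identifies $G$ with the kissing reflection group of this packing.

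\textbf{Reverse direction.} Given an abstract graph embedding $\psi\colon\Gamma_0\hookrightarrow\Gamma$ and a circle packing realization $\{C_1,\ldots,C_n\}$ of $\Gamma$ with reflections $g_1,\ldots,g_n$, define $\xi$ on generators by $\xi(\rho_i):=g_{\psi(i)}$. Both $\mathbf{G}_{\Gamma_0}$ and $G$ are free products of $n$ copies of $\Z/2\Z$ (a standard consequence of the Poincar\'e polyhedron theorem, since mirrors in a kissing configuration meet only at cusps), so $\xi$ extends to a well-defined isomorphism. The orientation-preservation condition holds because, in both groups, the orientation-preserving elements are exactly the even-length words in the canonical generators. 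For parabolicity, each parabolic element of $\mathbf{G}_{\Gamma_0}$ is conjugate to a nontrivial power of some $\rho_i\rho_j$ with $ij$ an edge of $\Gamma_0$ (cusps of the quotient orbifold correspond precisely to tangency orbits of the packing, and accidental parabolics are themselves of this form). Since $\psi$ is a graph embedding, $\psi(i)\psi(j)$ is an edge of $\Gamma$, so $C_{\psi(i)}$ and $C_{\psi(j)}$ are tangent, making $\xi(\rho_i\rho_j)=g_{\psi(i)}g_{\psi(j)}$ parabolic.

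\textbf{Main obstacle.} The delicate step is in the forward direction: ruling out pathological configurations among the $C_i$ beyond transverse crossings — specifically, nesting (one $C_i$ lying in the disk bounded by another) with inconsistent orientations. The infinite-order argument above bars elliptic compositions but not, on its face, nesting. I plan to resolve this by a topological comparison of fundamental domains: the polyhedral Dirichlet-type fundamental domain of $G$ acting on $\Hyp^3\cup\Omega(G)$ is bounded by the hyperbolic half-spaces of the $C_i$, and the combinatorics of its faces and edges must match that of $\mathbf{G}_{\Gamma_0}$ up to the additional identifications coming from the extra parabolic pairs of $\Gamma$. The same comparison forces the orientations on the $C_i$ to be chosen coherently so that the interiors are pairwise disjoint, completing the packing picture needed for Direction 1.
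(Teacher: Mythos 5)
Your overall route is the same as the paper's. For the forward direction the paper argues exactly as you do at the outset: connectivity of $\Gamma_0$ supplies, for each $i$, a $j$ with $\rho_i\rho_j$ parabolic, which rules out the antipodal map (the composition of the antipodal map with a circular reflection is elliptic of order two only for great circles, but in every case it is elliptic or loxodromic and never parabolic, which is all that is needed); weak type preservation transports tangencies; and a transverse crossing of $C_i$ and $C_j$ is excluded because it would create an extra relation. On this last step, note that ``elliptic'' alone does not contradict infinite order --- an irrational rotation is elliptic of infinite order --- so you must invoke discreteness of $G$ to upgrade ellipticity to finite order, as the paper does explicitly (citing Vinberg--Shvartsman). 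The paper also rules out three circles passing through a common point, again via an extra relation forced by discreteness; you omit this degeneration. Your reverse direction matches the paper's (which merely asserts the verification is easy), and your justification via the free product structure, parity of word length, and the classification of parabolics of $\mathbf{G}_{\Gamma_0}$ by tangency orbits is correct and more detailed than what is printed.

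The genuine gap is the step you yourself flag as the ``main obstacle'' and then only announce a plan for. Knowing that the circles $C_1,\ldots,C_n$ pairwise meet at most tangentially, with no triple points, does not by itself yield a circle packing in the paper's sense: one must still choose orientations so that the $n$ interiors are \emph{globally} pairwise disjoint, and nested or internally tangent configurations genuinely obstruct such a choice (for instance, a circle internally tangent to $C_1$ together with one externally tangent to $C_1$ admits no consistent assignment of disjoint interiors, even though all pairwise intersections are tangential and the contact graph is connected). A sentence of the form ``I plan to resolve this by a topological comparison of fundamental domains'' is not an argument, so as written your forward direction is incomplete at exactly the point where the remaining content lies. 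To be fair, the paper's own proof is terse here as well --- it passes from ``only tangential intersections, no triple points'' directly to ``is a circle packing'' --- so you have correctly located the delicate point; but your proposal does not carry out the needed work, and until it does the forward direction is not proved.
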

\begin{proof}
Let $G= \xi(\mathbf{G}_{\Gamma_0})$ be a discrete faithful weakly type preserving representation, 
then the reflections $\xi(\rho_i)$ along $C_i$ generate $G$.
If there were a non-tangential intersection between some $C_i$ and $C_j$, it would introduce a new relation between $\xi(\rho_i)$ and $\xi(\rho_j)$ by discreteness of $G$ (cf. \cite[Part~II, Chapter~5, \S 1.1]{VS93}), which would contradict the assumption that $\xi$ is an isomorphism.
Similarly, if there were circles $C_i, C_j, C_k$ touching at a point, then discreteness would give a new relation among $\xi(\rho_i)$, $\xi(\rho_j)$ and $\xi(\rho_k)$, which would again lead to a contradiction.
The above observations combined with the discussion preceding Definition~\ref{abstract_domination_def} imply that $\{C_1,..., C_n\}$ is a circle packing with associated contact graph $\Gamma$ abstractly dominating $\Gamma_0$.

Conversely, assume that $G$ is a kissing reflection group associated with a simple plane graph $\Gamma$ abstractly dominating $\Gamma_0$.
Let $C_i$ be the circle corresponding to $\mathbf{C}_i$ under a particular embedding of $\Gamma_0$ into $\Gamma$ (as abstract graphs), and $g_i$ be the reflection along $C_i$.
Defining $\xi:\mathbf{G}_{\Gamma_0}\longrightarrow G$ by $\xi(\rho_i) = g_i$, it is easy to check that $\xi$ is a weakly type preserving isomorphism.
\end{proof}

Note that for kissing reflection groups, the double of the polyhedron bounded by the half-planes associated to the circles $C_i$ is a fundamental polyhedron for the action of the index two Kleinian group on $\mathbb{H}^3$.
Thus, we have the following corollary which is worth mentioning.
\begin{cor}
Every group in $\textrm{AH}(\Gamma_0)$ is geometrically finite.
\end{cor}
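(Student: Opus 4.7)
The plan is to combine Lemma~\ref{lem:kissingVsAH} with the explicit hyperbolic polyhedron alluded to in the remark preceding the corollary, reducing the statement to the observation that kissing reflection groups carry an obvious finite-sided Poincar\'e polyhedron with one face per generator.

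Given an element $\xi\in\textrm{AH}(\Gamma_0)$ with image $G=\xi(\mathbf{G}_{\Gamma_0})$, I would first invoke Lemma~\ref{lem:kissingVsAH} to deduce that $G$ is itself a kissing reflection group: there exist circles $C_1,\ldots,C_n\subset\widehat{\C}$ with pairwise disjoint interiors such that $G=\langle g_1,\ldots,g_n\rangle$, where $g_i$ is the reflection along $C_i$. Extending each $C_i$ to the totally geodesic hyperbolic plane in $\Hyp^3$ sharing this ideal boundary circle, let $D_i\subset\Hyp^3$ be the half-space which is the hyperbolic convex hull of the disk in $\widehat{\C}$ bounded by $C_i$. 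Since those disks have pairwise disjoint interiors, so do the $D_i$ in $\Hyp^3$, and the region
$$
P:=\Hyp^3\setminus\bigcup_{i=1}^n \Int(D_i)
$$
is a convex hyperbolic polyhedron with exactly $n$ geodesic faces.

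Next, as indicated in the paper's remark, $P$ is a fundamental polyhedron for the action of $G$ on $\Hyp^3$: each face of $P$ lying on $C_i$ is self-paired by the reflection $g_i$, and the tangency pattern of the circle packing supplies the edge- and vertex-cycle conditions needed to apply the Poincar\'e polyhedron theorem. Consequently, the finite union $P\cup g_1(P)$ is a fundamental polyhedron for the index two orientation-preserving subgroup $\widetilde{G}$, bounded by at most $2(n-1)$ geodesic faces. Thus $\widetilde{G}$ is geometrically finite, and hence so is $G$ under the convention adopted in the paper.

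There is no genuine obstacle here. The only step deserving mild care is the verification that $P\cup g_1(P)$ is indeed a (finite-sided) fundamental polyhedron for $\widetilde{G}$, but this follows at once from the fact that $\{\mathrm{id},g_1\}$ is a system of coset representatives for $\widetilde{G}$ in $G$, applied to the tiling of $\Hyp^3$ by the $G$-translates of $P$.
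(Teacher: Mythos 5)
Your proposal is correct and takes essentially the same route as the paper: the paper's justification is precisely the remark that, since every group in $\textrm{AH}(\Gamma_0)$ is a kissing reflection group by Lemma~\ref{lem:kissingVsAH}, the double of the finite-sided polyhedron cut out by the half-spaces over the circles $C_i$ is a fundamental polyhedron for the index two orientation-preserving subgroup, which is the paper's definition of geometric finiteness.
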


Let $(\Gamma, \psi) \in \mathrm{Emb}_p(\Gamma_0)$ (respectively, in $\mathrm{Emb}_a(\Gamma_0)$).
Let us fix a circle packing $C_{1,\Gamma}, ..., C_{n, \Gamma}$ realizing $\Gamma$, where $C_{i,\Gamma}$ corresponds to $\mathbf{C}_i$ under the embedding $\psi$ of $\Gamma_0$.
Let $g_i$ be the associated reflection along $C_{i, \Gamma}$, and $\mathbf{G}_\Gamma = \langle g_1,..., g_{n}\rangle$.
Then Lemma \ref{lem:kissingVsAH} shows that 
\begin{align*}
\xi_{(\Gamma,\psi)}: \mathbf{G}_{\Gamma_0} &\longrightarrow \mathbf{G}_\Gamma\\
\rho_i &\mapsto g_i
\end{align*}
is a weakly type preserving isomorphism.
Thus, $\mathcal{QC}(\Gamma)= \mathcal{QC}(\mathbf{G}_\Gamma)$ can be embedded in $\textrm{AH}(\Gamma_0)= \textrm{AH}(\mathbf{G}_{\Gamma_0})$.
Indeed, if $\xi: \mathbf{G}_\Gamma \longrightarrow G$ represents an element in $\mathcal{QC}(\Gamma)$, then 
$$
\xi\circ \xi_{(\Gamma,\psi)}: \mathbf{G}_{\Gamma_0} \longrightarrow G
$$
is a weakly type preserving isomorphism.
It can be checked that the map $\xi \mapsto \xi\circ \xi_{(\Gamma,\psi)}$ gives an embedding of $\mathcal{QC}(\Gamma)$ into $\textrm{AH}(\Gamma_0)$.
We shall identify the space $\mathcal{QC}(\Gamma)$ with its image in $\textrm{AH}(\Gamma_0)$ under this embedding. 

\begin{prop}\label{thm:closure}
$$
\textrm{AH}(\Gamma_0) =  \bigcup_{(\Gamma, \psi) \in \mathrm{Emb}_a(\Gamma_0)} \mathcal{QC}(\Gamma),
$$
and
$$
\overline{\mathcal{QC}(\Gamma_0)} = \bigcup_{(\Gamma, \psi) \in \mathrm{Emb}_p(\Gamma_0)} \mathcal{QC}(\Gamma).
$$
\end{prop}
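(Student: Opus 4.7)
The plan is to handle the two identities in turn, with the first being essentially a repackaging of Lemma \ref{lem:kissingVsAH} and the second requiring the main effort. The guiding principle throughout is to translate between marked circle packings and weakly type preserving representations.

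For the identity $\textrm{AH}(\Gamma_0) = \bigcup_{(\Gamma,\psi) \in \mathrm{Emb}_a(\Gamma_0)} \mathcal{QC}(\Gamma)$, the inclusion ``$\supseteq$'' is immediate from the embedding $\xi \mapsto \xi \circ \xi_{(\Gamma,\psi)}$ constructed just before the proposition. For ``$\subseteq$'', I would take $\xi \in \textrm{AH}(\Gamma_0)$ and apply Lemma \ref{lem:kissingVsAH} to see that $G := \xi(\mathbf{G}_{\Gamma_0})$ is a kissing reflection group whose contact graph $\Gamma$ abstractly dominates $\Gamma_0$ via some embedding $\psi$. Since any two kissing reflection groups realizing the same marked plane graph are quasiconformally conjugate (the paragraph before Section \ref{limit_connected_subsec}), the group $G$ represents a point of $\mathcal{QC}(\mathbf{G}_\Gamma) = \mathcal{QC}(\Gamma)$; factoring $\xi$ through $\xi_{(\Gamma,\psi)}$ then exhibits $\xi$ on the right-hand side.

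For the closure identity, I would first tackle ``$\subseteq$''. Given $\xi_n \to \xi$ algebraically with $\xi_n \in \mathcal{QC}(\Gamma_0)$, each $\xi_n$ is realized by a marked circle packing $\mathcal{P}_n$ whose contact graph matches $\Gamma_0$ as a plane graph. After normalizing by fixing three appropriate tangency points, algebraic convergence of the generating reflections $\xi_n(\rho_i)$ translates into convergence of the corresponding circles to those of a limit packing realizing $\xi$. Weak type preservation forces every original $\Gamma_0$-tangency to persist in the limit, while any additional tangencies enlarge the contact graph to some $\Gamma$ with $(\Gamma, \psi) \in \mathrm{Emb}_a(\Gamma_0)$. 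Because this limiting packing arises from a continuous deformation of plane-realized packings, the inclusion $\Gamma_0 \hookrightarrow \Gamma$ automatically respects the plane structure, placing $(\Gamma,\psi) \in \mathrm{Emb}_p(\Gamma_0)$.

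The main obstacle will be the reverse inclusion ``$\supseteq$'', where I need to produce explicit approximating sequences. Given $(\Gamma,\psi) \in \mathrm{Emb}_p(\Gamma_0)$ and $\xi \in \mathcal{QC}(\Gamma)$, I would fix a circle packing $\mathcal{P} = \{C_1,\ldots,C_n\}$ realizing $\xi$ so that the subpacking indexed by $\psi$ represents $\Gamma_0$ as a plane graph. For each edge $e$ of $\Gamma \setminus \psi(\Gamma_0)$, the associated pair of circles is tangent at a single point $p_e$, and because $\psi$ respects the plane embedding, $p_e$ lies on the closure of a single face of the $\Gamma_0$-subpacking rather than in its interior. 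I would then build a continuous family $\{\mathcal{P}_t\}_{t \in (0,1]}$ of circle packings realizing $\Gamma_0$ by simultaneously separating each such pair through small, localized Möbius perturbations supported near the $p_e$'s; the compatibility of $\psi$ with the plane structure ensures these perturbations can be arranged independently of one another and of the remaining tangencies of $\Gamma_0$. The sequence $\xi_{\mathcal{P}_{1/n}} \in \mathcal{QC}(\Gamma_0)$ then converges algebraically to $\xi$. The key technical input is the local flexibility of circle packings with prescribed contact graph---equivalently, the possibility of opening rank-one cusps in Kleinian group deformation space, the mechanism underlying Maskit's and Thurston's pinching/unpinching constructions.
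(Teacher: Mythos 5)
Your treatment of the first identity and of the inclusion $\overline{\mathcal{QC}(\Gamma_0)}\subseteq\bigcup_{(\Gamma,\psi)\in\mathrm{Emb}_p(\Gamma_0)}\mathcal{QC}(\Gamma)$ matches the paper's (both rest on Lemma~\ref{lem:kissingVsAH} and on the persistence of tangencies under algebraic limits of the generating reflections). The gap is in the reverse inclusion $\mathcal{QC}(\Gamma)\subseteq\overline{\mathcal{QC}(\Gamma_0)}$, which is the real content of the proposition. Your mechanism --- ``simultaneously separating each extra tangent pair through small, localized M\"obius perturbations supported near the $p_e$'s'' --- does not work as stated: a M\"obius map is never localized, and any perturbation of a single circle that breaks the tangency at $p_e$ also breaks that circle's tangencies with all of its $\Gamma_0$-neighbours. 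Restoring those forces a readjustment that propagates through the entire packing, and the existence of a nearby packing realizing exactly $\Gamma_0$ is precisely the statement you are trying to prove, not something that can be asserted from ``local flexibility.'' (Degenerating a tangency corresponds to opening a rank-one cusp, and the assertion that the cusped group lies on the boundary of the cusp-free deformation space is a genuine theorem, not a perturbative triviality.) Your closing sentence correctly identifies Maskit--Thurston unpinching as the needed input, but the construction you actually describe never invokes it.

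The paper runs the argument in the opposite direction and on the conformal boundary rather than on the packing. It encodes the new edges of $\Gamma\setminus\psi(\Gamma_0)$ as a non-parallel $\sigma$-invariant multicurve $\widetilde{\alpha}$ on $\partial\mathcal{M}(\mathbf{G}_{\Gamma_0})$, completes $\Gamma$ to a triangulation to get Fenchel--Nielsen-type length coordinates $\widetilde{\beta}=\widetilde{\alpha}\sqcup\widetilde{\alpha}'$ on $\mathcal{QC}(\Gamma_0)$, starts from a group in $\mathcal{QC}(\Gamma_0)$ whose $\widetilde{\alpha}'$-lengths already equal the target data $\vec{l}$, and then pinches $\widetilde{\alpha}$ by quasiconformal deformations supported on annular neighbourhoods whose moduli tend to infinity. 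Algebraic convergence of this pinching sequence, and the identification of the limit as a kissing reflection group with contact graph $\Gamma$ and unchanged $\widetilde{\alpha}'$-lengths, is supplied by Ohshika's theorem \cite{O98} (resting on Thurston's hyperbolization), and it is exactly here that the non-parallel hypothesis --- equivalently, simplicity of $\Gamma$ --- is used. To repair your argument you would need either to quote this pinching result or to prove an unpinching statement for circle packings with prescribed contact graph; neither is accomplished by the local perturbation you describe.
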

The proof of this proposition will be furnished after a discussion of pinching deformations.
Once the connection with pinching deformation is established, the result can be derived from \cite{O98}.

\subsection*{The perspective of pinching deformation.}
For the general discussion of pinching deformations, let us fix an arbitrary $2$-connected simple plane graph $\Gamma$, a circle packing $\mathcal{P}$ with contact graph isomorphic to $\Gamma$, and the associated kissing reflection group $\mathbf{G}= G_\mathcal{P}$, which we think of as the base point of the quasiconformal deformation space $\mathcal{QC}(\Gamma)$. Since $\Gamma$ is the only graph appearing in this discussion (until the proof Proposition~\ref{thm:closure}), we omit the subscript `$\Gamma$' from the base point $\mathbf{G}$.

Let $F$ be a face of $\Gamma$, and $R_F$ be the associated component of $\partial\mathcal{M}(\mathbf{G})$.
Let $C\in \mathcal{P}$ be a circle on the boundary of $\Pi_F$.
Then the reflection along $C$ descends to an anti-conformal involution on $R_F$, which we shall denote as
$$
\sigma_F: R_F \longrightarrow R_F.
$$
Note that different choices of the circle descend to the same involution.
It is known that for boundary incompressible geometrically finite Kleinian groups, the quasiconformal deformation space is the product of the Teichm\"uller spaces of the components of the conformal boundary (see \cite[Theorem~5.1.3]{Marden16}):
$$
\mathcal{QC}(\widetilde{\mathbf{G}}) = \prod_{F \text{ face of } \Gamma} \textrm{Teich} (R_F),
$$
where $\widetilde{\mathbf{G}}$ is the index two subgroup of $\mathbf{G}$ consisting of orientation-preserving elements.

We denote by $\textrm{Teich}^{\sigma_F} (R_F) \subseteq \textrm{Teich} (R_F)$ those elements corresponding to $\sigma_F$-invariant quasiconformal deformations of $R_F$.
Then
$$
\mathcal{QC}(\mathbf{G}) = \prod_{F \text{ face of } \Gamma} \textrm{Teich}^{\sigma_F} (R_F).
$$
Indeed, any element in $\prod_{F \text{ face of } \Gamma} \textrm{Teich}^{\sigma_F} (R_F)$ is uniquely determined by the associated Beltrami differential on $\Pi_F$, which can be pulled back by $\mathbf{G}$ to produce a $\mathbf{G}$-invariant Beltrami differential on $\widehat\C$.
Such a Beltrami differential can be uniformized by the Measurable Riemann mapping Theorem.

Recall that $\Pi_F$ is an ideal polygon.
Note that the component of $\Omega(\mathbf{G})$ containing $\Pi_F$ is simply connected and hence $\Pi_F$ inherits the hyperbolic metric from the corresponding component of $\Omega(\mathbf{G})$.
Then $R_F$ is simply the double of $\Pi_F$.
We claim that the only $\sigma_F$-invariant geodesics are those simple closed curves $\widetilde{\gamma}^F_{vw}$ on $R_F$ (see the discussion following Proposition~\ref{prop:3connected} for the definition of $\widetilde{\gamma}^F_{vw}$), where $v,w$ are two non-adjacent vertices on the boundary of $F$.
Indeed, any $\sigma_F$-invariant geodesic would intersect the boundary of the ideal polygon perpendicularly, and the $\widetilde{\gamma}^F_{vw}$ are the only geodesics satisfying this property.
We denote the associated geodesic arc in $\Pi_F$ by $\gamma^F_{vw}$.

We define a {\em multicurve} on a surface as a disjoint union of simple closed curves, such that no two components are homotopic.
It is said to be {\em weighted} if a non-negative number is assigned to each component.
We shall identify two multicurves if they are homotopic to each other.

Let $\mathcal{T}$ be a triangulation of $F$ obtained by adding new edges connecting the vertices of $F$.
Since each additional edge in this triangulation connects two non-adjacent vertices of $F$, we can associate a multicurve $\widetilde{\alpha}^F_\mathcal{T}$ on $R_F$ consisting of all $\widetilde{\gamma}^F_{vw}$, where $vw$ is a new edge in $\mathcal{T}$.
A marking of the graph $\Gamma$ also gives a marking of the multicurve.
We use $\alpha^F_\mathcal{T}$ to denote the multi-arc in the hyperbolic ideal polygon $\Pi_F$.
Since $\mathcal{T}$ is a triangulation, the multicurve $\widetilde{\alpha}^F_\mathcal{T}$ yields a pants decomposition of the punctured sphere $R_F$ such that the boundary components of every pair of pants are punctures or simple closed curves in $\widetilde{\alpha}^F_\mathcal{T}$. Thus, the complex structures of these pairs of pants are uniquely determined by the lengths of the 
marked multicurve $\widetilde{\alpha}^F_\mathcal{T}$ (cf. \cite[Theorem~3.5]{TY}). Since $R_F$ is invariant under the anti-conformal involution $\sigma_F$, the surface $R_F$ is uniquely determined by the complex structures of the above pairs of pants, and hence by the lengths of the marked 
multicurve $\widetilde{\alpha}^F_\mathcal{T}$.
If $F$ has $m$ sides, then any triangulation of $F$ has $m-3$ additional edges.
Thus, we have the analogue of Fenchel-Nielsen coordinates on
$$
 \textrm{Teich}^{\sigma_F} (R_F) = \R_+^{m-3}
$$
by assigning to each element of $\textrm{Teich}^{\sigma_F} (R_F)$ the lengths of the marked multicurve $\widetilde{\alpha}^F_\mathcal{T}$ (or equivalently the lengths of the marked multi-arc $\alpha^F_\mathcal{T}$).
Note that with these lengths, the multi-arcs and the multicurves become weighted.
Note that different triangulations yield different coordinates. We also remark that unlike classical Fenchel-Nielsen coordinates on Teichm{\"u}ller spaces, Fenchel-Nielsen coordinates on $\textrm{Teich}^{\sigma_F} (R_F)$ do not contain \emph{twist parameters} precisely due to the $\sigma_F-$invariance of quasiconformal deformations (cf. \cite[\S 3.2]{TY}).

In order to degenerate in $\textrm{Teich}^{\sigma_F} (R_F)$, the length of some arc $\gamma^F_{vw}$ must shrink to $0$.
The above discussion implies that the closure of $\textrm{Teich}^{\sigma_F} (R_F)$ in the Thurston compactification consists only of weighted $\sigma_F$-invariant multicurves $\widetilde{\alpha}^F$.
If $S_k \in \textrm{Teich}^{\sigma_F} (R_F)$ converges to a weighted $\sigma_F$-invariant multicurve $\widetilde{\alpha}^F$, then we say that $S_k$ is a pinching deformation on $\widetilde{\alpha}^F$.
More generally, we say that $G_k \in \mathcal{QC}(\mathbf{G})$ is a pinching deformation on $\widetilde{\alpha} = \bigcup\widetilde{\alpha}^F$ if for each face $F$, the conformal boundary associated to $F$ is a pinching deformation on $\widetilde{\alpha}^F$.

Given two curves $\widetilde{\gamma}^F_{vw} \subseteq R_F$ and $\widetilde{\gamma}^{F'}_{ut} \subseteq R_{F'}$, we say that they are {\em parallel} if they are homotopic in the $3$-manifold $\mathcal{M}(G_{\mathcal{P}})$.
Since the curve $\widetilde{\gamma}^F_{vw}$ corresponds to the element $g_vg_w$, we get that $\widetilde{\gamma}^F_{vw}$ and $\widetilde{\gamma}^{F'}_{ut}$ are parallel if and only if (possibly after switching the order) $v=u$ and $w=t$.
Note that in this case, $g_vg_w \in \text{stab}(\Omega_F) \cap \text{stab}(\Omega_{F'})$, and $v, w$ are on the common boundaries of $F$ and $F'$. (See, for example Figure~\ref{fig:njd1} where $v, w$ correspond to the circles $C, C''$, where the curve $\widetilde{\gamma}^{F_1}_{vw}$ is parallel with $\widetilde{\gamma}^{F_2}_{vw}$).

Let $\widetilde{\alpha} = \bigcup \widetilde{\alpha}^F$ be a union of $\sigma_F$-invariant multicurves.
We say that $\widetilde{\alpha}$ is a {\em non-parallel} multicurve if no two components are parallel.
We note that non-parallel multicurves $\widetilde{\alpha}$ for $\mathbf{G}$ are in one-to-one correspondence with the simple plane graphs that dominate $\Gamma$.
Indeed, any multicurve $\widetilde{\alpha} = \bigcup \widetilde{\alpha}^F$ corresponds to a plane graph that dominates $\Gamma$.
The non-parallel condition is equivalent to the condition that the graph is simple.

We now apply the general discussion carried out above to prove Proposition \ref{thm:closure}.

\begin{proof}[Proof of Proposition \ref{thm:closure}]
The first equality follows from Lemma \ref{lem:kissingVsAH}.

We first show $ \bigcup_{(\Gamma, \psi) \in \mathrm{Emb}_p(\Gamma_0)} \mathcal{QC}(\Gamma) \subseteq \overline{\mathcal{QC}(\Gamma_0)}$.
Let $\Gamma$ be a simple plane graph that dominates $\Gamma_0$ with embedding $\psi:\Gamma_0 \longrightarrow \Gamma$ as plane graphs. 
It follows from the discussion on pinching deformations that there exists a non-parallel multicurve $\widetilde\alpha\subseteq\partial\mathcal{M}(\mathbf{G}_{\Gamma_0})$ associated to $\Gamma$.
We complete $\Gamma$ to a triangulation $\mathcal{T}$ by adding edges.
As before, $\mathcal{T}$ gives a $\sigma$-invariant multicurve $\widetilde{\beta}\subseteq\partial\mathcal{M}(\mathbf{G}_{\Gamma_0})$ which contains $\widetilde{\alpha}$.
We set $\widetilde{\beta} = \widetilde{\alpha} \sqcup \widetilde{\alpha}'$.
Then the lengths of the multicurve $\widetilde{\alpha}'$ gives a parameterization of $\mathcal{QC}(\Gamma)$, and the lengths of the multicurve $\widetilde{\beta}$ gives a parameterization of $\mathcal{QC}(\Gamma_0)$.
Now given any element $\vec{l}=(l_\gamma: \gamma \in \widetilde{\alpha}') \in \mathcal{QC}(\Gamma)$, we show that it can be realized as the algebraic limit of a sequence in $\mathcal{QC}(\Gamma_0)$.

Such a construction is standard, and follows directly from \cite[Theorem 5.1]{O98} (see also \cite{Maskit83}).
For completeness, we sketch the proof here.
Let $G \in \mathcal{QC}(\Gamma_0)$ be so that the length of the multicurve $\widetilde{\alpha}'$ is $\vec{l}$.
We assume that $\widetilde{\beta}$ is realized by hyperbolic geodesics in $\partial\mathcal{M}(G)$.
Let $A \subseteq \partial\mathcal{M}(G)$ be an $\epsilon$-neighborhood of $\widetilde{\alpha}$.
We choose $\epsilon$ small enough so that $A$ is disjoint from $\widetilde{\alpha}'$, and each component contains only one component of $\widetilde{\alpha}$.
We construct a sequence of quasiconformal deformations supported on $A$ so that the modulus of each annulus in $A$ tends to infinity while $\widetilde{\alpha}$ remains as core curves.
Since the multicurve $\widetilde{\alpha}$ is non-parallel, Thurston's Hyperbolization Theorem guarantees a convergent subsequence (see \cite{O98} for more details).
This algebraic limit is a kissing reflection group. 
By construction, the contact graph of such a limiting kissing reflection group is $\Gamma$, and the lengths of $\widetilde{\alpha}'$ is $\vec{l}$.

Conversely, if a sequence of quasiconformal deformations $\xi_n:\mathbf{G}_{\Gamma_0}\longrightarrow G_n$ of $\mathbf{G}_{\Gamma_0}$ converges to $\xi_\infty:\mathbf{G}_{\Gamma_0} \longrightarrow G$ algebraically, then $G$ is a kissing reflection group.
Since $\{\xi_n(g_i)\}$ converges, where $g_i$ are the standard generators, the contact graph for $G$ dominates $\Gamma_0$. The embedding $\psi: \Gamma_0 \longrightarrow \Gamma$ (respecting the plane structure) comes from the identification of the generators.
\end{proof}

\subsection{Quasi-Fuchsian space and mating locus.}\label{sec:qfs}
The above discussion applies to the special case of quasiFuchsian space.
This space is related to the mating locus for critically fixed anti-rational maps.

Let $\Gamma_d$ be the marked $d+1$-sided polygonal graph, i.e., $\Gamma_d$ contains $d+1$ vertices $v_1,..., v_{d+1}$ with edges $v_iv_{i+1}$ where indices are understood modulo $d+1$.
We choose the most symmetric circle packing $\mathcal{P}_d$ realizing $\Gamma_d$.
More precisely, consider the ideal $(d+1)$-gon in $\D\cong \Hyp^2$ with vertices at the $(d+1)$-st roots of unity.
The edges of this ideal $(d+1)$-gon are arcs of $d+1$ circles, and we label these circles as $\mathcal{P}_d := \{\mathbf{C}_1,..., \mathbf{C}_{d+1}\}$, where we index them counter-clockwise such that $\mathbf{C}_1$ passes through $1$ and $e^{2\pi i/(d+1)}$.
Let $\mathbf{G}_d$ be the kissing reflection group associated to $\mathcal{P}_d$.
Note that $\widetilde{\mathbf{G}}_d$ is a Fuchsian group. We remark that since any embedding of the polygonal graph $\Gamma_d$ into a graph $\Gamma$ abstractly dominating $\Gamma_d$ respects the plane structure, we have that $\textrm{AH}(\Gamma_d)=\overline{\mathcal{QC}(\Gamma_d)}$.

Recall that a graph $\Gamma$ is said to be Hamiltonian if there exists a cycle which passes through each vertex exactly once.
Since a simple plane graph with $d+1$ vertices is Hamiltonian if and only if it dominates $\Gamma_d$, the following proposition follows immediately from Proposition \ref{thm:closure}.

\begin{prop}\label{prop:qb}
Let $\Gamma$ be a simple plane graph with $d+1$ vertices, then any kissing reflection group $G$ with contact graph $\Gamma$ is in the closure $\overline{\mathcal{QC}(\Gamma_d)}$ if and only if $\Gamma$ is Hamiltonian.
\end{prop}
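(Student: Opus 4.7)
The plan is to derive Proposition~\ref{prop:qb} as an essentially formal consequence of Proposition~\ref{thm:closure} applied with base graph $\Gamma_0 = \Gamma_d$. The argument splits into two steps: first, read off what the closure $\overline{\mathcal{QC}(\Gamma_d)}$ looks like in terms of contact graphs, and second, translate the resulting combinatorial condition into Hamiltonicity.

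For the first step I will apply Proposition~\ref{thm:closure} directly to obtain
\begin{equation*}
\overline{\mathcal{QC}(\Gamma_d)} \;=\; \bigcup_{(\Gamma', \psi) \in \mathrm{Emb}_p(\Gamma_d)} \mathcal{QC}(\Gamma').
\end{equation*}
A kissing reflection group $G$ with contact graph $\Gamma$ lies in $\mathcal{QC}(\Gamma')$ precisely when $\Gamma' \cong \Gamma$ as plane graphs; this is the rigidity recorded in the paragraph following Proposition~\ref{prop:2c}, where different plane-graph isomorphism types of contact graphs are distinguished by the pairing-cylinder structure on the conformal boundary and so cannot be quasiconformally conjugate. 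Hence $G \in \overline{\mathcal{QC}(\Gamma_d)}$ if and only if $\Gamma$ itself dominates $\Gamma_d$ as plane graphs.

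For the second step I will identify the domination $\Gamma \geq \Gamma_d$ with Hamiltonicity. Since $\Gamma_d$ is the $(d+1)$-cycle, an abstract graph embedding $\Gamma_d \hookrightarrow \Gamma$ is the same data as a Hamiltonian cycle in $\Gamma$. To see that abstract and plane embeddings of $\Gamma_d$ into $\Gamma$ coincide in this situation, I will observe that the image of any such embedding is a subgraph of $\Gamma$ whose underlying topological realization is a Jordan curve in $\widehat{\C}$ passing through all $d+1$ vertices in some cyclic order; by the Jordan--Sch\"onflies theorem any two such configurations are related by an orientation-preserving self-homeomorphism of $\widehat{\C}$, so the induced plane graph structure on the image cycle agrees with that of $\Gamma_d$. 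The main conceptual point is this plane-versus-abstract distinction, which is resolved by the Sch\"onflies observation; beyond that, the proposition is a direct reinterpretation of Proposition~\ref{thm:closure}, so no significant obstacle is anticipated.
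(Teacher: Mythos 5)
Your proposal is correct and follows essentially the same route as the paper: the paper likewise derives Proposition~\ref{prop:qb} immediately from Proposition~\ref{thm:closure} together with the observation that a simple plane graph on $d+1$ vertices dominates $\Gamma_d$ exactly when it is Hamiltonian, noting in passing (as you verify via Sch\"onflies) that abstract embeddings of the cycle $\Gamma_d$ automatically respect the plane structure. The only hair to split is that a given graph isomorphism from $\Gamma_d$ onto a Hamiltonian cycle may reverse the cyclic order, in which case one should precompose with the reflection automorphism of the abstract cycle before invoking an orientation-preserving Sch\"onflies homeomorphism; this does not affect the conclusion.
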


Let $\Gamma$ be a marked Hamiltonian simple plane graph with a Hamiltonian cycle $C = (v_1,...,v_{d+1})$.
Let $G:=\langle g_1,..., g_{d+1}\rangle$ be a kissing reflection group with contact graph $\Gamma$.
The Hamiltonian cycle divides the fundamental domain $\Pi \subseteq \widehat\C$ into two parts, and we denote them as $\Pi^+$ and $\Pi^-$, where we assume that the Hamiltonian cycle is positively oriented on the boundary of $\Pi^+$ and negatively oriented on the boundary of $\Pi^-$ (see Figure~\ref{fig:njd1}).
We denote
$$
\Omega^\pm := \bigcup_{g\in G} g\cdot \Pi^\pm.
$$
Since each $\Omega^\pm$ is $G$-invariant, we have $\Lambda(G) = \partial \Omega^+ = \partial\Omega^-$.

As in \S \ref{limit_connected_subsec}, set $$\Pi^{1,\pm} = \bigcup_{i=1}^{d+1} g_i \cdot \Pi^\pm,\ \textrm{and}\quad \Pi^{j+1,\pm} = \bigcup_{i=1}^{d+1} g_i \cdot \left(\Pi^{j,\pm} \setminus \overline{D}_i\right).$$
For consistency, we also set $\Pi^{0, \pm} = \Pi^\pm$.
Then the arguments of the proof of Lemma \ref{lem:genl} show that
$$
\Pi^{l,\pm} = \bigcup_{\vert g\vert = l} g\cdot \Pi^\pm.
$$

\begin{lemma}\label{lem:HC}
The closures $\overline{\Omega^+}$ and $\overline{\Omega^-}$ are connected.
\end{lemma}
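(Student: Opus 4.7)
By symmetry it suffices to prove that $\overline{\Omega^+}$ is connected. The strategy is to set
$$
T_l^+ := \bigcup_{|g|\le l} g\cdot \overline{\Pi^+}
$$
and show by induction on $l$ that each $T_l^+$ is connected. Since $\overline{\Omega^+}=\overline{\bigcup_{l\ge 0} T_l^+}$, and the closure of an increasing union of connected sets is connected, this will finish the proof.

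\emph{Base case.} First I would show that $\overline{\Pi^+}$ itself is connected. The Hamiltonian cycle can be realized as a Jordan curve in $\widehat{\C}$ passing through the $d+1$ tangency points $p_1,\ldots,p_{d+1}$ corresponding to the Hamiltonian edges; it separates $\widehat{\C}$ into two topological disks, and we may take $U^+$ to be the one whose intersection with $\Pi$ is $\Pi^+$. Every face of $\Gamma$ lies entirely in $U^+$ or in $U^-$, and $\overline{\Pi^+}$ is the union of the closures of the $+$-side faces. Any two adjacent $+$-side faces of $\Gamma$ touch at the tangency point corresponding to their common edge (which sits in $\overline{U^+}$), so adjacency of closures in $\overline{\Pi^+}$ reduces to adjacency in the subgraph of the dual $\Gamma^\vee$ induced by the $+$-side faces. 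Since these faces tile the topological disk $\overline{U^+}$, this subgraph is connected, hence $\overline{\Pi^+}$ is connected.

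\emph{Inductive step.} Assume $T_l^+$ is connected. For a new tile $g\cdot \overline{\Pi^+}$ with $|g|=l+1$, write $g=h\,g_j$ in reduced form, so that $|h|=l$. Because $g_j$ fixes $C_j$ pointwise,
$$
\bigl(g\cdot \overline{\Pi^+}\bigr)\cap \bigl(h\cdot \overline{\Pi^+}\bigr)\supseteq h\cdot\bigl(C_j\cap \overline{\Pi^+}\bigr).
$$
The two Hamiltonian tangency points on $C_j$ (namely the tangencies with $C_{j-1}$ and $C_{j+1}$ in the Hamiltonian cyclic order) cut $C_j$ into two arcs, and $C_j\cap \overline{\Pi^+}$ is precisely one of these two arcs (possibly containing further, non-Hamiltonian tangency points). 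In particular it is a non-degenerate, connected arc, so $g\cdot \overline{\Pi^+}$ is glued to $T_l^+$ along a connected set. As each tile $g\cdot \overline{\Pi^+}$ is itself connected (being a homeomorphic image of $\overline{\Pi^+}$), this shows $T_{l+1}^+$ is connected.

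\emph{Expected obstacle.} The only delicate point is the base case: one must check that the collection of $+$-side faces is connected through shared tangency points, which uses in an essential way that the Hamiltonian cycle bounds a topological disk and that the $+$-side faces subdivide this disk. All subsequent steps are formal, relying on the standard fact that a reflection fixes its mirror circle pointwise together with the tiling structure already developed in Lemma~\ref{lem:genl}.
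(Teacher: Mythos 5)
Your proof is correct and follows essentially the same route as the paper: an induction on the tiling level in which each new tile is glued to the previous generation along a nondegenerate arc of a circle $C_j$ (using that the reflection $g_j$ fixes $C_j$ pointwise), with the base case resting on the Hamiltonian cycle separating the faces of $\Gamma$ into two connected families. The only organizational difference is that you peel off the \emph{last} letter of the reduced word and attach each tile $hg_j\cdot\overline{\Pi^+}$ directly to $h\cdot\overline{\Pi^+}$, which lets you dispense with the paper's auxiliary induction hypothesis on the connectedness of $\overline{\bigcup_{i\le n}\Pi^{i,+}\setminus\overline{D}_j}$; you also supply more detail for the base case, which the paper simply asserts.
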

\begin{proof}
We shall prove by induction that the closures $\overline{\bigcup_{i=0}^n \Pi^{i,+}}$ and $\overline{\bigcup_{i=0}^n \Pi^{i,+}\setminus \overline{D}_j}$ are connected for all $n$ and $j$.

Indeed, the base case is true as $\overline{\Pi^+}$ and $\overline{\Pi^+ \setminus \overline{D}_j}$ are connected for all $j$.
Assume that $\overline{\bigcup_{i=0}^n \Pi^{i,+}}$ and $\overline{\bigcup_{i=0}^n \Pi^{i,+}\setminus \overline{D}_j}$ connected for all $j$. 
Note that
$$
\overline{\bigcup_{i=0}^{n+1} \Pi^{i,+}} = \overline{\Pi^{+} \cup \bigcup_{j=1}^{d+1} g_j \cdot \left(\bigcup_{i=0}^n \Pi^{i,+} \setminus \overline{D}_j\right)}.
$$
By induction hypothesis, $\overline{g_j \cdot (\bigcup_{i=0}^n \Pi^{i,+} \setminus \overline{D}_j)}$ is connected.
Since each 
$$
\overline{g_j \cdot \left(\bigcup_{i=0}^n \Pi^{i,+} \setminus \overline{D}_j\right)}
$$ 
intersects $\overline{\Pi^{+}}$ along an arc of $C_j = \partial D_j$, we have that $\overline{\bigcup_{i=0}^{n+1} \Pi^{i,+}}$ is also connected.

Similarly, 
$$
\overline{\bigcup_{i=0}^{n+1} \Pi^{i,+}\setminus \overline{D}_j} = \overline{\Pi^+ \cup \bigcup_{k\neq j} g_k \cdot \left(\bigcup_{i=0}^n \Pi^{i,+} \setminus \overline{D}_k\right)}
$$ 
is connected for any $j$.

Connectedness of $\overline{\Omega^-}$ is proved in the same way.
\end{proof}

\subsection*{Matings of function kissing reflection groups.}
We say that a kissing reflection group $G$ with connected limit set is a {\em function kissing reflection group} if there is a component $\Omega_0$ of $\Omega(G)$ invariant under $G$.
This terminology was traditionally used in complex analysis as one can construct Poincar\'e series, differentials and functions on it.

We say that a simple plane graph $\Gamma$ with $n$ vertices is {\em outerplanar} if it has a face with all $n$ vertices on its boundary.
We also call this face the {\em outer face}.
Note that a $2$-connected outerplanar graph is Hamiltonian with a unique Hamiltonian cycle. The following proposition characterizes function kissing reflection groups in terms of their contact graphs.
\begin{prop}\label{prop:fkrg}
A kissing reflection group $G$ is a function group if and only if its contact graph $\Gamma$ is 2-connected and outerplanar.
\end{prop}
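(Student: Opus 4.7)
My plan is to connect $G$-invariant components of $\Omega(G)$ with faces of $\Gamma$ whose boundary contains every vertex, using the tiling $\Omega(G)=\bigcup_{g\in G} g\cdot\Pi$ from Section~\ref{limit_connected_subsec}. Since the definition of a function kissing reflection group already requires a connected limit set, Proposition~\ref{prop:2c} supplies the $2$-connectedness half of the equivalence for free; the substantive content lies in outerplanarity.

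For $(\Leftarrow)$, suppose $\Gamma$ is $2$-connected and outerplanar with outer face $F_0$. Let $\Omega_{F_0}$ be the component of $\Omega(G)$ containing the ideal polygon $\Pi_{F_0}$. I would show $g_i\cdot\Omega_{F_0}=\Omega_{F_0}$ for every generator $g_i$ via a local argument. Since $v_i\in\partial F_0$, the circle $C_i$ shares an arc with $\partial\Pi_{F_0}$; pick a non-tangency point $p$ on this arc and a ball $B$ around $p$ small enough that $B$ avoids all tangency points and meets no disk $D_j$ with $j\neq i$. Then $B\subset\Pi\cup g_i\cdot\Pi\subset\Omega(G)$ and $B$ is connected, so the outer half $B\setminus\overline{D}_i\subset\Pi_{F_0}\subset\Omega_{F_0}$ and the inner half $B\cap D_i\subset g_i\cdot\Pi_{F_0}\subset g_i\cdot\Omega_{F_0}$ lie in the same component of $\Omega(G)$. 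Hence $g_i\cdot\Omega_{F_0}=\Omega_{F_0}$ for each $i$, so $\Omega_{F_0}$ is $G$-invariant and $G$ is a function group.

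For $(\Rightarrow)$, suppose $\Omega_0$ is a $G$-invariant component and choose a $G$-translate of some component of $\Pi$ lying in $\Omega_0$; after conjugating by the appropriate group element, we may assume $\Pi_F\subset\Omega_0$ for some face $F$. Set $H=\langle g_i:v_i\in\partial F\rangle$. By the same local analysis as in $(\Leftarrow)$, the tile of $\Omega(G)$ adjacent to $h\cdot\Pi_F$ across the arc $h\cdot C_i$ (with $v_i\in\partial F$ and $h\in H$) is $hg_i\cdot\Pi_F$. A straightforward induction on this adjacency rule shows every tile in $\Omega_0$ is an $H$-translate of $\Pi_F$, giving $\Omega_0=H\cdot\Pi_F$. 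Moreover the stabilizer of $\Pi_F$ in $G$ is trivial, since $\Pi$ is a fundamental domain for $G$ on $\Omega(G)$. For any generator $g_j$, the $G$-invariance of $\Omega_0$ yields $g_j\cdot\Pi_F\subset\Omega_0=H\cdot\Pi_F$, so $g_j\cdot\Pi_F=h\cdot\Pi_F$ for some $h\in H$, and triviality of the stabilizer forces $g_j=h\in H$. Since $G$ is a free product of copies of $\mathbb{Z}/2\mathbb{Z}$ on the generators $g_1,\dots,g_n$, no generator is expressible as a word in the remaining generators, so $g_j\in H$ forces $v_j\in\partial F$. Hence every vertex lies on $\partial F$ and $\Gamma$ is outerplanar.

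The step I expect to be most delicate is the identification $\Omega_0=H\cdot\Pi_F$: one must check that the inductive tile-adjacency argument never escapes $H\cdot\Pi_F$, which follows because a tile $h\cdot\Pi_F$ has boundary arcs only on circles $h\cdot C_i$ with $v_i\in\partial F$, so generators $g_k$ with $v_k\notin\partial F$ never enter the admissible words. Once this bookkeeping is in place, the algebraic conclusion that $H=G$ (and hence outerplanarity) follows cleanly from the free-product structure of $G$.
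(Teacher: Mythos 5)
Your proof is correct and follows essentially the same route as the paper: the forward direction shows each generator preserves the component of the outer face because its circle contributes a boundary arc to $\Pi_{F_0}$, and the converse identifies a $G$-invariant component with some $\Omega_F$ and shows its stabilizer is the subgroup generated by the reflections in the circles bounding $F$. The paper simply asserts that $g_v\cdot\Pi_F\not\subseteq\Omega_F$ when $v\notin\partial F$; your tile-adjacency induction and the free-product argument supply the justification it leaves implicit.
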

\begin{proof}
If $\Gamma$ is outerplanar, then let $\Omega_F$ be the component of $\Omega(G)$ associated to the outer face $F$. It is easy to see that the standard generating set fixes $\Omega_F$, so $\Omega_F$ is invariant under $G$.

Conversely, assume that $\Gamma$ is not outerplanar. Since $\Pi$ is a fundamental domain of the action of $G$ on $\Omega(G)$, any $G$-invariant component of $\Omega(G)$ must correspond to some face of $\Gamma$. 
Let $F$ be a face of $\Gamma$.
Since $\Gamma$ is not outerplanar, there exists a vertex $v$ which is not on the boundary of $F$. Thus, $g_v \cdot \Pi_F$ is not in $\Omega_F$, and hence $\Omega_F$ is not invariant under $G$.
\end{proof}

  \begin{figure}[h!]
  \centering
   \includegraphics[width=.54\linewidth]{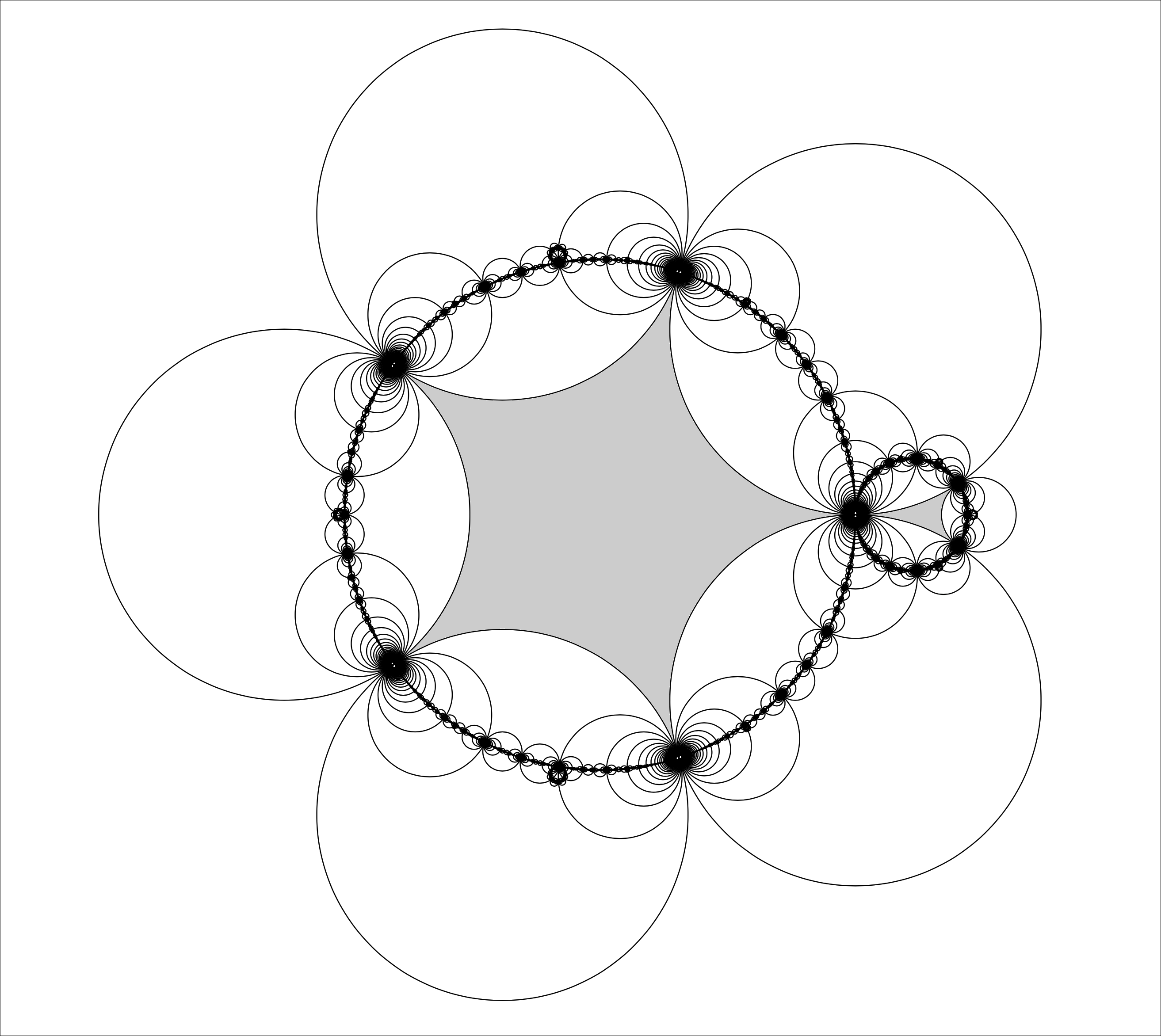}  
  \caption{The limit set of a function kissing reflection group $H$ with an outerplanar contact graph. The grey region represents the part $\Pi_b$ of the fundamental domain $\Pi(H)$ corresponding to the non-outer faces of $\Gamma(H)$. The kissing reflection group $G$ shown in Figure~\ref{fig:njd1} can be constructed by pinching a simple closed curve for $H$.}
\label{fig:njd}
\end{figure}

For a function kissing reflection group $G$, we set $\Omega_b := \Omega(G)\setminus \Omega_0$, where $\Omega_0$ is $G$-invariant.
Similarly, we shall use the notation $\Pi_b := \Pi \cap \Omega_b$ (see Figure~\ref{fig:njd}), and $\Pi^i_b := \Pi^i \cap \Omega_b$, for $i\geq0$ (see Lemma~\ref{lem:genl}).
We call the closure $\overline{\Omega_b} = \widehat\C \setminus \Omega_0$ the {\em filled limit set} for the function kissing reflection group $G$, and denote it by $\mathcal{K}(G)$.

Let $G^\pm$ be two function kissing reflection groups with the same number of vertices in their contact graphs.
We say that a kissing reflection group $G$ is a {\em geometric mating} of $G^+$ and $G^-$ if we have 
\begin{itemize}
\item a decomposition $\Omega(G) = \Omega^+(G) \sqcup \Omega^-(G)$ with $\Lambda(G) = \partial \Omega^+(G) = \partial \Omega^-(G)$;
\item weakly type preserving isomorphisms $\phi^\pm : G^\pm \longrightarrow G$;
\item continuous surjections $\psi^{\pm}: \mathcal{K}(G^\pm) \longrightarrow \overline{\Omega^\pm(G)}$ which are conformal between the interior $\mathring{\mathcal{K}}(G^\pm)$ and $\Omega^\pm(G)$
such that for any $g\in G^\pm$, $\psi^\pm \circ g|_{\mathcal{K}(G^\pm)} = \phi^\pm(g) \circ \psi^\pm$.
\end{itemize}

Note that by the semi-conjugacy relation, the sets $\Omega^\pm(G)$ are $G-$invariant.
We shall now complete the proof of the group part of Theorem~\ref{thm:gm}.

\begin{prop}\label{prop:mg}
A kissing reflection group $G$ is a geometric mating of two function kissing reflection groups if and only if the contact graph $\Gamma(G)$ of $G$ is Hamiltonian.
\end{prop}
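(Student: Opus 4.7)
My plan is to prove each direction separately. The forward direction is essentially immediate: if $G$ is a geometric mating of function kissing reflection groups $G^\pm$, then by Proposition \ref{prop:fkrg} the contact graphs $\Gamma(G^\pm)$ are $2$-connected outerplanar, and hence Hamiltonian with Hamiltonian cycle given by the boundary of the outer face. The weakly type preserving isomorphism $\phi^+ : G^+ \to G$ carries standard generating reflections to reflections, and by Lemma \ref{lem:kissingVsAH} this exhibits $\Gamma(G^+)$ as an abstract subgraph of $\Gamma(G)$ on the same vertex set (the number of generators matches). The Hamiltonian cycle of $\Gamma(G^+)$ is therefore also a Hamiltonian cycle of $\Gamma(G)$.

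For the reverse direction, suppose $\Gamma := \Gamma(G)$ has a Hamiltonian cycle $C$. Define $\Gamma^+$ (respectively $\Gamma^-$) to be the subgraph of $\Gamma$ containing all vertices of $\Gamma$, all edges of $C$, and all remaining edges of $\Gamma$ lying on the ``$+$'' (respectively ``$-$'') side of $C$. Each $\Gamma^\pm$ is $2$-connected outerplanar (the face on the opposite side of $C$ has all vertices on its boundary), and the inclusion $\Gamma^\pm \hookrightarrow \Gamma$ is an embedding of plane graphs, so $\Gamma \geq \Gamma^\pm$. Realize $\Gamma^\pm$ by kissing reflection groups $G^\pm$; by Proposition \ref{prop:fkrg} these are function kissing reflection groups, and Lemma \ref{lem:kissingVsAH} produces weakly type preserving isomorphisms $\phi^\pm : G^\pm \to G$ from the inclusions. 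Proposition \ref{thm:closure} then places $G$ in $\overline{\mathcal{QC}(\Gamma^\pm)}$ and realizes it as the algebraic limit of a pinching deformation of $G^\pm$ along the multicurve $\widetilde{\alpha}^\pm$ corresponding to the edges of $\Gamma$ on the ``$\mp$'' side of $C$.

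To construct the semi-conjugacy $\psi^+ : \mathcal{K}(G^+) \to \overline{\Omega^+(G)}$, use the canonical bijection between the non-outer faces of $\Gamma^+$ and the faces of $\Gamma$ on the ``$+$'' side of $C$ to match the ideal-polygon components of $\Pi_b(G^+)$ with those of $\Pi^+(G)$. Fix a homeomorphism between them that restricts to a conformal map on each component and matches ideal boundary identifications, and extend equivariantly via $\phi^+$ to a conformal isomorphism $\Omega_b(G^+) \to \Omega^+(G)$. Lemma \ref{lem:HC} ensures the target closure is connected, and the expansive property (Lemma \ref{lem:maxr}) applied to both $G^+$ and $G$ shows that nested tilings at level $n$ on both sides shrink uniformly to points, permitting a continuous extension of the map to $\mathcal{K}(G^+)$. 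Construct $\psi^-$ symmetrically; the decomposition $\widehat{\C} = \overline{\Omega^+(G)} \cup \overline{\Omega^-(G)}$ induced by $C$ yields the mating.

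The main obstacle is that the boundary extension must identify certain pairs of points on $\partial \Omega_0(G^+)$ --- precisely those pairs lying at the common boundary of two non-adjacent faces of $\Gamma^+$ joined in $\Gamma$ by an edge on the ``$-$'' side of $C$, equivalently the lifted endpoints of the pinched curves $\widetilde{\alpha}^+$. I expect the main work to be verifying that these identifications form a closed equivalence relation whose quotient is Hausdorff and homeomorphic to $\overline{\Omega^+(G)}$. This is where the pinching deformation interpretation from Proposition \ref{thm:closure} is essential: as the pinching parameter degenerates, the hyperbolic length of each $\widetilde{\gamma} \in \widetilde{\alpha}^+$ shrinks to zero, so the corresponding ideal arc in the fundamental domain $\Pi(G^+)$ collapses to a single point in the limit, which is exactly the combinatorial identification described above. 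Combining these pieces yields the required continuous surjection $\psi^+$ semi-conjugating $G^+$ to $G$, completing the geometric mating.
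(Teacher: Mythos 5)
Your proof is correct, and its two halves relate to the paper differently. The reverse direction is essentially the paper's own argument: split $\Gamma$ along the Hamiltonian cycle into two outerplanar graphs, realize them by function kissing reflection groups $G^\pm$, build $\psi^\pm$ tile by tile equivariantly via $\phi^\pm$, and extend continuously using the uniform shrinking of nested disks (Lemma \ref{lem:maxr}). Two small remarks there: (i) for the initial map on $\overline{\Pi^{0}_b(G^+)}$ to be conformal on the interior, the ideal polygons of corresponding faces must be conformally equivalent --- the paper arranges this by a preliminary quasiconformal deformation of $G^\pm$, which you should make explicit; (ii) your final paragraph overestimates the remaining work: the definition of geometric mating only requires a continuous, equivariant surjection $\psi^+:\mathcal{K}(G^+)\longrightarrow\overline{\Omega^+(G)}$ that is conformal on the interior, so there is nothing to verify about the identification relation being closed or its quotient being Hausdorff and homeomorphic to the target --- uniform continuity on the dense orbit of cusps (exactly your $\varepsilon$--$\delta$ argument with Lemma \ref{lem:maxr}) already yields the extension, and non-injectivity over the pinched curves is then automatic and harmless. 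The forward direction is where you genuinely diverge: you transport the Hamiltonian cycle of the outerplanar graph $\Gamma(G^+)$ (Proposition \ref{prop:fkrg}) through the abstract domination $\Gamma(G)\geq_a\Gamma(G^+)$ furnished by Lemma \ref{lem:kissingVsAH}, exploiting that Hamiltonicity depends only on the abstract graph and that the vertex sets match because the rank of a free product of copies of $\Z/2\Z$ is an isomorphism invariant. This is shorter than the paper's route, which instead uses the decomposition $\Omega(G)=\Omega^+(G)\sqcup\Omega^-(G)$ to split the fundamental domain and hence write $\Gamma(G)=\Gamma^+\cup\Gamma^-$, exhibiting the Hamiltonian cycle concretely as the common boundary of the two outer faces; the paper's version buys the explicit correspondence between marked Hamiltonian cycles and unmatings exploited immediately afterwards, while yours suffices for the stated equivalence, granting the paper's convention that the contact graph of $G$ read off from the images of the standard generators under a weakly type preserving isomorphism is the contact graph of $G$.
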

\begin{proof}
If $G$ is a geometric mating of $G^\pm$, then we have a decomposition $\Omega(G) =  \Omega^+(G) \sqcup \Omega^-(G)$ with $\Lambda(G) = \partial \Omega^+(G) = \partial \Omega^-(G)$.
This gives a decomposition of $\Pi(G) = \Pi^+ \sqcup \Pi^-$, and thus a decomposition of the contact graph $\Gamma(G) = \Gamma^+ \cup \Gamma^-$. More precisely, the vertex sets of $\Gamma^\pm$ coincide with that of $\Gamma(G)$, and there is an edge in $\Gamma^\pm$ connecting two vertices if and only if the point of intersection of the corresponding two circles lies in $\partial\Pi^\pm$.
Since the action of $G^\pm$ on $\mathcal{K}(G^\pm)$ is semi-conjugate to $G$ on $\overline{\Omega^\pm(G)}$, it follows that $\Gamma^\pm$ is isomorphic to $\Gamma(G^\pm)$ as plane graphs.
Thus, the intersection of $\Gamma^+$ and $\Gamma^-$ (which is the common boundary of the outer faces of $\Gamma^+$ and $\Gamma^-$) gives a Hamiltonian cycle for $\Gamma(G)$.

Conversely, if $\Gamma(G)$ is Hamiltonian, then a Hamiltonian cycle yields a decomposition $\Gamma(G) = \Gamma^+ \cup \Gamma^-$, where $\Gamma^+$ and $\Gamma^-$ are outerplanar graphs with vertex sets equal to that of $\Gamma(G)$.
We construct function kissing reflection groups $G^\pm$ with contact graphs $\Gamma(G^\pm) = \Gamma^\pm$.
Note that we have a natural embedding of $\Gamma(G^\pm)$ into $\Gamma(G)$ (as plane graphs), which gives an identification of vertices and non-outer faces.
Thus, we have weakly type preserving isomorphisms $\phi^\pm: G^\pm \longrightarrow G$ coming from the identification of vertices and Proposition \ref{thm:closure}.
By quasiconformal deformation, we may assume that for each non-outer face $F\in \Gamma(G^\pm)$, the associated conformal boundary component $R_F(G^\pm)\subseteq\partial\mathcal{M}(G^\pm)$ is conformally equivalent to the corresponding conformal boundary component $R_F(G)\subseteq\partial\mathcal{M}(G)$.

The existence of the desired continuous map $\psi^+$ can be derived from a more general statement on the existence of Cannon-Thurston maps (see \cite[Theorem 4.2]{MS13}).
For completeness and making the proof self-contained, we give an explicit construction of $\psi^+: \mathcal{K}(G^+) \longrightarrow \overline{\Omega^+(G)}$.
This construction is done in levels: we start with a homeomorphism $$\psi^+_0: \overline{\Pi^{0}_b(G^+)} \longrightarrow \overline{\Pi^{0, +}}.$$
This can be chosen to be conformal on the interior as the associated conformal boundaries are assumed to be conformally equivalent.
Assuming that $$\psi^+_i: \overline{\bigcup_{j=0}^i \Pi^{j}_b(G^+)} \longrightarrow \overline{\bigcup_{j=0}^i \Pi^{j,+}}$$ is constructed, we extend $\psi^+_i$ to $\psi^+_{i+1}$ by setting
$$
\psi^+_{i+1} (z) := \phi^+(g_k)\circ \psi^+_i \circ g_k(z)
$$
if $z\in \Pi^{i+1}_b(G^+) \cap D_k$.
It is easy to check by induction that $\psi^+_{i+1}$ is a homeomorphism which is conformal on the interior (see Figure \ref{fig:njd1} and Figure \ref{fig:njd}).

Let $P^{0}(G^+) := \overline{\Pi^{0}_b(G^+)}\setminus \Pi^{0}_b(G^+)$, then $P^{0}(G^+)$ consists of cusps where various components of $\Pi^{0}_b(G^+)$ touch.
Let $P^{\infty}(G^+):= \bigcup_{g\in G^+} g\cdot P^{0}(G^+)$.
Then, $P^{\infty}(G^+)$ is dense in $\Lambda(G^+)$.
We define $P^{\infty, +}$ similarly (where, $P^{0,+}:=\overline{\Pi^{0,+}}\setminus\Pi^{0,+}$), and note that $P^{\infty, +}$ is dense in $\partial \Omega^+(G) = \Lambda(G)$.

Note that $\psi^+_i = \psi^+_j$ on $P^{i}(G^+)$ for all $j \geq i$. Thus, we have a well defined limit $\psi^+ : P^{\infty}(G^+) \longrightarrow P^{\infty, +}$.
We claim that $\psi^+$ is uniformly continuous on $P^{\infty}(G^+)$.
For an arbitrary $\epsilon >0$, there exists $N$ such that all disks in $\mathcal{D}^N(G)$ have spherical diameter $< \epsilon$ by Lemma \ref{lem:maxr}.
Choose $\delta$ so that any two non-adjacent disks in $\mathcal{D}^N(G^+)$ are separated in spherical metric by $\delta$.
Then if $x,y \in P^{\infty}(G^+)$ with $d(x,y) < \delta$, they must lie in two adjacent disks of $\mathcal{D}^N(G^+)$.
Thus, $\psi^+_i(x)$ and $\psi^+_i(y)$ (whenever defined) lie in two adjacent disks of $\mathcal{D}^N(G)$ for all $i\geq N$.
Hence, $d(\psi^+_i(x), \psi^+_i(y)) < 2\epsilon$ for all $i \geq N$.
This shows that $\psi^+$ is uniformly continuous on $P^{\infty}(G^+)$.

Thus, we have a continuous extension $\psi^+:  \mathcal{K}(G^+) \longrightarrow \overline{\Omega^+(G)}$.
It is easy to check that $\psi^+$ is surjective, conformal on the interior, and equivariant with respect to the actions of $G^+$ and $G$.

The same proof gives the construction for $\psi^-$. This shows that $G$ is a geometric mating of $G^+$ and $G^-$.
\end{proof}

Note that from the proof of Proposition \ref{prop:mg}, we see that each (marked) Hamiltonian cycle $H$ gives a decomposition of the graph $\Gamma = \Gamma^+ \cup \Gamma^-$ and hence an unmating of a kissing reflection group. 
From the pinching perspective as in the proof of Proposition \ref{thm:closure}, this (marked) Hamiltonian cycle also gives a pair of non-parallel multicurves.
This pair of multicurves can be constructed explicitly from the decomposition of $\Gamma$.
Indeed, each edge in $\Gamma^\pm - H$ gives a pair of non-adjacent vertices in $H$.
The corresponding $\sigma$-invariant curve comes from these non-adjacent vertices in $H$.
If a marking is not specified on $H$, this pair of multicurves is defined only up to simultaneous change of coordinates on the two conformal boundaries.

\subsection{Nielsen maps for kissing reflection groups.}\label{br_cov_nielsen}
Let $\Gamma$ be a simple plane graph, and $G_\Gamma$ be a kissing reflection group with contact graph $\Gamma$.
Recall that $\overline{\mathcal{D}} = \bigcup_{j=1}^n \overline{D_j}$ is defined as the union of the closure of the disks for the associated circle packing.
We define the {\em Nielsen map} $\mathcal{N}_\Gamma: \overline{\mathcal{D}} \longrightarrow \widehat\C$ by
$$
\mathcal{N}_\Gamma(z) = g_j(z) \text{ if } z\in \overline{D_j}.
$$

Let us now assume that $\Gamma$ is $2$-connected, then the limit set $\Lambda(G_\Gamma)$ is connected.
Thus, each component of $\Omega(G_\Gamma)$ is a topological disk.
Let $F$ be a face with $d+1$ sides of $\Gamma$, and let $\Omega_F$ be the component of $\Omega(G)$ containing $\Pi_F$.
By a quasiconformal deformation, we can assume that the restriction of $G_\Gamma$ on $\Omega_F$ is conformally conjugate to the regular ideal $d+1$-gon reflection group on $\mathbb{D} \cong\Hyp^2$.

For the regular ideal $d+1$-gon reflection group $\mathbf{G}_d$ (whose associated contact graph is $\Gamma_d$), the $d+1$ disks yield a Markov partition for the action of the Nielsen map $\mathcal{N}_d\equiv \mathcal{N}_{\Gamma_d}$ on the limit set $\Lambda(\mathbf{G}_d)=\mathbb{S}^1$.
The diameters of the preimages of these disks under $\mathcal{N}_d$ shrink to $0$ uniformly by Lemma \ref{lem:maxr}, and hence the Nielsen map $\mathcal{N}_d$ is topologically conjugate to $z\mapsto \overline{z}^d$ on $\mathbb{S}^1$ (cf. \cite[\S 4]{LLMM19}).

Therefore, the Nielsen map $\mathcal{N}_\Gamma$ is topologically conjugate to $\overline{z}^d$ on the ideal boundary of $\Omega_F$.
This allows us to replace the dynamics of $\mathcal{N}_\Gamma$ on $\Omega_F$ by $\overline{z}^d$ for every face of $\Gamma$, and obtain a globally defined orientation reversing branched covering 
$\mathcal{G}_\Gamma: \widehat\C\longrightarrow \widehat\C$.

More precisely, let $\phi:\mathbb{D} \longrightarrow \Omega_F$ be a conformal conjugacy between $\mathcal{N}_d$ and $\mathcal{N}_\Gamma$, which extends to a topological conjugacy from the ideal boundary $\mathbb{S}^1 = I(\mathbb{D}) = \partial \mathbb{D}$ onto $I(\Omega_F)$.
Let $\psi: \overline{\mathbb{D}} \longrightarrow \overline{\mathbb{D}}$ be an arbitrary homeomorphic extension of the topological conjugacy between $\overline{z}^d\vert_{\mathbb{S}^1}$ and $\mathcal{N}_d\vert_{\mathbb{S}^1}$ fixing $1$.
We define 
\begin{align}\label{eqn:2}
\mathcal{G}_\Gamma :=\left\{\begin{array}{ll}
                     \mathcal{N}_\Gamma & \mbox{on}\ \overline{\mathcal{D}}\setminus \bigcup_{F} \Omega_F, \\
                     (\phi\circ\psi) \circ m_{-d} \circ (\phi\circ\psi)^{-1} & \mbox{on}\  \Omega_F,                                         \end{array}\right. 
\end{align}
where $m_{-d}(z) = \overline{z}^d$.

The map $\overline{z}^d$ has $d+1$ invariant rays in $\mathbb{D}$ connecting $0$ with $e^{2\pi i \cdot \frac{j}{d+1}}$ by radial line segments.
We shall refer to these rays as \emph{internal rays}.
For each $\Omega_F$, we let $\mathscr{T}_F$ be the image of the union of these $d+1$ internal rays (under $\phi\circ\psi$),
and we call the image of $0$ the {\em center} of $\Omega_F$.
This graph $\mathscr{T}_F$ is a deformation retract of $\Pi_F$ fixing the ideal points.
Thus, each ray of $\Omega_F$ lands exactly at a cusp where two circles of the circle packing touch.
We define 
\begin{align}\label{eqn:3}
\mathscr{T}(\mathcal{G}_\Gamma) := \bigcup_F \overline{\mathscr{T}_F},
\end{align}
and endow it with a simplicial structure such that the centers of the faces are vertices.
Then $\mathscr{T}(\mathcal{G}_\Gamma)$ is the planar dual to the plane graph $\Gamma$.
We will now see that $\mathcal{G}_\Gamma$ acts as a `reflection' on each face of $\mathscr{T}(\mathcal{G}_\Gamma)$.

\begin{lemma}\label{lem:uc}
Let $P$ be an open face of $\mathscr{T}(\mathcal{G}_\Gamma)$, then $\mathcal{G}_\Gamma$ is a homeomorphism sending $P$ to the interior of its complement.
\end{lemma}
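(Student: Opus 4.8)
The plan is to show that $\mathcal{G}_\Gamma$ restricts to an orientation-reversing homeomorphism from the open face $P$ onto $P':=\widehat{\C}\setminus\overline{P}$, the complementary component of $\widehat{\C}\setminus\partial P$; note that $P'=\Int(\widehat{\C}\setminus P)$ is exactly ``the interior of the complement'' in the statement. First I would fix notation: since $\mathscr{T}(\mathcal{G}_\Gamma)$ is the planar dual of $\Gamma$, the face $P$ is dual to a vertex of $\Gamma$, i.e.\ to one of the circles $C_j\in\mathcal{P}$, with disk $D_j$ and reflection $g_j$. The boundary $\partial P$ is the cycle of $\mathscr{T}$ running alternately through the centers of the faces of $\Gamma$ incident to $C_j$ and the cusps lying on $C_j$; because $\Gamma$ is $2$-connected each face is a simple cycle (so these centers are pairwise distinct) and because $\Gamma$ is simple the cusps are distinct, whence $\partial P$ is a Jordan curve and $\widehat{\C}\setminus\partial P=P\sqcup P'$ with $P,P'$ both open topological disks.

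Next I would record the structural input coming from the construction of $\mathcal{G}_\Gamma$. As the branching of $\mathcal{G}_\Gamma$ arises only from the factors $\overline{z}^{\,m-1}$ glued in on the components $\Omega_F$ (where $m$ is the number of sides of $F$), the critical points of $\mathcal{G}_\Gamma$ are precisely the centers of the faces, which are the vertices of $\mathscr{T}$. In particular $P$ contains no critical point, so $\mathcal{G}_\Gamma|_P$ is an orientation-reversing local homeomorphism. Moreover, in the local coordinate each cusp is a fixed root of unity (since $-(m-1)\equiv 1 \pmod m$) and each internal ray $r\zeta\mapsto r^{m-1}\zeta$ is mapped onto itself with its center-to-cusp orientation preserved; hence $\mathcal{G}_\Gamma$ fixes every vertex of $\mathscr{T}$ and carries every edge onto itself, so $\mathcal{G}_\Gamma|_{\partial P}$ is an \emph{orientation-preserving} self-homeomorphism of the circle $\partial P$. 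Since $\mathcal{G}_\Gamma$ reverses the orientation of $\widehat{\C}$, this forces $\mathcal{G}_\Gamma$ to interchange the two sides of $\partial P$ along the open edges, and the sector model does the same at the centers; consequently every point of $P$ sufficiently close to $\partial P$ is mapped into $P'$.

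The heart of the matter is to upgrade this collar statement to $\mathcal{G}_\Gamma(P)\subseteq P'$. As $\mathcal{G}_\Gamma(P)$ is connected and already meets $P'$, it suffices to prove $\mathcal{G}_\Gamma(P)\cap\partial P=\emptyset$. I would verify this piecewise inside $\overline{P}=\overline{D_j}\cup\bigcup_{F\ni C_j}\overline{\Delta_{F,j}}$. On the sector of a component $\Omega_F$ (with $F$ incident to $C_j$) cut out by $\overline{P}$, the model $\overline{z}^{\,m-1}$ carries a sector of angle $2\pi/m$ bounded by two internal rays onto the complementary sector of angle $2\pi-2\pi/m$; because $2\pi/m<2\pi/(m-1)$ the restriction is injective and the interior of the source sector contains no preimage of its two bounding edges, so its image meets $\partial P$ only in the fixed cusps. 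On the smaller components contained in $D_j$ and on $\Lambda(G_\Gamma)\cap\overline{P}\subseteq\overline{D_j}$, the map equals the reflection $g_j$, which sends each such component (resp.\ limit-set piece) to a distinct component (resp.\ piece) outside $\overline{D_j}$; such an image cannot lie in any $\Omega_F$ with $F\ni C_j$, for otherwise, $g_j$ stabilizing that $\Omega_F$, the original component would be part of it, contradicting that it is not one of the $\Omega_F$. Thus these images avoid the triangles $\overline{P}\setminus\overline{D_j}$ and $\partial P$, and land in $P'$. I expect this case analysis, in particular the bookkeeping over the infinitely many small components and the fractal limit set, to be the main obstacle; the expansivity of the tiling (Lemma~\ref{lem:maxr}) is what guarantees that the preimage picture near $\partial P$ is exactly that of the local models.

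Finally I would conclude by a soft covering-space argument that avoids any winding-number computation. By the previous step $\mathcal{G}_\Gamma$ restricts to a map $\mathcal{G}_\Gamma|_P\colon P\to P'$ that is an orientation-reversing local homeomorphism, and it is proper: for compact $K\subseteq P'$ the set $\mathcal{G}_\Gamma^{-1}(K)\cap\overline{P}$ is closed in the compact set $\overline{P}$ and, since $\mathcal{G}_\Gamma(\partial P)=\partial P$ is disjoint from $K$, it is contained in $P$, hence compact. A proper local homeomorphism from the connected surface $P$ onto the simply connected surface $P'$ is a covering map, and a connected covering of a simply connected base is single-sheeted, i.e.\ a homeomorphism. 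Therefore $\mathcal{G}_\Gamma$ maps $P$ homeomorphically onto $P'=\Int(\widehat{\C}\setminus\overline{P})$, the interior of its complement, as claimed.
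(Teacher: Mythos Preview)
Your argument is correct and follows the same skeleton as the paper's two-sentence proof: $2$-connectedness of $\Gamma$ makes $P$ a Jordan domain, $P$ contains no critical point, $\mathcal{G}_\Gamma(\partial P)=\partial P$, and a proper\,/\,local-homeomorphism\,/\,covering argument finishes.

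The difference lies entirely in your Step~4. The paper writes only ``it follows,'' while you carry out an explicit piecewise verification---sectors of each $\Omega_F$ under the power-map model, small Fatou components in $D_j$ under $g_j$, and the limit set in $D_j$---to establish $\mathcal{G}_\Gamma(P)\cap\partial P=\emptyset$. Your analysis is correct (and your worry about the fractal bookkeeping is unfounded: the small components and the limit set in $D_j$ really are sent by $g_j$ outside $\overline{D_j}$ and away from every $\Omega_F$ with $F\ni C_j$, exactly as you say), but it works harder than necessary. A short global count replaces it: once you know from your Step~3 that $\mathcal{G}_\Gamma$ swaps the two sides of $\partial P_j$ for \emph{every} face $P_j$, the quantity $a_j:=\vert\mathcal{G}_\Gamma^{-1}(y)\cap P_j\vert$ is constant for $y\in P_j$ and equals $a_j+1$ for $y$ in the complement; summing over the $d+1$ faces against the global degree $d$ forces $\sum_j a_j=0$, hence every $a_j=0$, whence $\mathcal{G}_\Gamma(P)\cap P=\emptyset$ and openness of $\mathcal{G}_\Gamma(P)$ then excludes $\partial P$ as well. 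Either route feeds into your Step~5, which is precisely the covering-space argument implicit in the paper's ``it follows.''
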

\begin{proof}
Since $\Gamma$ is $2$-connected, the complement of $\overline{P}$ is connected, so $P$ is a Jordan domain.
Since $P$ contains no critical point, and $\mathcal{G}_\Gamma(\partial P) = \partial P$, it follows that $\mathcal{G}_\Gamma$ sends $P$ homeomorphically to the interior of its complement. 
\end{proof}

We shall see in the next section that this branched covering $\mathcal{G}_\Gamma$ is topologically conjugate to a critically fixed anti-rational map $R$, and this dual graph $\mathscr{T}(\mathcal{G}_\Gamma)$ is the \emph{Tischler graph} associated with $R$.

\section{Critically Fixed Anti-rational Maps}\label{sec:cf}
In this section, we shall show how critically fixed anti-rational maps are related to circle packings and kissing reflection groups. As a corollary of our study, we obtain a classification of critically fixed anti-rational maps in terms of the combinatorics of planar duals of Tischler graphs. A classification of these maps in terms of combinatorial properties of Tischler graphs was given independently by Geyer in a recent work \cite{Gey20}. For the purpose of establishing a dynamical correspondence between critically fixed anti-rational maps and kissing reflection groups, planar duals of Tischler graphs turn out to be a more natural combinatorial invariant.

The arguments employed in Section~\ref{comb_prop_subsec}, where we investigate the structure (of planar duals) of Tischler graphs, parallel those used in the proof of \cite[Theorem~4.3]{Gey20}. On the other hand, \cite[\S 6]{Gey20} uses purely topological means to construct Thurston maps for the realization part of the classification theorem; while we use the branched cover $\mathcal{G}_\Gamma$ of Section~\ref{br_cov_nielsen}, which is cooked up from Nielsen maps of kissing reflection groups (see Section~\ref{sec:to}).

We define an anti-polynomial $P$ of degree $d$ as
$$
P(z) = a_d \overline{z}^d + a_{d-1} \overline{z}^{d-1} + ... + a_0
$$
where $a_i\in \C$ and $a_d \neq 0$.
An anti-rational map $R$ of degree $d$ is the ratio of two anti-polynomials
$$
R(z) = \frac{P(z)}{Q(z)}
$$
where $P$ and $Q$ have no common zeroes and the maximum degree of $P$ and $Q$ is $d$.
An anti-rational map of degree $d$ is an orientation reversing branched covering of $\widehat\C$.
It is said to be {\em critically fixed} if all of its critical points are fixed.
The Julia set and Fatou set of anti-rational maps can be defined as in the rational setting.

\subsection{Tischler graph of critically fixed anti-rational maps.}\label{comb_prop_subsec}
Let $R$ be a critically fixed anti-rational map of degree $d$. Let $c_1,\cdots, c_k$ be the distinct critical points of $R$, and the local degree of $R$ at $c_i$ be $m_i$ ($i=1,\cdots, k$). Since $R$ has $(2d-2)$ critical points counting multiplicity, we have that 
$$
\displaystyle\sum_{i=1}^k (m_i-1)=2d-2\ \implies\ \displaystyle\sum_{i=1}^k m_i=2d+k-2.
$$

Suppose that $U_i$ is the invariant Fatou component containing $c_i$ ($i=1,\cdots, k$). Then, $U_i$ is a simply connected domain such that $R\vert_{U_i}$ is conformally conjugate to $\overline{z}^{m_i}\vert_\D$ \cite[Theorem~9.3]{Milnor06}. This defines internal rays in $U_i$, and $R$ maps the internal ray at angle $\theta\in\R/\Z$ to the one at angle $-m_i\theta\in\R/\Z$. It follows that there are $(m_i+1)$ fixed internal rays in $U_i$. A straightforward adaptation of \cite[Theorem~18.10]{Milnor06} now implies that all these fixed internal rays land at repelling fixed points on $\partial U_i$.

We define the \emph{Tischler graph} $\mathscr{T}$ of $R$ as the union of the closures of the fixed internal rays of $R$.

\begin{lemma}\label{fixed_points_count}
$R$ has exactly $(d+2k-1)$ distinct fixed points in $\widehat{\C}$ of which $(d+k-1)$ lie on the Julia set of $R$. 
\end{lemma}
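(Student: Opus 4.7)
The plan is to apply the Lefschetz-Hopf fixed point theorem to $R$, viewed as a self-map of $\widehat{\C}$. Since $R$ is a degree-$d$ anti-holomorphic branched cover, it is orientation reversing with topological degree $-d$; combined with $H^0(\widehat{\C})\cong H^2(\widehat{\C})\cong\Z$ and $H^1(\widehat{\C})=0$, this gives Lefschetz number
\[
L(R)=1+(-d)=1-d.
\]
As $R$ is anti-holomorphic and not the identity, its fixed points are isolated, so Lefschetz-Hopf reads $1-d=\sum_{p:\, R(p)=p}\mathrm{ind}_p(R)$.

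Next I would classify the fixed points. Each of the $k$ critical points is, by hypothesis, a super-attracting fixed point in the Fatou set. The key claim is that every other fixed point of $R$ is repelling. To see this I would invoke the Fatou-Sullivan classification: every periodic Fatou component is attracting, parabolic, Siegel, or Herman, and the latter three types require the post-critical set to accumulate on a neutral cycle or on the boundary of a rotation domain. But the post-critical set of $R$ is the finite set $\{c_1,\dots,c_k\}$, entirely contained in super-attracting basins, so those three possibilities are excluded; Cremer fixed points are ruled out analogously, since every Cremer point is a limit point of the post-critical set (Ma\~{n}\'{e}). Hence every non-critical fixed point lies on $\mathcal{J}(R)$ and is repelling.

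Finally I would compute local indices. In suitable local coordinates near $c_i$ the map takes the form $z\mapsto\overline{z}^{m_i}$, and the winding number of $z-\overline{z}^{m_i}$ about $0$ equals $+1$. At a repelling fixed point, $R$ is linearized (in the anti-holomorphic sense) by $z\mapsto a\overline{z}$ with $|a|>1$; a direct inspection of the curve $\theta\mapsto re^{i\theta}-are^{-i\theta}$ shows that $z-a\overline{z}$ winds $-1$ times around $0$. Writing $r$ for the number of repelling fixed points, the Lefschetz-Hopf formula becomes $k-r=1-d$, giving $r=d+k-1$; the total number of distinct fixed points is therefore $k+r=d+2k-1$, with exactly $d+k-1$ lying on the Julia set. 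The only delicate step in this approach is the exclusion of indifferent fixed points, for which the critically fixed hypothesis (forcing the post-critical set to be a finite subset of $\mathcal{F}(R)$) is essential.
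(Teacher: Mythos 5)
Your argument is correct and is essentially the paper's own proof: the paper likewise observes that $R$ has no neutral fixed points and exactly $k$ attracting ones, and then invokes the Lefschetz fixed point theorem, outsourcing the local index computation (which you carry out explicitly, with the correct values $+1$ at the superattracting points and $-1$ at the repelling ones) to \cite[Lemma~6.1]{LM14}. The one small point worth adding is that a \emph{non-critical attracting} fixed point is also impossible, since its immediate basin would have to contain a critical point, and all critical points are already fixed elsewhere.
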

\begin{proof}
Note that $R$ has no neutral fixed point and exactly $k$ attracting fixed points. The count of the total number of fixed points of $R$ now follows from the Lefschetz fixed point theorem (see \cite[Lemma~6.1]{LM14}).
\end{proof}

\begin{lemma}\label{pairwise_landing}
The fixed internal rays of $R$ land pairwise.
\end{lemma}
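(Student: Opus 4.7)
The plan is to establish pairwise landing by a counting argument combined with a local analysis at repelling fixed points. First I count the total number of fixed internal rays. Since each invariant Fatou component $U_i$ is conformally conjugate to $(\D, \overline{z}^{m_i})$, the number of fixed internal rays in $U_i$ equals the number of $\theta \in \R/\Z$ with $-m_i\theta \equiv \theta \pmod{1}$, namely $m_i+1$. Using the Riemann--Hurwitz identity $\sum_{i=1}^k(m_i-1) = 2d-2$ recorded in the paragraph before Lemma~\ref{fixed_points_count}, the total number of fixed internal rays is
$$\sum_{i=1}^k (m_i+1) \;=\; (2d+k-2) + k \;=\; 2(d+k-1).$$
Lemma~\ref{fixed_points_count} guarantees that there are exactly $d+k-1$ fixed points of $R$ on $\mathcal{J}(R)$, and by the landing statement established just before that lemma, every fixed internal ray lands at one of these repelling fixed points.

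The main technical step is to show that each repelling fixed point $p\in\mathcal{J}(R)$ receives at most two fixed internal rays. For this I would use the local dynamics: because $R$ is anti-holomorphic and $p$ is a repelling fixed point of $R$, the holomorphic map $R\circ R$ has a repelling fixed point at $p$ with multiplier $|\lambda|^2>1$, and the classical Koenigs linearization of $R\circ R$ can be promoted to a local coordinate in which $R$ itself takes the form $z\mapsto \lambda \overline{z}$ with $|\lambda|>1$. The $R$-invariant tangent directions at the origin for this linear model are the solutions of $2\theta\equiv \arg\lambda\pmod{2\pi}$, which gives exactly two directions. Any fixed internal ray landing at $p$ must approach $p$ tangent to one of these two invariant directions.

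To complete the bound, I would rule out two distinct fixed rays landing from the same invariant tangent direction: in the linearizing coordinate they would bound a thin topological sector at $p$, and the expansion $z\mapsto \lambda \overline{z}$ would eventually push any compact subset of this sector outside a fixed neighborhood of $p$, contradicting $R$-invariance of the two bounding rays. Thus each of the $d+k-1$ repelling fixed points on $\mathcal{J}(R)$ accepts at most two fixed internal rays; since the total count is $2(d+k-1)$, the pigeonhole principle forces exactly two rays to land at each, which is the desired pairwise landing. The principal obstacle is the local uniqueness step, but once the linearization $z\mapsto \lambda\overline{z}$ is in place this is a routine check on the model; the rest is bookkeeping.
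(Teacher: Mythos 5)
Your global strategy --- counting $\sum_i(m_i+1)=2(d+k-1)$ fixed internal rays, noting via Lemma~\ref{fixed_points_count} that only $d+k-1$ repelling fixed points are available as landing points, bounding the number of rays landing at any one point by two, and then applying the pigeonhole principle --- is exactly the paper's argument, and your counting and pigeonhole steps are correct. The issue is with your justification of the local bound ``at most two rays per fixed point.''

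You assert that a fixed internal ray landing at $p$ must approach $p$ tangentially to one of the two invariant directions of the linear model $w\mapsto\lambda\overline{w}$. This is not automatic: an invariant embedded arc of $w\mapsto\lambda\overline{w}$ landing at $0$ need not have a tangent direction at all. Normalizing so that $\lambda=c>1$ is real, the curve $\{re^{i\theta(r)}:0<r\leq r_0\}$ with $\theta(r)=\epsilon\sin(\pi\log r/\log c)$ satisfies $\theta(cr)=-\theta(r)$, hence is invariant, is embedded, and lands at $0$ with its set of accumulation directions equal to the whole interval $[-\epsilon,\epsilon]$; in general one only knows that this accumulation set is a connected, conjugation-symmetric subset of the circle. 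So the dichotomy ``each ray picks one of the two invariant directions, and at most one ray per direction'' does not get off the ground without a further (nontrivial) argument that the internal rays land tangentially, which you have not supplied. The paper obtains the same bound by a softer observation that sidesteps tangency entirely: $R$ is an orientation-reversing local homeomorphism near $p$ fixing each of the rays that land there, so it must simultaneously preserve (since every ray is fixed) and reverse (since it reverses orientation) the cyclic order of these disjoint arcs around $p$, which is impossible once three or more of them land at $p$. If you replace your linearization step by this cyclic-order argument, the rest of your proof goes through verbatim.
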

\begin{proof}
As $R$ is a local orientation reversing diffeomorphism in a neighborhood of the landing point of each internal ray, it follows that at most two distinct fixed internal rays may land at a common point.

Note that the total number of fixed internal rays of $R$ is 
$$
\displaystyle\sum_{i=1}^k (m_i+1)=2(d+k-1),
$$ 
while there are only $(d+k-1)$ landing points available for these rays by Lemma \ref{fixed_points_count}. Since no more than two fixed internal rays can land at a common fixed point, the result follows.
\end{proof}

In our setting, it is more natural to put a simplicial structure on $\mathscr{T}$ so that the vertices correspond to the critical points of $R$.
We will refer to the repelling fixed points on $\mathscr{T}$ as the {\em midpoints} of the edges.
To distinguish an edge of $\mathscr{T}$ from an arc connecting a vertex and a midpoint, we will call the latter an  {\em internal ray} of $\mathscr{T}$.

\begin{cor}\label{valence_restriction}
The valence of the critical point $c_i$ (as a vertex of $\mathscr{T}$) is $(m_i+1)$ ($i=1,\cdots, k$). The repelling fixed points of $R$ are in bijective correspondence with the edges of $\mathscr{T}$. 
\end{cor}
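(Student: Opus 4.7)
The plan is to verify both statements by reducing them to the preceding lemmas together with a straightforward count. For the valence claim, I would first observe that in the simplicial structure on $\mathscr{T}$, an edge incident to the vertex $c_i$ is (by definition) the union of a fixed internal ray in $U_i$ emanating from $c_i$ together with its pair at the common landing point, where the pairing is furnished by Lemma~\ref{pairwise_landing}. Consequently, the edges incident to $c_i$ are in bijective correspondence with the fixed internal rays of $R\vert_{U_i}$ issuing from $c_i$. As established in the paragraph preceding Lemma~\ref{fixed_points_count}, there are exactly $(m_i+1)$ such rays, which gives the desired valence.

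For the bijection between edges and repelling fixed points, I would define a map $\Phi$ from the edge set of $\mathscr{T}$ to the set of repelling fixed points of $R$ by sending each edge to its midpoint. Injectivity of $\Phi$ is immediate: distinct edges are constituted by disjoint pairs of fixed internal rays, so their common landing points are distinct by Lemma~\ref{pairwise_landing}. For surjectivity I would argue by counting. The total number of fixed internal rays is
\[
\sum_{i=1}^{k}(m_i+1) \;=\; (2d+k-2)+k \;=\; 2(d+k-1),
\]
using the formula $\sum m_i = 2d+k-2$ from the opening of Section~\ref{comb_prop_subsec}. Since these rays land pairwise by Lemma~\ref{pairwise_landing}, the number of edges of $\mathscr{T}$ is exactly $(d+k-1)$, which matches the number of repelling fixed points of $R$ provided by Lemma~\ref{fixed_points_count}. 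Consequently $\Phi$ is a bijection; equivalently, every repelling fixed point of $R$ is the common landing point of exactly one pair of fixed internal rays.

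The argument is essentially bookkeeping once Lemmas~\ref{fixed_points_count} and~\ref{pairwise_landing} are available, so I do not anticipate any substantive obstacle at this stage; the genuine content lies upstream, in the Lefschetz calculation of the fixed-point count and in the pairwise landing statement, which in turn rests on the fact that $R$ is a local orientation-reversing diffeomorphism at each repelling fixed point and hence admits at most two fixed internal rays landing there.
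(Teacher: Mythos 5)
Your proposal is correct and follows essentially the same route as the paper: the corollary is stated without a separate proof precisely because the counting argument in the proof of Lemma~\ref{pairwise_landing} (namely, $2(d+k-1)$ fixed internal rays landing at most two per point on only $(d+k-1)$ available repelling fixed points) already forces exactly two rays per repelling fixed point, yielding both the valence count $(m_i+1)$ and the edge--fixed-point bijection. Your bookkeeping, including the identity $\sum_{i}(m_i+1)=2(d+k-1)$, matches the paper's intended argument.
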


The proof of Lemma~\ref{pairwise_landing} implies the following (see \cite[Corollary~6]{H19} for the same statement in the holomorphic setting).
\begin{cor}\label{fixed_are_vertices}
Each fixed point of $R$ lies on (the closure of) a fixed internal ray.
\end{cor}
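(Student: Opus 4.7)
The plan is a counting argument. By Lemma~\ref{fixed_points_count}, $R$ has exactly $d+2k-1$ distinct fixed points on $\widehat{\C}$, of which $k$ are the (attracting) critical fixed points $c_1,\dots,c_k$ and the remaining $d+k-1$ are repelling fixed points on the Julia set. I will show that $\mathscr{T}$ already contains all of these $d+2k-1$ points, which immediately yields the corollary.

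First, the critical fixed points $c_1,\dots,c_k$ are, by definition, the vertices of $\mathscr{T}$ (each $c_i$ is the base point of its $m_i+1$ fixed internal rays in $U_i$), so they lie on $\mathscr{T}$; this accounts for $k$ fixed points. Next, the total number of fixed internal rays is
\[
\sum_{i=1}^k (m_i+1) \;=\; 2(d+k-1),
\]
and by Lemma~\ref{pairwise_landing} these rays land pairwise at repelling fixed points. Hence the landing map from fixed internal rays to repelling fixed points is a $2$-to-$1$ surjection onto a set of at most $d+k-1$ repelling fixed points. Since Lemma~\ref{fixed_points_count} tells us there are exactly $d+k-1$ repelling fixed points available on the Julia set, the landing map must in fact hit all of them. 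Equivalently (see Corollary~\ref{valence_restriction}), the midpoints of the $d+k-1$ edges of $\mathscr{T}$ are in bijection with the repelling fixed points of $R$.

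Combining the two parts, every attracting fixed point appears as a vertex of $\mathscr{T}$ and every repelling fixed point appears as the midpoint of an edge of $\mathscr{T}$, so each fixed point of $R$ lies in the closure of some fixed internal ray. There is no real obstacle here beyond being careful that the pairwise landing in Lemma~\ref{pairwise_landing} is actually saturated, which follows simply because the number of available landing points matches half the number of fixed internal rays.
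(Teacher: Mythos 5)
Your proof is correct and is exactly the argument the paper intends: the paper derives the corollary with the one-line remark that ``the proof of Lemma~\ref{pairwise_landing} implies the following,'' and the content of that remark is precisely your saturation count --- $2(d+k-1)$ fixed internal rays landing at most two-to-one on the $(d+k-1)$ Julia fixed points must hit all of them, while the $k$ Fatou fixed points are the critical vertices themselves.
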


We are now ready to establish the key properties of the Tischler graph of a critically fixed anti-rational map that will be used in the combinatorial classification of such maps.

\begin{lemma}\label{lem:jd}
The faces of $\mathscr{T}$ are Jordan domains.
\end{lemma}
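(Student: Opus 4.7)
The plan is to show that $\mathscr{T}$ is a $2$-connected plane graph; by the standard fact that every face of a $2$-connected plane graph is bounded by a simple closed curve, the lemma will follow. The starting point is the forward invariance of $\mathscr{T}$: in the B\"ottcher coordinate $\phi_i\colon U_i\to\mathbb{D}$ conjugating $R|_{U_i}$ to $\overline{z}^{m_i}$, the fixed internal ray at angle $j/(m_i+1)$ is a radial segment on which $R$ acts by $r\mapsto r^{m_i}$, so each fixed ray is set-wise invariant with both endpoints fixed. By Lemma~\ref{pairwise_landing}, every edge of $\mathscr{T}$ is the union of two such rays glued at a common repelling fixed point, so every edge $e$ satisfies $R(e)=e$, and hence $R(\mathscr{T})=\mathscr{T}$. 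Consequently, for every face $F$ the restriction $R|_F$ is an unbranched local homeomorphism (no critical points lie in $F$) and $R(\partial F)=\partial F$.

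I would next establish connectedness of $\mathscr{T}$ by contradiction. If $\mathscr{T}=\mathscr{T}_1\sqcup\mathscr{T}_2$, then some face $F_0$ has its closure meeting both components, so $F_0$ has disconnected boundary and hence strictly negative Euler characteristic. The restriction $R|_{\overline{F_0}}$ is a proper continuous map mapping $\partial F_0$ onto itself, so $R|_{F_0}$ is an unbranched covering of $F_0$ onto its image $R(F_0)$, and Riemann--Hurwitz forces $\chi(R(F_0))=\chi(F_0)/n<0$ where $n$ is the covering degree. A careful identification of $R(F_0)$ as (an open subset of) a union of faces of $\mathscr{T}$ then contradicts the fact that each face of $\mathscr{T}$ has Euler characteristic at most $1$. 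The subtle point is that $R^{-1}(\mathscr{T})$ strictly contains $\mathscr{T}$, since each fixed ray in $U_i$ has $m_i-1$ non-fixed preimage rays inside $U_i$; one must first locate these extra preimage rays within the sector structure of $U_i$ to guarantee that the Riemann--Hurwitz comparison goes through.

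Finally I would rule out cut vertices of $\mathscr{T}$. Suppose a critical point $c_i$ were a cut vertex, so that two of the $m_i+1$ local sectors $\Sigma_{j_1}, \Sigma_{j_2}$ at $c_i$ belonged to the same face $F$. A direct computation in B\"ottcher coordinates shows that $R$ sends each sector $\Sigma_j$ at $c_i$ (bounded by two consecutive fixed rays) onto the union of the other $m_i$ sectors, i.e.\ onto the local complement of $\overline{\Sigma_j}$ near $c_i$. Hence $R$ would wrap a neighborhood of $c_i$ in $\Sigma_{j_1}\subseteq F$ over $\Sigma_{j_2}\subseteq F$, creating self-overlap of $R|_F$ incompatible with its being a local homeomorphism that sends $\partial F$ onto $\partial F$. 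Therefore $\mathscr{T}$ has no cut vertex, so $\mathscr{T}$ is $2$-connected and every face is a Jordan domain. The main obstacle is the connectedness step: the non-fixed preimage rays inside each $U_i$ obstruct a direct Euler-characteristic comparison and require careful bookkeeping of the sector structure before the contradiction can be carried through cleanly.
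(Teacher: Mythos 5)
Your overall strategy (prove $\mathscr{T}$ is $2$-connected, then invoke the standard fact about face boundaries of $2$-connected plane graphs) is legitimate in principle, but both of your intermediate steps have genuine gaps, one of which you flag yourself. For connectedness: the Euler-characteristic comparison rests on $R|_{F_0}$ being an unbranched covering of $F_0$ onto its image, and this is false in general. Since $R^{-1}(\mathscr{T})$ strictly contains $\mathscr{T}$, the face $F_0$ is cut by the extra preimage arcs into several pieces, each of which covers some (possibly different) face of $\mathscr{T}$; there is no single covering $F_0\to R(F_0)$ to which Riemann--Hurwitz applies, and "identifying $R(F_0)$ as a union of faces" does not repair this. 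Moreover, even granting such a covering, your claim that a face meeting two components of $\mathscr{T}$ has $\chi<0$ is wrong: its complement could have exactly two components, making $F_0$ an annulus with $\chi=0$, in which case $\chi(R(F_0))=\chi(F_0)/n$ reads $0=0$ and yields no contradiction. For the cut-vertex step, the punchline "creating self-overlap of $R|_F$ incompatible with its being a local homeomorphism" is not a valid deduction: a local homeomorphism need not be injective, so $R$ wrapping one sector of $F$ over another sector of $F$ contradicts nothing as stated.

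The paper's proof sidesteps both problems by arguing face by face and \emph{pulling back} rather than pushing forward: for a face $F$ it organizes the components of $\widehat\C\setminus\overline{F}$ into a tree, picks an extremal component $U$ (a disk), and uses the fact that $R$ reflects across each invariant edge together with the local sector dynamics at critical points to produce a component $V$ of $R^{-1}(U)$ with $\partial U\subseteq\partial V$ and $V\subseteq F$; since $V$ contains no critical points, Riemann--Hurwitz forces $V$ to be a disk mapping homeomorphically onto $U$, whence $V=F$ is a Jordan domain. Pulling back the disk $U$ avoids any bookkeeping of the extra components of $R^{-1}(\mathscr{T})$ inside $F$, and connectedness of $\mathscr{T}$ then comes out as a corollary instead of being needed as an input. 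If you want to salvage your route, you would need an honest argument for connectedness and for the absence of cut vertices that does not treat $R|_{F}$ as a covering of its image; as written, both steps fail.
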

\begin{proof}
Let $F$ be a face of the Tischler graph $\mathscr{T}$.
Then a component of the ideal boundary $I(F)$ consists of a sequence of edges $e_1,..., e_m$ of $\mathscr{T}$, oriented counter clockwise viewed from $F$.
Note that a priori, $e_i$ may be equal to $e_j$ in $\mathscr{T}$ for different $i$ and $j$.

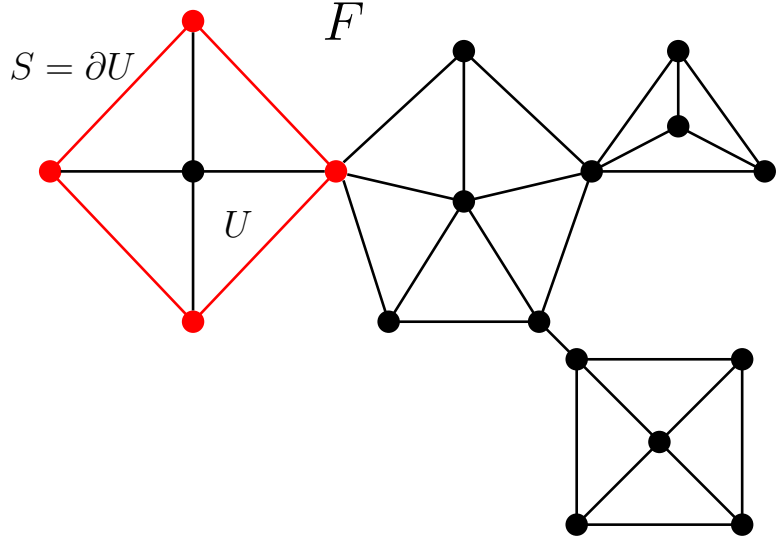
\begin{figure}[ht!]
\begin{tikzpicture}[thick]
  \node at (4.5,4) [circle,fill=red,inner sep=3pt] {};
  \node at (6.4,6) [circle,fill=red,inner sep=3pt] {};
     \node at (6.4,2) [circle,fill=red,inner sep=3pt] {};
  \node at (8.3,4) [circle,fill=red,inner sep=3pt] {};
  \node at (6.4,4) [circle,fill=black,inner sep=3pt] {};
  \node at (10,5.6) [circle,fill=black,inner sep=3pt] {}; 
  \node at (11.7,4) [circle,fill=black,inner sep=3pt] {}; 
   \node at (9,2) [circle,fill=black,inner sep=3pt] {};
    \node at (11,2) [circle,fill=black,inner sep=3pt] {};
    \node at (11.5,1.5) [circle,fill=black,inner sep=3pt] {};
    \node at (10,3.6) [circle,fill=black,inner sep=3pt] {};
        \node at (14,4) [circle,fill=black,inner sep=3pt] {};
         \node at (12.85,5.6) [circle,fill=black,inner sep=3pt] {};
         \node at (12.85,4.6) [circle,fill=black,inner sep=3pt] {};  
  \node at (13.7,1.5) [circle,fill=black,inner sep=3pt] {};
 \node at (13.7,-0.7) [circle,fill=black,inner sep=3pt] {};
  \node at (11.5,-0.7) [circle,fill=black,inner sep=3pt] {};
  \node at (12.6,0.4) [circle,fill=black,inner sep=3pt] {};
  \node at (2.5,0) {}; 
  
  \draw[-,line width=1pt,red] (4.5,4)->(6.4,6);
   \draw[-,line width=1pt,red] (6.4,6)->(8.3,4);
  \draw[-,line width=1pt,red] (8.3,4)->(6.4,2);
  \draw[-,line width=1pt,red] (6.4,2)->(4.5,4);
   \draw[-,line width=1pt] (6.4,4)->(6.4,5.85);
   \draw[-,line width=1pt] (6.4,4)->(8.15,4);
  \draw[-,line width=1pt] (6.4,4)->(6.4,2.15);
  \draw[-,line width=1pt] (6.4,4)->(4.64,4);
 \draw[-,line width=1pt] (8.4,4.12)->(10,5.6);
   \draw[-,line width=1pt] (10,5.6)->(11.7,4);
  \draw[-,line width=1pt] (11.7,4)->(11,2);
  \draw[-,line width=1pt] (11,2)->(9,2);
 \draw[-,line width=1pt] (9,2)->(8.4,3.88);
  \draw[-,line width=1pt] (10,3.6)->(10,5.6);
   \draw[-,line width=1pt] (10,3.6)->(11.7,4);
  \draw[-,line width=1pt] (10,3.6)->(11,2);
  \draw[-,line width=1pt] (10,3.6)->(9,2);
 \draw[-,line width=1pt] (10,3.6)->(8.45,3.96);
 \draw[-,line width=1pt] (11.7,4)->(14,4);
  \draw[-,line width=1pt] (14,4)->(12.85,5.6);
 \draw[-,line width=1pt] (12.85,5.6)->(11.7,4); 
  \draw[-,line width=1pt] (12.85,4.6)->(14,4);
  \draw[-,line width=1pt] (12.85,4.6)->(12.85,5.6);
 \draw[-,line width=1pt] (12.85,4.6)->(11.7,4);         
  \draw[-,line width=1pt] (11.5,1.5)->(13.7,1.5);
  \draw[-,line width=1pt] (13.7,1.5)->(13.7,-0.7);
  \draw[-,line width=1pt] (13.7,-0.7)->(11.5,-0.7);
  \draw[-,line width=1pt] (11.5,-0.7)->(11.5,1.5);  
    \draw[-,line width=1pt] (12.6,0.4)->(13.7,1.5);
  \draw[-,line width=1pt] (12.6,0.4)->(13.7,-0.7);
  \draw[-,line width=1pt] (12.6,0.4)->(11.5,-0.7);
  \draw[-,line width=1pt] (12.6,0.4)->(11,2);  
   \node at (8.4,6)  {\begin{huge}$F$\end{huge}};  
     \node at (7,3.3)  {\begin{Large}$U$\end{Large}};
 \node at (4.8,5.4)  {\begin{Large}$S=\partial U$\end{Large}}; 
   \end{tikzpicture}
   \caption{An a priori possible schematic picture of a component of the boundary of $F$.}
   \label{face_jordan_fig}
   \end{figure}

Consider the graph $T$ whose vertices are components of $\widehat\C\setminus\overline{F}$, and two vertices $U, V$ are connected by an edge if there is a path in $\widehat\C\setminus F$ connecting $U,V$ and not passing through other components.
Then $T$ is a finite union of trees.
Since each vertex (of $\mathscr{T}$) has valence at least $3$, we have that $e_i\neq e_{i+1}$.
This also implies that two adjacent components $U,V$ either share a common boundary vertex, or there exists an edge of $\mathscr{T}$ connecting them.
Let $U$ be a component of $\widehat\C\setminus\overline{F}$.
If $e_i, e_j$ are a pair of adjacent edges of $\partial U$, but $e_i, e_j$ are not adjacent on the ideal boundary $I(F)$, then there exists a component $V$ of $\widehat\C\setminus\overline{F}$ `attached' to $U$ through the vertex $v = e_i \cap e_j$ (see Figure~\ref{face_jordan_fig}).
Therefore, the number of pairs of adjacent edges of $\partial U$ that are not adjacent on the ideal boundary $I(F)$ equals to the valence of $U$ in $T$.
Choose $U$ to be an end point of $T$, and let $S = \partial U$.

Since $R$ is orientation reversing and each edge of $\mathscr{T}$ is invariant under the map, $R$ reflects the two sides near each open edge of $S$.
Thus, there is a connected component $V$ of $R^{-1}(U)$ with $\partial V \cap \partial U \neq \emptyset$ and $V \cap F \neq \emptyset$. It is easy to see from the local dynamics of $\overline{z}^{m_i}$ at the origin that near a critical point, $R$ sends the region bounded by two adjacent edges of $\mathscr{T}$ to its complement.
Therefore, by our choice of $U$, we have $\partial U \subseteq \partial V$. 
Since $\partial F$ is $R$-invariant, $V \cap \partial F = \emptyset$.
Thus, $V \subseteq F$ and $V$ contains no critical points of $R$.
As $U$ is a disk, the Riemann-Hurwitz formula now implies that $V$ is a disk, and $R$ is a homeomorphism from $\partial V$ to $\partial U$.
Therefore, $\partial U = \partial V$ and $V = F$; so $F$ is a Jordan domain.
\end{proof}

In particular, we have the following.
\begin{cor}
The Tischler graph $\mathscr{T}$ is connected.
\end{cor}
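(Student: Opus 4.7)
My plan is to prove this by contradiction, leveraging Lemma~\ref{lem:jd} as the main tool. Specifically, I would assume that $\mathscr{T}$ is disconnected and exhibit a face of $\mathscr{T}$ that fails to be a Jordan domain because its topological boundary is disconnected, which directly contradicts the lemma.

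To make the construction of such a face concrete, I would first pick two distinct connected components $\mathscr{T}_1$ and $\mathscr{T}_2$ of $\mathscr{T}$. Since $\mathscr{T}_1$ is a finite (hence compact) graph in $\widehat{\C}$ and is disjoint from $\mathscr{T}_2$, the component $\mathscr{T}_2$ lies in a single component $V$ of $\widehat{\C}\setminus\mathscr{T}_1$. I would then look at $V\setminus(\mathscr{T}\setminus\mathscr{T}_1)$, whose connected components are faces of $\mathscr{T}$. Among these, I can single out a face $F$ whose boundary meets $\mathscr{T}_2$; since $F\subseteq V$ is contained in a component of the complement of $\mathscr{T}_1$ whose boundary lies entirely in $\mathscr{T}_1$, the face $F$ must also accumulate on $\mathscr{T}_1$. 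Thus $\partial F$ contains points of both $\mathscr{T}_1$ and $\mathscr{T}_2$.

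The key observation is then that $\partial F$ cannot be connected: if it were, it would be a connected subset of the disjoint union $\mathscr{T}_1\sqcup\mathscr{T}_2\sqcup(\text{other components})$ that meets at least two distinct summands, which is impossible (the components of $\mathscr{T}$ are separated by open neighborhoods in $\widehat{\C}$ since $\mathscr{T}$ is compact). Therefore $F$ has disconnected topological boundary, so $F$ cannot be homeomorphic to an open disk bounded by a Jordan curve. This contradicts Lemma~\ref{lem:jd}, and so $\mathscr{T}$ must be connected.

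I do not expect any substantial obstacle here; the only point requiring a touch of care is the verification that $\partial F$ genuinely meets both $\mathscr{T}_1$ and $\mathscr{T}_2$, which as indicated above uses only the fact that $F$ is a component of $V\setminus\mathscr{T}_2$ and that $\partial V\subseteq\mathscr{T}_1$. No additional dynamical input beyond Lemma~\ref{lem:jd} is needed.
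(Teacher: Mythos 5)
Your overall strategy is exactly the intended one (the paper states this corollary with no proof, as an immediate consequence of Lemma~\ref{lem:jd}: a Jordan domain has connected boundary, while a disconnected graph forces some face to have disconnected boundary). However, the one step you flag as "requiring a touch of care" is justified incorrectly. You choose $F$ to be a face in $V$ whose boundary meets $\mathscr{T}_2$, and then assert that $F$ must accumulate on $\mathscr{T}_1$ merely because $F\subseteq V$ and $\partial V\subseteq\mathscr{T}_1$. That inference is false: containment in $V$ does not force $\overline{F}$ to reach $\partial V$. For instance, if $\mathscr{T}_2$ has several faces of its own inside $V$ (say $\mathscr{T}_2$ is a figure\nobreakdash-eight sitting inside $V$), then a face of $\mathscr{T}$ nested entirely inside $\mathscr{T}_2$ has boundary meeting $\mathscr{T}_2$ but closure disjoint from $\mathscr{T}_1$; such a face satisfies your selection criterion but yields no contradiction.

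The fix is to reverse the selection. First choose $F$ to be a face of $\mathscr{T}$ contained in $V$ with $\partial F\cap\mathscr{T}_1\neq\emptyset$: take a point $q\in V$ whose distance to $\mathscr{T}_1$ is smaller than the (positive) distance from $\mathscr{T}_1$ to the other components of $\mathscr{T}$; the nearest point of $\mathscr{T}$ to $q$ then lies in $\mathscr{T}_1$ and is a boundary point of the face containing $q$. Now observe that $\mathscr{T}_2\subseteq V\setminus F$, so $F$ is a nonempty proper open subset of the connected open set $V$; hence $\partial F\cap V\neq\emptyset$, and every point of $\partial F\cap V$ lies in $\mathscr{T}\cap V\subseteq\mathscr{T}\setminus\mathscr{T}_1$. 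Thus $\partial F$ meets both $\mathscr{T}_1$ and $\mathscr{T}\setminus\mathscr{T}_1$, and since the finitely many components of the compact set $\mathscr{T}$ are at positive distance from one another, $\partial F$ is disconnected. This contradicts Lemma~\ref{lem:jd} exactly as you intend. With this repair your argument is complete; as written, the existence of a single face whose boundary meets two components has not actually been established.
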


In fact, the Tischler graph gives a topological model for the map $R$.
\begin{cor}\label{cor:tm}
Let $F$ be a face of $\mathscr{T}$, and $F^c = \overline{\widehat\C\setminus F}$ be the closure of its complement. Then $R: \overline{F} \longrightarrow F^c$ is an orientation reversing homeomorphism.
\end{cor}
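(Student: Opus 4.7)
The plan is to show $R|_{\overline F}$ is a continuous bijection onto $F^c$; since $\overline F$ is compact and $F^c$ is Hausdorff, continuity then upgrades this to a homeomorphism, and orientation reversal is inherited from $R$. By Lemma \ref{lem:jd}, $F$ is a Jordan domain, so $F^c = \widehat{\C}\setminus F$ is a closed topological disk and $\widehat{\C}\setminus\overline F$ is an open disk, both with Jordan curve boundary $\partial F$. Every critical point of $R$ is a vertex of $\mathscr{T}$, hence none lies in the open face $F$, so $R|_F$ is a local homeomorphism. I would analyze $R|_{\partial F}$ and $R|_F$ separately.

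For the boundary, each edge of $\partial F$ is the union of two fixed internal rays meeting at a common repelling fixed point. In the B\"ottcher coordinate at a fixed critical point $c_i$ of local degree $m_i$, the map $R$ becomes $z \mapsto \overline{z}^{m_i}$, which acts on each fixed radial ray as $r\mapsto r^{m_i}$ on $[0,1]$, a homeomorphism of the interval fixing both endpoints. Hence $R$ maps each edge of $\partial F$ homeomorphically to itself, and $R|_{\partial F}\colon\partial F \to \partial F$ is a homeomorphism.

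Next I would show $R(F)\subseteq\widehat{\C} \setminus \overline F$ and then upgrade to $R|_F$ being a homeomorphism onto $\widehat{\C}\setminus\overline F$. At any interior point $p$ of an edge $e$ of $\partial F$, the map $R$ is a local diffeomorphism whose derivative sends the tangent line to $e$ at $p$ to the tangent line to $e$ at $R(p)$; being orientation reversing, $dR_p$ must swap the two sides of $e$. Hence a one-sided collar of $\partial F$ inside $F$ is mapped into $\widehat{\C}\setminus\overline F$. Since $R(F)$ is open and connected, $R(F)\cap\partial F=\emptyset$, and $\partial(R(F))\subseteq R(\partial F)=\partial F$, the image must lie in one of the two components of $\widehat{\C}\setminus\partial F = F\sqcup(\widehat{\C}\setminus\overline F)$; the collar observation pins this component down as $\widehat{\C}\setminus\overline F$. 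To promote this to a homeomorphism, I verify properness: for any compact $K\subset\widehat{\C}\setminus\overline F$ we have $R^{-1}(K)\cap\partial F=\emptyset$ (because $R(\partial F)=\partial F$ is disjoint from $K$), whence $R^{-1}(K)\cap\overline F\subset F$ is compact. A proper local homeomorphism between connected Hausdorff manifolds is a covering map, and a covering of the simply connected $\widehat{\C}\setminus\overline F$ by the connected $F$ is a homeomorphism.

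Combining the two parts, $R(F)=\widehat{\C}\setminus\overline F$ and $R(\partial F)=\partial F$ are disjoint and $R$ is bijective on each, so $R|_{\overline F}\colon\overline F\to F^c$ is a continuous bijection between compact Hausdorff spaces, hence the desired homeomorphism. I expect the main obstacle to be the containment $R(F)\subseteq\widehat{\C}\setminus\overline F$: a priori $R|_F$ could fold back and cover part of $F$ itself, and it is the combination of local reflection across the edges with the simple connectivity of $\widehat{\C}\setminus\overline F$ (from Lemma \ref{lem:jd}) that precludes this.
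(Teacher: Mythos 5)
Your overall architecture (boundary homeomorphism of $\partial F$, then a proper local homeomorphism $F\to\widehat\C\setminus\overline F$ which must be a degree-one covering of a simply connected domain) is sound, and the boundary analysis, the collar observation, and the properness/covering step are all correct. But there is a genuine gap exactly at the point you flag as the main obstacle: the containment $R(F)\subseteq\widehat\C\setminus\overline F$. Your argument for it lists ``$R(F)\cap\partial F=\emptyset$'' as one of three premises, yet this premise is never derived --- and it \emph{is} the non-folding statement you are trying to prove. The other two facts do not imply it: if $W=R(F)$ is open, connected, and $\partial W\subseteq\partial F$, the two consistent scenarios are (i) $W\cap\partial F=\emptyset$, so $W$ lies in one component of $\widehat\C\setminus\partial F$, or (ii) $W$ meets $\partial F$, in which case openness forces $W\supseteq F\cup(\widehat\C\setminus\overline F)$, i.e.\ $R|_F$ wraps over essentially the whole sphere. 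Nothing local (the collar, the reflection across edges) and nothing about simple connectivity of $\widehat\C\setminus\overline F$ rules out scenario (ii); a priori $R^{-1}(\partial F)\cap F$ could be a nonempty $1$-complex along which $R|_F$ folds back into $F$.

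Closing this gap requires a global degree argument. Two ways to do it: (a) as in the paper's proof of Lemma \ref{lem:jd}, take the component $V$ of $R^{-1}\bigl(\widehat\C\setminus\overline F\bigr)$ containing your collar; $V\subseteq F$ contains no critical points, so Riemann--Hurwitz (or your proper-covering argument) gives that $R:V\to\widehat\C\setminus\overline F$ has degree one and $V$ is a disk, and one then shows $\partial V=\partial F$, hence $V=F$; or (b) count: by Lemma \ref{fixed_points_count} and Corollary \ref{valence_restriction}, $\mathscr{T}$ has $k$ vertices and $d+k-1$ edges, hence exactly $d+1$ faces, and since $y\mapsto\deg_y(R|_{F'})$ is locally constant off $R(\partial F')=\partial F'$ and is $\geq 1$ outside $\overline{F'}$ (your collar argument applied to each face), a generic point already has $d$ preimages in the $d$ faces not containing it, forcing $\deg_y(R|_{F})=0$ for $y\in F$ and hence $R(F)\cap F=\emptyset$, from which $R(F)\cap\partial F=\emptyset$ follows because $R$ is open on $F$. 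Note that the paper does not prove Corollary \ref{cor:tm} separately at all: it is read off from the proof of Lemma \ref{lem:jd}, where $F$ is identified with a degree-one preimage component of the complementary disk. So your route is genuinely different in structure, but as written it assumes the one statement that carries all the content.
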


\begin{lemma}\label{lem:ns}
Let $F_1, F_2$ be two faces of $\mathscr{T}$, then the boundaries share at most one edge.
\end{lemma}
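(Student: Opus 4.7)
I would argue by contradiction, assuming that distinct faces $F_1, F_2$ share at least two distinct edges $e_1, e_2$, and then split into three cases based on the number of vertices that $e_1$ and $e_2$ have in common. The key ingredient is Corollary~\ref{cor:tm}, which provides for each face $F$ the homeomorphism $R|_{\overline{F}}: \overline{F}\to F^c$, together with the earlier vertex and edge counts $V=k$, $E=d+k-1$, and $F=d+1$ coming from Euler's formula.

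If $e_1, e_2$ share both endpoints, then $e_1\cup e_2$ is itself a Jordan curve. Since each $\partial F_i$ is a simple closed curve visiting every vertex at most once and contains $e_1\cup e_2$, we must have $\partial F_1 = \partial F_2 = e_1\cup e_2$, so $\mathscr{T}$ has only two faces. This forces $d+1=F=2$, contradicting $d\geq 2$.

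If $e_1, e_2$ share exactly one endpoint $v$, then since $\partial F_i$ visits $v$ exactly once and both $e_1, e_2$ are incident to $v$, the two boundary edges of $\partial F_i$ at $v$ must be precisely $e_1, e_2$. Hence both $F_1$ and $F_2$ occupy sectors at $v$ bounded by the same edge pair $\{e_1, e_2\}$, which is possible only if $v$ has valence exactly $2$---contradicting that $v$ is a critical point with $m_v\geq 2$, so valence $\geq 3$.

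The main obstacle is the remaining case in which $e_1, e_2$ share no vertex; say $e_1 = vw$ and $e_2 = xy$ with $\{v,w\}\cap\{x,y\}=\emptyset$. Then $\overline{F_1}\cup\overline{F_2}$ is topologically an annulus in $\widehat{\C}$, with cyclic boundary decompositions $\partial F_1 = e_1\cup\alpha_1\cup e_2\cup\alpha_2$ and $\partial F_2 = e_1\cup\beta_2\cup e_2\cup\beta_1$ for arcs $\alpha_i,\beta_i\subset\mathscr{T}$. I would pull back $F_2$ through $R|_{\overline{F_1}}$, producing an open sub-disk $V\subset F_1$ with $\partial V = e_1\cup\gamma_1\cup e_2\cup\gamma_2$, where $\gamma_i = R|_{\overline{F_1}}^{-1}(\beta_i)$ is an internal chord of $F_1$ connecting the endpoints of $e_1, e_2$. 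Examining the cyclic order of edges at $v$, where $e_1$, the first edge of $\alpha_2$, and the first edge of $\beta_1$ all meet: either these two first edges coincide, yielding a third shared edge of $F_1, F_2$ and allowing one to iterate, or they diverge. In the latter situation, iterating the valence-$2$ argument of the one-shared-endpoint case at intermediate vertices of $\alpha_2$ and $\beta_1$---combined with the degree count for preimages of each edge of $\mathscr{T}$ under $R$, which forces the multi-edge configurations thereby created to be inconsistent with the Euler identity $E=d+k-1$---yields the desired contradiction. This analysis parallels \cite[Theorem~4.3]{Gey20}.
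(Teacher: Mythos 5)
Your overall strategy (a direct case analysis on how the two shared edges $e_1,e_2$ sit relative to each other) is genuinely different from the paper's proof, which handles all configurations at once by a dynamical argument: it joins the midpoints of $e_1$ and $e_2$ by arcs $\gamma_i\subseteq\mathring{F_i}$, observes that $\gamma=\gamma_1\cup\gamma_2$ is essential in $\widehat\C\setminus P(R)$ because every vertex has valence at least $3$, and then uses Corollary~\ref{cor:tm} to produce a homeomorphic, isotopic preimage of $\gamma$, i.e.\ a Levy cycle --- impossible for the (hyperbolic) anti-rational map $R$. Your first two cases are correct as written: if $e_1,e_2$ share both endpoints, the Jordan curve property of $\partial F_i$ from Lemma~\ref{lem:jd} forces $\mathscr{T}=e_1\cup e_2$ and hence valence $2$; if they share one endpoint $v$, both faces would occupy sectors at $v$ cut out by the same pair of consecutive edges, again forcing valence $2$.

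The genuine gap is your third case, where $e_1$ and $e_2$ are disjoint --- and this is the heart of the lemma, since purely combinatorially a plane graph can certainly have two faces sharing two disjoint edges (e.g.\ a $4$-cycle, whose two faces share all four edges). After setting up the pulled-back disk $V\subseteq F_1$ and the dichotomy at $v$, the sub-case where the first edges of $\alpha_2$ and $\beta_1$ ``diverge'' is resolved only by the sentence ``iterating the valence-$2$ argument \ldots combined with the degree count \ldots forces the multi-edge configurations thereby created to be inconsistent with the Euler identity.'' No contradiction is actually derived: you do not say what configuration arises after one step, why the iteration terminates, or which quantity violates $E=d+k-1$. The valence-$2$ argument from your second case does not apply at an intermediate vertex of $\alpha_2$ or $\beta_1$, because such a vertex need not carry two edges shared by both faces. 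What is missing is precisely the dynamical input that the paper supplies: the curve through the midpoints of $e_1$ and $e_2$ is essential and lifts homeomorphically to an isotopic curve, contradicting the nonexistence of Thurston obstructions (Levy cycles) for $R$. Without an ingredient of this kind (or the equivalent orientation/expansion argument of \cite[Theorem~4.3]{Gey20}), the disjoint-edge case remains unproven.
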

\begin{proof}
Suppose that $F_1$ and $F_2$ share two or more edges.
For $i=1,2$, let $\gamma_i$ be the hyperbolic geodesic arc in $\mathring{F_i}$ connecting the two midpoints of the edges, and let $\gamma = \gamma_1 \cup \gamma_2$.
Since each vertex has valence at least $3$, we see that $\gamma$ is essential in $\widehat\C\setminus V(\mathscr{T}) = \widehat\C\setminus P(R)$ (i.e., $\gamma$ is not homotopic to a point or a puncture of the surface $\widehat\C\setminus P(R)$), where $P(R)$ stands for the post-critical set of $R$ and $V(\mathscr{T})$ denotes the vertex set of the graph $\mathscr{T}$.
By Corollary \ref{cor:tm}, there exists $\gamma'$ homotopic to $\gamma$ in $\widehat\C\setminus P(R)$ such that $R: \gamma' \longrightarrow \gamma$ is a homeomorphism.
This gives a Thurston's obstruction (more precisely, a Levy cycle) for the anti-rational map $R$ (see the discussion in \S \ref{sec:to} on Thurston's theory for rational maps), which is a contradiction. Hence, the supposition that $F_1$ and $F_2$ share two or more edges is false.
\end{proof}

The following result, where we translate the above properties of the Tischler graph to a simple graph theoretic property of its planar dual, plays a crucial role in the combinatorial classification of critically fixed anti-rational maps.

\begin{lemma}\label{lem:d2s}
Let $\Gamma$ be the planar dual of the Tischler graph $\mathscr{T}$ of a critically fixed anti-rational map $R$. Then $\Gamma$ is simple and $2$-connected.
\end{lemma}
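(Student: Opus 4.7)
The plan is to prove simplicity (absence of multi-edges and loops) and $2$-connectedness (cardinality at least three, connectedness, and no cut vertex) as separate steps, drawing on the combinatorial-topological input about $\mathscr{T}$ already established in Lemmas \ref{lem:jd}, \ref{lem:ns}, and Corollary \ref{valence_restriction}.

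For simplicity, the absence of multi-edges in $\Gamma$ is precisely the content of Lemma \ref{lem:ns}. To rule out loops, I observe that a loop at the vertex corresponding to a face $F$ would arise from an edge $e$ of $\mathscr{T}$ bounded on both sides by $F$; then any interior point $x\in e$ would have a small disk neighborhood contained in $\overline{F}$, placing $x$ in the interior of $\overline{F}$. But $F$ is a Jordan domain by Lemma \ref{lem:jd}, so the interior of $\overline{F}$ is $F$ itself, contradicting $x\in e\subseteq \partial F$.

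For $2$-connectedness, Euler's formula applied to $\mathscr{T}$ (which has $k$ vertices and $d+k-1$ edges by Corollary \ref{valence_restriction}) yields $d+1\geq 3$ faces, hence $|V(\Gamma)|\geq 3$, while connectedness of $\Gamma$ follows from connectedness of $\mathscr{T}$ via the standard transversal-arc argument. Assume for contradiction that some face $F_0$ is a cut vertex, so $\Gamma-F_0$ has two nonempty components $A, B$. The Jordan curve $\partial F_0$ has cyclically ordered edges, each shared with a unique face $\neq F_0$ by Lemma \ref{lem:ns} together with the loop-freeness just shown. Since $\Gamma$ is connected, the faces adjoining the edges of $\partial F_0$ visit both $A$ and $B$, so in the cyclic order there exist consecutive edges $e_1, e_2$ on $\partial F_0$ whose adjoining faces $F_A, F_B$ lie in $A, B$ respectively. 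Let $v$ be their common endpoint: by Corollary \ref{valence_restriction} the valence of $v$ is $m_v+1\geq 3$, and $F_0$ occupies exactly one sector at $v$ (again using the Jordan-domain property). The remaining sectors at $v$ give a chain from $F_A$ to $F_B$ in which consecutive members share an edge at $v$, producing a path from $F_A$ to $F_B$ in $\Gamma-F_0$, which contradicts $F_A\in A,\, F_B\in B$.

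The key geometric input, invoked in both parts, is Lemma \ref{lem:jd}: every face of $\mathscr{T}$ is a Jordan domain. In the loop-exclusion step it gives $\mathrm{int}(\overline{F})=F$, and in the cut-vertex argument it forces each face to occupy a single sector at each of its boundary vertices (so the intermediate faces in the chain around $v$ are automatically distinct from $F_0$, and the edges they share at $v$ are genuine edges of $\Gamma$ by Lemma \ref{lem:ns}). Given these ingredients plus the valence bound, what remains is straightforward cyclic-order bookkeeping; I do not anticipate a hidden obstacle.
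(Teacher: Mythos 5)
Your proof is correct, and the simplicity part (no multi-edges from Lemma \ref{lem:ns}, no loops from the Jordan-domain property of faces) is essentially identical to the paper's. For $2$-connectedness you take a genuinely different, more local route: the paper simply observes that since each face $F$ of $\mathscr{T}$ is a Jordan domain, $\widehat{\C}\setminus\overline{F}$ is connected, so any two faces other than $F$ can be joined by an arc transverse to $\mathscr{T}$ that avoids $\overline{F}$, whence $\Gamma-F$ is connected in one stroke. Your argument instead supposes a cut vertex $F_0$, locates two consecutive boundary edges of $\partial F_0$ whose opposite faces lie in different components, and walks around their common vertex $v$ through the sectors not occupied by $F_0$; this is a valid (if longer) combinatorial substitute, and it leans on exactly the same key input, namely that a Jordan-domain face meets each of its boundary vertices in a single sector. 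One small point in your favor: the paper's definition of $2$-connected requires more than two vertices, and your Euler-formula count $|V(\Gamma)|=d+1\geq 3$ (using the edge count from Corollary \ref{valence_restriction}) addresses this explicitly, whereas the paper leaves it implicit. One small point against: your appeal to Lemma \ref{lem:ns} for the edges of the chain around $v$ being ``genuine edges of $\Gamma$'' is unnecessary --- adjacency of two distinct faces sharing an edge of $\mathscr{T}$ is just the definition of the dual graph; Lemma \ref{lem:ns} only prevents multiplicity.
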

\begin{proof}
By Lemma \ref{lem:ns}, no two faces of $\mathscr{T}$ share two edges on their boundary. Hence, the dual graph contains no multi-edge.
Again, as each face of $\mathscr{T}$ is a Jordan domain by Lemma \ref{lem:jd}, the dual graph contains no self-loop.
Therefore $\Gamma$ is simple.

The fact that each face of $\mathscr{T}$ is a Jordan domain also implies that the complement of the closure of each face is connected.
So the dual graph $\Gamma$ remains connected upon deletion of any vertex. In other words, $\Gamma$ is $2$-connected.
\end{proof}

\subsection{Constructing critically fixed anti-rational maps from graphs.}\label{sec:to}
Let $\Gamma$ be a $2$-connected simple plane graph, and $G_\Gamma$ be an associated kissing reflection group.
In Section~\ref{br_cov_nielsen}, we constructed a topological branched covering $\mathcal{G}_\Gamma$ from the Nielsen map of $G_\Gamma$.
In the remainder of this section, we will use this branched covering $\mathcal{G}_\Gamma$ to promote Lemma~\ref{lem:d2s} to a characterization of Tischler graphs of critically fixed anti-rational maps.

\begin{prop}\label{prop:tc}
A plane graph $\mathcal{T}$ is the Tischler graph of a critically fixed anti-rational map $R$ if and only if the dual (plane) graph $\Gamma$ is simple and $2$-connected.
Moreover, $R$ is topologically conjugate to $\mathcal{G}_\Gamma$.
\end{prop}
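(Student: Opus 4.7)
The ``only if'' direction is precisely Lemma \ref{lem:d2s}. For the ``if'' direction, let $\Gamma$ be a $2$-connected, simple, plane graph. Consider the orientation-reversing branched covering $\mathcal{G}_\Gamma : \widehat{\C} \to \widehat{\C}$ built in Section \ref{br_cov_nielsen} from the Nielsen map of an associated kissing reflection group, together with its invariant graph $\mathscr{T}(\mathcal{G}_\Gamma) = \Gamma^{\vee}$. By construction $\mathcal{G}_\Gamma$ is postcritically finite of degree $d$; its critical points are the centers of the faces of $\Gamma$ (equivalently, the vertices of $\Gamma^{\vee}$), and each critical point is fixed with local degree equal to the valence of the corresponding face of $\Gamma$. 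In particular, $P(\mathcal{G}_\Gamma) = V(\Gamma^{\vee})$ and every postcritical point is fixed.

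The plan is to apply Thurston's characterization theorem to $\mathcal{G}_\Gamma^{\circ 2}$ in order to realize $\mathcal{G}_\Gamma$ by an anti-rational map up to Thurston equivalence, and then to upgrade Thurston equivalence to a topological conjugacy. The crux is to verify that $\mathcal{G}_\Gamma$ admits no Thurston obstruction. Since every postcritical point is fixed, a Perron--Frobenius argument applied to the (non-negative, integer-entry) transition matrix of an invariant multicurve shows that any obstruction must contain a Levy fixed curve: an essential, non-peripheral simple closed curve $\gamma \subset \widehat{\C} \setminus V(\Gamma^{\vee})$ with a connected component $\gamma' \subset \mathcal{G}_\Gamma^{-1}(\gamma)$ homotopic to $\gamma$ rel $V(\Gamma^{\vee})$ and $\mathcal{G}_\Gamma|_{\gamma'} : \gamma' \to \gamma$ a homeomorphism.

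To exclude such a $\gamma$, I would replay the argument from Lemma \ref{lem:ns}. By Lemma \ref{lem:uc}, $\mathcal{G}_\Gamma$ sends each face $P$ of $\Gamma^{\vee}$ homeomorphically onto the interior of its complement. Pushing $\gamma$ off the vertices and tracking how $\gamma$ and $\gamma'$ meet the edges of $\Gamma^\vee$ under this ``reflective'' action forces two distinct faces of $\Gamma^{\vee}$ to share at least two edges in common, i.e., forces a multi-edge in the dual graph $\Gamma$; this contradicts the simplicity of $\Gamma$. The peripheral and inessential configurations are ruled out by the $2$-connectedness of $\Gamma$, which prevents $\gamma$ from isolating a single vertex of $\Gamma^{\vee}$ on one side.

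Thurston's theorem then produces an anti-rational map $R$ Thurston equivalent to $\mathcal{G}_\Gamma$, and the equivalence transports the fixed critical data, so $R$ is critically fixed of degree $d$ with Tischler graph isotopic to $\mathscr{T}(\mathcal{G}_\Gamma)$ (hence dual to $\Gamma$). To promote Thurston equivalence to a topological conjugacy on all of $\widehat{\C}$, I would use the uniform contraction of both systems under backward iteration: on the Julia/limit set side, the Markov-type partitions furnished by the circle packing together with the expansion provided by Lemma \ref{lem:maxr} and the matching Markov structure from internal rays on the $R$-side yield compatible refinements, so iterated pullback of the Thurston isotopy converges uniformly to a conjugating homeomorphism, while on invariant Fatou components the conjugacy is furnished by the B\"ottcher coordinates on each side. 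The main anticipated obstacle is the combinatorial step ruling out Levy fixed curves—one must carefully track how the preimage of an essential curve distributes across the faces of $\Gamma^{\vee}$, which is the analogue in our dictionary of the obstruction-detection arguments used in \cite{LLMM19} and \cite{Gey20}.
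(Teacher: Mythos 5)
Your overall architecture matches the paper's: the ``only if'' direction is Lemma \ref{lem:d2s}, the ``if'' direction realizes $\mathcal{G}_\Gamma$ via Thurston's theorem applied to $\mathcal{G}_\Gamma\circ\mathcal{G}_\Gamma$ (Proposition \ref{prop:ttar}), and the upgrade from Thurston equivalence to topological conjugacy via iterated pullback, the Markov structure, Lemma \ref{lem:maxr}, and B\"ottcher coordinates is essentially the paper's Lemmas \ref{lem:it} and \ref{lem:tcl}. However, there is a genuine gap at the crux: your claim that, because all postcritical points are fixed, ``a Perron--Frobenius argument applied to the (non-negative, integer-entry) transition matrix'' reduces any obstruction to a Levy fixed curve is unjustified and false as stated. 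The Thurston linear transformation has entries of the form $\sum 1/\deg(f:\sigma'\to\sigma)$, not integers, and for degree $\geq 3$ there exist irreducible obstructions with leading eigenvalue $\geq 1$ that contain no Levy cycle (e.g.\ a curve with two essential preimages each mapped with degree $2$). Ruling out exactly such configurations for $\mathcal{G}_\Gamma$ is the content of the argument, so you cannot assume a Levy curve for free. Moreover, even granting a Levy fixed curve $\gamma$, your proposed ``replay of Lemma \ref{lem:ns}'' runs that lemma backwards: deducing from $\gamma$ that two faces of $\Gamma^{\vee}$ share two edges requires controlling how $\gamma$ meets the edges of $\Gamma^{\vee}$, and a Levy curve in minimal position can a priori cross many edges and faces; this step is not carried out.

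The paper closes this gap differently: it takes an irreducible obstruction $\Sigma$ for $\mathcal{G}_\Gamma\circ\mathcal{G}_\Gamma$, uses each edge $\lambda$ of $\mathscr{T}(\mathcal{G}_\Gamma)$ as a one-element irreducible arc system, and invokes the Pilgrim--Tan intersection theorem (Theorem \ref{thm:pt}) together with the invariance $\mathcal{G}_\Gamma(\lambda)=\lambda$ to conclude that the relevant preimages $\widetilde{\Sigma}$ are disjoint from $\mathcal{G}_\Gamma^{-2}(\mathscr{T})\setminus\mathscr{T}$. It then uses simplicity and $2$-connectedness of $\Gamma$ (so that distinct faces of $\mathscr{T}$ share at most one edge and each face is a Jordan domain) to show that $(\mathcal{G}_\Gamma^{-2}(\mathscr{T})\setminus\mathscr{T})\cup V(\mathscr{T})$ is a connected set containing all postcritical points, which forces every curve of $\Sigma$ to be peripheral --- a contradiction. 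You should either adopt this intersection-theoretic route or supply a correct proof of the Levy-cycle reduction for this specific class of maps (as in \cite{ST00}, such reductions require the presence of an intersecting invariant arc system, which is again the Tischler graph). One further small omission: when applying Proposition \ref{prop:ttar} you must separately treat the case $\Gamma=\Gamma_d$, where $\mathcal{G}_\Gamma\circ\mathcal{G}_\Gamma$ does not have hyperbolic orbifold and $\mathcal{G}_\Gamma$ is directly equivalent to $\overline{z}^d$.
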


We will first introduce some terminology.
A post-critically finite branched covering (possibly orientation reversing) of a topological $2$-sphere $\mathbb{S}^2$ is called a {\em Thurston map}. 
We denote the post-critical set of a Thurston map $f$ by $P(f)$. 
Two Thurston maps $f$ and $g$ are \emph{equivalent} if there exist two orientation-preserving homeomorphisms $h_0,h_1:(\mathbb{S}^2,P(f))\to(\mathbb{S}^2,P(g))$ so that $h_0\circ f = g\circ h_1$ where $h_0$ and $h_1$ are isotopic relative to $P(f)$.

A set of pairwise disjoint, non-isotopic, essential, simple, closed curves $\Sigma$ on $\mathbb{S}^2\setminus P(f)$ is called a {\em curve system}. A curve system $\Sigma$ is called $f$-stable if for every curve $\sigma\in\Sigma$, all the essential components of $f^{-1}(\sigma)$ are homotopic rel $P(f)$ to curves in $\Sigma$. We associate to an $f$-stable curve system $\Sigma$ the {\em Thurston linear transformation} 
$$
f_\Sigma: \R^\Sigma \longrightarrow \R^{\Sigma}
$$
defined as 
$$
f_\Sigma(\sigma) = \sum_{\sigma' \subseteq f^{-1}(\sigma)} \frac{1}{\deg(f: \sigma' \rightarrow \sigma)}[\sigma']_\Sigma,
$$
where $\sigma\in\Sigma$, and $[\sigma']_\Sigma$ denotes the element of $\Sigma$ isotopic to $\sigma'$, if it exists.
The curve system is called {\em irrreducible} if $f_\Sigma$ is irreducible as a linear transformation.
It is said to be a {\em Thurston obstruction} if the spectral radius $\lambda(f_\Sigma) \geq 1$.

Similarly, an arc $\lambda$ in $\mathbb{S}^2\setminus P(f)$ is an embedding of $(0,1)$ in $\mathbb{S}^2\setminus P(f)$ with end-points in $P(f)$.
It is said to be {\em essential} if it is not contractible in $\mathbb{S}^2$ fixing the two end-points.
A set of pairwise non-isotopic essential arcs $\Lambda$ is called an {\em arc system}.
The Thurston linear transformation $f_\Lambda$ is defined in a similar way, and we say that it is irreducible if $f_\Lambda$ is irreducible as a linear transformation.

The next proposition allows us to directly apply Thurston's topological characterization theorem for rational maps (see \cite[Theorem~1]{DH93}) to the study of orientation-reversing Thurston maps (see \cite[Theorem~3.9]{Gey20} for an intrinsic topological characterization theorem for anti-rational maps). It is proved by considering the second iterate of the Thurston's pullback map on the Teichm\"uller space of $\mathbb{S}^2\setminus P(f)$. The reader is referred to \cite{DH93} for the definition of hyperbolic orbifold, but this is the typical case as any map with more than four postcritical points has hyperbolic orbifold.

\begin{prop}\cite[Proposition~6.1]{LLMM19}\label{prop:ttar}
Let $f$ be an orientation reversing Thurston map so that $f\circ f$ has hyperbolic orbifold. Then $f$ is equivalent to an anti-rational map if and only if $f\circ f$ is equivalent to a rational map if and only if $f\circ f$ has no Thurston's obstruction.
Moreover, if $f$ is equivalent to an anti-rational map, the map is unique up to M\"obius conjugacy.
\end{prop}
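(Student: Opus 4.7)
The plan is to reduce to Thurston's classical topological characterization of rational maps, applied to the orientation-preserving iterate $f\circ f$, and then descend from $f\circ f$ to $f$ via an anti-holomorphic refinement of the Thurston pullback on Teichm\"uller space. The easy direction of the first equivalence is routine: Thurston equivalence is preserved under iteration, so if $f$ is equivalent to an anti-rational map $R$, then $f\circ f$ is equivalent to the rational map $R\circ R$. The equivalence between ``$f\circ f$ is Thurston equivalent to a rational map'' and ``$f\circ f$ has no Thurston obstruction'' is exactly the classical Thurston characterization (as formulated in \cite{DH93}), applied to $f\circ f$, which is orientation-preserving, postcritically finite, and has hyperbolic orbifold by hypothesis.

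For the nontrivial direction---$f\circ f$ equivalent to a rational map forces $f$ to be equivalent to an anti-rational map---I would work on the single Teichm\"uller space $\mathcal{T}:=\textrm{Teich}(\widehat{\C},P(f))$, noting that $P(f)=P(f\circ f)$ for postcritically finite maps. Define the pullback $\sigma_f:\mathcal{T}\to\mathcal{T}$ by sending a marked complex structure $\tau$ on the target sphere to the unique marked complex structure $f^*\tau$ on the source sphere that makes $f$ anti-holomorphic; this assignment is orientation-compatible precisely because $f$ reverses orientation. Since pulling back along an anti-holomorphic map reverses holomorphic dependence in the parameter, $\sigma_f$ is \emph{anti-holomorphic} on $\mathcal{T}$, and a direct computation identifies $\sigma_f\circ\sigma_f$ with the standard (holomorphic) Thurston pullback $\sigma_{f\circ f}$. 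By Thurston's rigidity for rational maps with hyperbolic orbifold, $\sigma_{f\circ f}$ has a \emph{unique} fixed point $\tau^*\in\mathcal{T}$; since $\sigma_f$ commutes with its own square $\sigma_{f\circ f}$, the point $\sigma_f(\tau^*)$ is also fixed by $\sigma_{f\circ f}$, and uniqueness forces $\sigma_f(\tau^*)=\tau^*$. This fixed point of $\sigma_f$ supplies a complex structure on $\widehat{\C}$ with respect to which $f$ becomes anti-holomorphic, exhibiting $f$ as Thurston equivalent to an anti-rational map.

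The M\"obius uniqueness follows from the same principle: any two anti-rational maps $R_1,R_2$ Thurston equivalent to $f$ correspond to fixed points of $\sigma_f$, both of which are in particular fixed by $\sigma_{f\circ f}$; by the uniqueness of $\tau^*$ these coincide, so $R_1$ and $R_2$ represent the same point of moduli space and must be intertwined by an orientation-preserving biholomorphism of $\widehat{\C}$, i.e., a M\"obius transformation. I expect the main subtle point of the argument to be the careful bookkeeping of orientation-reversal in the Teichm\"uller pullback---namely verifying that $\sigma_f$ is well-defined, anti-holomorphic, and squares to $\sigma_{f\circ f}$---after which the proof reduces to a short commutativity-plus-uniqueness deduction on top of Thurston's theorem for the orientation-preserving iterate.
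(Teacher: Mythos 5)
Your proposal is correct and follows exactly the route the paper indicates for this result (which it cites from \cite{LLMM19} with the remark that it is proved by considering the second iterate of the Thurston pullback on $\textrm{Teich}(\mathbb{S}^2\setminus P(f))$): apply Thurston's characterization and rigidity to $f\circ f$, observe $\sigma_f^2=\sigma_{f\circ f}$, and use uniqueness of the fixed point of $\sigma_{f\circ f}$ to force $\sigma_f$ to fix it as well. The only inessential embellishment is the claim that $\sigma_f$ is anti-holomorphic, which is plausible but not needed — the argument only uses $\sigma_f^2=\sigma_{f\circ f}$ and fixed-point uniqueness.
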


For a curve system $\Sigma$ (respectively, an arc system $\Lambda$), we set $\widetilde{\Sigma}$ (respectively, $\widetilde{\Lambda}$) as the union of those components of $f^{-1}(\Sigma)$ (respectively, $f^{-1}(\Lambda)$) which are isotopic relative to $P(f)$ to elements of $\Sigma$ (respectively, $\Lambda$).
We will use $\Sigma \cdot \Lambda$ to denote the minimal intersection number between them.
We will be using the following theorem excerpted and paraphrased from \cite[Theorem 3.2]{PT98}.
\begin{theorem}\cite[Theorem 3.2]{PT98}\label{thm:pt}
Let $f$ be an orientation preserving Thurston map, $\Sigma$ an irreducible Thurston obstruction in $(\mathbb{S}^2, P(f))$, and $\Lambda$ an irreducible arc system in $(\mathbb{S}^2, P(f))$.
Assume that $\Sigma$ intersect $\Lambda$ minimally,
then either
\begin{itemize}
\item $\Sigma \cdot \Lambda = 0$; or
\item $\Sigma \cdot \Lambda \neq 0$ and for each $\lambda \in \Lambda$, there is exactly one connected component $\lambda'$ of $f^{-1}(\lambda)$ such that $\lambda' \cap \widetilde{\Sigma} \neq \emptyset$. Moreover, the arc $\lambda'$ is the unique component of $f^{-1}(\lambda)$ which is isotopic to an element of $\Lambda$.
\end{itemize}
\end{theorem}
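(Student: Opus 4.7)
My plan is to prove this dichotomy via a weighted intersection comparison derived from the covering-space identity on preimages and the Perron--Frobenius theorem applied to the irreducible Thurston linear transformations $f_\Sigma$ and $f_\Lambda$.

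\textbf{Setup and key identity.} First, I would realise $\Sigma$ and $\Lambda$ by their geodesic representatives in a complete hyperbolic structure on $\mathbb{S}^2\setminus P(f)$, so that $A_{ij}:=\sigma_i\cdot\lambda_j=|\sigma_i\cap\lambda_j|$ is geometric. Pulling back the metric via $f$, every component of $f^{-1}(\Sigma)\cup f^{-1}(\Lambda)$ is again geodesic, hence realises its minimal intersection with other geodesics rel $P(f)$. Since every component $\lambda'$ of $f^{-1}(\lambda_j)$ is an arc (so $f\vert_{\lambda'}:\lambda'\to\lambda_j$ is a homeomorphism of degree $1$), the covering-space identity specialises to
\begin{equation*}
A_{ij} \;=\; \sum_{\sigma'\subseteq f^{-1}(\sigma_i)}|\sigma'\cap\lambda'|\qquad\text{for every component } \lambda' \text{ of } f^{-1}(\lambda_j).
\end{equation*}

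\textbf{Key inequality via Perron--Frobenius.} For any essential $\lambda'\subseteq f^{-1}(\lambda_j)$ isotopic to $\lambda_l\in\Lambda$, restricting the right-hand side to essential $\sigma'$ isotopic to some $\sigma_k\in\Sigma$ and using $|\sigma'\cap\lambda'|\geq A_{kl}$ gives $A_{ij}\geq\sum_k N_{ki}\,A_{kl}\geq\sum_k t_{ki}\,A_{kl}$, where $N_{ki}$ counts the essential preimages of $\sigma_i$ isotopic to $\sigma_k$ and $t_{ki}=\sum 1/\deg(f\vert_{\sigma'})\leq N_{ki}$ is the corresponding Thurston matrix entry. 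Let $\vec v>0$ be the Perron eigenvector of $f_\Sigma$ with eigenvalue $\mu=\lambda(f_\Sigma)\geq 1$, and set $B_j:=\sum_i v_i A_{ij}$. Weighting the previous inequality by $v_i$, summing over $i$, and applying $\sum_i t_{ki}v_i=\mu v_k$ yields
\begin{equation*}
B_j \;\geq\; \mu\,B_l\qquad\text{whenever } f^{-1}(\lambda_j) \text{ has an essential component isotopic to } \lambda_l.
\end{equation*}

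\textbf{Extracting the dichotomy.} Irreducibility of $f_\Lambda$ supplies, for each index $l_0$, a cycle $l_0\to l_1\to\cdots\to l_r=l_0$ along which the above inequality iterates to $B_{l_0}\geq\mu^r B_{l_0}$, forcing $\mu\leq 1$, hence $\mu=1$. If some $B_l$ vanishes, irreducibility propagates $B_l=0$ for every $l$, which by positivity of $\vec v$ gives $A_{ij}=0$ for all $i,j$, i.e.\ $\Sigma\cdot\Lambda=0$. Otherwise, every step in the derivation must be an equality, forcing (a) no intersection of $\widetilde{\Sigma}$ with $f^{-1}(\lambda_j)$ sits on an inessential or non-$\Sigma$-isotopic preimage of $\sigma_i$, (b) $|\sigma'\cap\lambda'|=A_{kl}$ is minimal, (c) $N_{ki}=t_{ki}$, and (d) the identity $A_{ij}=\sum_{\sigma'}|\sigma'\cap\lambda'|$ is saturated per essential preimage arc $\lambda'$. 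Comparing this with the gross covering identity $\deg(f)\cdot A_{ij}=\sum_{\sigma',\lambda'}|\sigma'\cap\lambda'|$ and invoking irreducibility of $\Lambda$ to propagate local rigidity across all of $\Lambda$ pins down the stated conclusion: for each $\lambda_j$, exactly one preimage component meets $\widetilde{\Sigma}$, and this component is the unique preimage of $\lambda_j$ isotopic to an element of $\Lambda$.

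\textbf{Main obstacle.} The most delicate part is extracting the precise uniqueness from the chain of equalities. The weighted bound $B_j\geq\mu B_l$ immediately forces $\mu=1$ and arranges all intersection points into minimum-position configurations, but showing that \emph{exactly one} preimage component of $\lambda_j$ meets $\widetilde{\Sigma}$ — and that this component belongs to the isotopy class of an element of $\Lambda$ — requires balancing several a priori independent counts (the per-arc identity above, the gross covering identity, and the positivity of the Perron data) and carefully matching the $1/\deg(f\vert_{\sigma'})$ weights in $f_\Sigma$ with the $\deg(f\vert_{\sigma'})$ weights in the covering identity; any slack in this bookkeeping would leave room for an extra essential non-$\Lambda$ preimage to meet $\widetilde{\Sigma}$.
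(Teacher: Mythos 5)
The first thing to say is that the paper offers no proof of this statement: it is explicitly ``excerpted and paraphrased'' from \cite[Theorem 3.2]{PT98} and used as a black box, so there is no in-paper argument to compare yours against. Judged on its own terms, the first two thirds of your proposal are sound and very much in the spirit of Pilgrim--Tan's intersection-number arguments: the covering identity $A_{ij}=\sum_{\sigma'\subseteq f^{-1}(\sigma_i)}|\sigma'\cap\lambda'|$ for each single arc component $\lambda'$ (valid because arcs with endpoints in $P(f)$ lift homeomorphically), the minimal-position lower bound $|\sigma'\cap\lambda'|\ge A_{kl}$ (correctly used only as an inequality, since preimages are geodesic only rel the larger set $f^{-1}(P(f))$), and the Perron--Frobenius weighting $B_j\ge\mu B_l$ propagated around cycles of the $f_\Lambda$-digraph do yield the dichotomy ``either every $B_l=0$, i.e.\ $\Sigma\cdot\Lambda=0$, or $\mu=1$ and every inequality in the chain is an equality.''

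The gap is exactly where you flag it, and it is genuine rather than a bookkeeping nuisance. Your equality chain is a statement about \emph{one} preimage component $\lambda'$ of $f^{-1}(\lambda_j)$ at a time: it yields $\#(\widetilde{\Sigma}\cap\lambda')=\#(f^{-1}(\Sigma)\cap\lambda')=\Sigma\cdot\lambda_j$ for \emph{each} component $\lambda'$ isotopic to an element of $\Lambda$, together with $N_{ki}=t_{ki}$. This is symmetric in all such components, so nothing in it distinguishes the case of one $\Lambda$-isotopic component from two (each of which would then meet $\widetilde{\Sigma}$ nontrivially, contradicting the claimed uniqueness), and nothing prevents a component of $f^{-1}(\lambda_j)$ that is \emph{not} isotopic to an element of $\Lambda$ from meeting $\widetilde{\Sigma}$; the closing appeal to ``the gross covering identity'' and ``propagating local rigidity'' is not actually carried out. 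The missing ingredient is the opposite bound, which your chain never uses: since $\widetilde{\Sigma}\subseteq f^{-1}(\Sigma)$, the covering identity gives the \emph{upper} bound $\#(\widetilde{\Sigma}\cap\lambda')\le\Sigma\cdot\lambda_j$ for \emph{every} one of the $\deg(f)$ components $\lambda'$, to be played against the lower bound $\#(\widetilde{\Sigma}\cap\lambda')\ge\sum_k\bigl(\sum_i N_{ki}\bigr)A_{kl}\ge\Sigma\cdot\lambda_l$ for $\Lambda$-isotopic components, where $\sum_i N_{ki}\ge 1$ because $N$ is a non-negative integer matrix with irreducible support. Squeezing these around cycles of $f_\Lambda$ forces the relevant row sums of $N$ to equal $1$, hence (square integer matrix, column sums $\ge 1$) the column sums as well, after which the count of $\#(\widetilde{\Sigma}\cap f^{-1}(\lambda_j))$ computed from the $\Sigma$ side equals $\Sigma\cdot\lambda_j$ exactly --- a total that a single $\Lambda$-isotopic component already exhausts. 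That two-sided comparison is what produces ``exactly one,'' and it is absent from your write-up.
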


With these preparations, we are now ready to show that $\mathcal{G}_\Gamma$ is equivalent to an anti-rational map.
The proof is similar to \cite[Proposition 6.2]{LLMM19}.
\begin{lemma}\label{lem:te}
Let $\Gamma$ be a $2$-connected, simple, plane graph. Then
$\mathcal{G}_\Gamma$ is equivalent to a critically fixed anti-rational map $\mathcal{R}_\Gamma$.
\end{lemma}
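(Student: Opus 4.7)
The plan is to apply Proposition \ref{prop:ttar} to the orientation-reversing post-critically finite Thurston map $\mathcal{G}_\Gamma$. Since every critical point of $\mathcal{G}_\Gamma$ is a face center $c_F$ of $\Gamma$ (with local degree $|\partial F|-1\geq 2$) and is fixed, we have $P(\mathcal{G}_\Gamma)=V(\mathscr{T}(\mathcal{G}_\Gamma))$. It then suffices to show that $f:=\mathcal{G}_\Gamma\circ\mathcal{G}_\Gamma$ has hyperbolic orbifold and admits no Thurston obstruction; a direct orbifold computation shows hyperbolicity as soon as $|V(\mathscr{T})|\geq 3$ (each postcritical orbifold index is at least $d_{F}^{2}\geq 4$), and the remaining case $|V(\mathscr{T})|=2$, in which $\Gamma$ is a cycle, is handled directly by observing that $\mathcal{G}_\Gamma$ is then (conjugate to) the anti-polynomial $\overline{z}^{d}$.

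To rule out obstructions I would argue by contradiction: assume that $f$ admits an irreducible Thurston obstruction $\Sigma$ in $(\widehat{\C},P(\mathcal{G}_\Gamma))$. Let $\Lambda$ be the arc system consisting of the edges of the Tischler graph $\mathscr{T}=\mathscr{T}(\mathcal{G}_\Gamma)$, each regarded as an arc joining two postcritical points (through the cusp midpoint). Since any singleton is trivially an irreducible arc system, I would apply the Pilgrim--Tan dichotomy (Theorem \ref{thm:pt}) to $(\Sigma,\{\lambda\})$ for each $\lambda\in\Lambda$ after isotoping $\Sigma$ to realize the minimal intersection number with $\lambda$.

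The key geometric input is that every $\lambda\in\Lambda$ is set-wise fixed by $\mathcal{G}_\Gamma$: its two half-rays $\lambda\cap\Omega_{F_i}$ are fixed internal rays of the local models $\overline{z}^{d_{F_i}}$ (on which the dynamics restricts to the homeomorphism $r\mapsto r^{d_{F_i}}$), while the cusp midpoint of $\lambda$ is fixed by each of the two circular reflections whose circles are tangent at it. Consequently $f|_\lambda$ is a self-homeomorphism and $\lambda$ is a component of $f^{-1}(\lambda)$. Any other component of $f^{-1}(\lambda)$ is contained inside some $\Omega_F$ with one endpoint at a critical point $c_F$ and the other at a non-cusp point of $\partial\Omega_F$; hence its endpoints are not both in $V(\mathscr{T})$, and it cannot be isotopic rel $P$ to any arc in $\Lambda$. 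Thus $\lambda$ is the unique preimage of $\lambda$ under $f$ that is isotopic to an arc in $\Lambda$.

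If the first case of Pilgrim--Tan applies for every $\lambda\in\Lambda$, then $\Sigma$ may be isotoped off $\mathscr{T}$ entirely, forcing each of its curves into a single open face of $\mathscr{T}$; by Lemma \ref{lem:uc} each such face is a Jordan domain, and since all postcritical points lie on its boundary rather than its interior, no simple closed curve confined there can be essential---contradiction. Otherwise, the second case applies for some $\lambda_0$: by what was just established, $\lambda_0$ itself is the unique preimage of $\lambda_0$ meeting $\widetilde{\Sigma}$, and combined with $\deg(f|_{\lambda_0})=1$ a weighted intersection-number comparison using the Perron--Frobenius eigenvector of $f_\Sigma$ forces $\lambda(f_\Sigma)<1$, contradicting the obstruction hypothesis. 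The main technical obstacle is carrying out this last intersection-counting step cleanly; I would adapt the argument from \cite[Proposition~6.2]{LLMM19}, which handled the case where $\mathscr{T}$ is a triangulation, to the present more general setting where face degrees of $\Gamma$ may vary.
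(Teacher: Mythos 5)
Your skeleton coincides with the paper's: the orbifold check with the special case $\Gamma=\Gamma_d$ handled as $\overline{z}^d$, the use of the edges of $\mathscr{T}(\mathcal{G}_\Gamma)$ as singleton irreducible arc systems, the observation that each edge is fixed by $\mathcal{G}_\Gamma$ and is the unique preimage component of itself isotopic to an element of the arc system, and the appeal to Theorem \ref{thm:pt}. Your disposal of the case $\Sigma\cdot\lambda=0$ for every edge is also fine. The gap is in the other case. The proposed ``weighted intersection-number comparison using the Perron--Frobenius eigenvector'' cannot yield $\lambda(f_\Sigma)<1$: setting $n_\sigma=\sigma\cdot\lambda_0$, the facts that $f|_{\lambda_0}$ is a homeomorphism onto $\lambda_0$ and that $\lambda_0$ is the only component of $f^{-1}(\lambda_0)$ meeting $\widetilde{\Sigma}$ give only a sub-invariance inequality for the non-negative vector $(n_\sigma)$, hence $\lambda(f_\Sigma)\le 1$. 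That is not a contradiction, since an obstruction only requires $\lambda(f_\Sigma)\ge 1$; the borderline case (all mapping degrees equal to $1$ and intersection numbers exactly preserved) is precisely a Levy cycle, which is the very thing one must exclude and which your inequality is consistent with. So as written the second case does not close.

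The paper finishes combinatorially rather than spectrally, and this is the missing idea. From Theorem \ref{thm:pt} one concludes, in either case and for every edge $\lambda$, that $\widetilde{\Sigma}$ is disjoint from $\mathcal{G}_\Gamma^{-2}(\lambda)\setminus\lambda$, hence from $\mathcal{G}_\Gamma^{-2}(\mathscr{T}(\mathcal{G}_\Gamma))\setminus\mathscr{T}(\mathcal{G}_\Gamma)$. One then proves that for each face $F$ of $\mathscr{T}(\mathcal{G}_\Gamma)$ the graph obtained by deleting the open boundary edges of $F$ remains connected (this uses $2$-connectedness of $\Gamma$ together with Lemma \ref{lem:ns}, i.e., that two faces share at most one edge); combined with Lemma \ref{lem:uc}, this shows that $\left(\mathcal{G}_\Gamma^{-2}(\mathscr{T}(\mathcal{G}_\Gamma))\setminus\mathscr{T}(\mathcal{G}_\Gamma)\right)\cup V(\mathscr{T}(\mathcal{G}_\Gamma))$ is a connected graph containing the entire postcritical set. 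Every curve of $\widetilde{\Sigma}$ then lies in a complementary disk of this graph containing no postcritical points and is therefore inessential, contradicting the irreducibility of the obstruction. Note that \cite[Proposition~6.2]{LLMM19}, which you propose to adapt, is exactly this connectivity argument and not an eigenvector estimate; adapting it to general face degrees is the right move, but the content you need from it is the connectedness of the blown-up graph, not a Perron--Frobenius bound.
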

\begin{proof}
It is easy to verify that $\mathcal{G}_\Gamma\circ\mathcal{G}_\Gamma$ has hyperbolic orbifold, except when $\Gamma=\Gamma_d$, in which case $\mathcal{G}_\Gamma$ has only two fixed critical points each of which is fully branched, and $\mathcal{G}_\Gamma$ is equivalent to $\overline{z}^d$.
Thus, by  Proposition \ref{prop:ttar}, we will prove the lemma by showing that there is no Thurston's obstruction for $\mathcal{G}_\Gamma\circ\mathcal{G}_\Gamma$.

We will assume that there is a Thurston obstruction $\Sigma$, and arrive at a contradiction. After passing to a subset, we may assume that $\Sigma$ is irreducible.
Isotoping the curve system $\Sigma$, we may assume that $\Sigma$ intersects the graph $\mathscr{T}(\mathcal{G}_\Gamma)$ minimally (see Section~\ref{br_cov_nielsen} for the definition of $\mathscr{T}(\mathcal{G}_\Gamma)$).
Let $\lambda$ be an edge in $\mathscr{T}(\mathcal{G}_\Gamma)$, then $\Lambda = \{\lambda\}$ is an irreducible arc system. 
Let $\widetilde{\Sigma}$ be the union of those components of $\mathcal{G}_\Gamma^{-2}(\Sigma)$ which are isotopic to elements of $\Sigma$.

We claim that $\widetilde{\Sigma}$ does not intersect $\mathcal{G}_\Gamma ^{-2}(\lambda) \setminus \lambda$.
Indeed, applying Theorem~\ref{thm:pt}, we are led to the following two cases.
In the first case, we have $\Sigma \cdot \lambda = 0$; hence $\mathcal{G}_\Gamma ^{-2}(\Sigma)$ cannot intersect $\mathcal{G}_\Gamma ^{-2}(\lambda)$, and the claim follows.
In the second case, since $\mathcal{G}_\Gamma (\lambda) = \lambda$, we conclude that $\lambda$ is the unique component of $\mathcal{G}_\Gamma^{-2}(\lambda)$ isotopic to $\lambda$.
Thus, the only component of $\mathcal{G}_\Gamma^{-2}(\lambda)$ intersecting $\widetilde{\Sigma}$ is $\lambda$, so the claim follows.

Applying this argument on all the edges of $\mathscr{T}(\mathcal{G}_\Gamma)$, we conclude that $\widetilde{\Sigma}$ does not intersect $\mathcal{G}_\Gamma ^{-2}(\mathscr{T}(\mathcal{G}_\Gamma)) \setminus \mathscr{T}(\mathcal{G}_\Gamma)$.

Let $F$ be a face of $\mathscr{T}(\mathcal{G}_\Gamma)$ with vertices of $\partial F$ denoted by $v_1, ..., v_k$ counterclockwise.
Denote $e_i \subseteq \partial F$ be the open edge connecting $v_i, v_{i+1}$.
We claim that the graph obtained by removing the boundary edges of $F$ 
$$
\mathscr{T}_{F}:=\mathscr{T}(\mathcal{G}_\Gamma) \setminus \bigcup_{i=1}^k e_i
$$ 
is still connected.
Since $\mathscr{T}(\mathcal{G}_\Gamma)$ is connected, any vertex $v$ is connected to the set $\{v_1,..., v_k\}$ in $\mathscr{T}_{F}$.
Thus, it suffices to show that $v_i$ and $v_{i+1}$ are connected in $\mathscr{T}_{F}$ for all $i$.
As the planar dual $\Gamma$ of $\mathscr{T}(\mathcal{G}_\Gamma)$ is $2$-connected and has no self-loop, each face of $\mathscr{T}(\mathcal{G}_\Gamma)$ is a Jordan domain. Hence, there exists a face $F_i\neq F$ of $\mathscr{T}(\mathcal{G}_\Gamma)$ sharing $e_i$ on the boundary with $F$. Moreover, since $\Gamma$ has no multi-edge, the boundaries of any two faces of $\mathscr{T}(\mathcal{G}_\Gamma)$ intersect at most at one edge. Thus, $\partial F_i \cap \partial F = e_i \cup \{v_i, v_{i+1}\}$.
Hence, $v_i$ and $v_{i+1}$ are connected by a path in $\partial F_i \setminus e_i \subseteq \mathscr{T}_{F}$, and this proves the claim.

Therefore, by Lemma~\ref{lem:uc}, we deduce that 
$$
(\mathcal{G}_\Gamma^{-1} (\mathscr{T}(\mathcal{G}_\Gamma)) \setminus \mathscr{T}(\mathcal{G}_\Gamma)) \cup  V(\mathscr{T}(\mathcal{G}_\Gamma))
$$ 
is connected.
Thus, $(\mathcal{G}_\Gamma ^{-2}(\mathscr{T}(\mathcal{G}_\Gamma)) \setminus \mathscr{T}(\mathcal{G}_\Gamma)) \cup  V(\mathscr{T}(\mathcal{G}_\Gamma))$ is a connected graph containing the post-critical set $P(\mathcal{G}_\Gamma \circ \mathcal{G}_\Gamma) =  V(\mathscr{T}(\mathcal{G}_\Gamma))$.
This forces $\Sigma$ to be empty, which is a contradiction.

Therefore, $\mathcal{G}_\Gamma$ is equivalent to an anti-rational map $\mathcal{R}_\Gamma$. Since $\mathcal{G}_\Gamma$ is critically fixed, so is $\mathcal{R}_\Gamma$.
\end{proof}

Let $h_0, h_1:(\mathbb{S}^2,P(\mathcal{G}_\Gamma))\to(\mathbb{S}^2,P(\mathcal{R}_\Gamma))$ be two orientation preserving homeomorphisms so that $h_0\circ \mathcal{G}_\Gamma = \mathcal{R}_\Gamma\circ h_1$ where $h_0$ and $h_1$ are isotopic relative to $P(\mathcal{G}_\Gamma) = V(\mathscr{T}(\mathcal{G}_\Gamma))$.

We now use a standard pullback argument to prove the following.

\begin{lemma}\label{lem:it}
$h_0(\mathscr{T}(\mathcal{G}_\Gamma))$ is isotopic to the Tischler graph $\mathscr{T}(\mathcal{R}_\Gamma)$.
\end{lemma}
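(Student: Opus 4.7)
The proof is a standard pullback argument. Write $\mathcal{T}_0 := h_0(\mathscr{T}(\mathcal{G}_\Gamma))$ and $\mathcal{T}_1 := h_1(\mathscr{T}(\mathcal{G}_\Gamma))$. The key preliminary observation is that $\mathscr{T}(\mathcal{G}_\Gamma)$ is setwise invariant under $\mathcal{G}_\Gamma$: inside every face $\Omega_F$, the map is conjugate via $\phi\circ\psi$ to $\overline{z}^{d}$, which fixes its $d+1$ radial segments setwise; and the midpoints of the edges of $\mathscr{T}(\mathcal{G}_\Gamma)$, being tangency points of circles in the packing, are fixed by the Nielsen reflections whose axes pass through them. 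Consequently $\mathcal{G}_\Gamma(\mathscr{T}(\mathcal{G}_\Gamma)) = \mathscr{T}(\mathcal{G}_\Gamma)$, and combining this with the intertwining $h_0\circ\mathcal{G}_\Gamma = \mathcal{R}_\Gamma\circ h_1$ yields $\mathcal{R}_\Gamma(\mathcal{T}_1) = \mathcal{T}_0$. Since $h_0 \simeq h_1$ rel $P(\mathcal{G}_\Gamma)$, the graphs $\mathcal{T}_0$ and $\mathcal{T}_1$ are isotopic rel $P(\mathcal{R}_\Gamma)$.

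Next, I iteratively lift along the branched cover $\mathcal{R}_\Gamma$, producing homeomorphisms $H_n:(\mathbb{S}^2, P(\mathcal{G}_\Gamma))\to(\mathbb{S}^2, P(\mathcal{R}_\Gamma))$ with $H_0 = h_0$, $H_1 = h_1$, and $\mathcal{R}_\Gamma\circ H_{n+1} = H_n\circ\mathcal{G}_\Gamma$ for every $n$. This is legitimate because the critical values of $\mathcal{R}_\Gamma$ lie in $P(\mathcal{R}_\Gamma)$, which is fixed by the isotopies. A standard argument (inductively lifting the isotopies between consecutive $H_n$'s) shows that all the $H_n$ are mutually isotopic rel $P(\mathcal{G}_\Gamma)$, whence the graphs $\mathcal{T}_n := H_n(\mathscr{T}(\mathcal{G}_\Gamma))$ are mutually isotopic rel $P(\mathcal{R}_\Gamma)$ and satisfy $\mathcal{R}_\Gamma(\mathcal{T}_{n+1}) = \mathcal{T}_n$. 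In particular, each $\mathcal{T}_n$ is isotopic to $\mathcal{T}_0$ rel $P(\mathcal{R}_\Gamma)$.

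The remaining task is to show that $\mathcal{T}_n$ converges, in the Hausdorff topology, to the Tischler graph $\mathscr{T}(\mathcal{R}_\Gamma)$. Within each invariant Fatou component $U_i$, Böttcher coordinates conjugate $\mathcal{R}_\Gamma|_{U_i}$ to $\overline{z}^{m_i}|_{\mathbb{D}}$, and the inverse branches of $\overline{z}^{m_i}$ contract uniformly toward the $m_i+1$ fixed radial segments joining $0$ to the $(m_i+1)$-st roots of unity; hence the portion of $\mathcal{T}_n$ inside $U_i$ converges to the fixed internal rays of $U_i$, which form $\mathscr{T}(\mathcal{R}_\Gamma)\cap U_i$. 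The landing points of these rays on $\partial U_i$ are forced by the relative homotopy classes of the arcs of $\mathcal{T}_0$ (which are preserved under the construction), and the arcs from adjacent Fatou components get paired correctly at the landing points because each repelling fixed point accepts exactly two fixed internal rays (Lemma \ref{pairwise_landing}). Combining this with expansion of $\mathcal{R}_\Gamma$ on a neighborhood of $\mathcal{J}(\mathcal{R}_\Gamma)$ gives $\mathcal{T}_n \to \mathscr{T}(\mathcal{R}_\Gamma)$, and since $\mathcal{T}_0 \simeq \mathcal{T}_n$ rel $P(\mathcal{R}_\Gamma)$ for every $n$, we conclude that $\mathcal{T}_0$ is isotopic to $\mathscr{T}(\mathcal{R}_\Gamma)$ rel $P(\mathcal{R}_\Gamma)$. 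The main obstacle is this last convergence step: while the Fatou-component portions of $\mathcal{T}_n$ converge cleanly via Böttcher dynamics, verifying that the arcs of $\mathcal{T}_n$ traversing the Julia set pin down in the limit at the correct repelling fixed points -- those dictated by the combinatorics of $\Gamma$ via Lemma \ref{lem:d2s} -- requires a careful homotopy-tracking argument across the Fatou/Julia interface.
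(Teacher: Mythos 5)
Your overall strategy -- the pullback sequence $H_n$ with $\mathcal{R}_\Gamma\circ H_{n+1}=H_n\circ\mathcal{G}_\Gamma$, all isotopic rel the postcritical set, followed by a Hausdorff-convergence argument -- is exactly the paper's, but the step you yourself flag as ``the main obstacle'' is left unproved, and it is the entire content of the lemma. Saying that ``expansion of $\mathcal{R}_\Gamma$ on a neighborhood of $\mathcal{J}(\mathcal{R}_\Gamma)$'' plus ``a careful homotopy-tracking argument across the Fatou/Julia interface'' gives $\mathcal{T}_n\to\mathscr{T}(\mathcal{R}_\Gamma)$ is not an argument: a priori the image arcs could wander through infinitely many non-invariant Fatou components, and nothing you have written rules out that the Julia-set portion of $\mathcal{T}_n$ fails to collapse, or collapses to the wrong repelling fixed point. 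Your appeal to ``inverse branches of $\overline{z}^{m_i}$ contracting toward the fixed radial segments'' is also imprecise, since a backward orbit of an angle under $\theta\mapsto -m_i\theta$ converges to a fixed angle only when the branch choices are controlled; what actually forces fixed internal rays in the limit is forward invariance of the Hausdorff limit, which you never invoke.

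The paper closes the gap with one concrete device you are missing. Work edge by edge: for an edge $\alpha$ with $\mathcal{G}_\Gamma(\alpha)=\alpha$, set $\beta_i=h_i(\alpha)$ and, after an isotopy, decompose $\beta_i=\beta_i^-+\gamma_i+\beta_i^+$ where $\beta_i^{\pm}$ are internal rays in the two invariant Fatou components containing the endpoints and $\gamma_i$ is disjoint from \emph{every invariant} Fatou component. Then $\mathcal{R}_\Gamma:\gamma_{i+1}\to\gamma_i$ is injective, and since the $\gamma_i$ stay uniformly away from the postcritical set, hyperbolicity of $\mathcal{R}_\Gamma$ forces $\diam(\gamma_i)\to 0$. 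The Hausdorff limit $\beta$ of the $\beta_i$ is therefore a union of two internal rays meeting at a single point; being a limit of arcs satisfying $\mathcal{R}_\Gamma(\beta_{i+1})=\beta_i$, it is forward invariant, so the two rays are fixed internal rays and their common endpoint is a repelling fixed point -- i.e., $\beta$ is an edge of $\mathscr{T}(\mathcal{R}_\Gamma)$ isotopic to $\beta_0=h_0(\alpha)$. This is precisely the ``pinning down'' you deferred. The final step (running this over all edges and counting valences at the critical points, using that $h_0$ is orientation preserving, to match $h_0(\mathscr{T}(\mathcal{G}_\Gamma))$ with all of $\mathscr{T}(\mathcal{R}_\Gamma)$) then goes through as you indicate.
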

\begin{proof}
Let $\alpha$ be an edge of $\mathscr{T}(\mathcal{G}_\Gamma)$, then $\mathcal{G}_\Gamma (\alpha) = \alpha$.
Consider the sequence of homeomorphisms $h_i$ with
$$
h_{i-1} \circ \mathcal{G}_\Gamma = \mathcal{R}_\Gamma \circ h_i.
$$
Each $h_i$ is normalized so that it carries $P(\mathcal{G}_\Gamma)$ to $P(\mathcal{R}_\Gamma)$. Note by induction, $\beta_i = h_i(\alpha)$ is isotopic to $\beta_0$ relative to the end-points.
Applying isotopy, we may assume that there is a decomposition $\beta_0 = \beta_0^- + \gamma_0 + \beta_0^+$, where $\beta_0^\pm$ are two internal rays and $\gamma_0$ does not intersect any invariant Fatou component.
Then, every $\beta_i = \beta_i^- + \gamma_i + \beta_i^+$ has such a decomposition too.

Note that $\mathcal{R}_\Gamma(\gamma_{i+1}) = \gamma_{i}$ and $\mathcal{R}_\Gamma:\gamma_{i+1}\to \gamma_{i}$ is injective for each $i$.
Since $\mathcal{R}_\Gamma$ is hyperbolic, and the curves $\gamma_i$ are uniformly bounded away from the post-critical point set of $\mathcal{R}_\Gamma$, we conclude that the sequence of curves $\{\gamma_i\}$ shrinks to a point.
Thus, if $\beta$ is a limit of $\beta_i$ in the Hausdorff topology, then $\beta$ is a union of two internal rays and it is isotopic to $\beta_0$.
Therefore, $h_0(\alpha)$ is isotopic to an edge of $\mathscr{T}(\mathcal{R}_\Gamma)$.

Since this is true for every edge of $\mathscr{T}(\mathcal{G}_\Gamma)$ and $h_0$ is an orientation preserving homeomorphism, by counting the valence at every critical point, we conclude that $h_0(\mathscr{T}(\mathcal{G}_\Gamma))$ is isotopic to $\mathscr{T}(\mathcal{R}_\Gamma)$.
\end{proof}

Therefore, after performing isotopy, we may assume that $h_0$ restricts to a homeomorphism between the graphs $h_0: \mathscr{T}(\mathcal{G}_\Gamma) \longrightarrow \mathscr{T}(\mathcal{R}_\Gamma)$.
By lifting, $h_1: \mathscr{T}(\mathcal{G}_\Gamma) \longrightarrow \mathscr{T}(\mathcal{R}_\Gamma)$ is also a homeomorphism.
Let $F$ be a face of $\Gamma$. By our construction of $\mathcal{G}_\Gamma$ (see Equation~\ref{eqn:2}), $\mathcal{G}_\Gamma$ is conjugate to $\bar{z}^{d_F}$ on the corresponding component $\Omega_F$, where $d_F + 1$ is the number of edges in the ideal boundary of the face $F$.
Since $\mathscr{T}(\mathcal{G}_\Gamma)$ is constructed by taking the union over all faces of the fixed internal rays under such conjugacies (see Equation~\ref{eqn:3}), the dynamics of $\mathcal{G}_\Gamma$ and $\mathcal{R}_\Gamma$ on each edge of $\mathscr{T}(\mathcal{G}_\Gamma)$ and $\mathscr{T}(\mathcal{R}_\Gamma)$ are conjugate.
Therefore, after further isotoping $h_0$, we may assume that
$$
h_0 = h_1 : \mathscr{T}(\mathcal{G}_\Gamma) \longrightarrow \mathscr{T}(\mathcal{R}_\Gamma)
$$ 
gives a conjugacy between $\mathcal{G}_\Gamma$ and $\mathcal{R}_\Gamma$ from $\mathscr{T}(\mathcal{G}_\Gamma)$ to $\mathscr{T}(\mathcal{R}_\Gamma)$.
Hence, both $h_0$ and $h_1$ send a face of $\mathscr{T}(\mathcal{G}_\Gamma)$ homomorphically to the corresponding face of $\mathscr{T}(\mathcal{R}_\Gamma)$.

As an application of the above, we will show that the dynamics on the limit set and Julia set are topologically conjugate (cf. \cite[Theorem~6.11]{LLMM19}).

\begin{lemma}\label{lem:tcl}
There is a homeomorphism $h: \Lambda(G_\Gamma) \longrightarrow \mathcal{J}(\mathcal{R}_\Gamma)$ satisfying 
$h\circ \mathcal{G}_\Gamma = \mathcal{R}_\Gamma \circ h$.
\end{lemma}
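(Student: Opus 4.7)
The plan is to construct $h$ as the pointwise limit on $\Lambda(G_\Gamma)$ of a sequence of face-preserving homeomorphisms of $\widehat{\C}$ obtained by iterated pullback along the common Markov structure provided by the Tischler graphs on the two sides.

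I would begin with the graph homeomorphism $h_0 : \mathscr{T}(\mathcal{G}_\Gamma) \to \mathscr{T}(\mathcal{R}_\Gamma)$ supplied by Lemma~\ref{lem:it} and the ensuing paragraph, where $h_0$ has been arranged to equal $h_1$ and to conjugate $\mathcal{G}_\Gamma$ to $\mathcal{R}_\Gamma$ edge-by-edge. I would extend $h_0$ to an orientation-preserving homeomorphism $H_0 : \widehat{\C} \to \widehat{\C}$ that carries each closed face of $\mathscr{T}(\mathcal{G}_\Gamma)$ onto the corresponding closed face of $\mathscr{T}(\mathcal{R}_\Gamma)$; this is possible because every open face on either side is a Jordan domain and the dynamics restricts to an orientation-reversing homeomorphism between the face and the interior of its complement, by Lemma~\ref{lem:uc} and Corollary~\ref{cor:tm}. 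I would then lift recursively, defining $H_{n+1}$ by requiring $H_n \circ \mathcal{G}_\Gamma = \mathcal{R}_\Gamma \circ H_{n+1}$ together with the normalization $H_{n+1} = H_0$ on $\mathscr{T}(\mathcal{G}_\Gamma)$. Since both dynamical maps are face-wise homeomorphisms, this uniquely determines $H_{n+1}$ as a homeomorphism of $\widehat{\C}$; by induction, $H_{n+1}$ agrees with $H_n$ on $\mathcal{G}_\Gamma^{-n}(\mathscr{T}(\mathcal{G}_\Gamma))$ and sends closed generation-$(n+1)$ faces to closed generation-$(n+1)$ faces with matching labels.

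The key step, which I expect to be the main obstacle, is showing that the generation-$n$ faces on each side shrink uniformly to points of the limit/Julia set. On the group side, the generation-$n$ faces that meet $\Lambda(G_\Gamma)$ are contained in the closed disks of $\overline{\mathcal{D}^n}$, whose spherical diameters tend to $0$ uniformly by Lemma~\ref{lem:maxr}. On the anti-rational map side, because $\mathcal{R}_\Gamma$ is critically fixed, its post-critical set consists of super-attracting fixed points and $\mathcal{R}_\Gamma$ is hyperbolic; consequently it is uniformly expanding in an orbifold metric on a neighborhood of $\mathcal{J}(\mathcal{R}_\Gamma)$, which forces the diameters of the generation-$n$ faces meeting the Julia set to tend to $0$ uniformly as well. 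A little care is required here to relate the Markov pieces (closed faces of the pullback Tischler graph) to the dynamically natural expanding partition, but this is essentially a Koebe-type distortion argument once one has excluded the super-attracting Fatou components from the pieces that touch $\mathcal{J}(\mathcal{R}_\Gamma)$.

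With uniform shrinking in hand, each $z \in \Lambda(G_\Gamma)$ sits in a nested shrinking chain of closed generation-$n$ faces, and the chain of faces with matching labels on the $\mathcal{R}_\Gamma$-side also shrinks, picking out a unique point $h(z) \in \mathcal{J}(\mathcal{R}_\Gamma)$; this $h$ coincides with $\lim_n H_n|_{\Lambda(G_\Gamma)}$. Continuity follows from the uniform shrinking (providing a modulus of continuity), injectivity from the fact that two distinct points of $\Lambda(G_\Gamma)$ are eventually separated by the level-$n$ partition, and surjectivity by running the same construction with the roles of $\mathcal{G}_\Gamma$ and $\mathcal{R}_\Gamma$ interchanged. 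Compactness then upgrades this continuous bijection to a homeomorphism. Finally, passing the intertwining identity $H_n \circ \mathcal{G}_\Gamma = \mathcal{R}_\Gamma \circ H_{n+1}$ to the limit on $\Lambda(G_\Gamma)$ yields $h \circ \mathcal{G}_\Gamma = \mathcal{R}_\Gamma \circ h$, completing the semiconjugacy.
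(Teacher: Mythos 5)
Your proposal follows essentially the same route as the paper: a pullback sequence of homeomorphisms normalized along the Tischler graph, uniform shrinking of the Markov pieces via Lemma~\ref{lem:maxr} on the group side and hyperbolicity of $\mathcal{R}_\Gamma$ on the map side, and passage to the limit on $\Lambda(G_\Gamma)$. One small caution: the injectivity claim that ``two distinct points are eventually separated by the level-$n$ partition'' is delicate because adjacent pieces share boundary points at every level, and the cleaner fix --- which the paper adopts and which you already have at hand via your symmetric construction --- is to build the continuous inverse by applying the identical argument to the sequence $\{H_n^{-1}\}$ and conclude bijectivity from that.
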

\begin{proof}
Let $P^0$ consist of the points of tangency of the circle packing, $P^{i+1} := \mathcal{G}_\Gamma^{-1}(P^i)$, and $P^\infty := \bigcup_{i=0}^\infty P^i$.
Then $P^\infty$ corresponds to the $G_\Gamma$ orbit of the cusps, and hence is dense in $\Lambda(G_\Gamma)$.
Let $Q^0 := h_0(P^0)$, which is also the set of repelling fixed points of $\mathcal{R}_\Gamma$, and $Q^{i+1} := \mathcal{R}_\Gamma^{-1}(Q^i)$.
Then $Q^\infty :=  \bigcup_{i=0}^\infty Q^i$ is dense in $\mathcal{J}(\mathcal{R}_\Gamma)$.

Recall that for each face $F$ of $\Gamma$, the dynamics of $\mathcal{G}_\Gamma$ (respectively, of $\mathcal{R}_\Gamma$) on the corresponding component of $\Omega(G_\Gamma)$ (respectively, on the corresponding critically fixed Fatou component of $\mathcal{R}_\Gamma$) can be uniformized to $\overline{z}^d$ on $\D$.
Let $\Pi$ be the closed ideal $d+1$-gon in $\D$ with ideal vertices at the fixed points of $\overline{z}^d$ on $\mathbb{S}^1$.
Let $\mathcal{L}_{\mathcal{G}, F}$ and $\mathcal{L}_{\mathcal{R},F}$ be the image of $\Pi$ under the uniformizing map in the dynamical plane of $\mathcal{G}_\Gamma$ and $\mathcal{R}_\Gamma$ respectively, and $\mathcal{L}_\mathcal{G}$ and $\mathcal{L}_\mathcal{R}$ be the union over all faces.

Let $\mathcal{E}^0 := \widehat\C\setminus\overline{\mathcal{L}_\mathcal{G}}$ and $\mathcal{H}^0 := \widehat\C\setminus\overline{\mathcal{L}_\mathcal{R}}$.
Inductively, let $\mathcal{E}^{i+1} := \mathcal{G}_\Gamma^{-1}(\mathcal{E}^i)$ and $\mathcal{H}^{i+1} := \mathcal{R}_\Gamma^{-1}(\mathcal{H}^i)$.
Note that these sets are the analogues of $\mathcal{D}^{i}$ for kissing reflection groups. 
Since $\mathcal{G}_\Gamma$ (respectively, $\mathcal{R}_\Gamma$) sends a face of $\mathscr{T}(\mathcal{G}_\Gamma)$ (respectively, a face of $\mathscr{T}(\mathcal{R}_\Gamma)$) to its complement univalently, our construction guarantees that $\overline{\mathcal{E}^1} \subseteq \overline{\mathcal{E}^0}$ and $\overline{\mathcal{H}^1} \subseteq \overline{\mathcal{H}^0}$.
Inductively, we see that $\{\overline{\mathcal{E}^{i}}\}$ and $\{\overline{\mathcal{H}^i}\}$ are nested sequences of closed sets.
Thus, we have 
$$
\Lambda(G_\Gamma) = \bigcap_{i=0}^\infty \overline{\mathcal{E}^{i}}
$$
and
$$
\mathcal{J}(\mathcal{R}_\Gamma) = \bigcap_{i=0}^\infty \overline{\mathcal{H}^{i}}.
$$

Since $\mathcal{R}_\Gamma$ is hyperbolic, and $\mathcal{H}^i$ contains no critical value, the diameters of the components of $\mathcal{H}^{i}\cap \mathcal{J}(\mathcal{R}_\Gamma)$ shrink to $0$ uniformly.
On the other hand, since $ \mathcal{G}_\Gamma = \mathcal{N}_\Gamma$ on $\Lambda(G_\Gamma)$, the diameters of the components of $\mathcal{E}^{i}\cap \Lambda(G_\Gamma)$ shrink to $0$ uniformly by Lemma \ref{lem:maxr}.

After isotoping $h_0$, we may assume that $h_0 (\mathcal{E}^0) = \mathcal{H}^0$.
We consider the pullback sequence $\{h_i\}$ with
$$
h_{i-1} \circ \mathcal{G}_\Gamma = \mathcal{R}_\Gamma \circ h_i,
$$
where each $h_i$ carries $P(\mathcal{G}_\Gamma)$ to $P(\mathcal{R}_\Gamma)$. 
Since we have assumed $h_0 = h_1$ on $P^0$, inductively, we have $h_j = h_i$ on $P_i$ and $h_j(P^i) = Q^i$ for all $j\geq i$.
We define $h(x) := \lim h_i(x)$ for $x\in P^\infty$.

Then, a similar argument as in the proof of Proposition \ref{prop:mg} implies that $h$ is uniformly continuous on $P^\infty$. 
Indeed, given $\epsilon >0$, we can choose $N$ so that the diameter of any component of $\mathcal{H}^N\cap \mathcal{J}(\mathcal{R}_\Gamma)$ is less than $ \epsilon$.
We choose $\delta$ so that any two non-adjacent component of $\mathcal{E}^N\cap \Lambda(G_\Gamma)$ are at least $\delta$ distance away.
If $x,y\in P^\infty$ are at most $\delta$ distance apart, they lie in two adjacent components of $\mathcal{E}^N$, so $h_j(x), h_j(y)$ lie in two adjacent components of $\mathcal{H}^N$ for all $j\geq N$.
Hence, $d(h_j(x), h_j(y)) < 2\epsilon$.

Since $P^\infty$ and $Q^\infty$ are dense on $\Lambda(G_\Gamma)$ and $\mathcal{J}(\mathcal{R}_\Gamma)$ (respectively), we get a unique continuous extension
$$
h:\Lambda(G_\Gamma) \longrightarrow \mathcal{J}(\mathcal{R}_\Gamma).
$$

Applying the same argument on the sequence $\{h_j^{-1}\}$, we get the continuous inverse of $h$.
Thus, $h$ is a homeomorphism.
Since $h$ conjugates $\mathcal{G}_\Gamma$ to $\mathcal{R}_\Gamma$ on $P^\infty$, it also conjugates $\mathcal{G}_\Gamma$ to $\mathcal{R}_\Gamma$ on $\Lambda(G_\Gamma)$.
\end{proof}

We are now ready to prove Proposition~\ref{prop:tc} (cf. \cite[Proposition~6.13]{LLMM19}).

\begin{proof}[Proof of Proposition \ref{prop:tc}]
By Lemma \ref{lem:d2s}, the dual graph of a Tischler graph (of a critically fixed anti-rational map) is $2$-connected, simple and plane.

Conversely, given any $2$-connected, simple, plane graph $\Gamma$, by Lemma \ref{lem:te} and Lemma \ref{lem:it}, we can construct a critically fixed anti-rational map $\mathcal{R}_\Gamma$ whose Tischler graph is the planar dual of $\Gamma$.

It remains to show that $\mathcal{G}_\Gamma$ and $\mathcal{R}_\Gamma$ are topologically conjugate.
Let $h$ be the topological conjugacy on $\Lambda(G_\Gamma)$ produced in Lemma \ref{lem:tcl}.
Let $U$ be a critically fixed component $\mathcal{G}_\Gamma$.
Then by construction, there exists $\Phi: \mathbb{D}\longrightarrow U$ conjugating $m_{-d}(z) = \overline{z}^d$ to $\mathcal{G}_\Gamma$.
The map extends to a semiconjugacy between $\mathbb{S}^1$ and $\partial U$.

Similarly, let $V$ be the corresponding critically fixed Fatou component of $\mathcal{R}_\Gamma$.
Then we have $\Psi: \mathbb{D}\longrightarrow V$ conjugating $m_{-d}(z) = \overline{z}^d$ to $\mathcal{R}_\Gamma$, which extends to a semiconjugacy.
Note that there are $d+1$ different choices of such a conjugacy, we may choose one so that $h\circ \Phi = \Psi$ on $\mathbb{S}^1$.
Thus, we can extend the topological conjugacy $h$ to all of $U$ by setting $h:= \Psi\circ \Phi^{-1}$.

In this way, we obtain a homeomorphic extension of $h$ to all critically fixed components of $\mathcal{G}_\Gamma$.
Lifting these extensions to all the preimages of the critically fixed components of $\mathcal{G}_\Gamma$, we get a map defined on $\widehat\C$.
Since the diameters of the preimages of the critically fixed components (of $\mathcal{G}_\Gamma$) as well as the corresponding Fatou components (of $\mathcal{R}_\Gamma$) go to $0$ uniformly, we conclude that all these extensions paste together to yield a global homeomorphism, which is our desired topological conjugacy.
\end{proof}

In light of Proposition \ref{prop:tc}, we see that the association of the isomorphism class of a $2$-connected, simple, plane graph to the M\"obius conjugacy class of a critically fixed anti-rational map is well defined and surjective.
To verify that this is indeed injective, we remark that if two (plane) graphs $\Gamma$ and $\Gamma'$ are isomorphic as plane graphs, then the associated Tischler graphs are also isomorphic as plane graphs.
This means that the corresponding pair of critically fixed anti-rational maps are Thurston equivalent.
Thus by Thurston's rigidity result, they are M\"obius conjugate.

As in the kissing reflection group setting, we define a geometric mating of two (anti-)polynomials as follows.

\begin{definition}\label{defn:gm}
We say that a rational map (or an anti-rational map)  $R$ is a {\em geometric mating} of two polynomials (or two anti-polynomials) $P^\pm$ with connected Julia sets if we have
\begin{itemize}
\item a decomposition of the Fatou set $\mathcal{F}(R) = \mathcal{F}^+ \sqcup \mathcal{F}^-$ with $\mathcal{J}(R) = \partial \mathcal{F}^+ = \partial \mathcal{F}^-$; and
\item two continuous surjections from the filled Julia sets $\psi^\pm : \mathcal{K}(P^\pm) \to\overline{\mathcal{F}^\pm}$ that are conformal between $\Int{\mathcal{K}}(P^\pm)$ and $\mathcal{F}^\pm$ 
\end{itemize}
so that
$$
\psi^\pm \circ P^\pm = R \circ \psi^\pm.
$$
\end{definition}

As a corollary, we have
\begin{cor}\label{cor:h3}
\noindent\begin{enumerate}
\item A critically fixed anti-rational map $R$ is an anti-polynomial if and only if the dual graph $\Gamma$ of $\mathscr{T}(R)$ is outerplanar.

\item A critically fixed anti-rational map $R$ is a geometric mating of two anti-polynomials if and only if the dual graph $\Gamma$ of $\mathscr{T}(R)$ is Hamiltonian.

\item A critically fixed anti-rational map $R$ has a gasket Julia set if and only if the dual graph $\Gamma$ of $\mathscr{T}(R)$ is $3$-connected.
\end{enumerate}
\end{cor}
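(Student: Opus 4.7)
The plan is to derive each of the three claims from its counterpart on the kissing reflection group side (Propositions \ref{prop:fkrg}, \ref{prop:mg}, and \ref{prop:3g}) via the topological correspondence established in Proposition \ref{prop:tc}. Throughout, I fix a $2$-connected simple plane graph $\Gamma$ and the global topological conjugacy $H\colon \widehat{\C}\to \widehat{\C}$ between $\mathcal{G}_\Gamma$ and $\mathcal{R}_\Gamma$ produced in the proof of Proposition \ref{prop:tc}; this $H$ carries $\Lambda(G_\Gamma)$ onto $\mathcal{J}(\mathcal{R}_\Gamma)$ and sends the $G_\Gamma$-orbit of the component of $\Omega(G_\Gamma)$ associated with a face $F$ of $\Gamma$ to the grand orbit of the corresponding critically fixed Fatou component of $\mathcal{R}_\Gamma$.

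Claim (3) is essentially immediate: the defining properties of a gasket---being the closure of a countable family of topological disks whose complementary disks form a dense union---are invariant under homeomorphisms of $\widehat{\C}$. Hence $H$ witnesses that $\mathcal{J}(\mathcal{R}_\Gamma)$ is a gasket iff $\Lambda(G_\Gamma)$ is, and Proposition \ref{prop:3g} translates this into $3$-connectedness of $\Gamma$.

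For (1), I would observe that $\mathcal{R}_\Gamma$ is M\"obius conjugate to an anti-polynomial iff it has a completely invariant Fatou component---which, for a critically fixed map, must be critically fixed and must carry local degree equal to the global degree $d$. By the construction of $\mathcal{G}_\Gamma$ (Equation~\ref{eqn:2}), the local degree at the center of the component corresponding to a face $F$ of $\Gamma$ is $m_F-1$, where $m_F$ is the number of edges on $\partial F$. Since $\Gamma$ has $d+1$ vertices by Theorem \ref{thm:c}, $m_F-1=d$ iff $\partial F$ contains every vertex, i.e., iff $\Gamma$ is outerplanar. The conjugacy $H$ transfers this characterization from $\mathcal{G}_\Gamma$ to $\mathcal{R}_\Gamma$, paralleling Proposition \ref{prop:fkrg}.

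For the forward direction of (2), a Hamiltonian cycle in $\Gamma$ yields a decomposition $\Gamma=\Gamma^+\cup\Gamma^-$ with both $\Gamma^\pm$ outerplanar on the common vertex set, so by (1) each $\Gamma^\pm$ produces a critically fixed anti-polynomial $P^\pm=\mathcal{R}_{\Gamma^\pm}$. I would then construct the semi-conjugacies $\psi^\pm\colon\mathcal{K}(P^\pm)\to\overline{\mathcal{F}^\pm}$ level-by-level, mirroring the tile-by-tile scheme in the proof of Proposition \ref{prop:mg} but replacing the tilings $\mathcal{D}^i$ of the domain of discontinuity with the Julia-set analogues $\mathcal{H}^i$ introduced in Lemma \ref{lem:tcl}; uniform continuity, and hence a continuous equivariant extension across $\mathcal{J}(P^\pm)$, will follow from the expansion of $\mathcal{R}_\Gamma$ on its Julia set. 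Conformality on interiors is ensured by choosing the base homeomorphism on $\overline{\Pi_b^0}$ to be conformal, which is possible because the relevant Fatou components of $P^\pm$ and $\mathcal{R}_\Gamma$ are conformally disks of the same combinatorial type. The reverse direction is easier: a mating decomposition $\mathcal{F}(\mathcal{R}_\Gamma)=\mathcal{F}^+\sqcup\mathcal{F}^-$ partitions the critically fixed Fatou components, hence the vertices of the Tischler graph $\mathscr{T}(\mathcal{R}_\Gamma)$, and hence the faces of $\Gamma$, into two sides meeting along a single cycle through every vertex---a Hamiltonian cycle of $\Gamma$. The main obstacle will be the forward direction: assembling $\psi^\pm$ so as to be simultaneously continuous on all of $\mathcal{K}(P^\pm)$, conformal on the interior, and equivariant, adapting the expansion and pinching arguments from the proof of Proposition \ref{prop:mg} and Lemma \ref{lem:tcl} to the anti-rational setting.
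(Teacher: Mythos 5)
Your treatment of statements (1) and (3) is correct and follows essentially the paper's route: (3) is the observation that being a gasket is a topological property of the pair $(\widehat{\C},\Lambda)$, transported by the global conjugacy of Proposition \ref{prop:tc} and combined with Proposition \ref{prop:3g}; (1) is the valence count identifying outerplanarity of $\Gamma$ with the existence of a fully branched fixed critical point (the paper phrases this as a vertex of $\mathscr{T}(R)$ of maximal valence $d+1$). For the forward direction of (2), your plan to build $\psi^{\pm}$ directly by a level-by-level construction over the tiles $\mathcal{H}^i$ is a legitimate, if more laborious, way to supply the conformality on interiors that Definition \ref{defn:gm} demands; the paper instead transports the group-side mating structure of Proposition \ref{prop:mg} through the conjugacy $h$ and leaves this verification implicit.

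The genuine gap is in your reverse direction of (2). You assert that the partition of the critically fixed Fatou components (equivalently, of the faces of $\Gamma$) induced by $\mathcal{F}(\mathcal{R}_\Gamma)=\mathcal{F}^+\sqcup\mathcal{F}^-$ automatically produces ``a single cycle through every vertex.'' A partition of the faces of a plane graph into two sets does not by itself yield a Hamiltonian cycle: the separating edges form a union of cycles, and even a single such cycle need not visit every vertex (partition the six faces of the cube graph into one face and the remaining five; the separating edges form a $4$-cycle through only four of the eight vertices). What is actually needed is that the vertices of $\mathscr{T}(\mathcal{R}_\Gamma)$ on each side span a tree in $\mathscr{T}(\mathcal{R}_\Gamma)$, i.e.\ each side is connected and contributes exactly $k^{\pm}-1$ edges, so that the remaining $d+1$ edges are dual to a Hamiltonian cycle of $\Gamma$. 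Establishing this uses the semi-conjugacies in an essential way: the finite part of $\mathscr{T}(P^{\pm})$ is a tree (the weak dual of the $2$-connected outerplanar graph associated to $P^{\pm}$), its image injects into $\mathscr{T}(\mathcal{R}_\Gamma)$ because at most two fixed internal rays land at a common point (Lemma \ref{pairwise_landing}), and one must rule out extra edges arising from identifications of distinct repelling fixed points of $P^{\pm}$ under the ray equivalence. The paper avoids all of this by pulling the decomposition back to the group side via $h$ and invoking the combinatorial argument already carried out in the proof of Proposition \ref{prop:mg}; you should either do the same or supply the tree argument sketched above.
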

\begin{proof}
The first statement follows from Proposition \ref{prop:tc} as the graph $\Gamma$ is outerplanar if and only if there is a vertex in the dual graph $\Gamma^\vee=\mathscr{T}(R)$ with maximal valence if and only if $R$ has a fully branched fixed critical point.

For the second statement, let $\Gamma$ be a $2$-connected, simple, plane graph.
Let $G_\Gamma$ be an associated kissing reflection group, $\mathcal{G}_\Gamma$ and $\mathcal{R}_\Gamma$ be the associated branched covering and anti-rational map.
By Proposition \ref{prop:tc}, $\mathcal{G}_\Gamma$ and $\mathcal{R}_\Gamma$ are topologically conjugated by $h$.

If $\mathcal{R}_\Gamma$ is a geometric mating of two anti-polynomials $P^\pm$ which are necessarily critically fixed, then the conjugacy gives a decomposition of $\Omega(G_\Gamma) = \Omega^+\sqcup \Omega^-$. 
It is not hard to check that the $\Gamma$-actions on $\overline{\Omega^\pm}$ are conjugate to the actions of the function kissing reflection groups $G^\pm$ on their filled limit sets, where $G^\pm$ correspond to $P^\pm$.
Thus, $G_\Gamma$ is a geometric mating of the two function kissing reflection groups $G^\pm$, so $\Gamma$ is Hamiltonian by Proposition \ref{prop:mg}.

Conversely, if $\Gamma$ is Hamiltonian, then $G_\Gamma$ is a geometric mating of two function kissing reflection groups $G^\pm$, and we get a decomposition of $\Omega(G_\Gamma)$ by Proposition \ref{prop:mg}. The conjugacy $h$ transports this decomposition to the anti-rational map setting.
One can now check directly that $\mathcal{R}_\Gamma$ is a geometric mating of the anti-polynomials $P^\pm$ that are associated to the function kissing reflection groups $G^\pm$.

The third statement follows immediately from Propositions~\ref{prop:tc} and~\ref{prop:3g}.
\end{proof}

\begin{remark}
The Hubbard tree of a critically fixed anti-polynomial $P$ is a strict subset of $\mathscr{T}(P)\cap\mathcal{K}(P)$, where $\mathcal{K}(P)$ is the filled Julia set of $P$ (cf. \cite[\S 5]{LMM20}).
\end{remark}

\begin{remark}
It is not hard to see that the existence of a Hamiltonian cycle for $\mathscr{T}(R)^{\vee}$ is equivalent to the existence of an equator for $R$ in the sense of \cite[Definition~4.1]{Mey14}. This gives an alternative proof of the second statement of Corollary~\ref{cor:h3} (cf. \cite[Theorem~4.2]{Mey14}).
\end{remark}

\subsection{Dynamical correspondence}\label{dyn_corr_subsec}
Let $G_\Gamma$ and $\mathcal{R}_\Gamma$ be a kissing reflection group and the critically fixed anti-rational map associated to a $2$-connected simple plane graph $\Gamma$.
Here we summarize the correspondence between various dynamical objects associated with the group and the anti-rational map.

\begin{itemize}
\item \textbf{Markov partitions for limit and Julia sets:}
The associated circle packing $\mathcal{P}$ gives a Markov partition for the group dynamics on the limit set (more precisely, for the action of the Nielsen map $\mathcal{N}_\Gamma$ on $\Lambda(G_\Gamma)$).
On the other side, the faces of the Tischler graph determine a Markov partition for the action of $\mathcal{R}_\Gamma$ on the Julia set. The topological conjugacy respects this pair of Markov partitions and the itineraries of points with respect to the corresponding Markov partitions.

\item \textbf{Cusps and repelling fixed points:}
The orbits of parabolic fixed points of $G_\Gamma$ bijectively correspond to pairs of adjacent vertices, or equivalently, to the edges of $\Gamma$.
On the other side, the repelling fixed points of $\mathcal{R}_\Gamma$ are in bijective correspondence with the edges of the Tischler graph $\mathscr{T} = \Gamma^\vee$, and thus also with the edges of $\Gamma$.
They are naturally identified by the topological conjugacy.

\item \textbf{$\sigma$-invariant curves and $2$-cycles:}
Each $\sigma$-invariant simple closed curve on the conformal boundary $\partial\mathcal{M}(G_\Gamma)$ corresponds to a pair of non-adjacent vertices on a face of $\Gamma$ (see the discussion on pinching deformations after Proposition~\ref{thm:closure} for the definition of the involution $\sigma$).
On the other side, the above Markov partition for $\mathcal{R}_\Gamma\vert_{\mathcal{J}(\mathcal{R}(\Gamma))}$ shows that each $2$-cycle on the {\em ideal boundary} of an invariant Fatou component corresponds to a pair of non-adjacent vertices on the associated face of $\Gamma$.
These $\sigma$-invariant curves and $2$-cycles are naturally identified by the topological conjugacy.
We remark that it is possible that a $\sigma$-invariant curve yields an accidental parabolic element, in which case the corresponding $2$-cycle on the ideal boundary of the invariant Fatou component coalesces, and gives rise to a repelling fixed point of $\mathcal{R}_\Gamma$.
This happens if and only if the two corresponding vertices are adjacent in $\Gamma$.

\item \textbf{Question mark conjugacy:}
If we choose our group $G_\Gamma$ so that each conformal boundary is the double of a suitable regular ideal polygon, then the restriction of the homeomorphism $h$ of Lemma~\ref{lem:tcl} between the boundary of an invariant Fatou component and the boundary of the corresponding component of $\Omega(G_\Gamma)$ gives a homeomorphism between the ideal boundaries $\phi:\mathbb{S}^1\longrightarrow \mathbb{S}^1$ that conjugates $\overline{z}^e$ to the Nielsen map $\mathcal{N}_e$ of the regular ideal $(e+1)$-gon reflection group.
This conjugacy $\phi$ is a generalization of the Minkowski question mark function (see \cite[\S 3.2, \S 5.4.2]{LLMM18} \cite[Remark~9.1]{LLMM19}).

\item \textbf{Function groups and anti-polynomials:}
If $G_\Gamma$ is a function kissing reflection group (of rank $d+1$), then it can be constructed by pinching a $\sigma$-invariant multicurve $\alpha$ on one component of $\partial\mathcal{M}(\mathbf{G}_d)$ (recall that $\mathbf{G}_d$ is the regular ideal polygon group of rank $d+1$).
Under the natural orientation reversing identification of the two components of $\partial\mathcal{M}(\mathbf{G}_d)$, the multicurve $\alpha$ gives a $\sigma$-invariant multicurve $\alpha'$ on the component of $\partial\mathcal{M}(G_\Gamma)$ associated to the $G_\Gamma$-invariant domain $\Omega_0\subseteq\Omega(G_\Gamma)$. The multicurve $\alpha'$ consists precisely of the simple closed curves corresponding to the accidental parabolics of $G_\Gamma$.
On the other side, the corresponding $2$-cycles on the ideal boundary of the unbounded Fatou component (i.e., the basin of infinity) generate the lamination for the Julia set of the anti-polynomial $R_\Gamma$.

\item \textbf{QuasiFuchsian closure and mating:} If $G_\Gamma$ lies in the closure of the quasiFuchsian deformation space of $\mathbf{G}_d$, then $G_\Gamma$ is obtained by pinching two non-parallel $\sigma$-invariant multicurves $\alpha^+$ and $\alpha^-$ on the two components of $\partial\mathcal{M}(\mathbf{G}_d)$; equivalently, $G_\Gamma$ is a mating of two function kissing reflection groups $G^+$ and $G^-$. On the other side, the critically fixed anti-rational map $\mathcal{R}_\Gamma$ is a mating of the critically fixed anti-polynomials $P^+$ and $P^-$, which correspond to the groups $G^+$ and $G^-$ (respectively). The topological conjugacy between $\mathcal{N}_\Gamma\vert_{\Lambda(G_\Gamma)}$ and $\mathcal{R}_\Gamma\vert_{\mathcal{J}(\mathcal{R}_\Gamma)}$ is induced by the circle homeomorphism that conjugates $\mathcal{N}_d$ to $\overline{z}^d$.
\end{itemize}

\subsection{Mating of two anti-polynomials.}
In this subsection, we shall discuss the converse question of mateablity in terms of laminations.
Let 
$$
P(z) = \overline{z}^d+ a_{d-2}\overline{z}^{d-2}+a_{d-3}\overline{z}^{d-3}+...+a_0
$$ 
be a monic centered anti-polynomial with connected Julia set.
We denote the filled Julia set by $\mathcal{K}(P)$. There is a unique B\"ottcher coordinate 
$$
\psi: \C\setminus\overline{\D} \longrightarrow \C\setminus \mathcal{K}(P)
$$
with derivative $1$ at infinity that conjugates $\overline{z}^d$ to $P$.
We shall call a monic centered anti-polynomial equipped with this preferred B\"ottcher coordinate a {\em marked} anti-polynomial. 
Note that when the Julia set is connected, this marking is equivalent to an affine conjugacy class of anti-polynomials together with a choice of B\"ottcher coordinate at infinity.
If we further assume that the Julia set is locally connected, the map $\psi$ extends to a continuous semi-conjugacy $\psi:\mathbb{S}^1 \longrightarrow \mathcal{J}(P)$ between $\overline{z}^d$ and $P$.
This semi-conjugacy gives rise to a $\overline{z}^d$-{\em invariant lamination} on the circle $\mathbb{S}^1$ for the marked anti-polynomial.
The coordinate on $\mathbb{S}^1\cong\R/\Z$ will be called {\em external angle}. The image of $\{re^{2\pi i\theta}\in \C: r\geq 1\}$ under $\psi$ will be called an {\em external ray} at angle $\theta$, and will be denoted by $R(\theta)$.
If $x\in \mathcal{J}(P)$ is the intersection of two external rays, then $x$ is a cut-point of $\mathcal{J}(P)$ (equivalently, a cut-point of $\mathcal{K}(P)$). 

If we denote $\circled{$\C$} = \C \cup \{(\infty, w): w\in \mathbb{S}^1\}$ as the complex plane together with the circle at infinity, then a marked anti-polynomial extends continuously to $\circled{$\C$}$ by the formula $P(\infty, w) = (\infty, \overline{w}^d)$.

Let $P$ and $Q$ be two marked degree $d$ anti-polynomials. 
To distinguish the two domains $\circled{$\C$}$, we denote them by $\C_P$ and $\C_Q$. 
The quotient
\[\mathbb{S}^2_{P,Q}:=(\C_P\cup\C_Q)/\{(\infty_P,w)\sim(\infty_Q,\overline{w})\}\]
defines a topological sphere. 
Denote the equivalence class of $(\infty_P,w)$ by $(\infty,w)$, and define the \emph{equator} of $\mathbb{S}^2_{P,Q}$ to be the set $\{(\infty,w): w\in \mathbb{S}^1\}$.
The \emph{formal mating} of $P$ and $Q$ is the degree $d$ branched cover $P\mate Q:\mathbb{S}^2_{f,g}\to \mathbb{S}^2_{f,g}$ defined by the formula 

\[(P\mate Q)(z) = \begin{cases} 
      P(z) & z\in\C_P, \\
      (\infty,\overline{z}^d) & \text{for }(\infty,z), \\
      Q(z) & z\in\C_Q .
   \end{cases}\]

Suppose that $P$ and $Q$ have connected and locally connected Julia sets. 
The closure in $\mathbb{S}^2_{P,Q}$ of the external ray at angle $\theta$ in $\C_P$ is denoted $R_P(\theta)$, and likewise the closure of the external ray at angle $\theta$ in $\C_Q$ is denoted $R_Q(\theta)$. Then, $R_P(\theta)$ and $R_Q(-\theta)$ are rays in $\mathbb{S}^2_{P,Q}$ that share a common endpoint $(\infty,e^{2\pi i\theta})$.
The \emph{extended external ray at angle $\theta$} in $\mathbb{S}^2_{P,Q}$ is defined as $R(\theta):=R_P(\theta)\cup R_Q(-\theta)$. Each extended external ray intersects each of $\mathcal{K}(P)$, $\mathcal{K}(Q)$, and the equator at exactly one point.

We define the ray equivalence $\sim_{ray}$ as the smallest equivalence relation on $\mathbb{S}^2_{P,Q}$ so that two points $x,y\in \mathbb{S}^2_{P,Q}$ are equivalent if there exists $\theta$ so that $x,y\in R(\theta)$. A ray equivalence class $\gamma$ can be considered as an embedded graph: the vertices are the points in $\gamma\cap(\mathcal{K}(P)\cup \mathcal{K}(Q))$, and the edges are the external extended rays in $\gamma$.
We also denote by $[x]$ the ray equivalence class of $x$. The diameter of a ray equivalence class is computed with respect to the graph metric. The vertex set of a ray equivalence class $[x]$ has a natural partition consisting of the two sets $[x]\cap\mathcal{K}(P)$ and $[x]\cap\mathcal{K}(Q)$, and using this partition $[x]$ is seen to be a bipartite graph.

An equivalence relation $\sim$ on $\mathbb{S}^2$ is said to be \emph{Moore-unobstructed} if the quotient $\mathbb{S}^2/{\sim}$ is homeomorphic to $\mathbb{S}^2$. The following theorem is due to A. Epstein (see \cite[Proposition 4.12]{PM12}).

\begin{prop}\label{prop:epstein}
Let $P$ and $Q$ be two anti-polynomials of equal degree $d\geq 2$ with connected and locally connected Julia sets. If the equivalence classes of $\sim_{ray}$ are ray-trees having uniformly bounded diameter, then $\mathbb{S}^2_{P,Q}/{\sim_{ray}}$ is Moore-unobstructed.
\end{prop}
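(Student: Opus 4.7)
The plan is to verify the hypotheses of Moore's decomposition theorem: if $\sim$ is an upper semi-continuous equivalence relation on $\mathbb{S}^2$ whose equivalence classes are compact, connected, non-separating, and not all of $\mathbb{S}^2$, then the quotient is homeomorphic to $\mathbb{S}^2$. I must check these four conditions for $\sim_{ray}$ on $\mathbb{S}^2_{P,Q}$, using the two hypotheses in play: each equivalence class is a ray-tree (a finite connected graph with no cycles), and these graph-theoretic diameters are uniformly bounded by some integer $N$.

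Connectedness and non-separation follow immediately from the ray-tree structure. An equivalence class $[x]$ is, topologically, a finite union of arcs (extended external rays $R(\theta)$) meeting at points of $\mathcal{K}(P)\cup\mathcal{K}(Q)$, with the incidence pattern of a graph-theoretic tree. As a topological graph it is connected and simply connected, and a simply connected compact subset of $\mathbb{S}^2$ has connected complement, so $[x]$ does not separate $\mathbb{S}^2_{P,Q}$. Combined with the uniform diameter bound, which gives a uniform bound on the number of edges per class (since at each vertex only boundedly many external rays can land by the degree of the map), this also shows that no class exhausts the entire sphere.

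The main work is verifying upper semi-continuity of $\sim_{ray}$: whenever $x_n \sim_{ray} y_n$ with $x_n \to x$ and $y_n \to y$, I must conclude $x \sim_{ray} y$. Since $\mathcal{J}(P)$ and $\mathcal{J}(Q)$ are assumed locally connected, every external ray lands, and the landing map is continuous as a function of the external angle. For each $n$, fix a path in the ray-tree $[x_n]$ of length at most $N$ from $x_n$ to $y_n$, encoded by a sequence of angles $(\theta_1^n,\dots,\theta_{k_n}^n)$ with $k_n\le N$. Passing to a subsequence, $k_n$ stabilizes and each $\theta_j^n$ converges to some limit $\theta_j$. Continuity of ray landing then yields a chain of extended external rays in $\mathbb{S}^2_{P,Q}$ from $x$ to $y$, giving $x\sim_{ray} y$.

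The delicate point I expect to be the main obstacle is handling degenerations of the limit path: two consecutive rays whose angles converge together can collapse to a single ray, and a path that is a simple tree at each level may, a priori, limit to a configuration with coincidences. The argument to push through is that any such collapse only shortens the path without destroying the equivalence $x\sim_{ray} y$, and bounded combinatorial complexity prevents pathological accumulation; the ray-tree hypothesis is used at finite levels, not in the limit, so the possibility that the limiting class $[x]$ is strictly larger than the naive limit of $[x_n]$ is harmless for upper semi-continuity. Once upper semi-continuity is established, all four Moore conditions hold and one concludes that $\mathbb{S}^2_{P,Q}/{\sim_{ray}}$ is homeomorphic to $\mathbb{S}^2$.
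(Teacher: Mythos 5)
This is one of the statements the paper does not prove at all: it is imported verbatim from Petersen--Meyer (attributed there to A.~Epstein), with only the remark that ``the proof of this theorem uses Moore's theorem'' and extends identically to anti-polynomials. Your proposal is therefore not a different route but a reconstruction of exactly the argument the paper points to, and its outline is sound: the whole content is the verification of Moore's hypotheses, and the only nontrivial one is closedness of $\sim_{ray}$ as a subset of $\mathbb{S}^2_{P,Q}\times\mathbb{S}^2_{P,Q}$, which you correctly reduce to the uniform bound $N$ on chain length plus Hausdorff-continuity of $\theta\mapsto R(\theta)$ (this is where local connectivity enters, via the continuous extension of the B\"ottcher parametrization), with collapsing of consecutive rays in the limit only shortening the chain. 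Two small points of hygiene. First, your justification that each class has finitely many edges --- ``at each vertex only boundedly many external rays can land by the degree of the map'' --- is not correct as stated: the number of rays landing at a point of a locally connected Julia set is not bounded by the degree (already for quadratics the $\alpha$-fixed point can have arbitrarily many rays over the family); finiteness should instead be read into the ray-tree hypothesis, or sidestepped entirely by noting that once the relation is shown to be closed, every class is a slice of a closed set and hence automatically compact. Second, for non-separation the relevant fact is that a finite topological tree has trivial first \v{C}ech cohomology, so by Alexander duality its complement in $\mathbb{S}^2$ is connected; ``simply connected implies non-separating'' is the right idea but should be phrased for the class of sets at hand. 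Neither point damages the argument.
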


We remark that the proof of this theorem uses Moore's theorem. It is originally stated only for polynomials, but the proof extends identically to the case of anti-polynomials.
There is also a partial converse of the above statement which we will not be using here.

For a critically fixed anti-polynomial $P$, we have the following description of the cut-points of $\mathcal{K}(P)$. The result directly follows from the dynamical correspondence between function kissing reflection groups and critically fixed anti-polynomials discussed in \S \ref{dyn_corr_subsec}.

\begin{lemma}\label{lem:ef}
Let $P$ be a critically fixed anti-polynomial. Then, any cut-point of $\mathcal{K}(P)$ is eventually mapped to a repelling fixed point that is the landing point of a $2$-cycle of external rays.
\end{lemma}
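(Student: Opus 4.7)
My plan is to transport the problem to the kissing reflection group side via the dynamical correspondence of \S\ref{dyn_corr_subsec}. By Corollary \ref{cor:h3}(1), the critically fixed anti-polynomial $P$ is M\"obius conjugate to $\mathcal{R}_\Gamma$ for a $2$-connected outerplanar simple plane graph $\Gamma$; let $H=G_\Gamma$ be an associated function kissing reflection group, with $H$-invariant component $\Omega_0\subset\Omega(H)$ corresponding to the outer face of $\Gamma$. The topological conjugacy of Proposition \ref{prop:tc} restricts to a homeomorphism $h:\mathcal{K}(H)\to\mathcal{K}(P)$ conjugating $\mathcal{N}_H\vert_{\Lambda(H)}$ to $P\vert_{\mathcal{J}(P)}$, so it is enough to locate and iterate the cut-points on the group side.

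Next I would characterize the cut-points of $\mathcal{K}(H)$. Since $\Gamma$ is $2$-connected, every component of $\Omega(H)$ is a Jordan domain (\S\ref{limit_connected_subsec}), so any cut-point of $\mathcal{K}(H)$ lies in $\Lambda(H)$ at a point where the closures of two distinct bounded components of $\Omega(H)$ meet. Of the first-generation tangencies $p=C_i\cap C_j$ of the circle packing realizing $\Gamma$, those on the Hamiltonian cycle of $\Gamma$ bound $\Omega_0$ on one side and are non-separating, whereas those corresponding to interior edges of $\Gamma$ separate two distinct bounded components of $\Omega(H)$ and are genuine cut-points; unwinding the recursive description of the tiles in Equation~\eqref{eqn:1} then shows that every cut-point of $\mathcal{K}(H)$ lies in the $H$-orbit of such an interior tangency. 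Each interior tangency $p$ is fixed by both $g_i$ and $g_j$, hence by $\mathcal{N}_H$, and for $q=g_{i_1}\cdots g_{i_\ell}\cdot p$ in reduced form Lemma~\ref{lem:im} gives $\mathcal{N}_H^\ell(q)=p$. Transporting through $h$, every cut-point of $\mathcal{K}(P)$ is eventually mapped by $P$ to a fixed point of $P$, which is repelling by hyperbolicity.

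The step I anticipate as the main obstacle is exhibiting the $2$-cycle of external rays landing at each such repelling fixed point $y=h(p)$. A local-dynamics argument shows that at a repelling fixed point of an anti-holomorphic map the derivative acts as an orientation-reversing involution on the tangent directions, so $P$ permutes the set of external-ray landing angles at $y$ by an involution, forcing every orbit to have length $1$ or $2$; since $y$ is a cut-point, at least two external rays land at $y$, so at least one $2$-cycle must occur. To conclude, I would invoke the pinching picture in the ``Function groups and anti-polynomials'' bullet of \S\ref{dyn_corr_subsec}: the interior tangency $p$ is an accidental parabolic of $H$ corresponding to a $\sigma$-invariant simple closed curve on the conformal boundary associated with $\Omega_0$, and under the question-mark circle conjugacy identifying $\mathcal{N}_d$ with $\overline{z}^d$, its two ideal endpoints are precisely a $2$-cycle of external angles co-landing at $y$, as required.
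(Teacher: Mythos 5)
Your overall route is exactly the one the paper intends: its proof of this lemma is literally a pointer to the dynamical correspondence of \S\ref{dyn_corr_subsec}, and your endgame --- identifying the fixed cut-points with the accidental parabolics of the function kissing reflection group and reading off the $2$-cycle from the two ideal endpoints of the corresponding $\sigma$-invariant geodesic under the question-mark conjugacy --- is the right mechanism. The problem is the middle step, where you locate the cut-points. The assertion that ``since $\Gamma$ is $2$-connected, every component of $\Omega(H)$ is a Jordan domain'' is false: \S\ref{limit_connected_subsec} only gives simple connectivity, and the proofs of Proposition~\ref{prop:3g} and Proposition~\ref{why_not_gasket_prop} show that whenever $\Gamma$ has a chord, the invariant component $\Omega_0$ is \emph{precisely not} a Jordan domain (the accidental parabolic fixed point has two preimages on its ideal boundary). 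Since $\mathcal{K}(H)=\widehat\C\setminus\Omega_0$, the cut-points of $\mathcal{K}(H)$ are by definition the self-identification points of $\partial\Omega_0$, i.e.\ the points of $\Lambda(H)$ with at least two accesses from $\Omega_0$; deducing that all of these lie in the $H$-orbit of the chord tangencies is the actual content of the lemma, and neither the (false) Jordan-domain claim nor ``unwinding Equation~\ref{eqn:1}'' delivers it. What is needed here is the lamination statement underlying the ``Function groups and anti-polynomials'' bullet of \S\ref{dyn_corr_subsec}: the identifications on the ideal boundary of $\Omega_0$ are generated by the leaves attached to the accidental parabolics and their $\mathcal{N}_d$-pullbacks, with no extra limit identifications (this uses the shrinking of tiles, Lemma~\ref{lem:maxr}, together with the Markov structure). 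As written, the central claim is asserted rather than proved.

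A smaller gap: your local-dynamics argument only shows that $P$ permutes the external angles landing at $y$ by an order-reversing involution of a cyclically ordered finite set. Such an involution can fix two angles, so ``at least two rays land'' does not by itself force a $2$-cycle --- you must rule out two \emph{fixed} rays co-landing (which would produce a self-loop at $\infty$ in $\mathscr{T}$, hence a cut edge of $\Gamma$, contradicting $2$-connectivity). Your second, pinching-based argument does settle this correctly, since the two ideal endpoints of the axis of an accidental parabolic $g_vg_w$ are swapped by $\mathcal{N}_H$ (each of $g_v$, $g_w$ conjugates $g_vg_w$ to its inverse), so they genuinely form a $2$-cycle of external angles; I would drop the local argument and keep only that one.
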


An extended external ray that contains a repelling fixed point of a critically fixed anti-polynomial is called a \emph{principal ray}. The ray equivalence class of a principal ray is called a \emph{principal ray class}. 
In our setting, the Moore obstructions can be detected simply by looking at these principal ray classes.

\begin{lemma}\label{lem:noMoore}
Let $P$ and $Q$ be two marked anti-polynomials of equal degree $d\geq 2$, where $P$ is critically fixed and $Q$ is postcritically finite and hyperbolic.
\begin{enumerate}
\item \label{lem:Moore1} If a principal ray equivalence class of $\sim_{ray}$ contains more than one cut-point of $\mathcal{K}(P)$, it must contain a 2-cycle or a 4-cycle.

\item \label{lem:Moore2} The equivalence $\sim_{ray}$ is Moore-obstructed if and only if some principal ray equivalence class contains a 2-cycle or a 4-cycle. 
\end{enumerate}
\end{lemma}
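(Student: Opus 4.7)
The plan for part~(1) is to exploit the restricted lamination of a critically fixed anti-polynomial. By Lemma~\ref{lem:ef}, every cut-point of $\mathcal{K}(P)$ is eventually mapped to a repelling fixed point of $P$, and at such a fixed point exactly two external rays land (forming a $2$-cycle under $\theta \mapsto -d\theta$); pulling back under $P$ shows that every cut-point of $\mathcal{K}(P)$ has exactly two external rays landing at it, and therefore has degree exactly two as a vertex in the bipartite graph of any ray equivalence class. Let $x_0, x_1 \in \mathcal{K}(P)$ be distinct cut-points in the same principal ray class $[x]$, with $R_P(\alpha_1), R_P(\alpha_2)$ landing at $x_0$ and their equatorial extensions $R_Q(-\alpha_1), R_Q(-\alpha_2)$ landing at $y_1, y_2 \in \mathcal{K}(Q)$. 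If $y_1 = y_2$, the two parallel edges joining $x_0$ to $y_1$ already form a $2$-cycle. Otherwise, letting $y_1', y_2' \in \mathcal{K}(Q)$ denote the analogous neighbors of $x_1$, the key technical step is to show, using the $(P\mate Q)$-invariance of $[x]$ and the rigid structure of preimages of the fixed pair $\{\alpha, -d\alpha\}$ under $\theta\mapsto-d\theta$, that $\{y_1, y_2\} = \{y_1', y_2'\}$, which produces a $4$-cycle $x_0 \to y_1 \to x_1 \to y_2 \to x_0$.

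For part~(2), the $(\Leftarrow)$ direction follows from the observation that a $2$-cycle or $4$-cycle in a ray class embeds as a Jordan curve $\gamma \subseteq \mathbb{S}^2_{P,Q}$ contained in a single $\sim_{ray}$-class; the complement $\mathbb{S}^2_{P,Q}\setminus\gamma$ has two non-empty components, each containing points of other ray classes, and the quotient collapses $\gamma$ to a point while keeping the two components separated, producing a wedge of two spheres rather than a single $\mathbb{S}^2$. For the $(\Rightarrow)$ direction, assume no principal ray class contains a $2$-cycle or $4$-cycle. Since the critical points of $P\mate Q$ lie in the filled Julia sets, while ray classes lie in $\mathcal{J}(P)\cup\mathcal{J}(Q)$ together with the equator, $P\mate Q$ induces a combinatorial map on ray classes that sends cycles to cycles; because cut-points of $\mathcal{K}(P)$ forward-iterate to repelling fixed points of $P$ (by Lemma~\ref{lem:ef}), any cycle in any ray class eventually lands in a principal ray class, so no ray class contains a short cycle. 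Applying part~(1), every ray class contains at most one cut-point of $\mathcal{K}(P)$; combined with the fact that non-cut-points of $\mathcal{K}(P)$ have degree one, each ray class is a tree of depth at most two (rooted at the unique cut-point, if any, branching to $\mathcal{K}(Q)$-landing-points, and then to degree-one leaves in $\mathcal{K}(P)$), hence a ray-tree of diameter at most four. Proposition~\ref{prop:epstein} then gives that $\sim_{ray}$ is Moore-unobstructed.

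The main obstacle is the cycle-closing argument in part~(1). Purely combinatorial degree counting does not rule out long trees with multiple degree-two $\mathcal{K}(P)$-vertices, so the argument must essentially use the dynamics: the invariance of the principal ray class under $P\mate Q$, the rigid $2$-cycle structure of landing fibers at cut-points of $\mathcal{K}(P)$, and the combinatorial control on how two preimages of the repelling fixed point can share $\mathcal{K}(Q)$-neighbors. Carefully combining these should show that the only way two cut-points of $\mathcal{K}(P)$ can coexist in a principal ray class is by forcing $\{y_1, y_2\} = \{y_1', y_2'\}$, giving the desired $4$-cycle (or, in the degenerate case $y_1 = y_2$, an even shorter $2$-cycle).
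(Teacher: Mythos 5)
Your part~(2) follows the paper's route (bound the diameter of ray classes and invoke Proposition~\ref{prop:epstein}), but part~(1) --- which you correctly identify as the crux --- has a genuine gap. You take two \emph{arbitrary} distinct cut-points $x_0,x_1\in\mathcal{K}(P)$ in the principal class and hope to show that their sets of $\mathcal{K}(Q)$-neighbours coincide, $\{y_1,y_2\}=\{y_1',y_2'\}$; this is left as ``carefully combining these should show,'' and it is not the right statement to aim for: in a large ray-tree two cut-points of $\mathcal{K}(P)$ need not share any $\mathcal{K}(Q)$-neighbour at all. The missing idea is a reduction via connectivity: on any path in the (bipartite) class joining two $\mathcal{K}(P)$-cut-points, every interior $\mathcal{K}(P)$-vertex has degree $\geq 2$ and is therefore itself a cut-point, so one may choose two cut-points $w',z'$ at graph distance exactly two, i.e.\ sharing a common $\mathcal{K}(Q)$-neighbour. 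One then applies $(P\mate Q)^j$ to send both to repelling fixed points $w,z$ of $P$ (Lemma~\ref{lem:ef}); since $Q$ is hyperbolic, $P\mate Q$ is a local homeomorphism on the Julia sets, so $w$ and $z$ are at distance $0$ or $2$. If $w=z$ the image of the connecting pair of rays is a $2$-cycle; if $w\neq z$ they share a neighbour $v\in\mathcal{K}(Q)$, the two extended rays have dynamical period $2$ (they land at fixed cut-points of $P$), and $v$ is either fixed (giving a $2$-cycle $\alpha_w\cup(P\mate Q)(\alpha_w)$) or of period $2$ (giving a $4$-cycle $\gamma\cup(P\mate Q)(\gamma)$). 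Forward invariance of the principal class places the resulting cycle back in $[x]$. None of this case analysis is present in your proposal, and without it part~(1) is unproved.

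Two smaller points in part~(2). First, you apply part~(1) to \emph{every} ray class, but part~(1) only concerns principal classes; the paper first bounds principal classes (tree of diameter $\leq 4$) and then propagates the bound to all classes by iterated pullback, using that preimages of simply connected sets under the local homeomorphism $P\mate Q\vert_{\mathcal{J}(P)\cup\mathcal{J}(Q)}$ are homeomorphic to them. Second, to conclude that principal classes are trees you must also exclude cycles of length strictly greater than $4$; this does follow from part~(1), since any cycle of length $\geq 4$ in the bipartite graph contains at least two $\mathcal{K}(P)$-vertices of degree $\geq 2$, hence two cut-points, but the deduction should be made explicit. These are repairable; the substantive defect is the unproved cycle-closing step in part~(1).
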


\begin{proof}
We first note that since the second iterate of a postcritically finite anti-polynomial is a postcritically finite holomorphic polynomial with the same Julia set, it follows from \cite[\S 9, \S 19]{Milnor06} that the Julia sets of $P, Q$ are connected and locally connected.

Let $x\in\mathcal{K}(P)$ be a repelling fixed point whose ray equivalence class $[x]$ contains more than one cut-point of $\mathcal{K}(P)$. Each point of $\mathcal{J}(P)$ is the landing point of one or two rays. Since $[x]$ is connected, there must be cut-points $w',z'\in [x]$ in $\mathcal{K}(P)$ so that $w'\sim_{ray} z'$ with $w'$ and $z'$ having graph distance two. Let $\gamma$ be the union of the two rays realizing the connection between $w'$ and $z'$. For some iterate $j$, we have from Lemma \ref{lem:ef} that $(P\mate Q)^j(w')$ and $(P\mate Q)^j(z')$ are fixed points which we denote $w$ and $z$ respectively. The graph distance between $w$ and $z$ is either 0 or 2. In the former case $w=z$ and since $Q$ is hyperbolic and hence a local homeomorphism on $\mathcal{J}(Q)$, we have that $(P\mate Q)^j(\gamma)$ is a 2-cycle. Since $x$ is a fixed point, its ray class is forward invariant, so $(P\mate Q)^j(\gamma)$ must be contained in $[x]$. This yields a 2-cycle in $[x]$.

Suppose now that $w\neq z$.
There then exist two extended external rays $\alpha_z$ and $\alpha_w$ that land at $z$ and $w$ and share a common endpoint $v\in \mathcal{K}(Q)$. 
Let $\gamma$ be the concatenation of these two rays. Each of $\alpha_z,\alpha_w$ must have dynamical period 2 since they land at fixed cut-points of $P$. 
If $v$ is a fixed point, then the graph $\alpha_w\cup (P\mate Q)(\alpha_w)$ is a cycle of length $2$ for $\sim_{ray}$. 
If $v$ has period $2$, then $\gamma\cup (P\mate Q)(\gamma)$ is a cycle of length $4$ for $\sim_{ray}$. As argued before, either of these cycles must be in $[x]$.
This concludes the proof of statement \ref{lem:Moore1}.

If a principal ray equivalence contains a cycle, it is immediate that ${\sim_{ray}}$ is Moore-obstructed. Now assume that no principal ray equivalence class contains a $2$-cycle or a $4$-cycle. 
This means that the diameter of a principal ray equivalence class is bounded by $4$ as there can be at most one cut-point of $\mathcal{K}(P)$ in this class.
Any principal ray equivalence class must evidently be a tree.

Now let $z\in\mathcal{J}(P)$. If there is no cut-point in the equivalence class $[z]$, then $[z]$ is a tree and has diameter at most $2$.
Otherwise, by Lemma \ref{lem:ef}, it is eventually mapped to a principal ray equivalence class.
Since both $P$ and $Q$ are hyperbolic, the map $P\mate Q$ is a local homeomorphism on $\mathcal{J}(P)$ and $\mathcal{J}(Q)$.
Since all the principal ray equivalence classes are trees, in particular, simply connected, any component of its preimage under $P\mate Q$ is homeomorphic to it.
Thus by induction, we conclude that $[z]$ is a tree and has diameter at most $4$.
Now by Proposition \ref{prop:epstein}, the mating is Moore-unobstructed.
\end{proof}

We now prove that for a large class of pairs of anti-polynomials, the absence of Moore obstruction is equivalent to the existence of geometric mating.

\begin{prop}\label{prop:mig}
Let $P$ and $Q$ be marked anti-polynomials of equal degree $d\geq 2$, where $P$ is critically fixed and $Q$ is postcritically finite and hyperbolic. There is an anti-rational map that is the geometric mating of $P$ and $Q$ if and only if $\sim_{ray}$ is not Moore-obstructed.
\end{prop}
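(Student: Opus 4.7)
The forward implication is essentially definitional: any geometric mating $R$ of $P$ and $Q$ yields semi-conjugacies $\psi^\pm$ which paste along the equator into a continuous surjection $\mathbb{S}^2_{P,Q}\to\widehat{\mathbb{C}}$ whose fibers are precisely the ray equivalence classes, so $\mathbb{S}^2_{P,Q}/{\sim_{ray}}\cong\widehat{\mathbb{C}}$ and $\sim_{ray}$ is Moore-unobstructed.

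For the converse, the plan is to realize $P\mate Q$ as a Thurston map on the topological quotient sphere and invoke Proposition \ref{prop:ttar}. Assuming $\sim_{ray}$ is not Moore-obstructed, Lemma \ref{lem:noMoore}(2) together with Proposition \ref{prop:epstein} will give that $\mathbb{S}^2_{\mathrm{top}}:=\mathbb{S}^2_{P,Q}/{\sim_{ray}}$ is a $2$-sphere. Since $P\mate Q$ acts as $\theta\mapsto-d\theta$ on external angles compatibly with the equator identification, it preserves $\sim_{ray}$ and descends to an orientation-reversing postcritically finite degree-$d$ branched cover $f\colon\mathbb{S}^2_{\mathrm{top}}\to\mathbb{S}^2_{\mathrm{top}}$. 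Excluding the trivial case $P=Q=\overline{z}^d$, the iterate $f\circ f$ has hyperbolic orbifold, so Proposition \ref{prop:ttar} will reduce the existence of an equivalent anti-rational map $R$ to the absence of a Thurston obstruction for $f\circ f$. Once this is verified, the Thurston-equivalence homeomorphisms, improved through the B\"ottcher coordinates on the bounded Fatou components of $R$, will furnish the desired semi-conjugacies $\psi^\pm$ and identify $R$ as the geometric mating of $P$ and $Q$.

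The main obstacle is therefore the no-obstruction step, and I propose to handle it in the spirit of Lemma \ref{lem:te} via Theorem \ref{thm:pt}. Suppose towards contradiction that $\Sigma$ is an irreducible Thurston obstruction for $f\circ f$. Since $P$ and $Q$ are individually realized as anti-rational maps of $\widehat{\mathbb{C}}$, neither admits a Thurston obstruction, so $\Sigma$ must interact nontrivially with the equator and with the principal ray classes. I would choose $\Lambda$ to be an irreducible arc system built from suitable $f$-invariant arcs obtained by projecting fixed extended external rays whose endpoints lie in the postcritical set. Theorem \ref{thm:pt} applied to $f\circ f$ with $\Lambda$, iterated over all fixed extended rays, will force $\widetilde{\Sigma}$ to avoid every preimage of these arcs other than the arcs themselves. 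A uniform shrinking estimate, coming from hyperbolicity of $P$ and $Q$ together with the diameter bound on principal ray classes furnished by Lemma \ref{lem:noMoore}(1), will then trap some curve of $\Sigma$ in an arbitrarily small neighborhood of a single principal ray class; a careful analysis of how such a curve closes up across the equator will produce a $2$- or $4$-cycle inside that ray class, contradicting Lemma \ref{lem:noMoore}(1). This final combinatorial-topological step is the delicate point; the rest of the argument will be routine Thurston-equivalence bookkeeping.
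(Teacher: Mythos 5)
Your forward direction and the overall architecture of the converse (reduce to the absence of a Thurston obstruction, then invoke Proposition \ref{prop:ttar}) match the paper, but the crucial step is left unproved. The heart of the matter is showing that an irreducible Thurston obstruction $\Sigma$ for $(P\mate Q)^{2}$ forces a $2$- or $4$-cycle in a principal ray class, and your plan for this --- applying Theorem \ref{thm:pt} to arcs coming from fixed extended rays, then using a ``uniform shrinking estimate'' to trap a curve of $\Sigma$ near a principal ray class and analyzing ``how such a curve closes up across the equator'' --- is a sketch of a substantial argument, not a proof. Theorem \ref{thm:pt} by itself only constrains which preimages of an arc can meet $\widetilde{\Sigma}$; the trick from Lemma \ref{lem:te} (where the complement of the invariant graph pulls back to a connected set containing the whole postcritical set, killing $\Sigma$ outright) does not transfer, because an arc system drawn from the Hubbard tree of $P$ or from fixed extended rays does not disconnect the postcritical points on the $Q$-side. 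The paper closes this gap by citing two results of Shishikura--Tan: since $Q$ is unobstructed, $\Sigma$ must intersect some edge $\lambda$ of the Hubbard tree of $P$, and then \cite[Theorem~3.9]{ST00} forces $\Sigma$ to be a Levy cycle consisting of a single curve; hyperbolicity of $P\mate Q$ makes this Levy cycle non-degenerate, and \cite[Theorem~1.4(2)]{ST00} then produces a cycle in a periodic ray class, contradicting Moore-unobstructedness. Without these (or an equivalent worked-out argument), your no-obstruction step does not go through.

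Two further points. First, you propose to pass to the quotient sphere $\mathbb{S}^2_{P,Q}/{\sim_{ray}}$ \emph{before} checking obstructions; the paper instead works with the formal mating $P\mate Q$ on $\mathbb{S}^2_{P,Q}$ itself, which avoids having to verify that the descended map on the Moore quotient is a well-defined postcritically finite branched cover. Your route can be made to work but adds verifications you do not address. Second, the final step --- passing from ``$P\mate Q$ is Thurston equivalent to an anti-rational map $R$'' to ``$R$ \emph{is} the geometric mating, i.e., the semi-conjugacies $\psi^{\pm}$ of Definition \ref{defn:gm} exist'' --- is not ``routine Thurston-equivalence bookkeeping''; it is the content of the Rees--Shishikura theorem \cite[Theorem~1.7]{Shishikura00}, which the paper invokes explicitly and which you should cite rather than elide.
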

\begin{proof}
The only if part of the statement is immediate.
To prove the if part, we will first show that the absence of Moore obstruction can be promoted to the absence of Thurston obstruction.
Note that the only case where $(P\mate Q)^{2}$ has non-hyperbolic orbifold is when $P$ and $Q$ are both power maps, in which case the conclusion of the theorem is immediate. 
Thus we assume for the remainder of the proof that $(P\mate Q)^{2}$ has hyperbolic orbifold.

Suppose $(P\mate Q)^{2}$ has a Thurston obstruction $\Gamma$. We may assume that $\Gamma$ is irreducible. Since $Q$ has no Thurston obstruction, there is some edge $\lambda$ in the Hubbard tree $H_P$ so that $\Gamma\cdot\lambda\neq 0$. 
Note that $\lambda$ is an irreducible arc system consisting of one arc, so it follows from \cite[Theorem 3.9]{ST00} that $\Gamma$ must also be a Levy cycle consisting of one essential curve $\gamma$.

Since $P\mate Q$ is hyperbolic, the Levy cycle is not degenerate (as defined in \cite[$\S$1]{ST00}). Thus there is a periodic ray class that contains a cycle \cite[Theorem 1.4(2)]{ST00}. 
By the hypothesis that there is no Moore obstruction, no such periodic ray class exists. This is a contradiction, and so no Thurston obstruction exists for $(P\mate Q)^{2}$. By Proposition \ref{prop:ttar}, it follows that $P\mate Q$ is also unobstructed and Thurston equivalent to a rational map.

Having shown that $P\mate Q$ has no Thurston obstruction, the Rees-Shishikura theorem (which extends directly to orientation reversing maps) implies that the geometric mating of $P$ and $Q$ exists \cite[Theorem 1.7]{Shishikura00}.
\end{proof}

\begin{figure}[ht!]
  \centering
  \begin{tikzpicture}
   \node[anchor=south west,inner sep=0] at (0,0) {\includegraphics[width=1\linewidth]{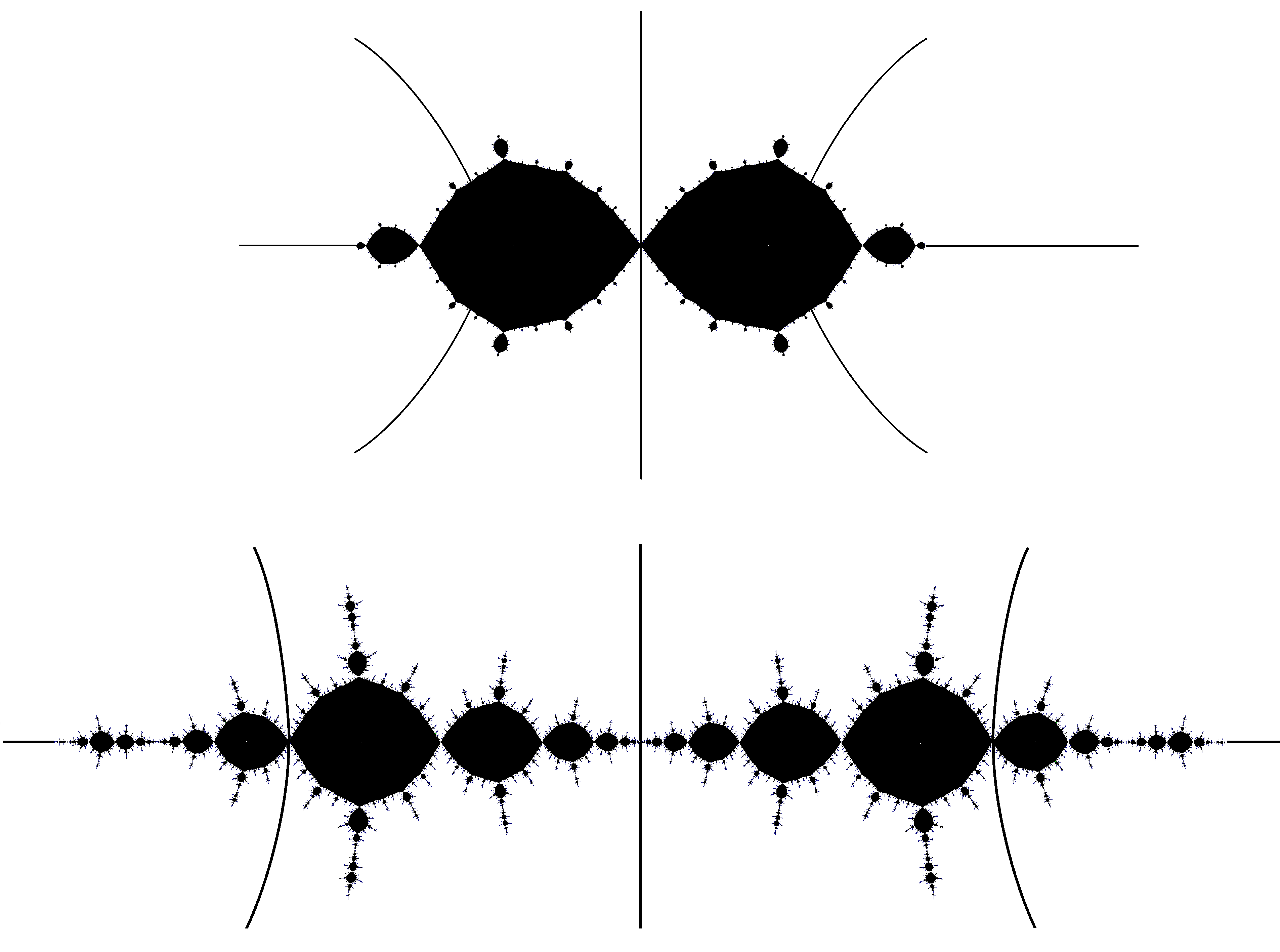}};
    \node at (2.2,7) {$\frac{7}{8}$};
        \node at (11.5,7) {$\frac{3}{8}$};

    \node at (3.3,4.8) {$\frac{0}{1}$};
    \node at (6.6,4.8) {$\frac{1}{8}$};
    \node at (9.3,4.8) {$\frac{1}{4}$};

    \node at (3.4,9.2) {$\frac{3}{4}$};
    \node at (6.6,9.2) {$\frac{5}{8}$};
    \node at (9.4,9.2) {$\frac{1}{2}$};
    \node at (.1,1.6) {$\frac{7}{8}$};
        \node at (12.5,1.6) {$\frac{3}{8}$};

    \node at (2.8,0.5) {$\frac{0}{1}$};
    \node at (6.6,0.5) {$\frac{1}{8}$};
    \node at (10.4,0.5) {$\frac{1}{4}$};

    \node at (2.8,3.8) {$\frac{3}{4}$};
    \node at (6.6,3.8) {$\frac{5}{8}$};
    \node at (10.4,3.8) {$\frac{1}{2}$};
  \end{tikzpicture}
   \caption{Two cubic anti-polynomials together with all rays that have period two or smaller. The first map is a critically fixed antipolynomial so that the 1/8 and 5/8 rays co-land. To second map is produced by tuning the first map with basilicas so that the pairs $(0/1,3/4)$ and $(1/2,1/4)$ co-land. The unique principal ray equivalence class has no cycle. Their mating is depicted below using the software \cite{Bar}.
   }
   \label{fig:MatingPolys}
  \end{figure}
  \begin{figure}[h!]
  \centering
  \includegraphics[width=1\linewidth]{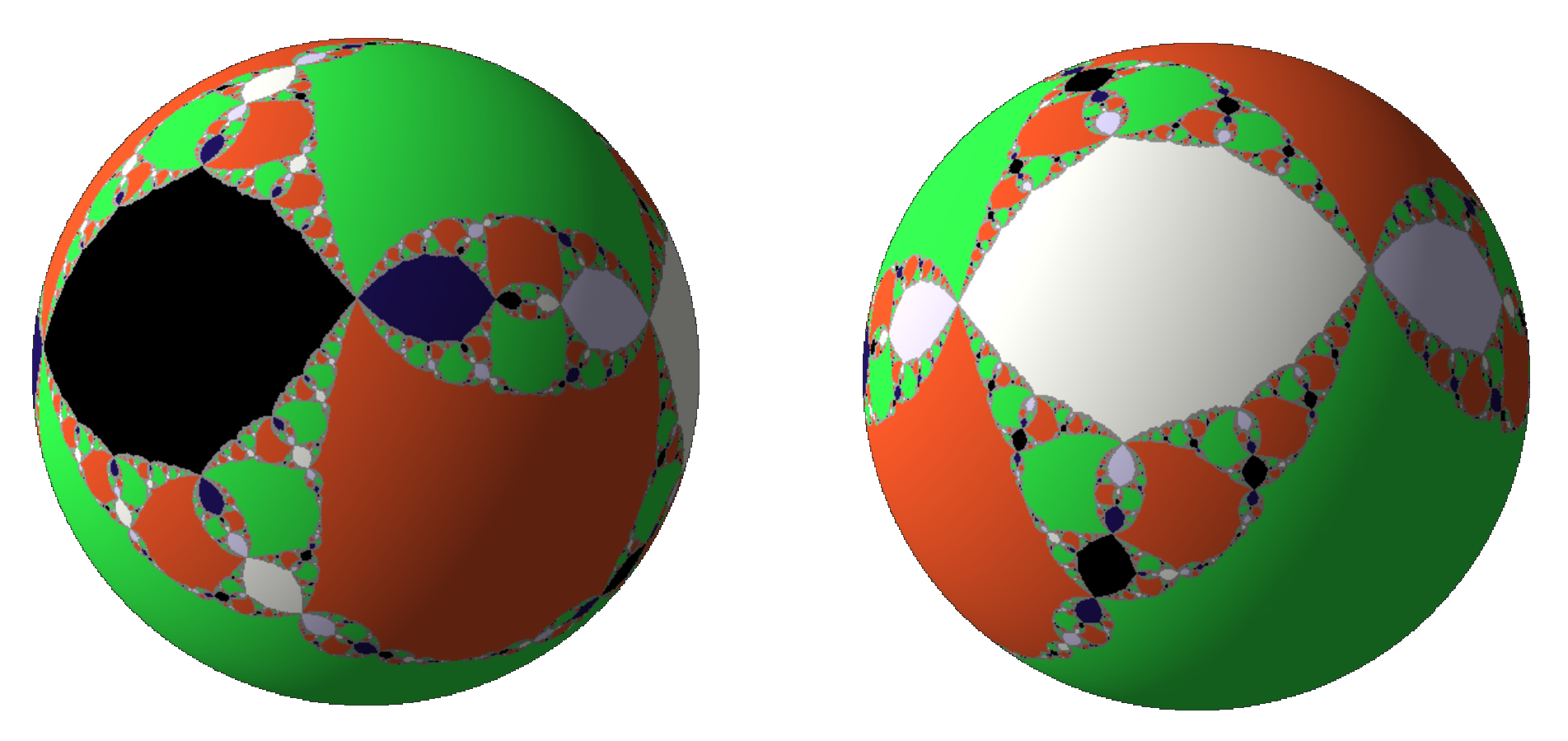}  

\label{fig:MatingSpheres}
\end{figure}

Recall that a pair of laminations on $\mathbb{S}^1$ for marked anti-polynomials is said to be non-parallel if they share no common leaf under the natural orientation reversing identification of the two copies of $\mathbb{S}^1$.
This is equivalent to saying that the ray equivalence classes contain no 2-cycle.
We immediately have the following corollary from Lemma \ref{lem:noMoore} and Proposition \ref{prop:mig}.
\begin{cor}\label{cor:mig}
A marked critically fixed anti-polynomial $P$ and a marked post-critically finite, hyperbolic anti-polynomial $Q$ are geometrically mateable if and only if the principal ray equivalence classes contain no $2$-cycles or $4$-cycles.

Two marked critically fixed anti-polynomials $P$ and $Q$ are geometrically mateable if and only if their laminations are non-parallel.
\end{cor}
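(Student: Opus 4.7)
The plan is to obtain both assertions by combining Proposition~\ref{prop:mig} with Lemma~\ref{lem:noMoore} and then translating the cycle condition into the language of laminations. The first assertion is immediate: Proposition~\ref{prop:mig} gives that $P$ and $Q$ are geometrically mateable if and only if $\sim_{ray}$ is not Moore-obstructed, while Lemma~\ref{lem:noMoore}(2) identifies Moore obstructions with the existence of a $2$-cycle or $4$-cycle in some principal ray equivalence class. Chaining these two equivalences yields the statement.

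For the second assertion, I would first establish that a $2$-cycle in a principal ray class corresponds exactly to a pair of parallel leaves of the laminations of $P$ and $Q$. Unwinding the definition, a $2$-cycle consists of two distinct extended external rays $R(\theta_1)$ and $R(\theta_2)$ sharing both a common endpoint $p\in\mathcal{K}(P)$ and a common endpoint $q\in\mathcal{K}(Q)$. This is equivalent to $\{\theta_1,\theta_2\}$ being a leaf of the $P$-lamination together with $\{-\theta_1,-\theta_2\}$ being a leaf of the $Q$-lamination. Under the orientation-reversing identification $\theta\mapsto -\theta$ used in the formal mating $\mathbb{S}^2_{P,Q}$, these two leaves coincide, which is precisely the parallel condition. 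Conversely, a parallel pair of leaves produces such a pair of extended rays; since parallelism is preserved under $P\mate Q$ (both angles are multiplied by $-d$), Lemma~\ref{lem:ef} guarantees that some forward iterate of the common landing points reaches fixed cut-points, producing a $2$-cycle inside a principal ray class.

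To complete the equivalence, I need to rule out the $4$-cycle alternative in the symmetric critically fixed setting. The proof of Lemma~\ref{lem:noMoore}(1) shows that a $4$-cycle in a principal ray class requires a point $v\in\mathcal{K}(Q)$ of period exactly $2$ under $P\mate Q$, bridging two distinct fixed cut-points of $\mathcal{K}(P)$. However, since $Q$ is critically fixed, Lemma~\ref{lem:ef} applied to $Q$ forces every cut-point of $\mathcal{K}(Q)$ to be eventually mapped to a fixed cut-point, so no cut-point of $\mathcal{K}(Q)$ has period $2$. Hence the $4$-cycle case is vacuous, and a Moore obstruction in the symmetric critically fixed case is equivalent to the existence of a $2$-cycle in a principal ray class, hence to parallel laminations. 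The most delicate step will be carefully tracking the orientation-reversing identification so that the correspondence between $2$-cycles of rays and parallel leaves is consistent with the angle conventions used in the formal mating; the rest of the argument amounts to assembling pieces already in place.
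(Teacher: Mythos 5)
Your overall strategy coincides with the paper's: the first assertion is the concatenation of Proposition~\ref{prop:mig} with Lemma~\ref{lem:noMoore}(2), and the second assertion reduces to identifying $2$-cycles in principal ray classes with parallel leaves and disposing of the $4$-cycle alternative. Your identification of $2$-cycles with common leaves, including the forward-iteration argument that a common leaf produces a $2$-cycle in a \emph{principal} class (using Lemma~\ref{lem:ef} and the fact that the identification $\theta\mapsto-\theta$ commutes with $\theta\mapsto-d\theta$), is correct and is exactly what the paper's proof relies on.

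The one step that over-claims is the $4$-cycle exclusion. Lemma~\ref{lem:noMoore}(1) does not assert that \emph{every} $4$-cycle in a principal ray class passes through a point of $\mathcal{K}(Q)$ of period exactly $2$; its proof only shows that a principal class containing more than one cut-point of $\mathcal{K}(P)$ contains \emph{some} $2$-cycle or \emph{some} $4$-cycle of the particular form constructed there, and only that particular $4$-cycle is known to involve a period-$2$ vertex in $\mathcal{K}(Q)$. An arbitrary $4$-cycle could a priori have strictly preperiodic vertices in $\mathcal{K}(Q)$, so "the $4$-cycle case is vacuous" does not follow as stated. The patch is short: any $4$-cycle in a principal class exhibits two distinct cut-points of $\mathcal{K}(P)$ in that class, so the dichotomy in the proof of Lemma~\ref{lem:noMoore}(1) applies to that class; your (correct) observation that Lemma~\ref{lem:ef}, applied to the critically fixed $Q$, forbids cut-points of $\mathcal{K}(Q)$ of period exactly $2$ kills the second branch of the dichotomy, leaving a $2$-cycle in the class, i.e.\ a common leaf, which contradicts non-parallelism. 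With this rerouting the argument is complete and matches the paper's (equally terse) proof of the corollary.
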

\begin{proof}
The first statement is immediate.

To see the second one, we note that according to Lemma~\ref{lem:ef}, the lamination of a marked critically fixed anti-polynomial is generated by the period $2$ cycles of rays landing at the repelling fixed points (which are cut-points).
In light of this, it is easy to see that for two marked critically fixed anti-polynomials, no principal ray equivalence class contains a $4$-cycle.
The second statement now follows from this observation and the first statement.
\end{proof}

\vspace{2mm}

\noindent{\textbf{Conflicts of interest:} none.}
\vspace{2mm}

\noindent{\textbf{Financial Support:} The third author was supported by an endowment from Infosys Foundation and SERB research grant SRG/2020/000018.}

\end{document}